\crefname{section}{Section}{Sections}
\definecolor{newcolor}{rgb}{.8,.349,.1}
\newcommand{\Rmnum}[1]{\uppercase\expandafter{\romannumeral #1}}
\newtheorem{theorem}{Theorem}[section]
\newtheorem{assumption}{Assumption}[section]
\newtheorem{problem}[theorem]{Problem}
\newtheorem{corollary}{Corollary}[section]
\theoremstyle{definition}
\newtheorem{definition}[theorem]{Definition}
\newtheorem{expl}{Example}[section]
\theoremstyle{remark}
\newtheorem{remark}{Remark}[]
\newcommand{\dt}{\Delta t}
\newcommand{\xbm}{\bm x}
\def\jump#1{\llbracket #1 \rrbracket }
\def\norm #1{\Vert #1 \Vert }
\journal{Journal of Computational Physics}
\begin{document}

\verso{Shengrong Ding, Shumo Cui, Kailiang Wu}

\begin{frontmatter}

\title{ {\bf Robust DG Schemes on Unstructured Triangular Meshes: Oscillation Elimination and Bound Preservation via Optimal Convex Decomposition}  \tnoteref{tnote1}
		}

\tnotetext[tnote1]{This work is partially supported by Shenzhen Science and Technology Program (Grant No. RCJC20221008092757098) and 
	National Natural Science Foundation of China (Grant No.~12171227).}

\author[1]{Shengrong {Ding}}
\ead{dingsr@sustech.edu.cn}
\author[1]{Shumo {Cui}}
\ead{cuism@sustech.edu.cn}
\author[2,1]{Kailiang {Wu}\corref{cor1}}
\cortext[cor1]{Corresponding author.}
\ead{wukl@sustech.edu.cn}

\address[1]{Shenzhen International Center for Mathematics, Southern University of Science and Technology, Shenzhen 518055, China}
\address[2]{Department of Mathematics, Southern University of Science and Technology, Shenzhen 518055, China}


\begin{abstract}

Discontinuous Galerkin (DG) schemes on unstructured meshes offer the advantages of compactness and the ability to handle complex computational domains. However, their robustness and reliability in solving hyperbolic conservation laws depend on two critical abilities: suppressing spurious oscillations and preserving intrinsic bounds or constraints. This paper introduces two significant advancements in enhancing the robustness and efficiency of DG methods on unstructured meshes for general hyperbolic conservation laws, while maintaining their inherent accuracy, compactness, and high parallel efficiency. First, we investigate the oscillation-eliminating (OE) DG methods on unstructured meshes, as a continuation of [M.~Peng, Z.~Sun, and K.~Wu, \textit{Mathematics of Computation}, {\tt doi.org/10.1090/mcom/3998}]. These methods not only maintain key features such as conservation, scale invariance, and evolution invariance but also achieve rotation invariance through a novel rotation-invariant oscillation-eliminating (RIOE) procedure. These properties are crucial for ensuring consistently good performance across problems with varying scales, wave speeds, and coordinate systems. Second, we propose, for the first time, the optimal convex decomposition approach for designing efficient bound-preserving (BP) DG schemes on unstructured meshes. Convex decomposition is a critical component in constructing high-order BP schemes and directly influences the BP Courant--Friedrichs--Lewy (CFL) number. Finding the optimal convex decomposition that maximizes the BP CFL number is an important yet challenging problem. While this challenge has been systematically addressed for rectangular meshes in [S.~Cui, S.~Ding, and K.~Wu, {\em Journal of Computational Physics}, 476: 111882, 2023; {\em SIAM Journal on Numerical Analysis}, 62: 775--810, 2024], {\em it remains an open problem for triangular meshes}. This paper successfully constructs the optimal convex decomposition for the widely used $\mathbb{P}^1$ and $\mathbb{P}^2$ spaces on triangular cells, significantly improving the efficiency of $\mathbb{P}^1$- and $\mathbb{P}^2$-based BP DG methods. For example, in the 2D Euler system, the maximum BP CFL numbers are increased by 100\%–200\% for $\mathbb{P}^1$ space and 280.38\%–350\% for $\mathbb{P}^2$ space, compared to BP schemes based on classic decomposition. Furthermore, our new RIOE procedure and optimal decomposition technique can be seamlessly integrated into existing DG codes with minimal and localized modifications. These techniques require only edge-neighboring cell information, thereby retaining the compactness and high parallel efficiency characteristic of original DG methods. The proposed BP OEDG schemes are validated through extensive numerical experiments, demonstrating their accuracy, efficiency, robustness, and BP properties, as well as their capability to handle complex computational domains.

\end{abstract}


\begin{keyword}	
	{\bf Keywords:}
        Hyperbolic conservation laws;
	Oscillation elimination;
        Bound preservation;
        Discontinuous Galerkin;
        Optimal convex decomposition; 
        Rotational invariance; 
	Unstructured meshes
\end{keyword}

\end{frontmatter}



\section{Introduction}

Hyperbolic conservation laws serve as fundamental mathematical models for exploring phenomena across various fields, including fluid dynamics, traffic flow, astrophysics, and space physics. This paper focuses on the following two-dimensional (2D) form of these laws:
\begin{equation}\label{eq:hcl}
	{\bm u}_t + \nabla_{\bm x} \cdot \bm{F}({\bm u}) = {\bm 0}, \qquad \xbm:=(x,y) \in \Omega \subset \mathbb{R}^2, ~~~ t>0,
\end{equation}
where $\bm{F} = (\bm{f}_1, \bm{f}_2)$, and the conservative variables 
\(\bm{u}\) and the fluxes \(\bm{f}_i\) for \(i = 1, 2\) take values in \(\mathbb{R}^d\), with \(d\) representing the number of equations. The nonlinear nature of most hyperbolic conservation laws poses notable challenges for both analytical solutions and numerical simulations. This nonlinearity often results in solutions that develop discontinuities, even when the initial conditions are smooth. Consequently, the design of robust and efficient numerical methods for hyperbolic conservation laws is crucial. However, the inherent characteristics of these laws make high-order numerical methods susceptible to spurious oscillations near discontinuities, leading to numerical instability, nonphysical solutions, and potential computational failures. Thus, the development of high-order, robust numerical methods, particularly on unstructured meshes, is an important and ongoing challenge in this field.

Unstructured meshes are particularly flexible for computational domains with complex geometries and are extensively used in the simulation of practical engineering problems. Over the past few decades, numerous high-order numerical methods applicable on unstructured meshes have been developed, including the discontinuous Galerkin (DG) method \cite{rkdg2,rkdg4,rkdg5,ZXSPP2012}, the finite volume method \cite{HuShu1999WENO,ZhuQiu2018SISC}, the spectral volume method \cite{Wang2002SVI,Wang2002SVIV}, and the spectral difference method \cite{Wang2006SDI,Wang2007SDII}. 
The DG methods, in particular, offer notable advantages due to their use of discontinuous piecewise polynomial finite element spaces. These advantages include a compact stencil, which reduces the communication overhead between elements, and high parallel efficiency, making them well-suited for modern high-performance computing architectures. The first DG method, introduced by Reed and Hill \cite{reed1973triangular}, was originally designed for solving steady-state linear hyperbolic equations. Later, Cockburn, Shu, and collaborators \cite{rkdg1,rkdg2,rkdg3,rkdg4,rkdg5} made substantial contributions to the field by combining the DG method with explicit high-order Runge--Kutta (RK) time discretization, enabling the effective solution of nonlinear time-dependent hyperbolic conservation laws.

Controlling spurious oscillations near discontinuities is a critical aspect of employing DG methods for hyperbolic conservation laws \eqref{eq:hcl}, particularly in cases involving strong shocks. To address this challenge, two primary approaches are commonly utilized. 
The first approach is to introduce suitable artificial viscosity terms (e.g., \cite{zingan2013implementation,yu2020study,huang2020adaptive}), typically based on second-order or higher-order spatial derivatives, to dissipate spurious oscillations.  
The second widely used approach is to employ nonlinear limiters, which prevent nonphysical oscillations by adjusting the DG solution in specific troubled cells. Examples of these limiters include the total variation diminishing (TVD) limiter \cite{Leveque2002}, the minmod-type total variation bounded (TVB) limiter \cite{rkdg2,rkdg3,rkdg4,rkdg5}, the moment limiter \cite{Biswas1994}, the monotonicity-preserving limiter \cite{SureshMP1997}, and the weighted essentially nonoscillatory (WENO) type limiters \cite{qiu2005runge,ZhuQSD2008JCP,zhong2013simple}. These limiters act as post-processing steps, applied after each RK stage. However, many limiters, such as the TVB limiter, may be problem-dependent. Additionally, some limiters can degrade the accuracy of high-order DG methods when incorrectly applied to smooth regions of the solution, as noted in \cite{ZhuQSD2008JCP}. 
While certain limiters, such as those described in \cite{SureshMP1997,qiu2005runge}, can maintain high-order accuracy while mitigating spurious oscillations, they typically require a large stencil for limiting or reconstruction. This requirement can compromise the compactness of the original DG method.

Recently, Lu, Liu, and Shu \cite{lu2021oscillation, LiuLuShu_OFDG_system} introduced a novel approach to controlling spurious oscillations by incorporating damping terms into the semi-discrete DG scheme, termed the oscillation-free DG (OFDG) method. In their work \cite{lu2021oscillation}, they rigorously analyzed the optimal error estimates and superconvergence properties of the OFDG method for linear advection equations. 
The OFDG method has been successfully applied to various systems, including the compressible Euler system \cite{LiuLuShu_OFDG_system, LiuLuShu2024}, shallow-water equations \cite{liu2022oscillation}, multi-component chemically reacting systems \cite{du2023oscillation}, and nonlinear degenerate parabolic equations \cite{tao2023oscillation}. However, the damping terms in the OFDG methods become highly stiff near strong shocks, necessitating very small time steps in explicit time-stepping methods to ensure stability during time integration. To address this issue, Liu, Lu, and Shu suggested using (modified) exponential Runge–Kutta (RK) methods \cite{huang2018bound}. 
Motivated by the damping technique of OFDG method, Peng, Sun, and Wu \cite{PengSunWu2023OEDG} systematically developed the oscillation-eliminating DG (OEDG) method for hyperbolic conservation laws. They provided a rigorous proof of the optimal error estimates for the fully discrete OEDG method for linear scalar conservation laws. The OEDG method effectively eliminates spurious oscillations by solving a quasi-linear damping ordinary differential equation (ODE), known as the OE procedure. Notably, this OE procedure is completely independent of the DG space discretization and time integration method, enabling the OEDG method to use general explicit RK time methods without encountering issues related to excessively restricted time steps. 
Moreover, the OEDG method \cite{PengSunWu2023OEDG} bridged, for the first time, the damping and filtering techniques, revealing the essential role of the OE procedure as a filter based on the jump information of the DG solution. 
More recently, the bound/structure-preserving OEDG schemes were developed in \cite{LiuWuOEMHD2024}  for ideal magnetohydrodynamics and in \cite{yan2024uniformly} for two-phase flows. However, the existing OEDG and OFDG methods do not, in general, preserve the rotation-invariant (RI) property inherent in physical systems such as the Euler and magnetohydrodynamics equations.

In addition to exhibiting discontinuous wave structures, the solutions to hyperbolic conservation laws \eqref{eq:hcl} typically adhere to certain invariant regions. When using numerical schemes to solve these equations, it is highly desirable, or even critical, to ensure that the numerical solutions remain within these invariant regions. Numerical methods that preserve these invariant regions are commonly referred to as bound-preserving (BP) or positivity-preserving schemes \cite{wu2015high,wu2018positivity,wu2019provably,Wu2023Geometric,wu2023provably}.  
While most first-order monotone schemes are BP under an appropriate CFL condition, designing high-order BP schemes is more challenging. To address this issue, Zhang and Shu \cite{zhang2010maximum,zhang2010positivity} introduced a widely-used framework for constructing high-order BP DG and finite volume schemes on rectangular meshes. 
A crucial aspect of this framework is the convex decomposition of cell averages, which is essential for both proving the BP property and determining the BP CFL number, which directly affects the maximum allowable time step size. The convex decomposition proposed by Zhang and Shu, known as the classic decomposition, is based on Gauss–Lobatto quadrature. In fact, the feasible decomposition is not unique. To achieve higher efficiency, it is natural to seek the optimal convex decomposition that maximizes the BP CFL number.

Finding the optimal convex decomposition is a highly nontrivial task and has remained an open problem since its proposal in 2010 \cite{zhang2010maximum}. 
Recently, in \cite{CDWOCAD2023,CuiDingWu2024}, we established a generic approach and theory to investigate the optimal convex decomposition on rectangular cells. 
We proved that the classic convex decomposition is already optimal for general one-dimensional $\mathbb{P}^k$ spaces and general 2D $\mathbb{Q}^k$ spaces on rectangular cells. However, for the widely-used 2D $\mathbb{P}^k$ spaces on rectangular cells, we found that the classic convex decomposition is not optimal. 
To address this, we constructed the optimal convex decomposition with 
 analytic formulas for $\mathbb{P}^k$ spaces with $k \leq 7$ on rectangular cells, and proposed a quasi-optimal convex decomposition that is more practical for cases where $k \geq 8$. By allowing larger time steps, these advances significantly improve the efficiency of high-order BP methods on rectangular meshes for a broad class of hyperbolic equations.
High-order BP DG schemes on 2D unstructured triangular meshes were proposed in \cite{ZXSPP2012}, where the convex decomposition on triangular cells was constructed by projecting the classic convex decomposition from rectangular to triangular cells. {\em However, whether this convex decomposition is optimal, and how to find the optimal convex decomposition on triangular cells, remain open questions.} Addressing these problems would provide a new opportunity to greatly improve the efficiency of high-order BP numerical schemes on unstructured meshes for general hyperbolic conservation laws.

This paper focuses on enhancing the robustness and efficiency of DG methods for general hyperbolic conservation laws on unstructured meshes while maintaining their accuracy and compactness. The new contributions are twofold. First, we investigate the OE procedure for unstructured meshes and propose a novel rotation-invariant OE (RIOE) procedure for physical systems, demonstrating its effectiveness in eliminating spurious oscillations. Second, we derive the optimal convex decomposition on triangular cells for the widely-used $\mathbb{P}^1$ and $\mathbb{P}^2$ spaces, which significantly improves the efficiency of BP DG schemes on unstructured meshes by allowing for much larger time steps. 
Compared to its counterpart on rectangular cells, {\em the optimal convex decomposition problem on triangular cells is much more challenging}. The geometric symmetry of rectangular cells notably simplifies the convex decomposition problem; however, this symmetry is absent in general triangular cells. In this paper, we focus on the $\mathbb{P}^1$ and $\mathbb{P}^2$ spaces on triangular cells, leaving the exploration of optimal convex decomposition for $\mathbb{P}^k$ spaces with $k \geq 3$ to future studies. 
With the proposed RIOE procedure and optimal convex decomposition, the resulting BP OEDG methods offer the following remarkable features:

\begin{description}
	\item[(i) Robustness.] The proposed DG schemes are rigorously proven to be BP under a mild CFL condition, ensuring the preservation of crucial physical constraints (e.g., the positivity of density and pressure for hydrodynamic equations), which guarantees robustness when simulating challenging problems. The proposed RIOE procedure effectively suppresses spurious oscillations without the need for problem-dependent parameters across all tested cases. It also exhibits scale invariance, evolution invariance, and rotation invariance, ensuring consistently good performance and reliability for problems of varying scales, wave speeds, and coordinates. 
 \item[(ii) Efficiency.] The proposed RIOE procedure can be applied in a cell-by-cell manner, requiring information only from edge-neighboring cells rather than a large stencil. This feature allows the OEDG methods to retain the compactness and high parallel efficiency characteristic of DG methods. Additionally, our  RIOE procedure is applied directly to the conservative variables, avoiding the need for characteristic decomposition for hyperbolic systems, thus reducing computational costs. Furthermore, the optimal convex decomposition presented in this paper significantly increases the BP CFL numbers for $\mathbb{P}^1$- and $\mathbb{P}^2$-based BP DG schemes. For instance, in the case of the 2D Euler system, {\em the maximum BP CFL numbers are increased by 100\%--200\% and 280.38\%--350\% for $\mathbb{P}^1$ and $\mathbb{P}^2$ spaces}, respectively, compared to BP schemes using the classic decomposition described in \cite{ZXSPP2012}. This increase allows for fewer time steps to reach the final time, resulting in a proportional reduction in CPU time in numerical experiments.
  \item[(iii) Ease of Implementation.] The proposed RIOE procedure can be seamlessly integrated into existing DG codes in a nonintrusive manner. It is applied solely to the numerical results at the end of each RK stage or time step, allowing for implementation as a separate module with minimal or no modifications to the original DG code. To ensure the BP property provided by the optimal convex decomposition, one only needs to replace the points involved in the BP limiter with our optimal decomposition nodes, requiring only slight and local modifications to the implementation code.
\end{description}

This paper is organized as follows. \Cref{sec:scalar} presents the OEDG method  on unstructured meshes for scalar conservation law, including the semi-discrete DG formulation, the fully discrete OEDG schemes, and the OE procedure. The scale-invariant and evolution-invariant properties of the OEDG method are also discussed. 
The OEDG methods on unstructured meshes for systems are detailed in \cref{sec:system}, where the component-wise and RIOE procedures are proposed. \Cref{sec:OCD} introduces the construction of $\mathbb{P}^1$- and $\mathbb{P}^2$-based BP schemes 
via optimal convex decomposition on triangular cells.
\Cref{sec:numexp} provides various numerical experiments to validate the robustness and efficiency of the proposed DG schemes. 
Finally, the concluding remarks are presented in \cref{sec:conclu}.

\section{OEDG Method on Unstructured Triangular Meshes for Scalar Conservation Law}\label{sec:scalar}
In this section, we introduce the OEDG method for 2D scalar conservation law
\begin{equation}\label{eq:scalar}
	\frac{\partial u}{\partial t} + \frac{\partial f_1(u)}{\partial x} + \frac{\partial f_2(u)}{\partial y} = 0, \qquad \xbm \in \Omega \subset \mathbb{R}^2.
\end{equation}
Assume that the unstructured mesh $\mathcal{T}_h$ is a partition of the 2D computational domain $\Omega$. As shown in \Cref{fig:MeshK}, the following notations are introduced:
\begin{itemize}
	\item $K$ denotes any element in $\mathcal{T}_h$.
	\item $e_K^{(i)}$ represents the $i$-th edge of element $K$ for $i=1,2,3$.
	\item $\mathbf{n}_{K}^{(i)}=\big(n_{K}^{(i),1},n_{K}^{(i),2}\big)^\top$ is the outward unit normal vector of the edge $e_K^{(i)}$.
	\item $K^{(i)}$ signifies the neighboring cell that shares the edge $e_K^{(i)}$ with $K$.
	\item ${\bm v}^{(1)}, {\bm v}^{(2)}$, and ${\bm v}^{(3)}$ denote the three vertices of element $K$, arranged counterclockwise, with corresponding coordinates $(x_1,y_1)$, $(x_2,y_2)$, and $(x_3,y_3)$, respectively.	
\end{itemize}
Additionally, $|K|$ represents the area of element $K$, and $l_K^{(i)}$ stands for the length of edge $e_K^{(i)}$. The time interval is discretized into a non-uniform mesh $\{t^n, n \geq 0\}$ with $t^0 = 0$.

\begin{figure}[!htb]
	\begin{center}
		\begin{tikzpicture}[xscale=2.2,yscale=2.2]
			\draw [ultra thick] (0,0) -- (1,-0.6) -- (1.5,0.2) -- (2.3,0.8) -- (1,1.3) -- (-0.5,0.9) -- (0,0);
			\draw [ultra thick, blue] (0,0) -- (1.5,0.2) -- (1,1.3) -- (0,0);
			\draw [blue] (0.75,0.1) node[below] {$e^{(3)}_{K}$};
			\draw [blue] (1.25,0.75) node[right] {$e^{(1)}_{K}$};
			\draw [blue] (0.5,0.65) node[left] {$e^{(2)}_{K}$};
			
			\draw [blue] (0.7,0.6) node[right] {$K$};
			\draw (0.9,-0.5) node[above] {$K^{(3)}$};
			\draw (2.2,0.75) node[left] {$K^{(1)}$};
			\draw (-0.45,0.85) node[right] {$K^{(2)}$};
			
			\filldraw [black] (0,0) circle (1pt);
			\draw [blue] (0,0) node[left] {$(x_1,y_1)$~~$\bm v^{(1)}$};
			
			\filldraw [black] (1.5,0.2) circle (1pt);
			\draw [blue] (1.5,0.2) node[right] {$\bm v^{(2)}$~~$(x_2,y_2)$};
			
			\filldraw [black] (1,1.3) circle (1pt);
			\draw [blue] (1,1.3) node[above] {$\bm  v^{(3)}$~~$(x_3,y_3)$};
		\end{tikzpicture}
	\end{center}
	\vspace{-0.5cm}
	\caption{Illustration and notations of the element $K\in \mathcal{T}_h$ and its adjacent elements.}
	\label{fig:MeshK}
\end{figure}
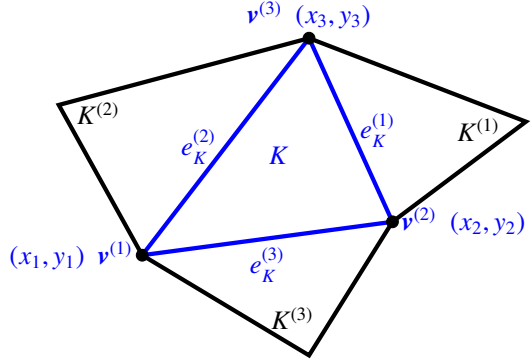


\subsection{Semi-discrete DG Formulation}

Define the finite element space $\mathbb{V}_h^k$ as follows:
\[
\mathbb{V}_h^k = \left\{ v_h \in L^2(\Omega) : v_h|_{K} \in \mathbb{P}^k(K) \quad \forall K \in \mathcal{T}_h \right\},
\]
where $\mathbb{P}^k(K)$ represents polynomials of degree up to $k$ over the element $K$. 
In DG schemes, we seek the numerical solution $u_h(\bm{x},t) \in \mathbb{V}_h^k$ that satisfies
\begin{equation}\label{DGScalar:Int}
	\begin{aligned}
		\int_{K} \frac{\partial u_h(\bm{x},t) }{\partial t} v_h(\bm{x}) \, \mathrm{d} \bm{x} &=
		\int_{K} \bm{F}(u_h(\bm{x},t)) \cdot \nabla v_h(\bm{x}) \, \mathrm{d} \bm{x} \\
		&\quad -  \sum_{i=1}^{3} \int_{e_K^{(i)}} \hat{\bm{F}}(u_h^{\text{int}}(\bm{x},t), u_h^{\text{ext}}(\bm{x},t), \mathbf{n}_{K}^{(i)}) v_h(\bm{x}) \, \mathrm{ds} \qquad \forall v_h(\bm{x}) \in \mathbb{V}_h^k,
	\end{aligned}
\end{equation}
where $u_h^{\text{int}}(\bm{x},t)$ and $u_h^{\text{ext}}(\bm{x},t)$ are the interior and exterior approximations with respect to $K$, respectively, and $\hat{\bm{F}}(\cdot, \cdot, \cdot)$ denotes the numerical flux function. For example, one may use the Lax--Friedrichs (LF) flux:
\begin{equation}\label{fluxLF}
	\hat{\bm{F}}(u^{\text{int}}, u^{\text{ext}}, \mathbf{n}) = \frac{1}{2} \left( \bm{F}(u^{\text{int}}) \cdot \mathbf{n} + \bm{F}(u^{\text{ext}}) \cdot \mathbf{n} - \alpha^{\text{LF}} (u^{\text{ext}} - u^{\text{int}}) \right).
\end{equation}
Here, $\alpha^{\text{LF}}$ is the numerical viscosity parameter. A standard choice is $\alpha^{\text{LF}} = \max \limits_{K}\{\alpha_K^{(1)}, \alpha_K^{(2)}, \alpha_K^{(3)}\}$, where $\alpha_{K}^{(i)}$ denotes an estimate of the local maximum wave speed in $K$ along the direction of $\mathbf{n}_{K}^{(i)}$, for example, we take 
\begin{equation*}
	\alpha_{K}^{(i)} 
	:= \left| \bm{F}^{\prime}(\overline{u}_K) \cdot \mathbf{n}_{K}^{(i)} \right|
	= \left| n_{K}^{(i),1} \frac{\partial f_1(\bar{u}_K)}{\partial u} + n_{K}^{(i),2} \frac{\partial f_2(\bar{u}_K)}{\partial u} \right|,
\end{equation*}
where the cell average $\bar{u}_K$ over the element $K$ is defined as:
\[
\bar{u}_K := \frac{1}{|K|} \int_{K} u_h(\bm{x},t) \, \mathrm{d} \bm{x}.
\]
To achieve a provable BP property, $\alpha^{\text{LF}}$ may alternatively be taken as
\begin{equation}\label{eq:358}
	\alpha^{\text{LF}}
	:=
	\max_{\bm{x} \in e_K^{(i)}, \, i \in \{1,2,3\}, \, K \in \mathcal{T}_h} 
	\left| \bm{F}^{\prime}\left(\bm{u}_h(\bm{x}) \right) \cdot \mathbf{n}_{K}^{(i)} \right|.
\end{equation}

Let $\{\Psi_{K}^{(\ell)}(\bm x)\}_{\ell=0}^{L_k}$ with $L_k={k(k+3)}/{2}$ be a orthogonal basis of the polynomial space $\mathbb{P}^{k}(K)$. 
For example, we take the following orthogonal basis functions of $\mathbb{P}^4(K)$: 
\begin{equation}\label{eq:basisfunc}
	\left\{
	\begin{aligned}
		\Psi^{(0)}_{K}(\bm x)&=1, \\ 
		\Psi^{(1)}_{K}(\bm x) &= 4 \xi + 2 \eta - 2, \\ 
		\Psi^{(2)}_{K}(\bm x) &= 3\eta - 1, \\
		\Psi^{(3)}_{K}(\bm x)&= 24\xi^2 + 24\xi \eta + 4 \eta^2 - 24\xi - 8\eta + 4, \\
		\Psi^{(4)}_{K}(\bm x)&= 20 \xi \eta + 10 \eta^2 - 4 \xi - 12 \eta + 2, \\
		\Psi^{(5)}_{K}(\bm x)&=10\eta^2 - 8\eta + 1, \\
		\Psi^{(6)}_{K}(\bm x)&=160 \xi^3 + 240 \xi^2 \eta + 96 \xi \eta^2 + 8 \eta^3 - 240 \xi^2 - 192 \xi \eta  - 24 \eta^2 + 96 \xi + 24 \eta - 8, \\
		\Psi^{(7)}_{K}(\bm x)&= 168 \xi^2 \eta + 168 \xi \eta^2 + 28 \eta^3 - 24 \xi^2 - 192 \xi \eta - 60 \eta^2 + 24 \xi + 36 \eta - 4, \\
		\Psi^{(8)}_{K}(\bm x)&= 84 \xi \eta^2 + 42 \eta^3 - 48 \xi  \eta - 66 \eta^2 + 4 \xi + 26 \eta - 2, \\
		\Psi^{(9)}_{K}(\bm x)&= 35 \eta^3 - 45 \eta^2 + 15 \eta - 1, \\
		\Psi^{(10)}_{K}(\bm x)&= 1120 \xi^4 + 2240 \xi^3 (\eta - 1) + 1440 \xi^2 (\eta - 1)^2 + 320 \xi (\eta - 1)^3 + 16(\eta - 1)^4,\\
		\Psi^{(11)}_{K}(\bm x)&= (80 \xi^2 + 80 \xi (\eta - 1) + 8(\eta - 1)^2)(\eta + 2 \xi - 1)(9 \eta - 1),\\
		\Psi^{(12)}_{K}(\bm x)&= (24 \xi^2 + 24 \xi (\eta - 1) + 4 (\eta - 1)^2)(36 \eta^2 - 16 \eta + 1),\\
		\Psi^{(13)}_{K}(\bm x)&= - 4 \xi - 44 \eta + 84 \eta \xi +
		210 \eta^2 - 336 \xi \eta^2 - 336 \eta^3 + 336 \xi \eta^3 + 168 \eta^4 + 2,\\
		\Psi^{(14)}_{K}(\bm x)&= 126 \eta^4 - 224 \eta^3 + 126 \eta^2 - 24 \eta + 1,
	\end{aligned}
	\right.
\end{equation}
where the variables $\xi$ and $\eta$ are derived by the following local coordinate transformation
\begin{equation*}
	\begin{pmatrix}
		x-x_1 \\
		y-y_1
	\end{pmatrix}
	= J_K
	\begin{pmatrix}
		\xi \\
		\eta
	\end{pmatrix}
	\quad \textrm{with} \quad
	J_K := \begin{pmatrix}
		x_2-x_1 & x_3-x_1 \\
		y_2-y_1 & y_3-y_1
	\end{pmatrix}.
\end{equation*}
Here, $J_K$ is an invertible matrix and $\det(J_K) = 2|K|$. 
Then, the numerical solution $u_h(\bm x,t)$ can be expressed as 
\begin{equation*}
	u_h(\bm x,t) :=  \sum_{\ell=0}^{L_k} u_K^{(\ell)}(t) \Psi_{K}^{(\ell)}(\bm x) \quad \forall \bm x\in K,
\end{equation*}
where $u_K^{(\ell)}$ represents the $\ell$-th modal coefficient.
It is worth noting that $\bar{u}_K = u_K^{(0)}$.

To achieve $(k+1)$-th-order accuracy for the $\mathbb{P}^k$-based DG scheme, we employ an $N$-point quadrature rule with algebraic precision of at least order $2k$ to approximate the first integral on the right-hand side of \eqref{DGScalar:Int}. 
Let $\left\{(x_K^{\mu}, y_K^{\mu})\right\}_{\mu=1}^N$ (refer to \ref{Appx:Triangle}) represent the quadrature nodes on the triangular element $K$. The associated quadrature weights $\{ \omega_{\mu} \}_{\mu=1}^{N}$ satisfy $\sum_{\mu=1}^{N} \omega_{\mu} = 1$. We set $N = 3, 6,$ and $12$ for $k = 1, 2,$ and $3$, respectively; please refer to \ref{Appx:Triangle} for more details. 
Let $\left\{(x^{(i),\nu}_K, y^{(i),\nu}_K)\right\}_{\nu=1}^Q$ (see equation \eqref{eq:AppxEdge} in \ref{Appx:Edge}) be the $Q$-point Gauss quadrature nodes on the edge $e_K^{(i)}$, with the quadrature weights $\{ \omega_{\nu}^{\tt G} \}_{\nu=1}^{Q}$ satisfying $\sum_{\nu=1}^{Q} \omega_{\nu}^{\tt G} = 1$. Here, we use $Q = k+1$ to achieve $(k+1)$-th-order accuracy. 
Then, for $\ell=0, \ldots, L_k$, we obtain 
\begin{equation}\label{DGScalar:Dis}
	\begin{aligned}
		a_K^{(\ell)} \frac{\mathrm{d} u_K^{(\ell)}(t)}{\mathrm{d} t} = 
		&|K| \sum_{\mu=1}^{N} \omega_{\mu} \, \bm{F}\big(u_h(x_K^{\mu}, y_K^{\mu},t)\big) \cdot \nabla \Psi_{K}^{(\ell)}(x_K^{\mu}, y_K^{\mu}) \\
		& - \sum_{i=1}^{3} l_K^{(i)} \sum_{\nu=1}^{Q} \omega_{\nu}^{\tt G} \, \hat{\bm{F}}\left(u_h^{\text{int}}(x^{(i),\nu}_K, y^{(i),\nu}_K,t), u_h^{\text{ext}}(x^{(i),\nu}_K, y^{(i),\nu}_K,t), \mathbf{n}_{K}^{(i)} \right) \, \Psi_{K}^{(\ell)}(x^{(i),\nu}_K, y^{(i),\nu}_K)
	\end{aligned}
\end{equation}
with
\[
a_K^{(\ell)} := \int_{K} \big(\Psi_{K}^{(\ell)}(\bm{x})\big)^2 \, \mathrm{d} \bm{x}.
\]

\subsection{Fully Discrete OEDG Schemes}

The ordinary differential system \eqref{DGScalar:Dis} for the  modal coefficients  $\big\{u_{K}^{(\ell)}(t)\big\}_{\ell=0,K \in \mathcal{T}_h}^{L_k}$ can be denoted as $\frac{\mathrm{d} u_h}{\mathrm{d} t} = \mathcal{L}(u_h)$. To solve this system in time, one can employ a high-order strong-stability-preserving (SSP) time integration method \cite{gottlieb2001strong}. For example, consider the explicit third-order SSP Runge–Kutta (RK) method:
\begin{equation}\label{RKTime}
	\left\{
	\begin{aligned}
		{u}_h^{[1]} &= {u}_h^{n} + \Delta t \, \mathcal{L}({u}_h^{n}), \\
		{u}_h^{[2]} &= \frac{3}{4}{u}_h^{n} + \frac{1}{4} \left({u}_h^{[1]} + \Delta t \, \mathcal{L}({u}_h^{[1]}) \right), \\
		{u}_h^{n+1} &= \frac{1}{3}{u}_h^{n} + \frac{2}{3} \left({u}_h^{[2]} + \Delta t \, \mathcal{L}({u}_h^{[2]}) \right),
	\end{aligned}
	\right.
\end{equation}
where $u_h^n = u_h(\bm{x}, t^n)$ is the numerical solution at time $t^n$, and $\Delta t := t^{n+1} - t^n$ denotes the time step size.

It is well known that the conventional RKDG method \eqref{RKTime} may generate spurious oscillations near strong discontinuities. Failure to control these oscillations can lead to severe numerical instability and simulation breakdowns. Therefore, we propose an OE procedure $\mathcal{F}_{\tau}$ (refer to \cref{sec:oep}) after each RK stage to eliminate potential spurious oscillations in the numerical solutions. Specifically, when the third-order SSPRK method \eqref{RKTime} is used, the resulting OEDG method reads:
\begin{equation*}
	\left\{
	\begin{aligned}
		{u}_h^{[1]} &= {u}_{\sigma}^{n} + \Delta t \, \mathcal{L}({u}_{\sigma}^{n}), & 
		{u}_{\sigma}^{[1]} &= \mathcal{F}_{\tau}({u}_h^{[1]}), \\
		{u}_h^{[2]} &= \frac{3}{4}{u}_{\sigma}^{n} + \frac{1}{4} \left({u}_{\sigma}^{[1]} + \Delta t \, \mathcal{L}({u}_{\sigma}^{[1]}) \right), &
		{u}_{\sigma}^{[2]} &= \mathcal{F}_{\tau}({u}_h^{[2]}), \\
		{u}_h^{n+1} &= \frac{1}{3}{u}_{\sigma}^{n} + \frac{2}{3} \left({u}_{\sigma}^{[2]} + \Delta t \, \mathcal{L}({u}_{\sigma}^{[2]}) \right), &
		{u}_{\sigma}^{n+1} &= \mathcal{F}_{\tau}({u}_h^{n+1}),
	\end{aligned}
	\right.
\end{equation*}
where 
 the OE procedure applied to ${u}_h^n$ is denoted as ${u}_{\sigma}^{n} = \mathcal{F}_{\tau}{u}_h^n$.

\subsection{OE Procedure for Scalar Equation} \label{sec:oep}

We use the numerical solution $u_h^n$ as an example to illustrate the OE operator $\mathcal{F}_{\tau}: \mathbb{V}_h^k \rightarrow \mathbb{V}_h^k$, which is defined by
\begin{equation}\label{oe:scalar}
	\begin{aligned}
		u_{\sigma}^n(\bm{x}) &= (\mathcal{F}_{\tau} u_h^n)(\bm{x}) = u_{\sigma}(\bm{x},\tau) \Big|_{\tau=\Delta t} \\
		&= u_K^{(0)}(t^n) \Psi_{K}^{(0)}(\bm{x}) +
		\sum_{m=1}^{k} \exp\left\{-\Delta t \sum_{j=0}^{{m}} \sigma_K^{j}(u_h^n)\right\} \sum_{\ell=L_{m-1}+1}^{L_m} u_K^{(\ell)}(t^n) \Psi_{K}^{(\ell)}(\bm{x}) \qquad \forall \bm{x} \in K,
	\end{aligned}
\end{equation}
where the damping coefficient $\sigma_K^{j} (u_h^n)$ plays a crucial role in suppressing spurious oscillations. It should be small in smooth regions of the numerical solution and large near discontinuities \cite{lu2021oscillation,PengSunWu2023OEDG}. Specifically, the coefficient $\sigma_K^{j} (u_h^n)$ is defined as:
\begin{equation}\label{OEScalar:Coef1}
	\sigma_K^j(u_h) = \sum_{i=1}^{3} \frac{\beta_{K}^{(i)}}{h_{K}^{(i)}} \delta_K^{(i),j}(u_h), \quad 0 \leq j \leq k.
\end{equation}
Here, $ h_{K}^{(i)} =  \sup_{\bm{x} \in K} \mathrm{dist} (\bm{x}, e_K^{(i)}) $, and $\beta_{K}^{(i)}$ approximates the local maximum wave speed in the direction of the outward unit normal vector $\mathbf{n}_{K}^{(i)}$ on the edge $ e_K^{(i)} $. For the scalar conservation law \eqref{eq:scalar}, we define $\beta_{K}^{(i)}$ as:
\begin{equation}\label{OEScalar:beta}
	\beta_{K}^{(i)}:= \max_{u \in \{u_c,u_d\}} \left\{
	\left| \mathbf{n}_{K}^{(i)} \cdot \frac{\partial \bm{F}(u^-)}{\partial u} \right|,
	\left| \mathbf{n}_{K}^{(i)} \cdot \frac{\partial \bm{F}(u^+)}{\partial u} \right| \right\}.
\end{equation}
In equation \eqref{OEScalar:beta}, $ u^- $ and $ u^+ $ denote the left and right limits of $ u_h^n $ at the interface $ e_K^{(i)} $, respectively; $ u_c $ and $ u_d $ represent the values of $ u_h^n $ at the two endpoints $\bm{v}_c$ and $\bm{v}_d$ of the edge $ e_K^{(i)} $. 
Furthermore,
\begin{equation}\label{OEScalar:Coef2}
	\delta_K^{(i),j}(u_h) =
	\left\{
	\begin{aligned}
		&0, & &u_h \equiv \bar{u}_h^{\Omega},\\
		&A^{k,j} \frac{(h_{K}^{(i)})^j}{\|u_h-\bar{u}_h^{\Omega}\|_{L^{\infty}(\Omega)}} \sqrt{\frac{1}{l_K^{(i)}} \int_{e_K^{(i)}}  \sum_{|\bm{\alpha}|=j} C_j^{\alpha_1} \jump{ \partial^{\bm{\alpha}} u_h}_{e_K^{(i)}}^2 \, \mathrm{ds}}, & & \text{otherwise},
	\end{aligned}
	\right.
\end{equation}
where $ A^{k,j} = \frac{2j+1}{(2k-1) j!} $ is a constant depending on the degree $ k $ and $ j $,
\[
\bar{u}_h^{\Omega} := \frac{1}{|\Omega|} \int_{\Omega} u_h \, \mathrm{d} \bm{x} = \frac{1}{|\Omega|} \sum_{K \in \mathcal{T}_h} |K| \bar{u}_K
\]
denotes the average of the numerical solution $ u_h $ over the entire computational domain $ \Omega $, and $ |\Omega| $ represents the area of $ \Omega $. 
To mitigate the impact of round-off errors on $\|u_h-\bar{u}_h^{\Omega}\|_{L^{\infty}(\Omega)}$, we introduce a small positive number $\epsilon$, set to $10^{-12}$. Then, the condition $ u_h \equiv \bar{u}_h^{\Omega} $ is replaced with 
\[
\|u_h-\bar{u}_h^{\Omega}\|_{L^{\infty}(\Omega)} \leq \epsilon \max\{1, |\bar{u}_h^{\Omega}|\}
\]
in our simulations. 
In \eqref{OEScalar:Coef2}, $ C_j^{\alpha_1} = \frac{j!}{\alpha_1!(j-\alpha_1)!} $, and $\partial^{\bm{\alpha}} u_h = \frac{\partial^{|\bm{\alpha}|} u_h}{\partial x^{\alpha_1} \partial y^{\alpha_2}}$ with the multi-index $\bm{\alpha} = (\alpha_1, \alpha_2)^\top$ and $\alpha_1 + \alpha_2 = |\bm{\alpha}|$. The notation $\jump{\partial^{\bm{\alpha}} u_h}_{e_K^{(i)}}$ stands for the jump of $\partial^{\bm{\alpha}} u_h$ across the edge $ e_K^{(i)} $. 
Note that the integral in \eqref{OEScalar:Coef2} should be approximated using a quadrature rule. For simplicity, we adopt the trapezoidal rule in our simulations. Thus,
\begin{equation*}
	\frac{1}{l_K^{(i)}} \int_{e_K^{(i)}} \sum_{|\bm{\alpha}|=j} C_j^{\alpha_1} \jump{ \partial^{\bm{\alpha}} u_h}_{e_K^{(i)}}^2 \, \mathrm{ds} \approx \frac{1}{2} \sum_{|\bm{\alpha}|=j} C_j^{\alpha_1} \left(\jump{ \partial^{\bm{\alpha}} u_h}_{\bm{v}_c}^2 + \jump{ \partial^{\bm{\alpha}} u_h}_{\bm{v}_d}^2\right),
\end{equation*}
where $\jump{ \partial^{\bm{\alpha}} u_h}_{\bm{v}_c}$ and $\jump{ \partial^{\bm{\alpha}} u_h}_{\bm{v}_d}$ refer to the jumps of $\partial^{\bm{\alpha}} u_h$ across the edge $ e_K^{(i)} $ at the two endpoints $\bm{v}_c$ and $\bm{v}_d$, respectively.

Equation \eqref{OE:m0} indicates that the OE procedure does not change the cell average of the numerical solution, thus preserving local mass conservation. Since the OE procedure damps the modal coefficients of the DG solution, the OEDG method remains stable under the standard CFL condition of the original DG method. Moreover, it retains many desirable properties of the original DG method, such as optimal convergence rates, as supported by the theoretical justification for linear advection equations in \cite{PengSunWu2023OEDG} and the numerical evidence in section \ref{sec:numexp}. 

The OE procedure $\mathcal{F}_{\tau}$ in \eqref{oe:scalar} is derived from the solution of a damping equation. Specifically, we modify the DG solution $ u_h^n $ to $ u_{\sigma}(\bm{x}, \Delta t) $, where $ u_{\sigma}(\bm{x}, \tau) \in \mathbb{V}_h^k $ is the solution of the following damping equation:
\begin{equation}\label{DGScalar:OE}
	\int_{K} \frac{\mathrm{d} u_{\sigma}(\bm{x}, \tau)}{\mathrm{d} \tau} v_h(\bm{x}) \, \mathrm{d} \bm{x} + \sum_{m=0}^{k} \sigma_K^{m}(u_h) \int_{K} (u_{\sigma} - \mathcal{P}^{m-1} u_{\sigma})(\bm{x}, \tau) v_h(\bm{x}) \, \mathrm{d} \bm{x} = 0 \quad \forall v_h(\bm{x}) \in \mathbb{P}^k(K),
\end{equation}
with the initial condition $ u_{\sigma}(\bm{x},0) = u_h^{n} $. Here, $\tau$ is a pseudo-time distinct from $t$. The operator $\mathcal{P}^m (m \geq 0)$ denotes the standard $L^2$ projection into $\mathbb{V}_h^{m}$, meaning that for any function $w(\bm{x})$, $\mathcal{P}^m w \in \mathbb{V}_h^m$ satisfies
\begin{equation}\label{OEScalar:Opera}
	\int_{K} (\mathcal{P}^m w - w)(\bm{x}) v_h(\bm{x}) \, \mathrm{d} \bm{x} = 0 \quad \forall v_h \in \mathbb{P}^m(K).
\end{equation}
This implies that $\mathcal{P}^0 w(\bm{x}) = \frac{1}{|K|} \int_{K} w(\bm{x}) \, \mathrm{d} \bm{x}$, which represents the cell average of $ w(\bm{x}) $ over $K$. We define $\mathcal{P}^{-1} = \mathcal{P}^{0}$. 
The damping equation \eqref{DGScalar:OE} is linear and can be solved exactly. According to the orthogonal basis presented in \eqref{eq:basisfunc}, $ u_{\sigma}(\bm{x}, \tau) $ can be expressed as
\[
u_{\sigma}(\bm{x}, \tau) = \sum_{\ell=0}^{L_k} u_{\sigma}^{(\ell)}(\tau) \Psi_{K}^{(\ell)}(\bm{x}) \in \mathbb{V}_h^k
\]
with the  modal coefficients $\{u_{\sigma}^{(\ell)}(\tau)\}_{\ell=0}^{L_k}$. Combining this with the definition \eqref{OEScalar:Opera} of the operator $\mathcal{P}$, we have
\begin{equation*}
	\begin{aligned}
		(u_{\sigma} - \mathcal{P}^{m-1} u_{\sigma})(\bm{x},\tau)
		&= \sum_{i=L_{m-1}+1}^{L_k} u_{\sigma}^{(i)}(\tau) \Psi_{K}^{(i)}(\bm{x}) 
		= \sum_{j=m}^{k} \sum_{i=L_{j-1}+1}^{L_j} u_{\sigma}^{(i)}(\tau) \Psi_{K}^{(i)}(\bm{x}) \quad \text{for} \quad 0 \leq m \leq k.
	\end{aligned}
\end{equation*}
Here, we define $ L_{-1} = L_0 $. Taking $ v_h(\bm{x}) = \Psi_{K}^{(\ell)}(\bm{x}) $ for $ 0 \leq \ell \leq L_k $, we obtain
\begin{align*}
	& a_K^{(0)} \frac{\mathrm{d} u_{\sigma}^{(0)}(\tau)}{\mathrm{d} \tau} = 0, \quad \text{for} \quad \ell = 0, \quad m = 0, \\
	& a_K^{(\ell)} \left(\frac{\mathrm{d} u_{\sigma}^{(\ell)}(\tau)}{\mathrm{d} \tau} + \sum_{j=0}^{m} \sigma_K^{j}(u_h^n) u_{\sigma}^{(\ell)}(\tau) \right) = 0, \quad \text{for} \quad L_{m-1}+1 \leq \ell \leq L_m, \quad 1 \leq m \leq k.
\end{align*}
Integrating these equations from $\tau = 0$ to $\Delta t$ yields
\begin{align}
	& u_{\sigma}^{(0)}(\Delta t) = u_{K}^{(0)}(t^n), \quad \text{for} \quad \ell = 0,  \label{OE:m0} \\
	& u_{\sigma}^{(\ell)}(\Delta t) = u_{K}^{(\ell)}(t^n) \exp\Bigg\{-\Delta t \sum_{j=0}^{m} \sigma_K^{j}(u_h^n)\Bigg\}, \quad \text{for} \quad L_{m-1}+1 \leq \ell \leq L_m, \quad 1 \leq m \leq k. \nonumber 
\end{align}
Thus, we obtain the solution $ u_{\sigma}^n(\bm{x}) $ in \eqref{oe:scalar} for the equation \eqref{DGScalar:OE}.

\begin{remark}[Compactness]
	In our simulations, $\|u_h-\bar{u}_h^{\Omega}\|_{L^{\infty}(\Omega)}$ is computed as
	\begin{equation}\label{normOmg}
		\|u_h-\bar{u}_h^{\Omega}\|_{L^{\infty}(\Omega)} = \max_{K \in \mathcal{T}_h} \max_{1 \leq \mu \leq N} \Big|u_h(x_K^\mu,y_K^\mu,t) - \bar{u}_h^{\Omega}\Big|,
	\end{equation}
	where $\left\{(x_K^{\mu}, y_K^{\mu})\right\}_{\mu=1}^N$ (refer to \ref{Appx:Triangle}) denotes the quadrature nodes of an $N$-point quadrature rule on the triangle element $K$. From \eqref{normOmg}, it is evident that $\|u_h-\bar{u}_h^{\Omega}\|_{L^{\infty}(\Omega)}$ depends solely on the known solution $u_h$, allowing its computation beforehand, similar to determining the time step size $\Delta t$. Consequently, $\delta_K^{(i),j}(u_h)$ depends only on the numerical solution within the element $K$ and its adjacent elements. Furthermore, from \eqref{OEScalar:beta}, we observe that $\beta_{K}^{(i)}$ involves only the limiting values at the endpoints of the edge $e_K^{(i)}$. Therefore, the damping coefficient $\sigma_K^j(u_h)$ exhibits compactness and locality. Consequently, the OEDG scheme retains the compactness and efficient parallel implementation of the original DG method.
\end{remark}

\begin{remark}[Simplicity and Efficiency]
	From \eqref{oe:scalar}, it is apparent that the OE procedure only requires a straightforward modification for the  modal coefficients with $\ell \geq 1$. This modification can be readily integrated into existing DG codes through an independent module. Furthermore, the OE procedure does not necessitate characteristic decomposition. Hence, the implementation of the OE procedure is simple and efficient.
\end{remark}

\subsection{Scale Invariance and Evolution Invariance}

To define scale invariance, let $\hat{u}_h = a {u}_h + b$, where $a \neq 0$ and $b$ are constants. Using the coefficients defined in \eqref{OEScalar:Coef1} and \eqref{OEScalar:Coef2}, it is straightforward to verify that $\delta_K^{(i),j}(\hat{u}_h) = \delta_K^{(i),j}({u}_h)$ and $\sigma_K^{j}(\hat{u}_h) = \sigma_K^{j}({u}_h)$. Consequently, the proposed OE procedure $\mathcal{F}_{\tau}$ satisfies the scale-invariant property:
\begin{equation*}\label{oe:sip}
    \mathcal{F}_{\tau} \hat{u}_h = a \mathcal{F}_{\tau} {u}_h + b.
\end{equation*}
This property ensures that the coefficient $\delta_K^{(i),j}({u}_h)$ in \eqref{OEScalar:Coef2} is dimensionless. When combined with $\beta_{K}^{(i)}$ in \eqref{OEScalar:beta}, the argument $\dt \sum_{j=0}^{{m}} \sigma_K^{j}(u_h^n)$ of the exponential in \eqref{oe:scalar} remains dimensionless. This maintains the damping strength and effect, regardless of changes in the scale of $ u_h$.
The dimensionless nature is crucial for ensuring the robustness of OEDG methods across problems with different scales.

Next, consider the following equation:
\begin{equation}\label{eip:eq}
    \frac{\partial u}{\partial t} + \lambda \frac{\partial f_1(u)}{\partial x} + \lambda \frac{\partial f_2(u)}{\partial y} = 0, \qquad \xbm \in \Omega \subset \mathbb{R}^2,
\end{equation}
where $\lambda > 0$, and let the time step size be $\tau_{\lambda} = \frac{\dt}{\lambda}$. Let $\mathcal{F}_{\tau_{\lambda}}^{\lambda}$ denote the OE procedure for equation \eqref{eip:eq}, and $\hat{\beta}_{K}^{(i)}$ represent the approximation of the local maximum wave speed in the outward unit normal vector $\mathbf{n}_{K}^{(i)}$ on the edge $e_K^{(i)}$ for equation \eqref{eip:eq}. 
From \eqref{OEScalar:beta}, we know that $\hat{\beta}_{K}^{(i)} = \lambda {\beta}_{K}^{(i)}$. Since our damping coefficient $\sigma_K^{j}(\cdot)$ in \eqref{OEScalar:Coef1} incorporates the local maximum wave speed information (i.e., ${\beta}_{K}^{(i)}$ for $\mathcal{F}_{\tau}$ and $\hat{\beta}_{K}^{(i)}$ for $\mathcal{F}_{\tau_{\lambda}}^{\lambda}$), the following relation holds:
\begin{equation*}\label{eip}
    \mathcal{F}_{\tau_{\lambda}}^{\lambda} = \mathcal{F}_{\lambda \tau_{\lambda}} = \mathcal{F}_{\tau} ~~~ \forall \lambda>0.
\end{equation*}	
This indicates that our OE procedure $\mathcal{F}_{\tau}$ possesses the evolution-invariant property. Specifically, if $u_h^{\lambda}$ is the numerical solution obtained using the OEDG method for equation \eqref{eip:eq} on a fixed mesh $\mathcal{T}_h$ at time $t=t^n/\lambda$, then under the same initial condition, $u_h^{\lambda} = u_h$. 
The evolution-invariant property ensures that the OE procedure maintains appropriate damping strength for different wave speeds. Moreover, it guarantees that the OEDG scheme produces consistent results at a fixed time, regardless of whether the problem involves slow or fast propagating wave speeds, provided the initial condition is the same.

Therefore, our OE procedure $\mathcal{F}_{\tau}$ demonstrates both scale invariance and evolution invariance. These properties are crucial for effectively eliminating nonphysical oscillations in problems with varying scales and wave speeds (see also  \cite{PengSunWu2023OEDG} for the related discussions on the one-dimensional OEDG schemes and the numerical evidence in Section 5). As a result, these properties contribute to obtaining reliable and robust numerical results.

\section{OEDG Schemes on Unstructured Triangular Meshes for Hyperbolic Systems}\label{sec:system}

In this section, we discuss the OEDG method and RIOE procedure for hyperbolic systems of conservation laws:
\begin{equation}\label{eq:System}
	\frac{\partial \bm{u}}{\partial t} + \frac{\partial \bm{f}_1(\bm{u})}{\partial x} + \frac{\partial \bm{f}_2(\bm{u})}{\partial y} = 0,
\end{equation}
where $\bm{x} \in \Omega \subset \mathbb{R}^2$, $\bm{u} := (u^{(1)}, \ldots, u^{(d)})^\top \in \mathbb{R}^d$ are the conservative variables, and the fluxes $\bm{f}_1$ and $\bm{f}_2$ take values in $\mathbb{R}^d$ with $d \geq 2$.

\subsection{Fully-discrete OEDG Schemes}

For the system \eqref{eq:System}, the semi-discrete DG schemes are formulated as follows: find the numerical solution $\bm{u}_h(\bm{x}, t) \in [\mathbb{V}_h^k]^d$ such that, for any $\bm{v}_h(\bm{x}) \in [\mathbb{V}_h^k]^d$, we have
\begin{equation}\label{DGSystem:Int}
	\begin{aligned}
		\int_{K} \frac{\partial \bm{u}_h(\bm{x},t) }{\partial t} \circ \bm{v}_h(\bm{x}) \, \mathrm{d} \bm{x} =
		\int_{K} \bm{F}(\bm{u}_h(\bm{x},t)) : \nabla \bm{v}_h(\bm{x}) \, \mathrm{d} \bm{x}    
		-  \sum_{i=1}^{3} \int_{e_K^{(i)}} \hat{\bm{F}}(\bm{u}_h^{\mathrm{int}}(\bm{x},t), \bm{u}_h^{\mathrm{ext}}(\bm{x},t), \mathbf{n}_{K}^{(i)}) \circ \bm{v}_h(\bm{x}) \, \mathrm{ds},
	\end{aligned}
\end{equation}
for each triangular cell $K \in \mathcal{T}_h$. Here, “$:$” represents the Frobenius inner product of two matrices, “$\circ$” denotes the Hadamard product of two vectors, and the LF flux from \eqref{fluxLF} is used with the numerical viscosity parameter $\alpha^{\mathrm{LF}} = \max \limits_{K}\{\alpha_K^{(1)}, \alpha_K^{(2)}, \alpha_K^{(3)}\}$. For hyperbolic systems, a choice for the parameter $\alpha_{K}^{(i)} $ is
\begin{equation*}
	\alpha_{K}^{(i)} = R(\overline{\bm{u}}_K, \mathbf{n}_{K}^{(i)}),
\end{equation*}
where $R(\bm{u}, \mathbf{n})$ is the spectral radius of the Jacobian matrix $\bm{F}'(\bm{u}) \cdot \mathbf{n} := n^{1} \frac{\partial \bm{f}_1(\bm{u})}{\partial \bm{u}} + n^{2} \frac{\partial \bm{f}_2(\bm{u})}{\partial \bm{u}}$ in the direction of the outward unit normal vector  $\mathbf{n}=(n^1,n^2)$ of the edge $e$, 
and $\overline{\bm{u}}_K$ is the cell average over element $K$, defined as
\[
\overline{\bm{u}}_K := \frac{1}{|K|} \int_{K} \bm{u}_h(\bm{x},t) \, \mathrm{d} \bm{x}.
\]
Using the orthogonal basis from \eqref{eq:basisfunc}, the numerical solution $\bm{u}_h(\bm{x}, t)$ can be expressed as
\[
\bm{u}_h(\bm{x},t) :=  \sum_{\ell=0}^{L_k} \bm{u}_K^{(\ell)}(t) \Psi_{K}^{(\ell)}(\bm{x}) \quad \forall \bm{x} \in K,
\]
where the vector $\bm{u}_K^{(\ell)}(t)$ is the $\ell$-th  modal coefficient of the numerical solution $\bm{u}_h(\bm{x},t)$. In particular, $\overline{\bm{u}}_K = \bm{u}_K^{(0)}(t)$. Similar to the scalar case, we use the quadrature rules presented in \ref{Appx:Triangle} and \ref{Appx:Edge} to approximate the integrals on the right-hand side of \eqref{DGSystem:Int}. 
For $\ell = 0, \ldots, L_k$, the ODEs for the modal coefficients $\bm{u}_K^{(\ell)}(t)$ are given by
\begin{equation}\label{DGSystem:Dis}
	\begin{aligned}
		a_K^{(\ell)} \frac{\mathrm{d} \bm{u}_K^{(\ell)}(t)}{\mathrm{d} t} = 
		&|K| \sum_{\mu=1}^{N} \omega_{\mu} \bm{F}(\bm{u}_h(x_K^{\mu}, y_K^{\mu},t)) \cdot \nabla \Psi_{K}^{(\ell)}(x_K^{\mu}, y_K^{\mu}) \\
		& - \sum_{i=1}^{3} l_K^{(i)} \sum_{\nu=1}^{Q} \omega_{\nu}^{\tt G} \hat{\bm{F}}\left(\bm{u}_h^{\mathrm{int}}(x^{(i),\nu}_K, y^{(i),\nu}_K,t), \bm{u}_h^{\mathrm{ext}}(x^{(i),\nu}_K, y^{(i),\nu}_K,t), \mathbf{n}_{K}^{(i)} \right) \Psi_{K}^{(\ell)}(x^{(i),\nu}_K, y^{(i),\nu}_K).
	\end{aligned}
\end{equation}
The ODEs in \eqref{DGSystem:Dis} can be formally rewritten as $\frac{\mathrm{d} \bm{u}_h}{\mathrm{d} t} = \mathcal{L}(\bm{u}_h)$, and are typically discretized in time using an SSP time integration method. To suppress nonphysical oscillations, the OE procedure $\mathcal{\bm{F}}_{\tau}$ (refer to \cref{sec:oep2,sec:oep3}) is applied after each RK stage. For instance, with the third-order SSP RK method \eqref{RKTime}, the resulting fully-discrete OEDG method for system \eqref{eq:System} is given by
\begin{equation*}
	\begin{aligned}
		\bm{u}_h^{[1]} &= \bm{u}_{\sigma}^{n} + \Delta t \mathcal{L}(\bm{u}_{\sigma}^{n}), & 
		\bm{u}_{\sigma}^{[1]} &= \mathcal{F}_{\tau}\bm{u}_h^{[1]}, \\
		\bm{u}_h^{[2]} &= \frac{3}{4}\bm{u}_{\sigma}^{n} + \frac{1}{4} \left(\bm{u}_{\sigma}^{[1]} + \Delta t \mathcal{L}(\bm{u}_{\sigma}^{[1]}) \right), &
		\bm{u}_{\sigma}^{[2]} &= \mathcal{F}_{\tau}\bm{u}_h^{[2]}, \\
		\bm{u}_h^{n+1} &= \frac{1}{3}\bm{u}_{\sigma}^{n} + \frac{2}{3} \left(\bm{u}_{\sigma}^{[2]} + \Delta t \mathcal{L}(\bm{u}_{\sigma}^{[2]}) \right), & 
		\bm{u}_{\sigma}^{n+1} &= \mathcal{F}_{\tau}\bm{u}_h^{n+1},
	\end{aligned}
\end{equation*}
where the OE procedure applied to $\bm{u}_h^n$ is denoted as $\bm{u}_{\sigma}^{n} = \mathcal{F}_{\tau}\bm{u}_h^n$.

\subsection{Component-wise OE Procedure} \label{sec:oep2}

As in the scalar case, the OE operator $\mathcal{F}_{\tau}: [\mathbb{V}_h^k]^d \rightarrow [\mathbb{V}_h^k]^d$ for hyperbolic systems is defined by
\begin{equation}\label{oe:sys}
	\begin{aligned}
		\bm{u}_{\sigma}(\bm{x}) &= (\mathcal{F}_{\tau} \bm{u}_h^n)(\bm{x}) = \bm{u}_{\sigma}(\bm{x}, \tau) \Big|_{\tau = \Delta t} \\
		&= \bm{u}_K^{(0)}(t^n) \Psi_{K}^{(0)}(\bm{x}) +
		\sum_{m=1}^{k} \exp\Bigg\{-\Delta t \sum_{j=0}^{m} \bm{\sigma}_K^{j}(\bm{u}_h^n)\Bigg\} 
		\circ 
		\sum_{\ell=L_{m-1}+1}^{L_m} \bm{u}_K^{(\ell)}(t^n) \Psi_{K}^{(\ell)}(\bm{x}) \quad \forall \bm{x} \in K,
	\end{aligned}
\end{equation}
where the damping coefficient vector $\bm{\sigma}_K^{j} (\bm{u}_h^n) = \left(\sigma_K^j(u_h^{(1)}), \ldots, \sigma_K^j(u_h^{(d)}) \right)^\top$ is defined as
\begin{equation*}
	\bm{\sigma}_K^j(\bm{u}_h) = \sum_{i=1}^{3} \frac{\beta_{K}^{(i)}}{h_{K}^{(i)}} \bm{\delta}_K^{(i),j}(\bm{u}_h).
\end{equation*}
Here, $h_{K}^{(i)} = \sup_{\bm{x} \in K} \mathrm{dist}(\bm{x}, e_K^{(i)})$, and the coefficients $\bm{\delta}_K^{(i),j}(\bm{u}_h)$ are defined by
\begin{equation}\label{oe:sysdelta1}
	\bm{\delta}_K^{(i),j}(\bm{u}_h) = \left(\delta_K^{(i),j}(u_h^{(1)}), \ldots, \delta_K^{(i),j}(u_h^{(d)}) \right)^\top
\end{equation}
with $\delta_K^{(i),j}(\cdot)$ specified by equation \eqref{OEScalar:Coef2}. The parameter $\beta_{K}^{(i)}$ is chosen as
\begin{equation*}
	\beta_{K}^{(i)} := \max_{\bm{u} \in \{\bm{u}_c, \bm{u}_d\}} \left\{
	R(\bm{u}^{-}, \mathbf{n}_{K}^{(i)}),
	R(\bm{u}^{+}, \mathbf{n}_{K}^{(i)}) \right\},
\end{equation*}
where $\bm{u}^{-}$ and $\bm{u}^{+}$ denote the left and right states of $\bm{u}_h$ at the interface $e_K^{(i)}$, and $\bm{u}_c$ and $\bm{u}_d$ are the values of $\bm{u}_h^n$ at the endpoints $\bm{v}_c$ and $\bm{v}_d$ of the edge $e_K^{(i)}$.

The OE operator \eqref{oe:sys} is derived from the solution $\bm{u}_{\sigma}(\bm{x}, \tau) \in [\mathbb{V}_h^k]^d$ of the following damping equation:
\begin{equation}\label{DGSystem:OE}
	\int_{K} \frac{\mathrm{d} \bm{u}_{\sigma}(\bm{x}, \tau)}{\mathrm{d} \tau} \circ \bm{v}_h(\bm{x}) \, \mathrm{d} \bm{x} 
	+ \sum_{m=0}^{k} \bm{\sigma}_K^{m}(\bm{u}_h^n) \circ \int_{K} (\bm{u}_{\sigma} - \mathcal{P}^{m-1} \bm{u}_{\sigma})(\bm{x}, \tau) \circ \bm{v}_h(\bm{x}) \, \mathrm{d} \bm{x} = 0 \quad \forall \bm{v}_h(\bm{x}) \in [\mathbb{P}^k(K)]^d,
\end{equation}
with the initial condition $\bm{u}_{\sigma}(\bm{x},0) = \bm{u}_h^n$. Here, $\tau$ is a pseudo-time variable distinct from $t$, and $\mathcal{P}^m$ denotes the standard $L^2$ projection onto $[\mathbb{V}_h^m]^d$, with $\mathcal{P}^{-1} = \mathcal{P}^{0}$. 
The solution $\bm{u}_{\sigma}(\bm{x}, \tau)$ can be expanded as
\[
\bm{u}_{\sigma}(\bm{x}, \tau) = \sum_{\ell=0}^{L_k} \bm{u}_{\sigma}^{(\ell)}(\tau) \Psi_{K}^{(\ell)}(\bm{x}) \in \mathbb{V}_h^k,
\]
where $\bm{u}_{\sigma}^{(\ell)}(\tau)$ represents the $\ell$-th modal coefficient of $\bm{u}_{\sigma}(\bm{x}, \tau)$. By solving the linear damping equations \eqref{DGSystem:OE} exactly, we derive the form of $\bm{u}_{\sigma}(\bm{x})$ as presented in \eqref{oe:sys}.

\subsection{Rotation-invariant OE Procedure} \label{sec:oep3}

To introduce the concept of rotational invariance for the OE procedure, we first define the rotational invariance of a hyperbolic system of conservation laws.

\begin{definition}\label{def:eqri}
	A hyperbolic system \eqref{eq:System} is called RI if, for any unit vector $\bm{\xi} \in \mathbb{R}^2$, there exists an orthogonal matrix $T_{\bm{\xi}}$ such that the flux $\bm{f}_1$ satisfies:
	\begin{equation}\label{eq:rip}
		T_{\bm{\xi}}^{-1} \bm{f}_1 (T_{\bm{\xi}} \bm{u}) = \langle \bm{\xi}, \bm{F}(\bm{u}) \rangle,
	\end{equation}	
	where $\langle \cdot, \cdot \rangle$ denotes the inner product between two vectors. 
    Denote $(\cos \phi, \sin \phi)^\top$ as the polar coordinate representation of $\bm{\xi}$, thus we have 
    \begin{equation}\label{eq:rip2}
	\begin{aligned}
	    T_{\bm{\xi}}^{-1} \bm{f}_1 (T_{\bm{\xi}} \bm{u}) &= \cos \phi \, \bm{f}_1(\bm{u}) + \sin \phi \, \bm{f}_2(\bm{u}),\\
            T_{\bm{\xi}}^{-1} \bm{f}_2 (T_{\bm{\xi}} \bm{u}) &= -\sin \phi \, \bm{f}_1(\bm{u}) + \cos \phi \, \bm{f}_2(\bm{u}).
	\end{aligned}
    \end{equation}	
\end{definition}

Next, we present an example of a hyperbolic system that exhibits rotational invariance.

\begin{expl}[2D Compressible Euler System]\label{ex:euler}
	The 2D compressible Euler system is given by:
	\begin{equation}\label{eq:euler}
		\bm{u} =
		\begin{pmatrix}	
			\rho \\
			m_1 \\
			m_2 \\
			E
		\end{pmatrix},
		\quad
		\bm{f}_1(\bm{u}) =
		\begin{pmatrix}	
			m_1 \\
			m_1 v_1 + p \\
			m_2 v_1 \\
			(E + p) v_1
		\end{pmatrix},
		\quad
		\bm{f}_2(\bm{u}) =
		\begin{pmatrix}	
			m_2 \\
			m_1 v_2 \\
			m_2 v_2 + p \\
			(E + p) v_2
		\end{pmatrix},
	\end{equation}
	where $\rho$ is the density, $p$ is the pressure, $\bm{m} = (m_1, m_2)^\top = \rho \bm{v}$ is the momentum vector, $\bm{v} := (v_1, v_2)^\top \in \mathbb{R}^2$ is the velocity field, and $E = \frac{1}{2} \rho (v_1^2 + v_2^2) + \frac{p}{\gamma - 1}$ denotes the total energy with the adiabatic index $\gamma$. The rotational matrix $T_{\bm{\xi}}$ is given by:
	\begin{equation*}
		T_{\bm{\xi}} = \text{diag}\{1, M_{\bm{\xi}}, 1\},
	\end{equation*}
	where
	\begin{equation}\label{mt2}
		M_{\bm{\xi}} := 
		\begin{pmatrix}
			\cos \phi & \sin \phi \\
			-\sin \phi & \cos \phi 
		\end{pmatrix}.
	\end{equation}
	Defining $\hat{\bm{m}} = M_{\bm{\xi}} \bm{m}$ and $\hat{\bm{v}} = M_{\bm{\xi}} \bm{v}$, we have the following relationships:
	\[
		\|\bm{m}\| := \sqrt{m_1^2 + m_2^2} = \sqrt{\hat{m}_1^2 + \hat{m}_2^2} = \|\hat{\bm{m}}\|,
	\]
	\[
		\|\bm{v}\| := \sqrt{v_1^2 + v_2^2} = \sqrt{\hat{v}_1^2 + \hat{v}_2^2} = \|\hat{\bm{v}}\|.
	\]
\end{expl}

We now consider a general hyperbolic system \eqref{eq:System} with rotational invariance, as characterized by the relation \eqref{eq:rip}. The initial condition for the system \eqref{eq:System} is given by:
\[
\bm{u}(\bm{x}, 0) = \bm{u}_0(\bm{x})
\]
over the domain $\Omega$. By rotating the domain $\Omega := \left\{\bm{x} = (x, y)^\top \right\}$ clockwise by an angle $\phi \in [0, 2\pi]$ while keeping the origin fixed, we obtain a new domain $\hat{\Omega} := \left\{\hat{\bm{x}} = (\hat{x}, \hat{y})^\top \right\}$. The relationship between the coordinates $\bm{x} \in \Omega$ and $\hat{\bm{x}} \in \hat{\Omega}$ is given by:
\[
\hat{\bm{x}} = M_{\bm{\xi}} \bm{x},
\]
with $M_{\bm{\xi}}$ defined as in \eqref{mt2}. The element $\hat{K}$ of the partition $\hat{\mathcal{T}}_h$ for $\hat{\Omega}$ is obtained by rotating the element $K \in \mathcal{T}_h$ of $\Omega$. We denote the $i$-th edge of $\hat{K}$ by $e_{\hat{K}}^{(i)}$ $(i = 1, 2, 3)$, and the outward unit normal vector of $e_{\hat{K}}^{(i)}$ by $\hat{\mathbf{n}}_{\hat{K}}^{(i)}$.
Additionally, the initial condition $\bm{u}_0(\bm{x})$ undergoes the same rotational transformation, resulting in the initial condition $\hat{\bm{u}}_0(\hat{\bm{x}})$ on the domain $\hat{\Omega}$:
\begin{equation*}
\hat{\bm{u}}_0(\hat{\bm{x}}) = T_{\bm{\xi}} \bm{u}_0(\bm{x}).
\end{equation*}

Next, we define the RI property of a numerical flux. Both the standard LF flux and the HLL flux are RI.

\begin{definition}
	A numerical flux $\hat{\bm{F}}(\cdot, \cdot, \cdot)$ is called RI if it satisfies:
	\begin{equation*}
		\hat{\bm{F}}(\hat{\bm{u}}^{\mathrm{int}}, \hat{\bm{u}}^{\mathrm{ext}}, \hat{\mathbf{n}}) = T_{\bm{\xi}} \hat{\bm{F}}(\bm{u}^{\mathrm{int}}, \bm{u}^{\mathrm{ext}}, \mathbf{n}),
	\end{equation*}
	where the unit vector $\hat{\mathbf{n}} = M_{\bm{\xi}} \mathbf{n}$ and $\hat{\bm{u}} = T_{\bm{\xi}} \bm{u}$.
\end{definition}

Suppose $\bm{u}_h(\bm{x}, t)$ and $\hat{\bm{u}}_h(\hat{\bm{x}}, t)$ are the numerical solutions obtained by the DG method for the same system \eqref{eq:System}, with initial conditions $\bm{u}(\bm{x}, 0) = \bm{u}_0(\bm{x})$ and $\hat{\bm{u}}(\hat{\bm{x}}, 0) = \hat{\bm{u}}_0(\hat{\bm{x}})$ on the domains $\Omega$ and $\hat{\Omega}$, respectively. We can then establish the following theorem.

\begin{theorem}\label{thm:DG-OE}
    If both the system \eqref{eq:System} and the numerical flux $\hat{\bm{F}}(\cdot, \cdot, \cdot)$ are RI, then the DG method preserves this property:
    \begin{equation}\label{rip:dg}
        \hat{\bm{u}}_h(\hat{\bm{x}}, t) = T_{\bm{\xi}} \bm{u}_h(\bm{x}, t).
    \end{equation}
    However, the component-wise OE procedure $\mathcal{F}_{\tau}$, as described in \cref{sec:oep2}, does not preserve the RI property:
    \begin{equation*}
        (\mathcal{F}_{\tau} \hat{\bm{u}}_h) (\hat{\bm{x}}) \not\equiv T_{\bm{\xi}} (\mathcal{F}_{\tau} \bm{u}_h) (\bm{x}).
    \end{equation*}
\end{theorem}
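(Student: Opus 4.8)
The plan is to pass to the modal coefficients and exploit the elementary fact that a rigid rotation acts trivially on the local basis. First I would record the geometric consequences of $\hat{\bm x} = M_{\bm\xi}\bm x$: since the vertices of $\hat K$ are the rotated vertices of $K$, the Jacobians satisfy $J_{\hat K} = M_{\bm\xi} J_K$, so corresponding points carry identical local coordinates $(\xi,\eta)$. This yields $\Psi_{\hat K}^{(\ell)}(\hat{\bm x}) = \Psi_K^{(\ell)}(\bm x)$, $a_{\hat K}^{(\ell)} = a_K^{(\ell)}$, $|\hat K| = |K|$, $l_{\hat K}^{(i)} = l_K^{(i)}$, $\hat{\mathbf n}_{\hat K}^{(i)} = M_{\bm\xi}\mathbf n_K^{(i)}$, and, by the chain rule, $\nabla_{\hat{\bm x}}\Psi_{\hat K}^{(\ell)} = M_{\bm\xi}\nabla_{\bm x}\Psi_K^{(\ell)}$. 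Because the $L^2$ projection is linear and $T_{\bm\xi}$ is a constant matrix, these identities give the initial modal relation $\hat{\bm u}_{\hat K}^{(\ell)}(0) = T_{\bm\xi}\bm u_K^{(\ell)}(0)$. The first assertion \eqref{rip:dg} then reduces to showing that the spatial operator intertwines with the rotation, i.e. that the right-hand side of \eqref{DGSystem:Dis} on $\hat K$ equals $T_{\bm\xi}$ times the same expression on $K$; once this holds, $\hat{\bm u}_{\hat K}^{(\ell)}(t) = T_{\bm\xi}\bm u_K^{(\ell)}(t)$ propagates through every SSP Runge--Kutta stage by induction, since the updates in \eqref{RKTime} are linear combinations of $\mathcal L$-evaluations and $T_{\bm\xi}$ is linear.

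The computational heart is this intertwining, which I would verify term by term in \eqref{DGSystem:Dis}. For the volume term I substitute the rotational-invariance identities \eqref{eq:rip2}, namely $\bm f_1(T_{\bm\xi}\bm u) = T_{\bm\xi}(\cos\phi\,\bm f_1(\bm u) + \sin\phi\,\bm f_2(\bm u))$ and $\bm f_2(T_{\bm\xi}\bm u) = T_{\bm\xi}(-\sin\phi\,\bm f_1(\bm u) + \cos\phi\,\bm f_2(\bm u))$, together with $\nabla_{\hat{\bm x}}\Psi_{\hat K}^{(\ell)} = M_{\bm\xi}\nabla_{\bm x}\Psi_K^{(\ell)}$; expanding $\bm F(T_{\bm\xi}\bm u)\cdot\nabla_{\hat{\bm x}}\Psi_{\hat K}^{(\ell)}$, the mixed trigonometric terms cancel by $\cos^2\phi + \sin^2\phi = 1$, leaving exactly $T_{\bm\xi}\,\bm F(\bm u)\cdot\nabla_{\bm x}\Psi_K^{(\ell)}$ at each rotated quadrature node. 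For the edge term I combine $\hat{\mathbf n}_{\hat K}^{(i)} = M_{\bm\xi}\mathbf n_K^{(i)}$ and the modal relation at the rotated Gauss nodes with the RI property of the numerical flux, $\hat{\bm F}(\hat{\bm u}_h^{\mathrm{int}},\hat{\bm u}_h^{\mathrm{ext}},\hat{\mathbf n}_{\hat K}^{(i)}) = T_{\bm\xi}\hat{\bm F}(\bm u_h^{\mathrm{int}},\bm u_h^{\mathrm{ext}},\mathbf n_K^{(i)})$; with $\Psi_{\hat K}^{(\ell)}(\hat{\bm x}) = \Psi_K^{(\ell)}(\bm x)$ and the equal geometric factors, the whole edge sum carries the prefactor $T_{\bm\xi}$. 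Dividing by $a_{\hat K}^{(\ell)} = a_K^{(\ell)}$ proves the intertwining. I expect the trigonometric cancellation in the volume term to be the only delicate bookkeeping, though it is routine once \eqref{eq:rip2} is invoked; the RI of the LF/HLL flux (including the consistency of $\alpha^{\mathrm{LF}}$) is used only through the stated flux identity.

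For the second assertion, a non-identity, it suffices to exhibit one configuration where the two operations disagree, and I would do so for the 2D Euler system (\Cref{ex:euler}), where $T_{\bm\xi} = \mathrm{diag}\{1, M_{\bm\xi}, 1\}$. By the first part, $\hat{\bm u}_{\hat K}^{(\ell)} = T_{\bm\xi}\bm u_K^{(\ell)}$, so the rotation fixes the density and energy modes but mixes the two momentum modes through $M_{\bm\xi}$. The component-wise procedure \eqref{oe:sys} scales the $\ell$-th mode of component $s$ by $D_s = \exp\{-\Delta t\sum_j\sigma_K^j(u_h^{(s)})\}$ (I take $k=1$ for definiteness); since $\delta_K^{(i),j}$ in \eqref{OEScalar:Coef2} enters through a square root of summed squared jumps normalized by $\|u_h^{(s)} - \bar{u}_h^{(s),\Omega}\|_{L^\infty}$, it is scale invariant but not additive, hence cannot commute with the linear combination of $m_1$ and $m_2$ produced by $M_{\bm\xi}$. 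Writing $\bm m^{(\ell)} = (m_{1,K}^{(\ell)}, m_{2,K}^{(\ell)})^\top$, identity of the two operators on the momentum block requires $M_{\bm\xi}\,\mathrm{diag}(D_1, D_2)\,\bm m^{(\ell)} = \mathrm{diag}(\hat D_1, \hat D_2)\,M_{\bm\xi}\,\bm m^{(\ell)}$ for $\ell = 1, 2$, where $\hat D_s$ are the factors computed from the mixed components; if $\bm m^{(1)}, \bm m^{(2)}$ are linearly independent this forces $\mathrm{diag}(\hat D_1, \hat D_2) = M_{\bm\xi}\,\mathrm{diag}(D_1, D_2)\,M_{\bm\xi}^{-1}$, whose right-hand side is diagonal for a generic angle $\phi$ only when $D_1 = D_2$. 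A generic state (for instance $\rho, E$ continuous across the edges while $m_1, m_2$ carry distinct jump profiles) has $D_1 \neq D_2$, giving the contradiction and hence $(\mathcal F_\tau\hat{\bm u}_h)(\hat{\bm x}) \not\equiv T_{\bm\xi}(\mathcal F_\tau\bm u_h)(\bm x)$.

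The step I expect to demand the most care is this counterexample, because of a genuine trap: if the momentum modal vectors are degenerate — e.g. $m_2 \equiv 0$, or $m_2$ a scalar multiple of $m_1$ — then the scale invariance $\delta_K^{(i),j}(a\,w) = \delta_K^{(i),j}(w)$ forces $\hat D_1 = \hat D_2 = D_1$, and the two operations \emph{accidentally} coincide. The construction must therefore guarantee that $\bm m^{(1)}$ and $\bm m^{(2)}$ genuinely span $\mathbb R^2$ and that $D_1 \neq D_2$, so that the coordinate mixing of $M_{\bm\xi}$ is not absorbed by the scale invariance of the damping. This degeneracy pinpoints the structural defect — component-wise, non-additive damping applied to a rotation-entangled momentum vector — that the RIOE procedure of \cref{sec:oep3} is designed to remove.
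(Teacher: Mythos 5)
Your proposal is correct and, for the first assertion, follows essentially the same route as the paper: induction in time, with the base case handled through the $L^2$ projection of the rotated initial data (using $\Psi_{\hat{K}}^{(\ell)}(\hat{\bm{x}}) = \Psi_{K}^{(\ell)}(\bm{x})$ and orthogonality), and the inductive step handled by showing the spatial operator intertwines with $T_{\bm{\xi}}$ --- the volume term via \eqref{eq:rip2} and the trigonometric cancellation, the edge terms via the RI of the numerical flux. The one ingredient the paper includes that you omit is the verification that the two runs use the \emph{same} time step: the paper conjugates the Jacobians to prove $R(\hat{\bm{u}}_h,\hat{\mathbf{n}}_{\hat{K}}^{(i)}) = R(\bm{u}_h,\mathbf{n}_{K}^{(i)})$, so the CFL-determined $\Delta t$ agrees for the rotated and unrotated simulations; without this, the induction hypotheses at $t^n$ would not refer to the same time instants. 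This is a one-line computation given the relations you already established, so it is a minor omission rather than a structural gap.

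For the second assertion, your argument is in fact more careful than the paper's. The paper simply asserts that $\hat{\bm{m}}_h = M_{\bm{\xi}}\bm{m}_h$ forces $\sigma_K^{j}(\hat{m}_{1,h}) \neq \sigma_K^{j}(m_{1,h})$ ``due to the coefficient $\delta_K^{(i),j}(\cdot)$,'' which is only true generically (it fails for constant or degenerate momentum fields); since the claim is $\not\equiv$, a generic counterexample suffices, but the paper never spells out the required genericity. Your conjugation identity $\mathrm{diag}(\hat{D}_1,\hat{D}_2) = M_{\bm{\xi}}\,\mathrm{diag}(D_1,D_2)\,M_{\bm{\xi}}^{-1}$ --- which forces $D_1 = D_2$ once the momentum modal vectors span $\mathbb{R}^2$ and $\sin\phi\cos\phi \neq 0$ --- makes precise exactly which configurations witness the failure, and your flagging of the degenerate case (proportional momentum modes, where scale invariance makes the two operations accidentally coincide) identifies the trap that the paper's one-line argument silently steps over. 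Both proofs rest on the same structural defect, namely component-wise, non-additive damping applied to the rotation-mixed momentum block, so this is a sharpening of the paper's argument rather than a genuinely different one.
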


\begin{proof}
    We will use induction to prove \eqref{rip:dg}.
    Let $\hat{\bm{u}}_{\hat{K}}^{(\ell)}(t)$ be the $\ell$-th  modal coefficient of the numerical solution $\hat{\bm{u}}_h(\hat{\bm{x}},t)$, so $\hat{\bm{u}}_h(\hat{\bm{x}}, t)$ can be expressed as 
    \[
    \hat{\bm{u}}_h(\hat{\bm{x}},t) =  \sum_{\ell=0}^{L_k} \hat{\bm{u}}_{\hat{K}}^{(\ell)}(t) \Psi_{\hat{K}}^{(\ell)}(\hat{\bm{x}}) \quad \forall \hat{\bm{x}} \in \hat{K}.
    \]
    We first prove that $\hat{\bm{u}}_h(\hat{\bm{x}},0)=T_{\bm{\xi}}\bm{u}_h(\bm{x},0)$.
    According to the initial condition \cref{rip:init}, for each triangular cell $\hat{K} \in \hat{\mathcal{T}}_h$, we have 
    \begin{equation}\label{rip:pf1}
        \int_{\hat{K}} \hat{\bm{u}}_h(\hat{\bm{x}},0) \circ \hat{\bm{v}}_h(\hat{\bm{x}}) \, \mathrm{d} \hat{\bm{x}} = T_{\bm{\xi}} \int_{{K}} \bm{u}_h(\bm{x},0) \circ \bm{v}_h(\bm{x}) \, \mathrm{d} \bm{x}
    \end{equation}
    for any $\hat{\bm{v}}_h(\hat{\bm{x}}) = \bm{v}_h(\bm{x}) \in [{\mathbb V}_h^{k}]^d$.
    Thus, for $\ell=0,\cdots, L_k$, we obtain 
    \begin{align*}
        \hat{\bm{u}}_{\hat{K}}^{(\ell)}(0) \int_{K} (\Psi_{K}^{(\ell)}(\bm{x}))^2 \, \mathrm{d} \bm{x} 
        &= \hat{\bm{u}}_{\hat{K}}^{(\ell)}(0) \int_{\hat{K}} (\Psi_{\hat{K}}^{(\ell)}(\hat{\bm{x}}))^2 \, \mathrm{d} \hat{\bm{x}}
        = \sum_{m=0}^{L_k} \hat{\bm{u}}_{\hat{K}}^{(m)}(0) \int_{\hat{K}} \Psi_{\hat{K}}^{(m)}(\hat{\bm{x}}) \Psi_{\hat{K}}^{(\ell)}(\hat{\bm{x}}) \, \mathrm{d} \hat{\bm{x}} \\
        &\overset{\eqref{rip:pf1}}{=} T_{\bm{\xi}} \sum_{m=0}^{L_k} {\bm{u}}_{{K}}^{(m)}(0) \int_{{K}} \Psi_{{K}}^{(m)}({\bm{x}}) \Psi_{{K}}^{(\ell)}({\bm{x}}) \, \mathrm{d} {\bm{x}} 
        = T_{\bm{\xi}} {\bm{u}}_{{K}}^{(\ell)}(0) \int_{{K}} (\Psi_{{K}}^{(\ell)}({\bm{x}}))^2  \, \mathrm{d} \bm{x}.
    \end{align*}
    This implies that $\hat{\bm{u}}_{\hat{K}}^{(\ell)}(0)=T_{\bm{\xi}} {\bm{u}}_{{K}}^{(\ell)}(0)$. Therefore, $\hat{\bm{u}}_h(\hat{\bm{x}},0)=T_{\bm{\xi}}\bm{u}_h(\bm{x},0)$.

Next, assume that $\hat{\bm{u}}_h(\hat{\bm{x}},t^n)=T_{\bm{\xi}}\bm{u}_h(\bm{x},t^n)$ holds at time step $t^n (n\geq 1)$. We aim to show that this relationship also holds at the subsequent time step, i.e., $\hat{\bm{u}}_h(\hat{\bm{x}},t^{n+1})=T_{\bm{\xi}}\bm{u}_h(\bm{x},t^{n+1})$. For simplicity, we consider the Euler forward time discretization method; the argument extends naturally to other time integration schemes, and thus, we omit those details. 
    The time step size is determined by the spectral radius $R(\bm{u},\mathbf{n})$ of the Jacobian matrix $\bm{F}'(\bm{u}) \cdot \mathbf{n} = n^{1} \frac{\partial \bm{f}_1(\bm{u})}{\partial \bm{u}} + n^{2} \frac{\partial \bm{f}_2(\bm{u})}{\partial \bm{u}}$ in the direction of the outward unit normal vector $\mathbf{n}$ on the edge $e$. 
    Applying the rotational invariance \eqref{eq:rip2}  of the system \eqref{eq:System}, we have
    \begin{align*}
        \frac{\partial \bm{f}_1(\hat{\bm{u}})}{\partial \hat{\bm{u}}} \Big|_{\hat{\bm{u}}=\hat{\bm{u}}_h(\hat{\bm{x}},t^{n})} 
        &= T_{\bm{\xi}} \, \Big( \cos \phi \, \frac{\partial \bm{f}_1(\bm{u})}{\partial \bm{u}} + \sin \phi \, \frac{\partial \bm{f}_2(\bm{u})}{\partial \bm{u}} \Big)\Big|_{{\bm{u}}={\bm{u}}_h({\bm{x}},t^{n})} \, T_{\bm{\xi}}^{-1},\\
        \frac{\partial \bm{f}_2(\hat{\bm{u}})}{\partial \hat{\bm{u}}} \Big|_{\hat{\bm{u}}=\hat{\bm{u}}_h(\hat{\bm{x}},t^{n})} 
        &= T_{\bm{\xi}} \, \Big( -\sin \phi \, \frac{\partial \bm{f}_1(\bm{u})}{\partial \bm{u}} + \cos \phi \, \frac{\partial \bm{f}_2(\bm{u})}{\partial \bm{u}} \Big)\Big|_{{\bm{u}}={\bm{u}}_h({\bm{x}},t^{n})} \, T_{\bm{\xi}}^{-1}.
    \end{align*}
    Combined with the following relations:
    \begin{align*}
        \hat{n}_{\hat{K}}^{(i),1} = \cos \phi \, {n}_{K}^{(i),1} + \sin \phi \, {n}_{K}^{(i),2}, \qquad
        \hat{n}_{\hat{K}}^{(i),2} = -\sin \phi \, {n}_{K}^{(i),1} + \cos \phi \, {n}_{K}^{(i),2},
    \end{align*}
    it follows that 
    \begin{align*}
        \Big(\hat{n}_{\hat{K}}^{(i),1} \, \frac{\partial \bm{f}_1(\hat{\bm{u}})}{\partial \hat{\bm{u}}} + \hat{n}_{\hat{K}}^{(i),2} \, \frac{\partial \bm{f}_2(\hat{\bm{u}})}{\partial \hat{\bm{u}}} \Big)\Big|_{\hat{\bm{u}}=\hat{\bm{u}}_h(\hat{\bm{x}},t^{n})}
        =  T_{\bm{\xi}} \, \Big( {n}_{{K}}^{(i),1} \, \frac{\partial \bm{f}_1({\bm{u}})}{\partial {\bm{u}}} + {n}_{{K}}^{(i),2} \, \frac{\partial \bm{f}_2({\bm{u}})}{\partial {\bm{u}}} \Big)\Big|_{{\bm{u}}={\bm{u}}_h({\bm{x}},t^{n})} \, T_{\bm{\xi}}^{-1}.
    \end{align*}
    This indicates that $R(\hat{\bm{u}}(\hat{\bm x}, t^n),\hat{\mathbf{n}}_{\hat{K}}^{(i)})=R(\bm{u}(\bm x, t^n),\mathbf{n}_{K}^{(i)})$. Hence, the time step size obtained using $\hat{\bm{u}}(\hat{\bm{x}},t^n)$ is the same as that obtained using $\bm{u}(\bm{x}, t^n)$ and is denoted by $\dt$ for convenience.
    Furthermore, according to \eqref{DGSystem:Int}, for each triangular cell $\hat{K} \in \hat{\mathcal{T}}_h$, we have
    \begin{equation*}
	\begin{aligned}
		\int_{\hat{K}}  \hat{\bm{u}}_h(\hat{\bm{x}},t^{n+1}) \circ \hat{\bm{v}}_h(\hat{\bm{x}}) \, \mathrm{d} \hat{\bm{x}} 
            =& \int_{\hat{K}}  \hat{\bm{u}}_h(\hat{\bm{x}},t^{n}) \circ \hat{\bm{v}}_h(\hat{\bm{x}}) \, \mathrm{d} \hat{\bm{x}} 
            +\dt \int_{\hat{K}} \bm{F}(\hat{\bm{u}}_h(\hat{\bm{x}},t^n)) : \nabla \hat{\bm{v}}_h(\hat{\bm{x}}) \, \mathrm{d} \hat{\bm{x}} \\   
		&-  \dt \sum_{i=1}^{3} \int_{e_{\hat{K}}^{(i)}} \hat{\bm{F}}(\hat{\bm{u}}_h^{\mathrm{int}}(\hat{\bm{x}},t^n), \hat{\bm{u}}_h^{\mathrm{ext}}(\hat{\bm{x}},t^n), \hat{\mathbf{n}}_{\hat{K}}^{(i)}) \circ \hat{\bm{v}}_h(\hat{\bm{x}}) \, \mathrm{d\hat{s}} \\
            :=& \Pi_1 + \dt \, \Pi_2 - \dt \sum_{i=1}^{3} \Xi_i.
	\end{aligned}
    \end{equation*} 
    Using the assumption $\hat{\bm{u}}_h(\hat{\bm{x}},t^n)=T_{\bm{\xi}}\bm{u}_h(\bm{x},t^n)$ and the rotational invariance of the numerical flux $\hat{\bm{F}}(\cdot, \cdot, \cdot)$, one can obtain that 
    \begin{align}
        \Pi_1 &= T_{\bm{\xi}} \int_{{K}} {\bm{u}}_h({\bm{x}},t^{n}) \circ {\bm{v}}_h({\bm{x}}) \, \mathrm{d} {\bm{x}} \label{rip:pi1}\\
        \Xi_i &= T_{\bm{\xi}}
		\int_{e_{{K}}^{(i)}} \hat{\bm{F}}(\bm{u}_h^{\mathrm{int}}(\bm{x},t^n), \bm{u}_h^{\mathrm{ext}}(\bm{x},t^n), \mathbf{n}_{K}^{(i)}) \circ \bm{v}_h(\bm{x}) \, \mathrm{ds} ~~~ i=1,2,3. \label{rip:xii}
    \end{align}
    Given that $\hat{\bm{x}}=M_{\bm{\xi}}\bm{x}$, the following formulas hold:
    \begin{align*}
        \frac{\partial \hat{\bm{v}}_h(\hat{\bm{x}})}{\partial \hat{x}} 
        =\cos \phi \, \frac{\partial {\bm{v}}_h({\bm{x}})}{\partial {x}} 
        + \sin \phi \, \frac{\partial {\bm{v}}_h({\bm{x}})}{\partial {y}}, \qquad
        \frac{\partial \hat{\bm{v}}_h(\hat{\bm{x}})}{\partial \hat{y}} 
        =-\sin \phi \, \frac{\partial {\bm{v}}_h({\bm{x}})}{\partial {x}} 
        + \cos \phi \, \frac{\partial {\bm{v}}_h({\bm{x}})}{\partial {y}}.
    \end{align*}
    Therefore, we obtain 
    \begin{align*}
        \Pi_2 =& \int_{\hat{K}} \Big[ \bm{f}_1(\hat{\bm{u}}_h(\hat{\bm{x}},t^n)) \circ \frac{\partial \hat{\bm{v}}_h(\hat{\bm{x}})}{\partial \hat{x}} 
        + \bm{f}_2(\hat{\bm{u}}_h(\hat{\bm{x}},t^n)) \circ \frac{\partial \hat{\bm{v}}_h(\hat{\bm{x}})}{\partial \hat{y}} \Big] \, \mathrm{d} \hat{\bm{x}} \\
        =&\int_{{K}} \Big[ \bm{f}_1(T_{\bm{\xi}} {\bm{u}}_h({\bm{x}},t^n)) \circ 
        \Big(\cos \phi \, \frac{\partial {\bm{v}}_h({\bm{x}})}{\partial {x}} 
        + \sin \phi \, \frac{\partial {\bm{v}}_h({\bm{x}})}{\partial {y}} \Big) 
        + \bm{f}_2(T_{\bm{\xi}} {\bm{u}}_h({\bm{x}},t^n)) \circ 
        \Big(-\sin \phi \, \frac{\partial {\bm{v}}_h({\bm{x}})}{\partial {x}} 
        + \cos \phi \, \frac{\partial {\bm{v}}_h({\bm{x}})}{\partial {y}} \Big)\Big] \, \mathrm{d} {\bm{x}} \\
        =&\int_{{K}} \Big[ T_{\bm{\xi}} \Big(\cos \phi \, \bm{f}_1({\bm{u}}_h({\bm{x}},t^n)) + \sin \phi \, \bm{f}_2({\bm{u}}_h({\bm{x}},t^n)) \Big)\circ 
        \Big(\cos \phi \, \frac{\partial {\bm{v}}_h({\bm{x}})}{\partial {x}} 
        + \sin \phi \, \frac{\partial {\bm{v}}_h({\bm{x}})}{\partial {y}} \Big) \\
        &+ T_{\bm{\xi}} \Big(-\sin \phi \, \bm{f}_1({\bm{u}}_h({\bm{x}},t^n)) + \cos \phi \, \bm{f}_2({\bm{u}}_h({\bm{x}},t^n)) \Big) \circ 
        \Big(-\sin \phi \, \frac{\partial {\bm{v}}_h({\bm{x}})}{\partial {x}} 
        + \cos \phi \, \frac{\partial {\bm{v}}_h({\bm{x}})}{\partial {y}} \Big)\Big] \, \mathrm{d} {\bm{x}} \\
        =& T_{\bm{\xi}} \int_{{K}} \Big[ \bm{f}_1({\bm{u}}_h({\bm{x}},t^n)) \circ \frac{\partial {\bm{v}}_h({\bm{x}})}{\partial {x}} 
        + \bm{f}_2({\bm{u}}_h({\bm{x}},t^n)) \circ \frac{\partial {\bm{v}}_h({\bm{x}})}{\partial {y}} \Big] \, \mathrm{d} {\bm{x}} \\
        =& T_{\bm{\xi}} \int_{{K}} \bm{F}(\bm{u}_h(\bm{x},t^n)) : \nabla \bm{v}_h(\bm{x}) \, \mathrm{d} \bm{x}.
    \end{align*}
    This, together with \eqref{rip:pi1} and \eqref{rip:xii}, implies
    \begin{equation*}
	\int_{\hat{K}} \hat{\bm{u}}_h(\hat{\bm{x}},t^{n+1}) \circ \hat{\bm{v}}_h(\hat{\bm{x}}) \, \mathrm{d} \hat{\bm{x}} = T_{\bm{\xi}} \int_{{K}} \bm{u}_h(\bm{x},t^{n+1}) \circ \bm{v}_h(\bm{x}) \, \mathrm{d} \bm{x}.
    \end{equation*}
    This yields $\hat{\bm{u}}_h(\hat{\bm{x}},t^{n+1})=T_{\bm{\xi}}\bm{u}_h(\bm{x},t^{n+1})$.
    Consequently, for any $t\geq0$, the relation \eqref{rip:dg} holds.

    To show that the OE procedure $\mathcal{F}_{\tau}$ is not RI, consider the Euler system \eqref{eq:euler} as an example. From \eqref{oe:sys}, for any $\hat{\bm{x}} \in \hat{K}$, we have 
	\begin{equation*}
		\begin{aligned}
			\hat{\bm{u}}_{\sigma}(\hat{\bm{x}}) 
			= (\mathcal{F}_{\tau} \hat{\bm{u}}_h) (\hat{\bm{x}}) 
			&= \hat{\bm{u}}_{\hat{K}}^{(0)}(t) \, \Psi_{\hat{K}}^{(0)}(\hat{\bm{x}}) +
			\sum_{m=1}^{k} \exp\Bigg\{-\Delta t \sum_{j=0}^{{m}} \bm{\sigma}_{\hat{K}}^{j}(\hat{\bm{u}}_h)\Bigg\} 
			\circ 
			\sum_{\ell=L_{m-1}+1}^{L_m} \hat{\bm{u}}_{\hat{K}}^{(\ell)}(t)  \, \Psi_{\hat{K}}^{(\ell)}(\hat{\bm{x}}) \\
             &=T_{\bm{\xi}} \Bigg( {\bm{u}}_{{K}}^{(0)}(t) \, \Psi_{{K}}^{(0)}({\bm{x}}) +
			\sum_{m=1}^{k} \exp\Bigg\{-\Delta t \sum_{j=0}^{{m}} \bm{\sigma}_{\hat{K}}^{j}(\hat{\bm{u}}_h)\Bigg\}
			\circ 
			\sum_{\ell=L_{m-1}+1}^{L_m} {\bm{u}}_{{K}}^{(\ell)}(t) \, \Psi_{{K}}^{(\ell)}({\bm{x}}) \Bigg)
		\end{aligned}
	\end{equation*}
	Let $\bm{m}_h$ and $\hat{\bm{m}}_h$ be the numerical approximations of momentum for $\bm{u}_h$ and $\hat{\bm{u}}_h$, respectively. Then $\hat{\bm{m}}_h = M_{\bm{\xi}} \bm{m}_h$, implying that
	\[
	\sigma_K^{j}(\hat{m}_{1,h}) \neq \sigma_K^{j}(m_{1,h}), \qquad
	\sigma_K^{j}(\hat{m}_{2,h}) \neq \sigma_K^{j}(m_{2,h}),
	\]
	due to the coefficient $\delta_K^{(i),j}(\cdot)$ in \eqref{OEScalar:Coef2}. Hence, $\hat{\bm{u}}_{\sigma}(\hat{\bm{x}}) \neq T_{\bm{\xi}} \bm{u}_{\sigma}(\bm{x})$, completing the proof.
\end{proof}

\Cref{thm:DG-OE} indicates that the OEDG scheme, with the component-wise OE procedure described in \cref{sec:oep2}, does not retain the RI property. To address this, we introduce a modified OE procedure $\mathcal{F}^{\rm RI}_{\tau}$, called the RIOE procedure. This procedure redefines the coefficients $\bm{\delta}_K^{(i),j}(\bm{u}_h)$ in \eqref{oe:sysdelta1}. For illustration, consider the Euler system \eqref{ex:euler}. The modified coefficients $\hat{\bm{\delta}}_K^{(i),j}(\bm{u}_h)$ are given by:
\begin{equation}\label{oe:sysdelta2}
	\hat{\bm{\delta}}_K^{(i),j}(\bm{u}_h) = 
	\left(
	\delta_K^{(i),j}(\rho_h),  
	\hat{\delta}_K^{(i),j}(\bm{m}_h), 
	\hat{\delta}_K^{(i),j}(\bm{m}_h),
	\delta_K^{(i),j}(E_h)
	\right)^\top,
\end{equation} 
where $\delta_K^{(i),j}(\cdot)$ is defined by equation \eqref{OEScalar:Coef2}, and 
\begin{equation}\label{RIOESys:CoefM}
	\hat{\delta}_K^{(i),j}(\bm{m}_h) = \max \left\{ \hat{\delta}_K^{(i),j}(m_{n,h}^{(i)}), \hat{\delta}_K^{(i),j}(m_{t,h}^{(i)}) \right\},
\end{equation}
where the momentum components $m_{n,h}^{(i)}$ and $m_{t,h}^{(i)}$ are defined on the outward unit normal vector $\mathbf{n}_{K}^{(i)}$ and the corresponding unit tangent vector $\mathbf{n}_{t,K}^{(i)} = (- n_K^{(i),2}, n_K^{(i),1})^\top$, respectively, as
\[
m_{n,h}^{(i)} = \bm{m}_h \cdot \mathbf{n}_{K}^{(i)} = m_{1,h} \, n_K^{(i),1} + m_{2,h} \, n_K^{(i),2}, \qquad 
m_{t,h}^{(i)} = \bm{m}_h \cdot \mathbf{n}_{t,K}^{(i)} = -m_{1,h} \, n_K^{(i),2} + m_{2,h} \, n_K^{(i),1}.
\]
The coefficients $\hat{\delta}_K^{(i),j}(m_{n,h}^{(i)})$ and $\hat{\delta}_K^{(i),j}(m_{t,h}^{(i)})$ are defined as:
\begin{equation}\label{RIOESys:Coef1}
	\hat{\delta}_K^{(i),j}(m_{n,h}^{(i)}) = 
	\left\{
	\begin{aligned}
		&0, & & \bm{m}_h \equiv \bar{\bm{m}}_h^{\Omega}, \\
		&A^{k,j} \frac{(h_{K}^{(i)})^j}{\|\|\bm{m}_h-\bar{\bm{m}}_h^{\Omega}\|\|_{L^{\infty}(\Omega)}} \sqrt{\frac{1}{l_K^{(i)}} \int_{e_K^{(i)}}  \sum_{|{\bm{\alpha}}|=j} C_j^{\alpha_1} \jump{ \partial^{\bm{\alpha}} m_{n,h}^{(i)}}_{e_K^{(i)}}^2 \, \mathrm{ds}}, & & \text{otherwise},
	\end{aligned}
	\right.
\end{equation}
\begin{equation}\label{RIOESys:Coef2}
	\hat{\delta}_K^{(i),j}(m_{t,h}^{(i)}) = 
	\left\{
	\begin{aligned}
		&0, & & \bm{m}_h \equiv \bar{\bm{m}}_h^{\Omega}, \\
		&A^{k,j} \frac{(h_{K}^{(i)})^j}{\|\|\bm{m}_h-\bar{\bm{m}}_h^{\Omega}\|\|_{L^{\infty}(\Omega)}} \sqrt{\frac{1}{l_K^{(i)}} \int_{e_K^{(i)}}  \sum_{|{\bm{\alpha}}|=j} C_j^{\alpha_1} \jump{ \partial^{\bm{\alpha}} m_{t,h}^{(i)}}_{e_K^{(i)}}^2 \, \mathrm{ds}}, & & \text{otherwise},
	\end{aligned}
	\right.
\end{equation}
where 
$\|\bm{m}_h-\bar{\bm{m}}_h^{\Omega}\| = \sqrt{(m_{1,h}-\bar{m}_{1,h}^{\Omega})^2 + (m_{2,h}-\bar{m}_{2,h}^{\Omega})^2}$, 
$\bar{\bm{m}}_h^{\Omega} = (\bar{m}_{1,h}^{\Omega}, \bar{m}_{2,h}^{\Omega})^\top$, and the averages $\bar{m}_{1,h}^{\Omega}$ and $\bar{m}_{2,h}^{\Omega}$ over the computational domain $\Omega$ are defined as
\[
\bar{m}_{1,h}^{\Omega} := \frac{1}{|\Omega|} \int_{\Omega} m_{1,h}(\bm{x},t) \, \mathrm{d} \bm{x}, \qquad
\bar{m}_{2,h}^{\Omega} := \frac{1}{|\Omega|} \int_{\Omega} m_{2,h}(\bm{x},t) \, \mathrm{d} \bm{x}.
\]
In our simulations, we use the trapezoidal rule to approximate the integrals along the edge $e_K^{(i)}$ in \eqref{RIOESys:Coef1} and \eqref{RIOESys:Coef2}, specifically:
\begin{align*}
	\frac{1}{l_K^{(i)}} \int_{e_K^{(i)}}  \sum_{|{\bm{\alpha}}|=j} C_j^{\alpha_1} \jump{ \partial^{\bm{\alpha}} m_{n,h}^{(i)} }_{e_K^{(i)}}^2 \, \mathrm{ds} &\approx \frac{1}{2} \sum_{|{\bm{\alpha}}|=j} C_j^{\alpha_1} \left(\jump{ \partial^{\bm{\alpha}} m_{n,h}^{(i)} }_{{\bm{v}}_c}^2 + \jump{ \partial^{\bm{\alpha}} m_{n,h}^{(i)} }_{{\bm{v}}_d}^2\right), \\
	\frac{1}{l_K^{(i)}} \int_{e_K^{(i)}}  \sum_{|{\bm{\alpha}}|=j} C_j^{\alpha_1} \jump{ \partial^{\bm{\alpha}} m_{t,h}^{(i)} }_{e_K^{(i)}}^2 \, \mathrm{ds} &\approx \frac{1}{2} \sum_{|{\bm{\alpha}}|=j} C_j^{\alpha_1} \left(\jump{ \partial^{\bm{\alpha}} m_{t,h}^{(i)} }_{{\bm{v}}_c}^2 + \jump{ \partial^{\bm{\alpha}} m_{t,h}^{(i)} }_{{\bm{v}}_d}^2\right).
\end{align*}
To mitigate the effects of round-off errors, the condition $\bm{m}_h \equiv \bar{\bm{m}}_h^{\Omega}$ is replaced with 
\[
\|\|\bm{m}_h-\bar{\bm{m}}_h^{\Omega}\|\|_{L^{\infty}(\Omega)} \leq \epsilon \max\{1, \|\bar{\bm{m}}_h^{\Omega}\|\} = \epsilon \max\left\{1, \sqrt{(\bar{m}_{1,h}^{\Omega})^2 + (\bar{m}_{2,h}^{\Omega})^2}\right\}.
\]

The RIOE procedure $\mathcal{F}^{\rm RI}_{\tau}$ ensures rotational invariance by modifying the coefficients $\delta_K^{(i),j}(m_{1,h})$ and $\delta_K^{(i),j}(m_{2,h})$ to $\hat{\delta}_K^{(i),j}(\bm{m}_h)$. This technique can be directly extended to more general systems with vector variables analogous to momentum $\bm{m}$. For each such vector $\tilde{\bm{m}}$, the corresponding coefficient $\hat{\delta}_K^{(i),j}(\tilde{\bm{m}})$ is defined as \eqref{RIOESys:CoefM}.

Clearly, according to the modified coefficients $\hat{\bm{\delta}}_K^{(i),j}(\bm{u}_h)$, we have the following theorem.

\begin{theorem}[Rotational Invariance]
	The RIOE procedure $\mathcal{F}^{\rm RI}_{\tau}$ preserves the rotational invariance property, namely, 
	\begin{equation*}\label{rioe:rip}
		(\mathcal{F}_{\tau}^{\rm RI} \hat{\bm{u}}_h) (\hat{\bm{x}}) = T_{\bm{\xi}} (\mathcal{F}_{\tau}^{\rm RI} \bm{u}_h) (\bm{x}).
	\end{equation*}
	Therefore, the OEDG method with the RIOE procedure is RI, provided both the system \eqref{eq:System} and the numerical flux $\hat{\bm{F}}(\cdot, \cdot, \cdot)$ satisfy the rotational invariance.
\end{theorem}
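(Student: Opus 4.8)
The plan is to reduce the statement to two facts: that the RIOE damping coefficient vectors are genuine rotation-invariant scalars, and that Hadamard multiplication by such a vector commutes with $T_{\bm\xi} = \mathrm{diag}\{1, M_{\bm\xi}, 1\}$. Theorem~\ref{thm:DG-OE} already supplies the backbone: the DG solutions satisfy $\hat{\bm u}_h(\hat{\bm x},t) = T_{\bm\xi}\bm u_h(\bm x,t)$, whence (by the same $L^2$-projection computation used there) the modal coefficients obey $\hat{\bm u}_{\hat K}^{(\ell)}(t) = T_{\bm\xi}\bm u_K^{(\ell)}(t)$, and the rotated element $\hat K = M_{\bm\xi}K$ shares the same local coordinates $(\xi,\eta)$ as $K$, so $\Psi_{\hat K}^{(\ell)}(\hat{\bm x}) = \Psi_K^{(\ell)}(\bm x)$ whenever $\hat{\bm x} = M_{\bm\xi}\bm x$. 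I would first record the geometric invariants: rotation preserves the edge lengths $l_K^{(i)}$, the distances $h_K^{(i)}$, and, by the spectral-radius identity $R(\hat{\bm u},\hat{\mathbf n}) = R(\bm u,\mathbf n)$ established inside the proof of Theorem~\ref{thm:DG-OE}, the wave-speed estimates $\beta_{\hat K}^{(i)} = \beta_K^{(i)}$.

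The heart of the argument is to prove $\bm\sigma_{\hat K}^{j}(\hat{\bm u}_h) = \bm\sigma_K^{j}(\bm u_h)$ componentwise, for which it suffices to show each entry of $\hat{\bm\delta}_K^{(i),j}$ in~\eqref{oe:sysdelta2} is unchanged. For the density and energy slots this is immediate once one observes that $\hat\rho_h(\hat{\bm x}) = \rho_h(\bm x)$ and $\hat E_h(\hat{\bm x}) = E_h(\bm x)$ (they occupy the invariant diagonal of $T_{\bm\xi}$), so the $L^\infty$ normalizers and jump data defining $\delta_K^{(i),j}(\cdot)$ in~\eqref{OEScalar:Coef2} transfer verbatim. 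For the momentum slots I would use the orthogonality of $M_{\bm\xi}$: since $\hat{\bm m}_h = M_{\bm\xi}\bm m_h$, $\hat{\mathbf n}_{\hat K}^{(i)} = M_{\bm\xi}\mathbf n_K^{(i)}$, and the tangent rotates likewise, the scalar projections satisfy $\hat m_{n,h}^{(i)}(\hat{\bm x}) = m_{n,h}^{(i)}(\bm x)$ and $\hat m_{t,h}^{(i)}(\hat{\bm x}) = m_{t,h}^{(i)}(\bm x)$, while $\|M_{\bm\xi}(\bm m_h - \bar{\bm m}_h^{\Omega})\| = \|\bm m_h - \bar{\bm m}_h^{\Omega}\|$ preserves the normalizer in~\eqref{RIOESys:Coef1}--\eqref{RIOESys:Coef2}. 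Thus the normal and tangential momenta behave as rotation-invariant scalars, and the maximum in~\eqref{RIOESys:CoefM} gives $\hat\delta_{\hat K}^{(i),j}(\hat{\bm m}_h) = \hat\delta_K^{(i),j}(\bm m_h)$; the two momentum slots carry this single common value, which is the design feature that makes everything commute.

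The one nontrivial ingredient — and the step I expect to be the main obstacle — is that the jump functional $\sum_{|\bm\alpha|=j} C_j^{\alpha_1}\jump{\partial^{\bm\alpha}w}_{e}^2$ appearing in~\eqref{OEScalar:Coef2} and~\eqref{RIOESys:Coef1}--\eqref{RIOESys:Coef2} is itself rotation-invariant for every scalar field $w$ with $\hat w(\hat{\bm x}) = w(\bm x)$. The individual partials $\partial^{\bm\alpha}w$ do not transfer, since the chain rule mixes them under $M_{\bm\xi}$; what saves the day is that $C_j^{\alpha_1} = \binom{j}{\alpha_1}$ is exactly the multiplicity with which the mixed partial $\partial^{\bm\alpha}w$ occurs in the fully symmetric $j$-th derivative tensor of $w$, so that $\sum_{|\bm\alpha|=j}\binom{j}{\alpha_1}(\partial^{\bm\alpha}w)^2$ equals the Frobenius norm of that tensor. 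Since a rank-$j$ symmetric tensor transforms by applying the rotation to each index and the Frobenius norm is orthogonally invariant, this weighted sum is preserved; evaluating at corresponding points of the matched edges $e_{\hat K}^{(i)}$ and $e_K^{(i)}$ yields equality of the jump integrals (the trapezoidal quadrature used in practice inherits this at the shared endpoints). I would isolate this as a short lemma.

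With $\bm\sigma_{\hat K}^{j}(\hat{\bm u}_h) = \bm\sigma_K^{j}(\bm u_h) =: \bm\sigma^j$, a vector of the form $(\sigma_\rho, \sigma_m, \sigma_m, \sigma_E)^\top$ whose two momentum entries coincide, the assembly is routine. Writing $\bm c = \exp\{-\Delta t\sum_{j=0}^m \bm\sigma^j\}$, the equal momentum entries make $\bm c\circ(T_{\bm\xi}\bm w) = T_{\bm\xi}(\bm c\circ\bm w)$ for every modal vector $\bm w$, because the common scalar $c_m$ pulls through the linear block $M_{\bm\xi}$; this is precisely the point at which the componentwise procedure of \cref{sec:oep2}, with $\sigma_K^j(m_{1,h})\neq\sigma_K^j(m_{2,h})$, failed. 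Substituting $\hat{\bm u}_{\hat K}^{(\ell)} = T_{\bm\xi}\bm u_K^{(\ell)}$ and $\Psi_{\hat K}^{(\ell)}(\hat{\bm x}) = \Psi_K^{(\ell)}(\bm x)$ into the defining formula~\eqref{oe:sys} for $\mathcal F_\tau^{\rm RI}$ and factoring out $T_{\bm\xi}$ gives $(\mathcal F_\tau^{\rm RI}\hat{\bm u}_h)(\hat{\bm x}) = T_{\bm\xi}(\mathcal F_\tau^{\rm RI}\bm u_h)(\bm x)$. Finally, combining this commutation with the RI property of the DG step from Theorem~\ref{thm:DG-OE} and inducting over the RK stages, each a composition of a DG update and an application of $\mathcal F_\tau^{\rm RI}$, establishes that the full OEDG method equipped with the RIOE procedure is rotation-invariant.
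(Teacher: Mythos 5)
Your proposal is correct, and it follows the same overall route as the paper's proof: show that the RIOE damping coefficients are invariant, $\bm{\sigma}_{\hat K}^{j}(\hat{\bm u}_h) = \bm{\sigma}_{K}^{j}(\bm u_h)$, and then pull $T_{\bm\xi}$ through the Hadamard product in \eqref{oe:sys}, which works precisely because the two momentum slots of $\hat{\bm\delta}_K^{(i),j}$ in \eqref{oe:sysdelta2} carry one common value. The difference is that the paper's proof consists of exactly that assertion and nothing else, whereas you actually prove it. In particular, your ``nontrivial ingredient'' --- that $\sum_{|\bm\alpha|=j} C_j^{\alpha_1}\jump{\partial^{\bm\alpha} w}_{e}^2$ equals the squared Frobenius norm of the jump of the fully symmetric $j$-th derivative tensor of $w$, hence is invariant under applying the orthogonal matrix $M_{\bm\xi}$ to every tensor slot --- is the genuine mathematical content behind the paper's one-line claim: without it, even the density and energy entries of $\hat{\bm\delta}_K^{(i),j}$ are not obviously invariant, since the individual mixed partials in \eqref{OEScalar:Coef2} do mix under the chain rule. (One small caution: in your second paragraph you say the jump data for $\rho_h$ and $E_h$ ``transfer verbatim''; that is true only of the weighted sums, not of the individual $\jump{\partial^{\bm\alpha}\cdot}$ terms, as your own third paragraph makes clear.) The remaining ingredients you check --- matched basis functions $\Psi_{\hat K}^{(\ell)}(\hat{\bm x}) = \Psi_K^{(\ell)}(\bm x)$ via $J_{\hat K} = M_{\bm\xi} J_K$, invariance of $l_K^{(i)}$, $h_K^{(i)}$, $\beta_K^{(i)}$ and of the $L^\infty$ normalizers, the identities $\hat m_{n,h}^{(i)} = m_{n,h}^{(i)}$ and $\hat m_{t,h}^{(i)} = m_{t,h}^{(i)}$, and the induction over RK stages using \Cref{thm:DG-OE} --- are all correct and are exactly what a complete version of the paper's argument requires.
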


\begin{proof}
	Based on $\hat{\bm{\delta}}_K^{(i),j}(\cdot)$ presented in \eqref{oe:sysdelta2}, we have 
	\[
	\bm{\sigma}_K^{j}(\hat{\bm{u}}_h) = \bm{\sigma}_K^{j}(\bm{u}_h)
	\]
	for $0 \leq j \leq k$. Thus, the relation \eqref{rioe:rip} holds. This means that the OEDG method using the RIOE procedure is indeed RI, completing the proof.
\end{proof}

\section{Bound Preservation via Optimal Convex Decomposition on Triangular Cells}\label{sec:OCD}

In addition to controlling spurious oscillations, another crucial feature of robust DG schemes for solving hyperbolic conservation laws is their ability to preserve intrinsic bounds or constraints, commonly known as the bound-preserving (BP) property. The convex decomposition is critical in constructing high-order BP schemes and directly determines the BP CFL number, which affects the computational efficiency. Among the various possible convex decompositions, finding the optimal one that yields the largest BP CFL number is a natural and important challenge.

This challenge has been addressed for rectangular meshes in \cite{CDWOCAD2023, CuiDingWu2024}, but it remains an open problem for triangular meshes. This section focuses on constructing the optimal convex decomposition for the widely used $\mathbb{P}^1$ and $\mathbb{P}^2$ polynomial spaces on triangular cells. As discussed in this section and demonstrated by numerical experiments in \Cref{sec:numexp}, the optimal convex decomposition significantly enhances the efficiency of BP DG schemes on triangular meshes for general hyperbolic conservation laws.

We first introduce the definition of a feasible convex decomposition:

\begin{definition}[Feasible convex decomposition on a triangular cell $K$]\label{def:980}
	A convex decomposition of the cell average on a triangular cell $K$,
	\begin{equation}\label{eq:980}
	\begin{aligned}
		\frac{1}{|K|}\iint_K p(x,y) ~ \textrm{d} x \textrm{d} y
		=&
		\sum_{i=1}^3
		\frac{w_i}{l_K^{(i)}}
		\int_{e_K^{(i)}} p(x,y) ~ \textrm{ds}
		+
		\sum_{s = 1}^{S}
		\omega_s p(\xi_s,\eta_s)
	\end{aligned}
	\end{equation}
	is said to be \emph{feasible} for the polynomial space $\mathbb{P}^k$ if it simultaneously satisfies 
	the following three conditions:
	\begin{enumerate}[label=(\roman*)]
		\item  The convex decomposition holds exactly for all $p(x,y) \in \mathbb{P}^k$;
		\item  The edge weights $\{w_i\}_{i=1}^3$ and the internal node weights $\{\omega_s\}_{s=1}^{S}$ are all positive, with their sum equal to one;
		\item  The internal node set $\mathcal{I}_K = \left\{ ( \xi_s, \eta_s ) \right\}_{s=1}^{S} \subset K$.
	\end{enumerate}
\end{definition}

\noindent\textbf{A Classical Convex Decomposition on a Triangular Cell.}
As an example, Zhang, Xia, and Shu proposed in \cite{ZXSPP2012} the following classic convex decomposition, which is feasible for the $\mathbb{P}^k$ space, on a triangular cell $K$:
\[
	\frac{1}{|K|}\iint_K p(x,y) ~ \textrm{d} x \textrm{d} y
	=
	\sum_{i=1}^3
	\frac{2 \, \omega_1^{\tt GL}}{3 l^{(i)}_K}
	\int_{e_K^{(i)}} p(x,y) ~ \textrm{ds}
	+
	\sum_{s = 1}^{S^{\tt ZXS}}
	\omega^{\tt ZXS}_s p(\xi^{\tt ZXS}_s,\eta^{\tt ZXS}_s),
\]
where $\omega_1^{\tt GL}=\frac{1}{L(L-1)}$ with $L=\lceil \frac{k+3}{2} \rceil$, $\left\{(\xi^{\mathrm{ZXS}}_s,\eta^{\mathrm{ZXS}}_s)\right\}$ denote the coordinates of $S^{\mathrm{ZXS}} = 3\lceil\frac{k-1}{2}\rceil(k+1)$ internal nodes. This convex decomposition was constructed by projecting a convex decomposition on rectangular cells (based on the tensor product of the Gauss and Gauss--Lobatto quadratures) onto triangular cells. The projection maps one edge of the rectangular cell to one vertex of the triangular cell and the remaining three edges of the rectangular cell to the three edges of the triangular cell; see \cite{ZXSPP2012} for more details.

\begin{remark}\label{rem:ChenShu}
In \cite{ChenShu2017}, Chen and Shu introduced another series of quadrature rules on triangular cells for $\mathbb{P}^k$ spaces to construct entropy-stable DG methods for hyperbolic conservation laws. These quadrature rules also satisfy \Cref{def:980}, thus qualifying as feasible convex decompositions for BP design on triangular cells for $\mathbb{P}^k$ spaces; see \cite[Appendix C]{ChenShu2017} for further details on these quadrature rules. However, Chen and Shu's quadrature rules \cite{ChenShu2017} are not optimal for BP studies in general, as demonstrated later in Section \ref{sec:BP}.  
\end{remark}

\begin{remark}
    A feasible convex decomposition \eqref{eq:980} can be reformulated as
    \begin{equation}\label{eq:1180}
    \frac{1}{|K|}\iint_K p(x,y) ~ \mathrm{d}x \, \mathrm{d}y
    =
    \sum_{i=1}^3
    \frac{w_i}{l_K^{(i)}}
    \int_{e_K^{(i)}} p(x,y) ~ \mathrm{ds}
    +
    \left( \sum_{s=1}^S \omega_s \right)
    p^*_K
    =
    \sum_{i=1}^3
    \frac{w_i}{l_K^{(i)}}
    \int_{e_K^{(i)}} p(x,y) ~ \mathrm{ds}
    +
    \left( 1-\sum_{i=1}^3 w_i \right)
    p(x^*_K,y^*_K),
    \end{equation}
    where $p^*_K$ denotes the weighted average of $p(x,y)$ over all the internal nodes:
    \begin{equation}\label{eq:1198}
        p^*_K 
        = 
        \frac{\sum_{s=1}^S \omega_s p(\xi_s,\eta_s)}{\sum_{s=1}^S \omega_s}
        \stackrel{\eqref{eq:980}}{=}
        \frac
        {
        \frac{1}{|K|}\iint_K p(x,y) ~ \mathrm{d}x \, \mathrm{d}y
        -
        \sum_{i=1}^3
        \frac{w_i}{l_K^{(i)}}
        \int_{e_K^{(i)}} p(x,y) ~ \mathrm{ds}
        }
        {1-\sum_{i=1}^3 w_i}.
    \end{equation}
According to Condition (iii) of \Cref{def:980} and the mean value theorem, 
there always exists a point ${\bm x}^*_K = (x^*_K,y^*_K) \in K$ such that $p^*_K = p(x^*_K,y^*_K)$. 
    In the special case of 
    \begin{equation}\label{eq:12151}
        \sum_{s=1}^S \omega_s = 1-\sum_{i=1}^3 w_i = 0,
    \end{equation}
    the formula for $p^*_K$ is not valid (it vanishes), but the decomposition \eqref{eq:1180} still holds in the form:
    \begin{equation}\label{eq:1215}
        \frac{1}{|K|}\iint_K p(x,y) ~ \mathrm{d}x \, \mathrm{d}y
    =
    \sum_{i=1}^3
    \frac{w_i}{l_K^{(i)}}
    \int_{e_K^{(i)}} p(x,y) ~ \mathrm{ds}.
    \end{equation}
    This reformulation \eqref{eq:1180} (or \eqref{eq:1215} in the case of \eqref{eq:12151}) only depends on the values of $p(x,y)$ on the cell edges. It is useful for designing a simplified BP limiter that avoids evaluating $p(x,y)$ at internal nodes; see \cite{zhang2011b} for the original proposal of this idea.
\end{remark}

\subsection{BP Conditions via Feasible Convex Decomposition for Scalar Conservation Laws}\label{sec:1039}

While the case of hyperbolic systems of conservation laws will be discussed in \Cref{sec:1228}, we first focus on scalar conservation laws \eqref{eq:scalar}. In this scenario, the solution satisfies the minimum and maximum principles, i.e., $u({\bm x},t) \in \mathcal{G} := [u_{\min},u_{\max}]$ for any ${\bm x} \in \Omega$ and $t > 0$, where $u_{\min} := \min_{\bm x} u(\bm x, 0)$ and $u_{\max} := \max_{\bm x} u(\bm x, 0)$.

We only discuss the forward Euler time discretization, while the discussions are directly extesiable to high-order SSP RK methods which are convex combinations of forward Euler steps. 
For DG methods using the LF numerical flux \eqref{fluxLF}, the evolution of the cell average on a triangular cell $K$ is given by
\begin{equation}\label{eq:1033}
	\bar{u}_{K}^{n+1}  
	= 
	\bar{u}_{K}^{n} 
	- 
	\frac{\Delta t}{2 |K|} 
	\sum_{i=1}^{3}  l_K^{(i)} 
	\sum_{\nu = 1}^{Q} \omega^{\tt G}_\nu 
	\left[ 
		 \bm{F}(u^{\rm int}_{i,\nu})\cdot{\bm n}_K^{(i)}
		+\bm{F}(u^{\rm ext}_{i,\nu})\cdot{\bm n}_K^{(i)}
		+\alpha u^{\rm int}_{i,\nu}
		-\alpha u^{\rm ext}_{i,\nu}
	\right],
\end{equation}
where $u^{\rm int}_{i,\nu}:=u^{n}_h(x^{(i),\nu}_K,y^{(i),\nu}_K)|_K$ and $u^{\rm ext}_{i,\nu}:=u^{n}_h(x^{(i),\nu}_K,y^{(i),\nu}_K)|_{K^{(i)}}$. Given a feasible convex decomposition \eqref{eq:980}, we define
\begin{equation}\label{eq:1252}
    u^*_K := 
    \frac
    {
    \frac{1}{|K|}\iint_K u^n_h(x,y) ~ \mathrm{d}x \, \mathrm{d}y
    -
    \sum_{i=1}^3
    \frac{w_i}{l_K^{(i)}}
    \int_{e_K^{(i)}} u^n_h(x,y) ~ \mathrm{ds}
    }
    {1-\sum_{i=1}^3 w_i}
    =
    \frac
    {
    \bar u_K^n
    -
    \sum_{i=1}^3 {w_i} \sum_{\nu=1}^Q 
    \omega^{\tt G}_\nu \, u^{\rm int}_{i,\nu}
    }
    {1-\sum_{i=1}^3 w_i}.
\end{equation}

\begin{theorem}[BP via Feasible Convex Decomposition]\label{thm:1048}
	Consider the scalar conservation law \eqref{eq:scalar}. If a convex decomposition in the form of \eqref{eq:980} is feasible for the $\mathbb{P}^k$ space on a triangular cell $K$, and the solution at time level $t^n$ satisfies
    \begin{equation}\label{eq:1275}
        \begin{dcases}
            u^{\rm int}_{i,\nu}, u^{\rm ext}_{i,\nu} \in \mathcal{G} ~~ \forall i, \nu, K,
            &
            \text{if the condition \eqref{eq:12151}}~holds,
            \\
            u^{\rm int}_{i,\nu}, u^{\rm ext}_{i,\nu}, u_K^* \in \mathcal{G} ~~ \forall i, \nu, K,
            &
            \text{otherwise},
        \end{dcases}
    \end{equation}
    then the $\mathbb{P}^k$-based high-order scheme \eqref{eq:1033} with the LF flux and viscosity coefficient $\alpha \ge \alpha^{\rm LF}$ is BP under the CFL condition
	\begin{equation}\label{eq:1050}
		\alpha \frac{\Delta t}{|K|} \le \min 
		\left\{ 
		\frac{w_1}{l^{(1)}_K},
		\frac{w_2}{l^{(2)}_K},
		\frac{w_3}{l^{(3)}_K}
		\right\}:=\mathcal{C}_{\rm BP}.
	\end{equation}
\end{theorem}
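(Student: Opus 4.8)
The plan is to recognize the cell-average update \eqref{eq:1033} as a single \emph{monotone} map of the edge Gauss-point traces and the auxiliary value $u_K^*$, and then invoke the classical principle that a monotone, constant-preserving map sends $\mathcal{G}$-valued inputs into $\mathcal{G}$. Concretely, I would first use the two exactness properties at my disposal: since $u_h^n|_K\in\mathbb{P}^k$ and the decomposition \eqref{eq:980} holds exactly on $\mathbb{P}^k$, while the $Q=k+1$ point Gauss rule integrates the edge trace (a polynomial of degree $\le k$) exactly, I can rewrite the cell average as
\begin{equation*}
  \bar u_K^n = \sum_{i=1}^3 w_i \sum_{\nu=1}^Q \omega_\nu^{\tt G}\, u^{\rm int}_{i,\nu} + \Big(1-\sum_{i=1}^3 w_i\Big)\,u_K^*,
\end{equation*}
with $u_K^*$ exactly the quantity defined in \eqref{eq:1252}. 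Note that $\sum_i w_i\le 1$ by Condition (ii) of \Cref{def:980}, so the coefficient of $u_K^*$ is nonnegative.

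Next I would substitute this identity into \eqref{eq:1033}, rewritten via the LF flux \eqref{fluxLF} as $\bar u_K^{n+1}=\bar u_K^n-\frac{\Delta t}{|K|}\sum_i l_K^{(i)}\sum_\nu\omega_\nu^{\tt G}\hat{\bm F}(u^{\rm int}_{i,\nu},u^{\rm ext}_{i,\nu},\bm n_K^{(i)})$, and collect terms edge-by-edge to obtain
\begin{equation*}
  \bar u_K^{n+1} = \Big(1-\sum_{i=1}^3 w_i\Big)\,u_K^* + \sum_{i=1}^3 w_i \sum_{\nu=1}^Q \omega_\nu^{\tt G}\, H_{i,\nu}, \qquad H_{i,\nu}:= u^{\rm int}_{i,\nu} - \lambda_i\,\hat{\bm F}(u^{\rm int}_{i,\nu},u^{\rm ext}_{i,\nu},\bm n_K^{(i)}),
\end{equation*}
where $\lambda_i:=\frac{\Delta t\, l_K^{(i)}}{w_i|K|}$, so that the CFL condition \eqref{eq:1050} is precisely $\lambda_i\alpha\le 1$ for every $i$. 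I would then show that each $H_{i,\nu}$, viewed as a function of $(u^{\rm int}_{i,\nu},u^{\rm ext}_{i,\nu})$, is nondecreasing in both arguments: writing $g_i(u):=\bm F(u)\cdot\bm n_K^{(i)}$, differentiation gives $\partial_{u^{\rm int}}H_{i,\nu}=1-\tfrac{\lambda_i}{2}(\alpha+g_i')$ and $\partial_{u^{\rm ext}}H_{i,\nu}=\tfrac{\lambda_i}{2}(\alpha-g_i')$, both of which lie in $[0,1]$ because $|g_i'|=|\bm F'\cdot\bm n_K^{(i)}|\le\alpha^{\rm LF}\le\alpha$ at the edge quadrature nodes and $\lambda_i\alpha\le1$. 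Together with the nonnegative coefficient of $u_K^*$, this makes $\bar u_K^{n+1}$ a monotone function of the full list of inputs $\{u^{\rm int}_{i,\nu},u^{\rm ext}_{i,\nu}\}_{i,\nu}$ and $u_K^*$.

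The decisive step, and the one I expect to be the main obstacle, is that the building blocks $H_{i,\nu}$ are \emph{not} individually bound-preserving: evaluating at a constant state $c$ leaves a defect $H_{i,\nu}\big|_{u^{\rm int}=u^{\rm ext}=c}=c-\lambda_i g_i(c)\neq c$, so a naive ``convex combination of BP blocks'' argument fails. The remedy is to verify \emph{consistency of the whole map} rather than of its pieces: summing the defects and using the geometric identity $\sum_{i=1}^3 l_K^{(i)}\bm n_K^{(i)}=\bm 0$ (valid for any closed polygon) yields $\sum_i w_i\lambda_i g_i(c)=\frac{\Delta t}{|K|}\,\bm F(c)\cdot\sum_i l_K^{(i)}\bm n_K^{(i)}=0$, whence the full map returns $\bar u_K^{n+1}=c$ whenever all inputs equal $c$. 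Thus the individual defects cancel exactly, and it is this cancellation, not block-wise positivity, that carries the proof.

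Finally, I would combine monotonicity with constant-preservation in the standard way: under hypothesis \eqref{eq:1275} every input lies in $\mathcal G=[u_{\min},u_{\max}]$, so monotonicity bounds $\bar u_K^{n+1}$ between the values obtained by setting all inputs to $u_{\min}$ and to $u_{\max}$, each of which equals the corresponding constant by consistency; hence $\bar u_K^{n+1}\in\mathcal G$ under the CFL number $\mathcal{C}_{\rm BP}$ in \eqref{eq:1050}. In the degenerate case \eqref{eq:12151} where $\sum_i w_i=1$, the $u_K^*$-term drops out, the input list reduces to the edge traces alone, and only the first branch of \eqref{eq:1275} is needed, so the same argument applies verbatim. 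The extension to the full SSP Runge--Kutta scheme then follows because each stage is a convex combination of forward-Euler updates of this form.
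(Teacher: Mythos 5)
Your proposal is correct and follows essentially the same route as the paper's proof: rewrite $\bar u_K^{n+1}$ via the feasible convex decomposition as a monotone function of the inputs $u^{\rm int}_{i,\nu}$, $u^{\rm ext}_{i,\nu}$, and $u_K^*$, verify the monotonicity under the CFL condition \eqref{eq:1050} using $|\bm{F}'\cdot\mathbf{n}_K^{(i)}|\le\alpha^{\rm LF}\le\alpha$ at the edge quadrature nodes, and conclude $\bar u_K^{n+1}\in[u_{\min},u_{\max}]$ by evaluating the map at constant states. Your only substantive addition is the explicit verification of consistency --- the cancellation of the per-edge flux defects through the geometric identity $\sum_{i=1}^{3} l_K^{(i)}\mathbf{n}_K^{(i)}=\bm{0}$ --- a step the paper's proof uses implicitly when asserting that setting all inputs to $u_{\min}$ (or $u_{\max}$) returns that constant; this is a useful clarification rather than a different argument.
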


\begin{proof}
	First, consider the case when condition \eqref{eq:12151} does not hold. Applying the convex decomposition \eqref{eq:1180} on the polynomial $u_h|_K \in \mathbb{P}^k$ yields
	\begin{align*}
		\bar{u}^n_K = 
		\frac{1}{|K|}\iint_K u_h(x,y) ~ \mathrm{d}x \, \mathrm{d}y
		& \stackrel{\eqref{eq:1180}}{=}
		\sum_{i=1}^3
		\frac{w_i}{l_K^{(i)}}
		\int_{e_K^{(i)}} u_h(x,y) ~ \mathrm{ds}
		+
		\left(1-\sum_{i = 1}^{3} w_i \right) u^*_K
		=
		\sum_{i=1}^3
		w_i
		\sum_{\nu=1}^Q
		\omega^{\tt G}_\nu \, u^{\rm int}_{i,\nu}
		+
		\left(1-\sum_{i = 1}^{3} w_i \right) u^*_K.
	\end{align*}
	Then, the high-order scheme \eqref{eq:1033} can be rewritten as
	\[
		\bar{u}^{n+1}_K = 
		\sum_{i=1}^3 l^{(i)}_K
		\sum_{\nu=1}^Q \omega^{\tt G}_\nu
		\left[ 
			\left(
				 \frac{w_i}{l^{(i)}_K}-\frac{\alpha\,\Delta t}{2|K|} 
			\right)
			u^{\rm int}_{i,\nu} 
			-
			\frac{\Delta t}{2|K|}
			{\bm F}(u^{\rm int}_{i,\nu})\cdot{\bm n}^{(i)}_K
			+
			\frac{\alpha\,\Delta t}{2|K|} u^{\rm ext}_{i,\nu}
			-
			\frac{\Delta t}{2|K|}
			{\bm F}(u^{\rm ext}_{i,\nu})\cdot{\bm n}^{(i)}_K
		\right]
		+
		\left(1-\sum_{i = 1}^{3} w_i\right) u^*_K,
	\]
	which expresses $\bar{u}^{n+1}_K$ as a function of $u^{\rm int}_{i,\nu}$, $u^{\rm ext}_{i,\nu}$, and $u^*_K$. This function is monotonically increasing with respect to all arguments under the CFL condition \eqref{eq:1050}, because
	\begin{gather*}
		\frac{\partial \bar{u}^{n+1}_K}{\partial u^{\rm int}_{i,\nu}}
		=
		\frac{w_i}{l^{(i)}_K}
		-
		\frac{\alpha\,\Delta t}{2|K|} 
		-
		\frac{\Delta t}{2|K|}
		{\bm F}'(u^{\rm int}_{i,\nu})\cdot{\bm n}_K^{(i)}
		\stackrel{\eqref{eq:358}}{\ge}
		\frac{w_i}{l^{(i)}_K}
		-
		\frac{\alpha\,\Delta t}{2|K|} 
		-
		\frac{\alpha^{\rm LF}\,\Delta t}{2|K|}
		\ge
		\frac{w_i}{l^{(i)}_K}
		-
		\frac{\alpha\,\Delta t}{|K|}
		\stackrel{\eqref{eq:1050}}{\ge} 
		0, \\
		\frac{\partial \bar{u}^{n+1}_K}{\partial u^{\rm ext}_{i,\nu}}
		=
		\frac{\alpha\,\Delta t}{2|K|} 
		-
		\frac{\Delta t}{2|K|}
		{\bm F}'(u^{\rm int}_{i,\nu})\cdot{\bm n}_K^{(i)}
		\stackrel{\eqref{eq:358}}{\ge}
		\frac{\alpha\,\Delta t}{2|K|} 
		-
		\frac{\alpha^{\rm LF}\,\Delta t}{2|K|} = 0,
		\qquad
		\frac{\partial \bar{u}^{n+1}_K}{\partial u^*_K}
		=
		1-\sum_{i = 1}^{3} w_i
		\ge
		0.
	\end{gather*}
	Noting that $u^{\rm int}_{i,\nu}$, $u^{\rm ext}_{i,\nu}$, and $u^*_K$ belong to $\mathcal{G} = [u_{\min},u_{\max}]$ for all $i$, $\nu$, and $K$, this monotonicity implies
	\[
		u_{\min}
		=
		\bar{u}^{n+1}_K\Big|_{
			u^{\rm int}_{i,\nu}=
			u^{\rm ext}_{i,\nu}=
			u^*_K=u_{\min},\forall i,\nu
		}
		\le
		\bar{u}^{n+1}_K
		\le
		\bar{u}^{n+1}_K\Big|_{
			u^{\rm int}_{i,\nu}=
			u^{\rm ext}_{i,\nu}=
			u^*_K=u_{\max},\forall i,\nu
		}
		=
		u_{\max},
	\]
	which ensures that the scheme \eqref{eq:1033} is BP. The case when condition \eqref{eq:12151} holds can be similarly proven using the convex decomposition \eqref{eq:1215}, completing the proof.
\end{proof}

\subsection{BP Conditions via Feasible Convex Decomposition for Hyperbolic Systems of Conservation Laws}\label{sec:1228}

We now discuss the construction of BP schemes using a feasible convex decomposition for hyperbolic systems of conservation laws \eqref{eq:System}. In this context, the invariant region $\mathcal{G}$ is typically defined by the positivity or non-negativity of several functions of ${\bm u}$:
\begin{equation}\label{eq:1202}
    \mathcal{G} =
    \left\{~
        {\bm u} \in \mathbb{R}^d: g_j({\bm u}) \succ 0, \; \forall j\in \mathbb{J} \cup \hat{\mathbb{J}}
    ~\right\},
\end{equation}
where the symbol ``$\succ$'' denotes ``$>$'' if $j \in \mathbb{J}$ or ``$\geq$'' if $j \in \hat{\mathbb{J}}$.

For DG methods using the LF numerical flux \eqref{fluxLF} and forward Euler time discretization, the evolution of the cell average on a triangular cell $K$ is given by
\begin{equation}\label{eq:1182}
    \bar{{\bm u}}_{K}^{n+1}  
    = 
    \bar{{\bm u}}_{K}^{n} 
    - 
    \frac{\Delta t}{2 |K|} 
    \sum_{i=1}^{3}  l_K^{(i)} 
    \sum_{\nu = 1}^{Q} \omega^{\tt G}_\nu 
    \left[ 
    \bm{F}({\bm u}^{\rm int}_{i,\nu})\cdot{\bm n}_K^{(i)}
    +\bm{F}({\bm u}^{\rm ext}_{i,\nu})\cdot{\bm n}_K^{(i)}
    +\alpha {\bm u}^{\rm int}_{i,\nu}
    -\alpha {\bm u}^{\rm ext}_{i,\nu}
    \right],
\end{equation}
where ${\bm u}^{\rm int}_{i,\nu}:={\bm u}^{n}_h(x^{(i),\nu}_K,y^{(i),\nu}_K)|_K$ and ${\bm u}^{\rm ext}_{i,\nu}:={\bm u}^{n}_h(x^{(i),\nu}_K,y^{(i),\nu}_K)|_{K^{(i)}}$. Similar to the scalar case discussed in Section \ref{sec:1039}, a feasible convex decomposition is critical in establishing the BP property of \eqref{eq:1182} with respect to $\mathcal{G}$. However, for hyperbolic systems, BP analysis is often more challenging, especially when nonlinear constraints are involved.

We adopt the GQL framework from \cite{Wu2023Geometric}, which transforms any nonlinear constraints in \eqref{eq:1202} into equivalent linear constraints, leading to the following GQL representation of the invariant region $\mathcal{G}$:
\begin{equation}\label{eq:1200}
    \mathcal{G}^\star
    :=
    \left\{\,
    {\bm u} \in \mathbb{R}^d
    \, : \, ({\bm u}-{\bm u}^\star_j) \cdot \mathbf{n}^\star_j \succ 0, \; 
    \forall {\bm u}^\star_j \in \mathcal{S}_j,
    \forall j\in \mathbb{J} \cup \hat{\mathbb{J}}
    \,\right\},
\end{equation}
where $\mathbf{n}^\star_j := \nabla g_j({\bm u}^\star)$, $\mathcal{S}_j := \partial \mathcal{G} \cap \partial \mathcal{G}_j$, and $\mathcal{G}_j := \left\{ {\bm u} \in \mathbb{R}^d : g_j({\bm u})\succ 0\right\}$.

Using the GQL representation \eqref{eq:1200}, one can conduct BP analysis of \eqref{eq:1182} analogously to the scalar case, under the following assumption, which is valid for most hyperbolic systems of conservation laws.

\begin{assumption}[Weak LF Property]\label{asp:1273}
    For any unit vector ${\bm n} \in \mathbb{R}^2$ and any ${\bm u} \in \mathcal{G}$, there exist $\tilde\alpha({\bm u},{\bm n}) > 0$ and ${\bm \zeta}({\bm u^\star}) \in \mathbb{R}^2$ such that
	\begin{equation}\label{eq:1222}
		\tilde\alpha({\bm u},{\bm n})({\bm u}-{\bm u}^\star)\cdot \mathbf{n}_j^\star 
		- 
		({\bm F}({\bm u})\cdot {\bm n})\cdot \mathbf{n}_j^\star
		\succ
		{\bm \zeta}({\bm u^\star}) \cdot {\bm n}
		\qquad \forall {\bm u}^\star \in \mathcal{S}_j.
	\end{equation}
\end{assumption}

The weak LF property implies that for any $\alpha \ge \tilde\alpha({\bm u},{\bm n})$,
\begin{equation}\label{eq:1233}
    \alpha({\bm u}-{\bm u}^\star)\cdot \mathbf{n}_j^\star 
    - 
    ({\bm F}({\bm u})\cdot {\bm n})\cdot \mathbf{n}_j^\star
    \succ
    {\bm \zeta}({\bm u^\star}) \cdot {\bm n}
    \qquad \forall {\bm u}^\star \in \mathcal{S}_j.
\end{equation}

\begin{remark}\label{rmk:1243}
    The GQL representation \eqref{eq:1200} and the weak LF property (Assumption \ref{asp:1273}) apply to many hyperbolic conservation laws, including the 2D Euler system \eqref{eq:euler}, the relativistic hydrodynamic system, and the ten-moment Gaussian closure system. For more details and additional examples, see \cite{Wu2023Geometric}. Specifically, the GQL framework applied to the 2D Euler system \eqref{eq:euler} is summarized in the following theorem.
\end{remark}

\begin{theorem}\label{thm:1355}
For the 2D Euler system \eqref{eq:euler}, the admissible state set for positive density and pressure is defined as
\begin{align}\label{G:Euler}
    \mathcal{G} = 
    \left\{
    {\bm u}=(\rho, m_1, m_2, E)^\top \in \mathbb{R}^4: \rho({\bm u}) > 0, ~~ \mathcal{E}({\bm u}):=E - \frac{m_1^2 + m_2^2}{2\rho} > 0  
    \right\},
\end{align}
which is a convex set \cite{zhang2010positivity} because $\mathcal{E}({\bm u})$ is a concave function of ${\bm u}$ on the set $\{{\bm u} \in \mathbb{R}^4: \rho({\bm u}) > 0\}$. 
  The GQL representation of the invariant region $\mathcal{G}$ is given by 
    \begin{equation*}
    \mathcal{G}^\star
    :=
    \left\{\,
    {\bm u} \in \mathbb{R}^4
    \, : {\bm u} \cdot {\bf n}_1 >0, \quad 
     ({\bm u}-{\bm u}^\star) \cdot \mathbf{n}^\star > 0 ~~ 
    \forall {\bm u}^\star \in \mathcal{S}
    \,\right\},
\end{equation*}
    with ${\bf n}_1=(1,0,0,0)$,
    \begin{align*}
        \mathcal{S} &= \left\{
            {\bm u}^\star = \left(
            \rho^\star,\rho^\star v_1^\star,
            \rho^\star v_2^\star,
            \frac{\rho^\star (v_1^\star)^2+\rho^\star (v_2^\star)^2}{2}
            \right)^\top:
            \rho^\star > 0,
            v_1^\star \in \mathbb{R},
            v_2^\star \in \mathbb{R}
        \right\},\\
        \mathbf{n}^\star &= \left(
        \frac{(v_1^\star)^2+(v_2^\star)^2}{2},
        -v_1^\star,
        -v_2^\star,
        1
        \right)^\top,
    \end{align*}
    and the weak LF property \eqref{eq:1222} holds with 
    $\tilde \alpha({\bm u},{\bm n})$ being the spectral radius of the Jacobian matrix ${\bm F}'({\bm u})\cdot {\bm n}$ and ${\bm \zeta}^\star = {\bm 0}$.
\end{theorem}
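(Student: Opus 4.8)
The plan is to verify the three assertions of \Cref{thm:1355} separately. For convexity of $\mathcal{G}$ in \eqref{G:Euler}, I would observe that $\rho({\bm u})={\bm u}\cdot{\bf n}_1$ is linear, while the map ${\bm u}\mapsto \frac{m_1^2+m_2^2}{2\rho}$ is the quadratic-over-linear (perspective) function, which is jointly convex on $\{\rho>0\}$; hence $\mathcal{E}({\bm u})$ is concave there. Thus $\mathcal{G}$ is the intersection of the open half-space $\{\rho>0\}$ with the superlevel set $\{\mathcal{E}>0\}$ of a concave function, and is therefore convex, which is the argument attributed to \cite{zhang2010positivity} in the statement.

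The crux of the GQL representation is the identity
\[
\mathcal{E}({\bm u}) = \min_{{\bm u}^\star\in\mathcal{S}}({\bm u}-{\bm u}^\star)\cdot\mathbf{n}^\star \qquad (\rho>0),
\]
which I would prove by direct substitution of the parametrizations of ${\bm u}^\star$ and $\mathbf{n}^\star$: the $\rho^\star$-terms cancel and one is left with
\[
({\bm u}-{\bm u}^\star)\cdot\mathbf{n}^\star = E - m_1 v_1^\star - m_2 v_2^\star + \tfrac{\rho}{2}\big((v_1^\star)^2+(v_2^\star)^2\big),
\]
a strictly convex, coercive quadratic in $(v_1^\star,v_2^\star)$ whose unique minimizer is $(v_1^\star,v_2^\star)=(m_1/\rho,m_2/\rho)$, at which its value equals exactly $\mathcal{E}({\bm u})$. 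Because $\rho>0$ ensures the minimum is attained, $\mathcal{E}({\bm u})>0$ is equivalent to $({\bm u}-{\bm u}^\star)\cdot\mathbf{n}^\star>0$ for every ${\bm u}^\star\in\mathcal{S}$; combined with ${\bm u}\cdot{\bf n}_1=\rho>0$ this yields $\mathcal{G}=\mathcal{G}^\star$ in the form \eqref{eq:1200}.

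For the weak LF property \eqref{eq:1222} with ${\bm\zeta}^\star={\bm 0}$ and $\tilde\alpha({\bm u},{\bm n})$ the spectral radius of ${\bm F}'({\bm u})\cdot{\bm n}$ (whose eigenvalues are $v_n-c,v_n,v_n,v_n+c$, so $\tilde\alpha=|v_n|+c$ with $v_n:={\bm v}\cdot{\bm n}$ and $c=\sqrt{\gamma p/\rho}$), I would divide the inequality by $\tilde\alpha>0$ and recognize it as $(\tilde{\bm u}-{\bm u}^\star)\cdot\mathbf{n}^\star>0$, where $\tilde{\bm u}:={\bm u}-\tilde\alpha^{-1}{\bm F}({\bm u})\cdot{\bm n}$. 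By the min-identity above, together with the density constraint carried by ${\bf n}_1$, this reduces to showing $\tilde{\bm u}\in\mathcal{G}$, i.e.\ that the Lax--Friedrichs split state has positive density and pressure. Writing $a:=1-v_n/\tilde\alpha$, the density of $\tilde{\bm u}$ is $\rho a$, which is positive since $\tilde\alpha=|v_n|+c>v_n$.

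The remaining, and most delicate, step is the pressure positivity. After expanding $\mathcal{E}(\tilde{\bm u})=\tilde E-\frac{\tilde m_1^2+\tilde m_2^2}{2\tilde\rho}$ and using $|{\bm n}|=1$, all kinetic cross-terms cancel and the expression collapses to
\[
\mathcal{E}(\tilde{\bm u}) = \frac{p}{a}\left[\frac{a^2}{\gamma-1}-\frac{p}{2\rho\,\tilde\alpha^2}\right],
\]
which, noting $a\,\tilde\alpha=\tilde\alpha-v_n>0$, is positive precisely when $\tilde\alpha>v_n+c\sqrt{\tfrac{\gamma-1}{2\gamma}}$. Since $\sqrt{(\gamma-1)/(2\gamma)}<1$ for $\gamma>1$ and $\tilde\alpha=|v_n|+c\ge v_n+c$, this inequality holds strictly, giving $\mathcal{E}(\tilde{\bm u})>0$. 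I expect this pressure computation to be the main obstacle, both because of the bookkeeping in expanding $\tilde m_1^2+\tilde m_2^2$ and because one must keep the inequalities strict to conclude positivity rather than nonnegativity; the general GQL framework of \cite{Wu2023Geometric} supplies the organizing principle, while the explicit Euler identities above pin down the concrete forms of ${\bf n}_1$, $\mathcal{S}$, $\mathbf{n}^\star$, and the choice ${\bm\zeta}^\star={\bm 0}$.
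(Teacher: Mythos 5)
Your proposal is correct; I checked the three key computations and they all hold: the cancellation ${\bm u}^\star\cdot\mathbf{n}^\star=0$ that underlies your minimization identity (whose minimizer $\bm v^\star=\bm m/\rho$ lies in $\mathcal{S}$, so attainment gives the equivalence with \emph{strict} inequalities, a point you rightly flag); the reduction of \eqref{eq:1222} with ${\bm\zeta}={\bm 0}$ to membership of the split state $\tilde{\bm u}={\bm u}-\tilde\alpha^{-1}{\bm F}({\bm u})\cdot{\bm n}$ in $\mathcal{G}$; and the collapse, for any unit ${\bm n}$, of $\mathcal{E}(\tilde{\bm u})$ to $\frac{p}{a}\bigl[\frac{a^2}{\gamma-1}-\frac{p}{2\rho\,\tilde\alpha^2}\bigr]$, which is strictly positive because $a\tilde\alpha=\tilde\alpha-v_n\ge c>c\sqrt{(\gamma-1)/(2\gamma)}$ for $\gamma>1$. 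Be aware, however, that the paper itself does not prove this theorem: its stated proof is a single remark that the result "can be proven similarly to its one-dimensional counterpart in \cite{Wu2023Geometric}," with all details omitted. So your write-up supplies exactly what the paper defers to the literature, and it does so along essentially the route that reference takes in one dimension: the GQL representation follows from the identity $({\bm u}-{\bm u}^\star)\cdot\mathbf{n}^\star=\mathcal{E}({\bm u})+\frac{\rho}{2}\|{\bm v}-{\bm v}^\star\|^2$, which is just a completed-square restatement of your quadratic-in-$(v_1^\star,v_2^\star)$ minimization, and the weak LF property is the classical Zhang--Shu-type positivity of the Lax--Friedrichs flux splitting \cite{zhang2010positivity}, which your pressure computation reproduces in the rotated 2D setting. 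The only structural difference is organizational: you verify $\tilde{\bm u}\in\mathcal{G}$ directly (density via $\tilde\rho=\rho a>0$, internal energy via the explicit formula) and then invoke the min-identity once, whereas the GQL framework of \cite{Wu2023Geometric} phrases the same facts constraint-by-constraint for each ${\bm u}^\star\in\mathcal{S}_j$; the two are equivalent, and your version is arguably cleaner to check.
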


\begin{proof}
    The above \Cref{thm:1355} can be proven similarly to its one-dimensional counterpart in \cite{Wu2023Geometric}. Therefore, the proof is omitted here.
\end{proof}

Similarly to \eqref{eq:1198} and \eqref{eq:1252}, we define the weighted average of ${\bm u}_h({\bm x})|_K$ over all internal nodes of $K$ by
\begin{equation*}
    {\bm u}^*_K := 
    \frac
    {
    \frac{1}{|K|}\iint_K {\bm u}^n_h(x,y) ~ \mathrm{d} x \mathrm{d} y
    -
    \sum_{i=1}^3
    \frac{w_i}{l_K^{(i)}}
    \int_{e_K^{(i)}} {\bm u}^n_h(x,y) ~ \mathrm{ds}
    }
    {1-\sum_{i=1}^3 w_i}
    =
    \frac
    {
    \bar {\bm u}_K^n
    -
    \sum_{i=1}^3 {w_i} \sum_{\nu=1}^Q \omega^{\tt G}_\nu \, {\bm u}^{\rm int}_{i,\nu}
    }
    {1-\sum_{i=1}^3 w_i}.
\end{equation*}

We are now ready to demonstrate that a provable BP property of the high-order scheme \eqref{eq:1182} can be derived using any feasible convex decomposition \eqref{eq:980}  on triangular cells.

\begin{theorem}[BP via Feasible Convex Decomposition]\label{thm:1008}
    Consider the hyperbolic system of conservation laws \eqref{eq:System}. If a convex decomposition of the form \eqref{eq:980} is feasible for the $\mathbb{P}^k$ space on a triangular cell $K$, and the solution ${\bm u}_h^n$ satisfies
    \begin{equation}\label{eq:1526}
        \begin{dcases}
            {\bm u}^{\rm int}_{i,\nu}, {\bm u}^{\rm ext}_{i,\nu} \in \mathcal{G} ~~ \forall i, \nu, K
            & \textrm{if the condition \eqref{eq:12151} holds}, \\
            {\bm u}^{\rm int}_{i,\nu}, {\bm u}^{\rm ext}_{i,\nu}, {\bm u}_K^* \in \mathcal{G} ~~ \forall i, \nu, K
            & \textrm{otherwise},
        \end{dcases}
    \end{equation}
    then the $\mathbb{P}^k$-based high-order scheme \eqref{eq:1182} with the LF flux and a viscosity coefficient $\alpha$ satisfying
    \begin{equation}\label{eq:1250}
        \alpha 
        \ge 
        \alpha^{\rm wLF} :=
        \max_{{\bm x}\in {e}^{(i)}_K, i \in \{1,2,3\}, K \in \mathcal{T}_h} 
        \tilde\alpha({\bm u}_h({\bm x}),{\bm n}_K^{(i)})
    \end{equation}
    is BP under the CFL condition
    \begin{equation}\label{eq:1009}
        \alpha \frac{\Delta t}{|K|} \le \min 
        \left\{ 
        \frac{w_1}{l^{(1)}_K},
        \frac{w_2}{l^{(2)}_K},
        \frac{w_3}{l^{(3)}_K}
        \right\}
        =\mathcal{C}_{\rm BP}.
    \end{equation}
\end{theorem}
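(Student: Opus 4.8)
The plan is to mirror the scalar argument of \Cref{thm:1048}, replacing the scalar monotonicity and min--max reasoning by the linear-functional description of $\mathcal{G}$ furnished by the GQL representation \eqref{eq:1200} together with the weak LF property. I focus on the nondegenerate case (condition \eqref{eq:12151} fails), the case \eqref{eq:12151} being handled identically using \eqref{eq:1215} in place of \eqref{eq:1180} and dropping the internal-average term. First I would apply the reformulated convex decomposition \eqref{eq:1180} to the polynomial ${\bm u}_h|_K \in [\mathbb{P}^k]^d$ to write $\bar{\bm u}_K^n = \sum_{i=1}^3 w_i \sum_{\nu=1}^Q \omega^{\tt G}_\nu {\bm u}^{\rm int}_{i,\nu} + (1-\sum_i w_i){\bm u}^*_K$, and substitute this into the update \eqref{eq:1182}. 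Exactly as in the scalar proof, this recasts $\bar{\bm u}_K^{n+1}$ as $\sum_{i=1}^3 l^{(i)}_K \sum_{\nu=1}^Q \omega^{\tt G}_\nu {\bm H}_{i,\nu} + (1-\sum_i w_i){\bm u}^*_K$, where ${\bm H}_{i,\nu}$ is the vector analogue of the bracketed expression appearing in \Cref{thm:1048}.

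The second step is to regroup each edge--node contribution into manifestly convex pieces. Writing $\theta_i := \alpha\,\Delta t\, l^{(i)}_K/(2|K|)$, the CFL condition \eqref{eq:1009} is exactly $\theta_i \le w_i/2$, so I can split $l^{(i)}_K {\bm H}_{i,\nu} = (w_i - 2\theta_i){\bm u}^{\rm int}_{i,\nu} + \theta_i {\bm L}^{\rm int}_{i,\nu} + \theta_i {\bm L}^{\rm ext}_{i,\nu}$, where ${\bm L}^{\bullet}_{i,\nu} := {\bm u}^{\bullet}_{i,\nu} - \alpha^{-1}{\bm F}({\bm u}^{\bullet}_{i,\nu})\cdot {\bm n}^{(i)}_K$ is a one-dimensional LF building block and $w_i - 2\theta_i \ge 0$. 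Collecting all coefficients (including $1-\sum_i w_i \ge 0$) shows they are nonnegative and sum to one, so $\bar{\bm u}_K^{n+1}$ is a convex combination of the states ${\bm u}^{\rm int}_{i,\nu}$, the building blocks ${\bm L}^{\rm int}_{i,\nu}, {\bm L}^{\rm ext}_{i,\nu}$, and ${\bm u}^*_K$. Since $\mathcal{G} = \mathcal{G}^\star$ is cut out by the linear functionals $({\bm u}-{\bm u}^\star_j)\cdot {\bm n}^\star_j$, it suffices to test $\bar{\bm u}_K^{n+1}$ against each such functional. The terms in ${\bm u}^{\rm int}_{i,\nu}$ and ${\bm u}^*_K$ are nonnegative (respectively strictly positive for the internal average) by the hypothesis \eqref{eq:1526} that these states lie in $\mathcal{G}=\mathcal{G}^\star$, and the building-block terms are controlled by the weak LF consequence \eqref{eq:1233}: dividing \eqref{eq:1233} by $\alpha>0$ gives $({\bm L}^{\bullet}_{i,\nu}-{\bm u}^\star)\cdot {\bm n}^\star_j \succ \alpha^{-1}{\bm \zeta}({\bm u}^\star)\cdot {\bm n}^{(i)}_K$.

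The main obstacle is the residual ${\bm \zeta}({\bm u}^\star)$ left over by the weak LF estimate, which prevents each building block from lying individually in $\mathcal{G}$. The key is that these residuals cancel upon summation over the cell. Indeed, the aggregated residual contribution to $(\bar{\bm u}_K^{n+1}-{\bm u}^\star)\cdot{\bm n}^\star_j$ equals $\tfrac{2}{\alpha}\sum_{i=1}^3 \theta_i\, {\bm \zeta}({\bm u}^\star)\cdot {\bm n}^{(i)}_K = \tfrac{\Delta t}{|K|}\,{\bm \zeta}({\bm u}^\star)\cdot\big(\sum_{i=1}^3 l^{(i)}_K {\bm n}^{(i)}_K\big)$, which vanishes by the geometric identity $\sum_{i=1}^3 l^{(i)}_K {\bm n}^{(i)}_K = {\bm 0}$ (the integral of the outward normal over the closed boundary $\partial K$). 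Hence every linear functional stays on the correct side of $0$, with strictness inherited from the internal-average term (or, in the degenerate case \eqref{eq:12151}, from the strictly weighted building blocks, since $\theta_i>0$), so $\bar{\bm u}_K^{n+1}\in \mathcal{G}^\star=\mathcal{G}$. For the 2D Euler system this step is immediate, since \Cref{thm:1355} gives ${\bm \zeta}^\star = {\bm 0}$.
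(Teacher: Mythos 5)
Your proof is correct and follows essentially the same route as the paper's: apply the convex decomposition \eqref{eq:1180} (or \eqref{eq:1215} in the degenerate case) to the cell average, reduce bound preservation to the linear functionals of the GQL representation \eqref{eq:1200}, use the hypothesis \eqref{eq:1526} and the CFL condition \eqref{eq:1009} for the state terms, invoke the weak LF estimate \eqref{eq:1233} for the flux terms, and cancel the ${\bm \zeta}({\bm u}^\star)$ residuals via the geometric identity $\sum_{i=1}^3 l_K^{(i)}{\bm n}_K^{(i)} = {\bm 0}$. Your repackaging of the update as an explicit convex combination of the states ${\bm u}^{\rm int}_{i,\nu}$, ${\bm u}^*_K$ and the LF building blocks ${\bm L}^{\bullet}_{i,\nu}$ with weights $w_i-2\theta_i$, $\theta_i$ is just a notational variant of the paper's direct chain of inequalities for $(\bar{\bm u}^{n+1}_K-{\bm u}^\star)\cdot \mathbf{n}^\star_j$, not a different argument.
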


\begin{proof}
    Let us first consider the case when condition \eqref{eq:12151} does not hold. Applying the convex decomposition \eqref{eq:1180} to ${\bm u}_h|_K \in (\mathbb{P}^k)^d$ yields 
	\begin{equation}\label{eq:1269}
		\bar{\bm u}^n_K = 
		\frac{1}{|K|}\iint_K {\bm u}_h(x,y) ~ \mathrm{d} x \mathrm{d} y
		\stackrel{\eqref{eq:1180}}{=}
		\sum_{i=1}^3
		\frac{w_i}{l_K^{(i)}}
		\int_{e_K^{(i)}} {\bm u}_h(x,y) ~ \mathrm{ds}
		+
		\left( 1-\sum_{i=1}^3 w_i\right) {\bm u}^*_K
            =
		\sum_{i=1}^3
		w_i
		\sum_{\nu=1}^Q
		\omega^{\tt G}_\nu \, {\bm u}^{\rm int}_{i,\nu}
		+
		\left( 1-\sum_{i=1}^3 w_i\right) {\bm u}^*_K.
	\end{equation}
	Notice that for any ${\bm u}^\star \in \mathcal{S}$ and $j \in \mathbb{J}\cup\hat{\mathbb{J}}$,
	\begin{align*}
		&(\bar{\bm u}^{n+1}_K-{\bm u}^\star)\cdot \mathbf{n}^\star_j
		\\
		\stackrel{\eqref{eq:1182}}{=} & 
		\bar{\bm u}^{n}_K\cdot \mathbf{n}^\star_j
		-
		{\bm u}^\star\cdot \mathbf{n}^\star_j
		-
		\frac{\Delta t}{2 |K|} 
		\sum_{i=1}^{3}  l_K^{(i)} 
		\sum_{\nu = 1}^{Q} \omega^{\tt G}_\nu 
		\left[ 
		\bm{F}({\bm u}^{\rm int}_{i,\nu})\cdot{\bm n}_K^{(i)}
		+\bm{F}({\bm u}^{\rm ext}_{i,\nu})\cdot{\bm n}_K^{(i)}
		+\alpha {\bm u}^{\rm int}_{i,\nu}
		-\alpha {\bm u}^{\rm ext}_{i,\nu}
		\right]\cdot \mathbf{n}^\star_j \\
		\stackrel{\eqref{eq:1269}}{=} & 
		\sum_{i = 1}^3 l^{(i)}_K
		\sum_{\nu = 1}^Q \omega^{\tt G}_{\nu}
		\left[
			\left(\frac{w_i}{l_K^{(i)} }-\frac{\alpha\dt}{2|K|}\right)
			({\bm u}^{\rm int}_{i,\nu} - {\bm u}^\star) \cdot \mathbf{n}^\star_j
			-
			\frac{\dt}{2|K|}
			\left(\bm{F}({\bm u}^{\rm int}_{i,\nu})\cdot{\bm n}_K^{(i)}\right)\cdot \mathbf{n}^\star_j
		\right]\\
		& +
		\sum_{i = 1}^3 l^{(i)}_K
		\sum_{\nu = 1}^Q \omega^{\tt G}_{\nu}
		\left[
		\frac{\alpha\dt}{2|K|}
		({\bm u}^{\rm ext}_{i,\nu} - {\bm u}^\star) \cdot \mathbf{n}^\star_j
		-
		\frac{\dt}{2|K|}
		\left(\bm{F}({\bm u}^{\rm ext}_{i,\nu})\cdot{\bm n}_K^{(i)}\right)\cdot \mathbf{n}^\star_j
		\right]
		+
            \left( 1-\sum_{i=1}^3 w_i\right) 
		\left({\bm u}^*_K-{\bm u}^\star\right)\cdot \mathbf{n}^\star_j
		\\
		\stackrel{\eqref{eq:1526}, \eqref{eq:1009}}{\ge} & 
		\sum_{i = 1}^3 l^{(i)}_K
		\sum_{\nu = 1}^Q \omega^{\tt G}_{\nu}
		\left[
		\frac{\alpha\dt}{2|K|}
		({\bm u}^{\rm int}_{i,\nu} - {\bm u}^\star) \cdot \mathbf{n}^\star_j
		-
		\frac{\dt}{2|K|}
		\left(\bm{F}({\bm u}^{\rm int}_{i,\nu})\cdot{\bm n}_K^{(i)}\right)\cdot \mathbf{n}^\star_j
		\right]\\
		& +
		\sum_{i = 1}^3 l^{(i)}_K
		\sum_{\nu = 1}^Q \omega^{\tt G}_{\nu}
		\left[
		\frac{\alpha\dt}{2|K|}
		({\bm u}^{\rm ext}_{i,\nu} - {\bm u}^\star) \cdot \mathbf{n}^\star_j
		-
		\frac{\dt}{2|K|}
		\left(\bm{F}({\bm u}^{\rm ext}_{i,\nu})\cdot{\bm n}_K^{(i)}\right)\cdot \mathbf{n}^\star_j
		\right]
		\\
		\stackrel{\eqref{eq:1233},\eqref{eq:1250}}{\succ} & 
		\sum_{i = 1}^3 l^{(i)}_K
		\sum_{\nu = 1}^Q \omega^{\tt G}_{\nu}
		\frac{\dt}{2|K|}
		\left[
		{\bm \zeta}({\bm u}^\star)\cdot {\bm n}_K^{(i)}
		\right]
		+
		\sum_{i = 1}^3 l^{(i)}_K
		\sum_{\nu = 1}^Q \omega^{\tt G}_{\nu}
		\frac{\dt}{2|K|}
		\left[
		{\bm \zeta}({\bm u}^\star)\cdot {\bm n}_K^{(i)}
		\right]
		\\
		= &
		\frac{\dt}{|K|}
		\sum_{\nu = 1}^Q \omega^{\tt G}_{\nu}
		{\bm \zeta}({\bm u}^\star)
		\cdot 
		\left(
			\sum_{i = 1}^3 l^{(i)}_K
			{\bm n}_K^{(i)}
		\right)
		=
		\frac{\dt}{2|K|}
		\sum_{\nu = 1}^Q \omega^{\tt G}_{\nu}
		{\bm \zeta}({\bm u}^\star)
		\cdot 
		{\bm 0}
		= 0.
	\end{align*}
	Therefore, we have shown that $\bar{\bm u}^{n+1}_K \in \mathcal{G}^\star = \mathcal{G}$ based on \eqref{eq:1200}. The remaining case when condition \eqref{eq:12151} holds can be similarly proven using the convex decomposition \eqref{eq:1215}, completing the proof.
\end{proof}

As shown in Theorems \ref{thm:1048} and \ref{thm:1008}, $\mathcal{C}_{\rm BP}$ is the CFL number  for provably ensuring the BP property. Consequently, we refer to $\mathcal{C}_{\rm BP}$ as the BP CFL number. Different convex decompositions may yield different values of $\mathcal{C}_{\rm BP}$. Our objective is to seek the optimal convex decomposition that maximizes $\mathcal{C}_{\rm BP}$, thereby resulting in the most lenient BP CFL condition. This optimization would enhance the efficiency of high-order BP schemes. This goal leads to the following open problem:
\begin{problem}[Optimal Convex Decomposition Problem]\label{prb:1549}
    Given $k \in \mathbb{N}_+$ and a triangular cell $K$, find the optimal feasible convex decomposition in the form of \eqref{eq:980} that maximizes the BP CFL number:
    \[
        \mathcal{C}_{\rm BP} = 
        \min 
        \left\{ 
        \frac{w_1}{l^{(1)}_K},
        \frac{w_2}{l^{(2)}_K},
        \frac{w_3}{l^{(3)}_K}
        \right\}.
    \]
\end{problem}

\subsection{Optimal Convex Decomposition for $\mathbb{P}^1$ and $\mathbb{P}^2$ Spaces on Triangular Cells}

In this section, we address the optimal convex decomposition problem (\Cref{prb:1549}) for the widely-used $\mathbb{P}^1$ and $\mathbb{P}^2$ spaces on triangular cells. Specifically, we will construct feasible convex decompositions for $\mathbb{P}^1$ and $\mathbb{P}^2$ spaces, respectively, and demonstrate that these decompositions are \emph{optimal}. That is, no other convex decomposition can achieve a larger BP CFL number $\mathcal{C}_{\rm BP}$. For notational simplicity, we consider a particular triangular cell $K$ and assume that the indices of its edges $\{e_K^{(i)}\}_{i=1}^3$ and vertices $\{{\bm v}^{(i)}\}_{i=1}^3$ are arranged such that $l_K^{(1)} \ge l_K^{(2)} \ge l_K^{(3)}$.

\subsubsection{Optimal Convex Decomposition for $\mathbb{P}^1$ Space on Triangular Cells}

We consider a feasible convex decomposition for $\mathbb{P}^1$ space, given by
\begin{subequations}\label{eq:1040}
\begin{equation}\label{eq:1032}
	w_i = \frac
	{2l_K^{(i)}}
	{3l_K^{(1)}+3l_K^{(2)}}, \qquad i = 1,2,3,
\end{equation}
with a single internal node, weighted and positioned as follows:
\begin{equation}\label{eq:1048}
	\omega_1 = 
	\frac
	{l_K^{(1)}+l_K^{(2)}-2l_K^{(3)}}
	{3l_K^{(1)}+3l_K^{(2)}},
	\quad
	(\xi_1, \eta_1)^\top
	=
	\frac{
		(l_K^{(1)}-l_K^{(3)})\,{\bm v}^{(1)}
		+
		(l_K^{(2)}-l_K^{(3)})\,{\bm v}^{(2)}
	}
	{l_K^{(1)}+l_K^{(2)}-2 \, l_K^{(3)}}.
\end{equation}
\end{subequations}
Note that if $l_K^{(1)} = l_K^{(2)} = l_K^{(3)}$, then the internal node weight $\omega_1$ becomes zero, and no internal node is required.

We claim that this convex decomposition \eqref{eq:1040} is \emph{optimal} for $\mathbb{P}^1$ space, meaning the BP CFL number $\mathcal{C}_{\rm BP}$ defined in \eqref{eq:1050} and \eqref{eq:1009} cannot be further improved.

\begin{theorem}\label{thm:1062}
	The convex decomposition \eqref{eq:1040} is feasible and optimal for $\mathbb{P}^1$ space.
\end{theorem}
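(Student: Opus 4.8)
The plan is to split the claim of \Cref{thm:1062} into a \emph{feasibility} part (the three conditions of \Cref{def:980}) and an \emph{optimality} part (no feasible decomposition beats the $\mathcal{C}_{\rm BP}$ of \eqref{eq:1040}). Throughout I abbreviate $a=l_K^{(1)}\ge b=l_K^{(2)}\ge c=l_K^{(3)}$ and work in barycentric coordinates, exploiting that for any $p\in\mathbb{P}^1$ the cell average equals $p$ at the centroid and each edge average $\frac{1}{l_K^{(i)}}\int_{e_K^{(i)}}p\,\mathrm{ds}$ equals $p$ at the midpoint of $e_K^{(i)}$ (with $e_K^{(i)}$ opposite ${\bm v}^{(i)}$). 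Consequently, \eqref{eq:980} restricted to $\mathbb{P}^1$ is equivalent to two moment identities: the weights sum to one, and the $w_i$-weighted edge midpoints together with the internal node reproduce the centroid.

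For feasibility I would first verify the normalization in Condition (ii): $\sum_i w_i+\omega_1=\frac{2a+2b+2c+(a+b-2c)}{3(a+b)}=1$, while $w_i>0$ is immediate and $\omega_1=\frac{a+b-2c}{3(a+b)}\ge 0$ follows from $c\le a$ and $c\le b$ (with $\omega_1=0$ exactly in the equilateral case, where no internal node is needed). Condition (iii) holds because $(\xi_1,\eta_1)=\frac{(a-c){\bm v}^{(1)}+(b-c){\bm v}^{(2)}}{a+b-2c}$ has nonnegative coefficients summing to one, hence lies on the segment $[{\bm v}^{(1)},{\bm v}^{(2)}]=e_K^{(3)}\subset\overline K$. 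Condition (i) then reduces to matching the centroid: summing the barycentric coefficients picked up by each vertex from the three edge midpoints and the single internal node, a short computation gives total weight $\frac{a+b}{3(a+b)}=\frac13$ at each vertex, which confirms \eqref{eq:980} for all $p\in\mathbb{P}^1$.

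For optimality the key observation is that \eqref{eq:1040} balances all three ratios, $\frac{w_i}{l_K^{(i)}}=\frac{2}{3(a+b)}$ for every $i$, so it attains $\mathcal{C}_{\rm BP}=\frac{2}{3(a+b)}$. To show this is maximal, I would test an \emph{arbitrary} feasible decomposition against the single linear function $\lambda_3$, the barycentric coordinate associated with ${\bm v}^{(3)}$: it equals $\tfrac12$ at the midpoints of the two longest edges $e_K^{(1)},e_K^{(2)}$, vanishes at the midpoint of $e_K^{(3)}$, and is nonnegative at every internal node (as $\mathcal{I}_K\subset K$). Since $\lambda_3(\text{centroid})=\tfrac13$, evaluating \eqref{eq:980} at $p=\lambda_3$ yields $\frac13=\frac{w_1+w_2}{2}+\sum_s\omega_s\lambda_3(\xi_s,\eta_s)\ge\frac{w_1+w_2}{2}$, i.e. $w_1+w_2\le\frac23$. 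Combined with $w_1\ge\mathcal{C}_{\rm BP}\,a$ and $w_2\ge\mathcal{C}_{\rm BP}\,b$ (immediate from $\mathcal{C}_{\rm BP}=\min_i w_i/l_K^{(i)}$), this gives $\mathcal{C}_{\rm BP}(a+b)\le w_1+w_2\le\frac23$, hence $\mathcal{C}_{\rm BP}\le\frac{2}{3(a+b)}$, exactly the value realized by \eqref{eq:1040}.

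The feasibility computations are routine; the crux is the optimality bound, and the main obstacle I anticipate is identifying the correct single test function. The point is that the ordering $a\ge b\ge c$ makes $a+b$ the largest pairwise edge-length sum, so only the constraint $w_1+w_2\le\frac23$ produced by $\lambda_3$ (which couples precisely the two largest weights $w_1,w_2$) is sharp; the companions $\lambda_1,\lambda_2$ give the weaker bounds $\frac{2}{3(b+c)}$ and $\frac{2}{3(a+c)}$. I would finally remark that this argument handles the internal-node case and the degenerate case \eqref{eq:12151} uniformly, since the nonnegative internal contribution $\sum_s\omega_s\lambda_3(\xi_s,\eta_s)$ is simply empty in the latter.
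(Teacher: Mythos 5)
Your proof is correct and follows essentially the same route as the paper: the optimality bound rests on exactly the paper's critical polynomial (your $\lambda_3$ is the paper's $p_*$ with $p_*({\bm v}^{(1)})=p_*({\bm v}^{(2)})=0$, $p_*({\bm v}^{(3)})=1$), using its edge averages $\tfrac12,\tfrac12,0$ and nonnegativity on $K$ to force $w_1+w_2\le\tfrac23$. The only cosmetic differences are that you state the bound directly rather than by contradiction, and you spell out the feasibility verification that the paper leaves to the reader.
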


\begin{proof}
	To prove that the convex decomposition \eqref{eq:1040} is feasible for $\mathbb{P}^1$ space, one can verify that it satisfies the necessary conditions. We will demonstrate its optimality by contradiction. Assume there exists another feasible convex decomposition for $\mathbb{P}^1$:
	\begin{equation}\label{eq:1072}
		\frac{1}{|K|}\iint_K p(x,y) ~ \textrm{d} x \textrm{d} y
		=
		\sum_{i=1}^3
		\frac{\tilde w_i}{l_K^{(i)}}
		\int_{e_K^{(i)}} p(x,y) ~ \textrm{ds}
		+
		\sum_{s = 1}^{\tilde S}
		\tilde \omega_s p(\tilde \xi_s,\tilde \eta_s),
	\end{equation}
	that achieves a strictly larger BP CFL number:
	\[
		\min 
		\left\{ 
		\frac{\tilde w_1}{l^{(1)}_K},
		\frac{\tilde w_2}{l^{(2)}_K},
		\frac{\tilde w_3}{l^{(3)}_K}
		\right\}
		>
		\frac{2}{3(l^{(1)}_K+l^{(2)}_K)},
	\]
	which implies
	\begin{equation}\label{eq:1094}
		\tilde w_i > \frac{2 l^{(i)}_K}{3(l^{(1)}_K+l^{(2)}_K)}, \qquad i = 1,2,3.
	\end{equation}
	Consider a critical polynomial $p_*(x,y) \in \mathbb{P}^1$ defined by
	\begin{equation}\label{eq:1773}
		p_*(x_1,y_1) = p_*(x_2,y_2) = 0, \quad
		p_*(x_3,y_3) = 1.
	\end{equation}
	Notice that $p_*(x,y)$ is non-negative on $K$,
	\begin{equation}\label{eq:1103}
		p_*(x,y) \ge 0 \qquad \forall ~ (x,y) \in K,
	\end{equation}
	and $p_*(x,y)$ is linear on $K$; see the following \Cref{fig:1813} for an illustration. It is clear to see that the cell and edge averages of function $p_*(x,y)$ are
	\[
		\frac{1}{|K|}\iint_K p_*(x,y) ~ \textrm{d} x \textrm{d} y = 
  \frac{1}{|K|} \cdot \frac{|K|}{3} = \frac13, \quad
		\frac{1}{l_K^{(1)}} \int_{e_K^{(1)}} p_*(x,y) ~ \textrm{ds} =
		\frac{1}{l_K^{(2)}} \int_{e_K^{(2)}} p_*(x,y) ~ \textrm{ds} = \frac12, \quad
		\frac{1}{l_K^{(3)}} \int_{e_K^{(3)}} p_*(x,y) ~ \textrm{ds} = 0.
	\]
	Using the convex decomposition \eqref{eq:1072} on $p_*$, we get
	\begin{align*}
		\frac13 & = \frac{1}{|K|}\iint_K p_*(x,y) ~ \textrm{d} x \textrm{d} y
		=
		\sum_{i=1}^3
		\frac{\tilde w_i}{l_K^{(i)}}
		\int_{e_K^{(i)}} p_*(x,y) ~ \textrm{ds}
		+
		\sum_{s = 1}^{S}
		\tilde \omega_s p_*(\tilde \xi_s,\tilde \eta_s) \\
		& =
		\frac{\tilde w_1}{2}+\frac{\tilde w_2}{2}+\sum_{s = 1}^{S}
		\tilde \omega_s p_*(\tilde \xi_s,\tilde \eta_s)
		\stackrel{\eqref{eq:1103}}{\ge}
		\frac{\tilde w_1}{2}+\frac{\tilde w_2}{2}
		\stackrel{\eqref{eq:1094}}{>}
		\frac{l^{(1)}_K}{3(l^{(1)}_K+l^{(2)}_K)}+\frac{l^{(2)}_K}{3(l^{(1)}_K+l^{(2)}_K)} = \frac13,
	\end{align*}
	which is a contradiction, thus proving the optimality.
\end{proof}

\begin{figure}
    \centering
    \includegraphics[width=0.99\linewidth]{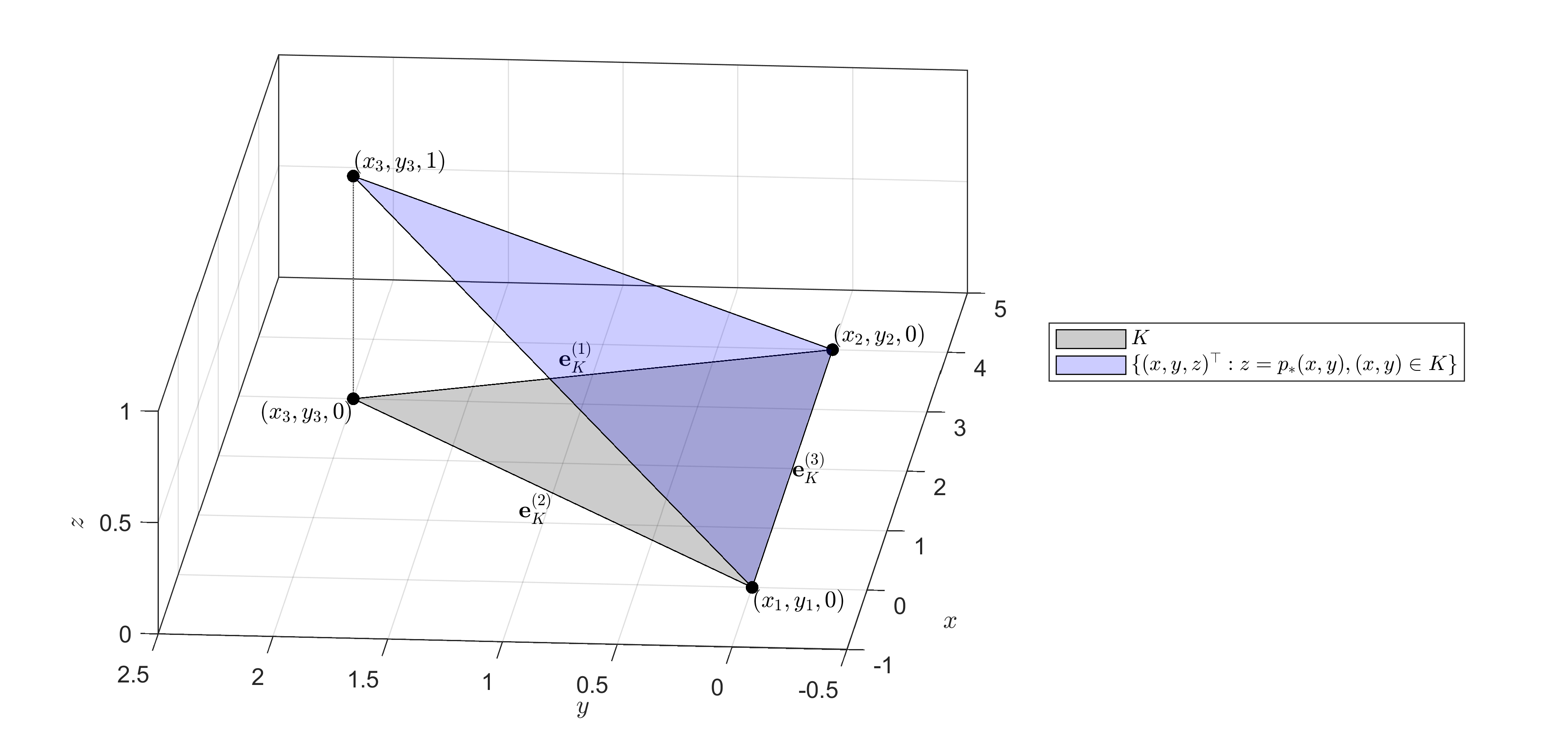}
    \caption{Visualization of the critical polynomial $p_*(x,y)$ defined in \eqref{eq:1773} on an example triangular cell $K$ with vertices $(0,0)$, $(4,0)$, and $(3,2)$.}
    \label{fig:1813}
\end{figure}

\subsubsection{Optimal Convex Decomposition for $\mathbb{P}^2$ Space on Triangular Cells}

We consider a convex decomposition feasible for $\mathbb{P}^2$ space with edge weights
\begin{subequations}\label{eq:1139}
	\begin{equation}
		w_i = \frac{2l_K^{(i)}}{9 \bar l_K + 3 \hat l_K}, \qquad i = 1,2,3,
	\end{equation}
	and two internal nodes with weights and coordinates
	\begin{equation}
		\omega_s = \frac{\bar l_K + \hat l_K}{6 \bar l_K + 2 \hat l_K},
		\quad
		(\xi_s, \eta_s)^\top 
		= 
		\sum_{i=1}^3 \beta_{s,i} \, {\bm v}^{(i)},
		\quad
		\beta_{s,i}
		=
		\frac
		{{\bm l}_K^\top {\bm M}_{s,i} \, {\bm l}_K
			+ 2 \, c_{s,i} \, \hat l_K}
		{18(\bar l_K+\hat l_K)(l_K^{(2)}+\hat l_K)},
		\qquad s = 1,2,
	\end{equation}
	where ${\bm l}_K := (l_K^{(1)}, l_K^{(2)}, l_K^{(3)})^\top$, $\bar l_K := (l_K^{(1)} + l_K^{(2)} + l_K^{(3)})/3$, and
        \begin{equation}\label{eq:1703}
	\begin{aligned}
	\hat l_K := & \sqrt{
		\frac
		{
			\big(l_K^{(1)}\big)^2
			+
			\big(l_K^{(2)}\big)^2
			+
			\big(l_K^{(3)}\big)^2
		}
		{3}
		+
		\frac
		{
			\big(l_K^{(1)}-l_K^{(2)}\big)^2
			+
			\big(l_K^{(2)}-l_K^{(3)}\big)^2
			+
			\big(l_K^{(3)}-l_K^{(1)}\big)^2
		}
		{3}
	}\\
	= &
	\sqrt
	{
		\big(l_K^{(1)}\big)^2
		+
		\big(l_K^{(2)}\big)^2
		+
		\big(l_K^{(3)}\big)^2
		-
		\frac23\Big(
		l_K^{(1)} l_K^{(2)} + l_K^{(2)} l_K^{(3)} + l_K^{(3)} l_K^{(1)}
		\Big)
	}.
	\end{aligned}
        \end{equation}
	The positive coefficients $c_{s,i}$ and the positive definite matrices ${\bm M}_{s,i}$ are given by
	\begin{equation}
		\begin{aligned}
		c_{1,1}
		&=
		3 l_K^{(1)} + 3 l_K^{(2)} + \sqrt{3} l_K^{(2)} - \sqrt{3} l_K^{(3)},
		&
		c_{1,2}
		&=
		6 l_K^{(2)} + \sqrt{3} l_K^{(3)} - \sqrt{3} l_K^{(1)},
		&
		c_{1,3}
		&=
		3 l_K^{(2)} + 3 l_K^{(3)} + \sqrt{3} l_K^{(1)} - \sqrt{3} l_K^{(2)},
		\\
		c_{2,1}
		&=
		3 l_K^{(1)} + 3 l_K^{(2)} + \sqrt{3} l_K^{(3)} - \sqrt{3} l_K^{(2)},
		&
		c_{2,2}
		&=
		6 l_K^{(2)} + \sqrt{3} l_K^{(1)} - \sqrt{3} l_K^{(3)},
		&
		c_{2,3}
		&=
		3 l_K^{(2)} + 3 l_K^{(3)} + \sqrt{3} l_K^{(2)} - \sqrt{3} l_K^{(1)},
		\end{aligned}
	\end{equation}
	\begin{equation}
		\begin{aligned}
			{\bm M}_{1,1}&=
			\begin{pmatrix}
				6 & 1 & -2 \\
				1 & 2\sqrt{3}+6 & -\sqrt{3}-2 \\
				-2 & -\sqrt{3}-2 & 6
			\end{pmatrix},&
			{\bm M}_{2,1}&=
			\begin{pmatrix}
				6 & 1 & -2 \\
				1 & 6-2\sqrt{3} & \sqrt{3}-2 \\
				-2 & \sqrt{3}-2 & 6
			\end{pmatrix},&
			\\
			{\bm M}_{1,2}&=
			\begin{pmatrix}
				6 & -\sqrt{3}-2 & -2 \\
				-\sqrt{3}-2 & 12 & \sqrt{3}-2 \\
				-2 & \sqrt{3}-2 & 6
			\end{pmatrix},&
			{\bm M}_{2,2}&=
			\begin{pmatrix}
				6 & \sqrt{3}-2 & -2 \\
				\sqrt{3}-2 & 12 & -\sqrt{3}-2 \\
				-2 & -\sqrt{3}-2 & 6
			\end{pmatrix},&
			\\
			{\bm M}_{1,3}&=
			\begin{pmatrix}
				6 & \sqrt{3}-2 & -2 \\
				\sqrt{3}-2 & 6-2\sqrt{3} & 1 \\
				-2 & 1 & 6
			\end{pmatrix},&
			{\bm M}_{2,3}&=
			\begin{pmatrix}
				6 & -\sqrt{3}-2 & -2 \\
				-\sqrt{3}-2 & 2\sqrt{3}+6 & 1 \\
				-2 & 1 & 6
			\end{pmatrix}.
		\end{aligned}
	\end{equation}
\end{subequations}

The convex decomposition defined by \eqref{eq:1139} is designed to be feasible for $\mathbb{P}^2$ space and, as we will show, is optimal. This means it achieves the maximum possible BP CFL number $\mathcal{C}_{\rm BP}$, which governs the stability condition of bound-preserving schemes.

\begin{theorem}\label{thm:1239}
	The convex decomposition \eqref{eq:1139} is feasible and optimal for $\mathbb{P}^2$ space.
\end{theorem}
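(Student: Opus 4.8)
The plan is to prove the two assertions of \Cref{thm:1239} separately: \emph{feasibility}, which reduces to verifying the three conditions of \Cref{def:980} for the explicit data in \eqref{eq:1139}, and \emph{optimality}, which I would establish by a critical-polynomial argument mirroring the $\mathbb{P}^1$ proof of \Cref{thm:1062}.

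For feasibility I would check the conditions in turn. Condition (i), exactness on $\mathbb{P}^2$, is six linear moment identities; by affine invariance it suffices to verify them on a fixed reference triangle for a basis of $\mathbb{P}^2$, after which the identities for a general $K$ follow by pulling back along the map $J_K$. Condition (ii) is a short computation: each $w_i$ and each $\omega_s$ is manifestly positive, and summing gives $\sum_i w_i=\frac{2\bar l_K}{3\bar l_K+\hat l_K}$ and $\sum_s\omega_s=\frac{\bar l_K+\hat l_K}{3\bar l_K+\hat l_K}$, whose sum is exactly $1$. Condition (iii) requires the two internal nodes to lie in $K$, i.e.\ the barycentric coordinates $\beta_{s,i}$ must be nonnegative with $\sum_i\beta_{s,i}=1$; the normalization follows from applying (i) to the constant and to the two linear coordinate functions, while nonnegativity follows from $\hat l_K>0$, the positive-definiteness of the fixed matrices ${\bm M}_{s,i}$ (checked once via their leading principal minors), and the positivity of the $c_{s,i}$. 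The positivity of each $c_{s,i}$ is the only place where geometry enters: writing, e.g., $c_{1,2}=6l_K^{(2)}-\sqrt{3}\bigl(l_K^{(1)}-l_K^{(3)}\bigr)$ and using the triangle inequality $l_K^{(1)}<l_K^{(2)}+l_K^{(3)}$ together with the ordering $l_K^{(1)}\ge l_K^{(2)}\ge l_K^{(3)}$ gives $\sqrt3\,(l_K^{(1)}-l_K^{(3)})<\sqrt3\,l_K^{(2)}<6l_K^{(2)}$, and the remaining $c_{s,i}$ are handled identically.

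For optimality I would argue by contradiction. Suppose a competing feasible decomposition $\{\tilde w_i,\tilde\omega_s,(\tilde\xi_s,\tilde\eta_s)\}$ attains a strictly larger BP number than $\mathcal{C}_{\rm BP}=\tfrac{2}{9\bar l_K+3\hat l_K}$ in \eqref{eq:1009}. Since the three ratios $w_i/l_K^{(i)}$ of \eqref{eq:1139} are all equal to this common value, the strict improvement forces $\tilde w_i/l_K^{(i)}>w_i/l_K^{(i)}$, hence $\tilde w_i>w_i$, for every $i$. Now let $\ell$ be a nonzero affine function vanishing at both internal nodes $(\xi_1,\eta_1)$ and $(\xi_2,\eta_2)$ of \eqref{eq:1139}, chosen so that $\ell$ does not vanish identically on any edge of $K$; such a choice is possible because both nodes are interior. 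Take the critical polynomial $p_*:=\ell^2\in\mathbb{P}^2$, the nonnegative quadratic analogue of the linear critical polynomial \eqref{eq:1773}. Then $p_*\ge0$ on $K$, $p_*$ vanishes at both optimal nodes, and $\int_{e_K^{(i)}}p_*\,\mathrm{ds}>0$ for each $i$ since $\ell\not\equiv0$ on $e_K^{(i)}$. Evaluating the \emph{exact} optimal decomposition \eqref{eq:1139} on $p_*$ makes the internal-node terms drop, giving $\frac1{|K|}\iint_K p_*=\sum_i\frac{w_i}{l_K^{(i)}}\int_{e_K^{(i)}}p_*\,\mathrm{ds}$; evaluating the competing decomposition on the same $p_*\ge0$ (whose nodes lie in $K$ by its condition (iii) and whose weights are positive by (ii)) yields $\frac1{|K|}\iint_K p_*\ge\sum_i\frac{\tilde w_i}{l_K^{(i)}}\int_{e_K^{(i)}}p_*\,\mathrm{ds}$. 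Combining the two relations gives $\sum_i\frac{w_i}{l_K^{(i)}}\int_{e_K^{(i)}}p_*\,\mathrm{ds}\ge\sum_i\frac{\tilde w_i}{l_K^{(i)}}\int_{e_K^{(i)}}p_*\,\mathrm{ds}$, whereas $\tilde w_i>w_i$ and $\int_{e_K^{(i)}}p_*\,\mathrm{ds}>0$ force the reverse strict inequality, which is the desired contradiction.

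The main obstacle is the feasibility step (iii): certifying that the two nodes lie inside $K$. The coordinates $\beta_{s,i}$ intertwine the $\sqrt3$ terms, the irrational quantity $\hat l_K$ of \eqref{eq:1703}, and the quadratic forms ${\bm l}_K^\top{\bm M}_{s,i}{\bm l}_K$, so confirming $\beta_{s,i}>0$ demands simultaneously the positive-definiteness of the ${\bm M}_{s,i}$ and the triangle-inequality-based positivity of the $c_{s,i}$; the exactness verification (i) is likewise algebraically heavy. Once interiority of the nodes is in hand, the optimality argument is short, its only inventive ingredient being the recognition that the correct dual certificate is the perfect square $\ell^2$ built from the line joining the two optimal nodes.
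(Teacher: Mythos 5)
Your proposal is correct and takes essentially the same route as the paper: feasibility by direct verification of conditions (i)--(iii) of the definition, and optimality by contradiction using a nonnegative critical polynomial $p_*=\ell^2\in\mathbb{P}^2$, the square of an affine function vanishing at the two internal nodes, so that the optimal decomposition evaluates $p_*$ exactly through its edge terms alone while any competitor with a strictly larger $\mathcal{C}_{\rm BP}$ must overshoot the cell average. The paper writes this same certificate explicitly as $q_*^2$ with $q_*(x_i,y_i)=3\bar l_K+\hat l_K-4l_K^{(i)}$ (treating the equilateral case separately), whereas your geometric characterization of $\ell$ via interiority of the nodes handles both cases uniformly and spells out the contradiction chain that the paper leaves implicit.
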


\begin{proof}
	It can be verified that the convex decomposition \eqref{eq:1139} satisfies the conditions (i), (ii), and (iii) of \Cref{def:eqri} for $\mathbb{P}^2$ space. Specifically, condition (i) is satisfied because the decomposition holds exactly for all polynomials in $\mathbb{P}^2$. Condition (ii) is verified by the positivity of the weights $\{w_i\}_{i=1}^3$ and $\{\omega_s\}_{s=1}^2$, ensuring that they sum up to one. Since the barycentric coordinates $\beta_{s,i}$ satisfy $\sum_i \beta_{s,i} = 1$ and $\beta_{s,i} > 0$ for $s=1,2$ and $i=1,2,3$, the internal nodes are strict convex combinations of the vertices of $K$, fulfilling condition (iii).

	To demonstrate the optimality of the convex decomposition \eqref{eq:1139}, we follow an argument similar to that used in the proof of \Cref{thm:1062}. Consider a critical polynomial $p_*(x,y) \in \mathbb{P}^2$ that is nonnegative over $K$. When $l_K^{(1)} > l_K^{(3)}$, we can define $p_*(x,y) = q_*^2(x,y)$, where $q_*(x,y) \in \mathbb{P}^1$ is uniquely determined by the conditions:
	\[
		q_*(x_i,y_i) = 3\bar l_K + \hat l_K - 4 l_K^{(i)}, \quad i = 1,2,3.
	\]
	In the special case of $l_K^{(1)} = l_K^{(2)} = l_K^{(3)}$, we define $p_*(x,y) = q_*^2(x,y)$ with $q_*(x,y) \in \mathbb{P}^1$ such that $q_*(x_1,y_1)=0$, $q_*(x_2,y_2)=1$, and $q_*(x_3,y_3)=-1$. These definitions ensure that $p_*(x,y)$ captures the necessary features to demonstrate the contradiction if a decomposition with a larger BP CFL number exists. Thus, the proof of optimality is completed.
\end{proof}

\subsection{BP High-Order Schemes for Hyperbolic Conservation Laws via Optimal Convex Decomposition}\label{sec:BP}

As specific applications of Theorems \ref{thm:1048}, \ref{thm:1008}, \ref{thm:1062}, and \ref{thm:1239}, this section introduces the construction of $\mathbb{P}^1$- and $\mathbb{P}^2$-based BP schemes using the optimal convex decompositions \eqref{eq:1040} and \eqref{eq:1139}, for scalar conservation laws \eqref{eq:scalar} and hyperbolic systems of conservation laws \eqref{eq:euler} that satisfy the weak LF property (\Cref{asp:1273}).

\begin{corollary}\label{cor:1837}
    For hyperbolic systems of conservation laws \eqref{eq:euler} satisfying the weak LF property (\Cref{asp:1273}) (resp. scalar conservation laws \eqref{eq:scalar}), 
    if the solution at time level $t^n$ satisfies \eqref{eq:1526} (resp. \eqref{eq:1275}), then the $\mathbb{P}^1$-based scheme \eqref{eq:1182} (resp. \eqref{eq:1033}) with the LF flux and viscosity coefficient $\alpha$ satisfying \eqref{eq:1250} (resp. $\alpha \ge \alpha^{\rm LF}$) is BP under the optimal BP CFL condition
	\begin{equation}\label{eq:1692}
		\alpha \frac{\dt}{|K|} \le
		\mathcal{C}_{K,1}^{\tt DCW} \quad \forall K \in \mathcal{T}_h, \quad
		\mathcal{C}_{K,1}^{\tt DCW} := \frac{2}{3(l^{(1)}_K+l^{(2)}_K)}.
	\end{equation}
    This BP CFL condition \eqref{eq:1692} cannot be improved by selecting another feasible convex decomposition.
\end{corollary}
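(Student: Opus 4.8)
The plan is to obtain this corollary as a direct specialization of the general bound-preserving theory to the explicit optimal $\mathbb{P}^1$ decomposition~\eqref{eq:1040}, whose feasibility and optimality are already supplied by \Cref{thm:1062}. First I would invoke the feasibility half of \Cref{thm:1062} so that \Cref{thm:1048} (scalar case) and \Cref{thm:1008} (system case) both apply to~\eqref{eq:1040}. Under the assumed solution membership condition \eqref{eq:1275} (resp.\ \eqref{eq:1526}) together with the stated viscosity lower bound $\alpha \ge \alpha^{\rm LF}$ (resp.\ \eqref{eq:1250}), these theorems guarantee the BP property provided $\alpha \frac{\dt}{|K|} \le \mathcal{C}_{\rm BP}$, where $\mathcal{C}_{\rm BP} = \min\{w_1/l_K^{(1)},\, w_2/l_K^{(2)},\, w_3/l_K^{(3)}\}$ as defined in~\eqref{eq:1050} and~\eqref{eq:1009}.

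The only genuine computation is then to evaluate $\mathcal{C}_{\rm BP}$ for the specific weights in~\eqref{eq:1032}. Substituting $w_i = 2l_K^{(i)}/(3l_K^{(1)}+3l_K^{(2)})$ gives
\[
\frac{w_i}{l_K^{(i)}} = \frac{2}{3(l^{(1)}_K+l^{(2)}_K)}, \qquad i = 1,2,3,
\]
so that the three edge ratios coincide. Consequently the minimum is attained simultaneously on every edge, and $\mathcal{C}_{\rm BP} = \mathcal{C}_{K,1}^{\tt DCW} = \frac{2}{3(l^{(1)}_K+l^{(2)}_K)}$, which is precisely the CFL bound~\eqref{eq:1692}. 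I would emphasize that this equalization of the three ratios is exactly the structural mechanism by which~\eqref{eq:1040} maximizes the minimum: choosing the edge weights proportional to the edge lengths removes any ``weakest edge,'' which is what a non-proportional choice would otherwise create.

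The non-improvability assertion is inherited verbatim from the optimality half of \Cref{thm:1062}: since no feasible $\mathbb{P}^1$ convex decomposition can yield a strictly larger $\mathcal{C}_{\rm BP}$, the condition~\eqref{eq:1692} cannot be relaxed by any alternative decomposition. There is no real obstacle beyond bookkeeping here; the single point requiring care is to confirm that the hypotheses actually invoked from \Cref{thm:1048,thm:1008} — namely the solution membership conditions and the viscosity lower bound — match exactly those assumed in the corollary, so that the BP conclusion transfers without gap. Once that correspondence is checked, the result follows immediately by combining the explicit ratio computation with \Cref{thm:1062}.
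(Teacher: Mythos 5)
Your proposal is correct and follows exactly the route the paper intends: the corollary is stated there as a direct application of \Cref{thm:1048,thm:1008} (BP via any feasible decomposition) combined with \Cref{thm:1062} (feasibility and optimality of \eqref{eq:1040}), and your ratio computation $w_i/l_K^{(i)} = 2/\bigl(3(l_K^{(1)}+l_K^{(2)})\bigr)$ for all $i$ is precisely what identifies $\mathcal{C}_{\rm BP}$ with $\mathcal{C}_{K,1}^{\tt DCW}$. The only caveat is that your remark about equalized ratios being the "mechanism" of optimality is heuristic rather than probative — but since you correctly defer the actual non-improvability claim to the contradiction argument of \Cref{thm:1062}, there is no gap.
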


\begin{remark}
	Based on the classical convex decomposition, Zhang, Xia, and Shu \cite{ZXSPP2012} proposed a $\mathbb{P}^1$-based BP scheme under a more restrictive BP CFL condition (see the comparison in \Cref{fig:1744a}):
	\[
		\alpha^{\rm LF} \frac{\dt}{|K|} \le
		\mathcal{C}^{\tt ZXS}_{K,1} \quad \forall K \in \mathcal{T}_h, \quad
		\mathcal{C}^{\tt ZXS}_{K,1} := \frac{1}{9 \bar l_K},
	\]
    where $\bar l_K$ denotes the average edge length of the triangular cell $K$.
\end{remark}

\begin{remark}
Following our BP analysis in \Cref{thm:1008}, we can also use Chen and Shu's quadrature from \cite[Table C.1]{ChenShu2017} as a feasible convex decomposition for the $\mathbb{P}^1$ space. The resulting $\mathbb{P}^1$-based scheme for the 2D Euler system \eqref{eq:euler} is BP under the following CFL condition:
\begin{align}\label{eq:P1chenshu}
    \alpha^{\rm LF} \frac{\dt}{|K|} \le \mathcal{C}^{\tt CS}_{K,1} \quad \forall K \in \mathcal{T}_h, \quad \mathcal{C}^{\tt CS}_{K,1} := \frac{1}{6 l_K^{(1)}}.
\end{align}
Clearly, our optimal BP CFL condition \eqref{eq:1692} is less restrictive than \eqref{eq:P1chenshu}.
\end{remark}

\begin{corollary}\label{cor:1859}
    For hyperbolic systems of conservation laws \eqref{eq:euler} satisfying the weak LF property (\Cref{asp:1273}) (resp. scalar conservation laws \eqref{eq:scalar}), if the solution at time level $t^n$ satisfies \eqref{eq:1526} (resp. \eqref{eq:1275}), then the $\mathbb{P}^2$-based scheme \eqref{eq:1182} (resp. \eqref{eq:1033}) with the LF flux and viscosity coefficient $\alpha$ satisfying \eqref{eq:1250} (resp. $\alpha \ge \alpha^{\rm LF}$) is BP under the optimal BP CFL condition
    \begin{equation}\label{eq:1713}
	\alpha \frac{\dt}{|K|} \le \mathcal{C}^{\tt DCW}_{K,2} \quad \forall K \in \mathcal{T}_h, \quad
	\mathcal{C}^{\tt DCW}_{K,2} := \frac{2}{9\bar l_K + 3 \hat l_K},
	\end{equation}
    where $\bar l_K$ denotes the average edge length of the triangular cell $K$ and $\hat l_K$ is defined in \eqref{eq:1703}. This BP CFL condition \eqref{eq:1713} cannot be improved by selecting another convex decomposition.
\end{corollary}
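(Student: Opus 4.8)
The plan is to derive \Cref{cor:1859} as an immediate consequence of the general bound-preserving results (\Cref{thm:1008} for the Euler system and \Cref{thm:1048} for scalar laws) together with the feasibility and optimality of the explicit $\mathbb{P}^2$ decomposition furnished by \Cref{thm:1239}. Essentially all the technical work has already been carried out in those results, so the argument here reduces to identifying the BP CFL number produced by the decomposition \eqref{eq:1139} and then citing the appropriate theorems.

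First I would evaluate $\mathcal{C}_{\rm BP}$ for the decomposition \eqref{eq:1139}. The crucial structural feature is that the edge weights take the form $w_i = \tfrac{2 l_K^{(i)}}{9\bar l_K + 3\hat l_K}$, so that each ratio $w_i / l_K^{(i)} = \tfrac{2}{9\bar l_K + 3\hat l_K}$ is independent of $i$. The three candidate values in the minimum therefore coincide, and
\[
\mathcal{C}_{\rm BP} = \min\left\{\frac{w_1}{l_K^{(1)}},\frac{w_2}{l_K^{(2)}},\frac{w_3}{l_K^{(3)}}\right\} = \frac{2}{9\bar l_K + 3\hat l_K} = \mathcal{C}^{\tt DCW}_{K,2}.
\]
Since \Cref{thm:1239} certifies that \eqref{eq:1139} is a feasible convex decomposition for $\mathbb{P}^2$, I would then apply \Cref{thm:1008} (for the Euler system \eqref{eq:euler} under the weak LF property in \Cref{asp:1273}, with $\alpha$ chosen as in \eqref{eq:1250}) or \Cref{thm:1048} (for scalar conservation laws, with $\alpha \ge \alpha^{\rm LF}$). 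Under the standing hypothesis \eqref{eq:1526} (resp. \eqref{eq:1275}) on the solution at $t^n$, these theorems yield the BP property under the CFL condition $\alpha \tfrac{\dt}{|K|} \le \mathcal{C}_{\rm BP}$, which is precisely the condition \eqref{eq:1713}.

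For the sharpness claim I would invoke the optimality half of \Cref{thm:1239}: no feasible convex decomposition of the form \eqref{eq:980} can achieve a BP CFL number exceeding that of \eqref{eq:1139}. Hence the condition \eqref{eq:1713} cannot be relaxed by replacing \eqref{eq:1139} with any other feasible decomposition, which establishes the corollary. The only genuinely nonroutine ingredient, namely the critical-polynomial construction proving optimality, has already been supplied inside the proof of \Cref{thm:1239}, so no real obstacle remains here; the present argument is pure bookkeeping, with the single computational point being the collapse of all three edge ratios $w_i/l_K^{(i)}$ to the common value $\mathcal{C}^{\tt DCW}_{K,2}$.
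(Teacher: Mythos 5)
Your proposal is correct and follows exactly the route the paper intends: the corollary is presented there as an immediate application of Theorems \ref{thm:1048} and \ref{thm:1008} combined with the feasibility/optimality of the $\mathbb{P}^2$ decomposition \eqref{eq:1139} established in \Cref{thm:1239}, and your observation that all three ratios $w_i/l_K^{(i)}$ collapse to the common value $\mathcal{C}^{\tt DCW}_{K,2}$ is precisely the bookkeeping step that identifies the BP CFL number. Nothing is missing.
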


\begin{remark}
    Based on \Cref{thm:1355}, one may choose $\alpha = \alpha^{\rm LF}$ for the 2D Euler system \eqref{eq:euler} in Corollaries \ref{cor:1837} and \ref{cor:1859}.
\end{remark}

\begin{remark}
	Based on the classical convex decomposition, Zhang, Xia, and Shu \cite{ZXSPP2012} proposed a $\mathbb{P}^2$-based BP scheme for the 2D Euler system \eqref{eq:euler} under a more restrictive BP CFL condition (see the comparison in \Cref{fig:1744b}):
	\[
	\alpha^{\rm LF} \frac{\dt}{|K|} \le 
	\mathcal{C}^{\tt ZXS}_{K,2} \quad \forall K \in \mathcal{T}_h, \quad
	\mathcal{C}^{\tt ZXS}_{K,2} := \frac{1}{27 \bar l_K}.
	\]
\end{remark}

\begin{remark}
Following our BP analysis in \Cref{thm:1008}, we can also use Chen and Shu's quadrature from \cite[Table C.1]{ChenShu2017} as a feasible convex decomposition for the $\mathbb{P}^2$ space. The resulting $\mathbb{P}^2$-based scheme for the 2D Euler system \eqref{eq:euler} is BP under the following CFL condition:
\begin{align}\label{eq:P2chenshu}
    \alpha^{\rm LF} \frac{\dt}{|K|} \le \mathcal{C}^{\tt CS}_{K,2} \quad \forall K \in \mathcal{T}_h, \quad \mathcal{C}^{\tt CS}_{K,2} := \frac{1}{6 l_K^{(1)}}.
\end{align}
Clearly, our optimal BP CFL condition \eqref{eq:1713} is milder than \eqref{eq:P2chenshu}.
\end{remark}

\begin{remark}
\Cref{tab:2019} provides a comparison of BP CFL numbers induced by three different feasible convex decompositions for $\mathbb{P}^1$ and $\mathbb{P}^2$ spaces on two specific triangular cells. As shown, the BP CFL number derived from the optimal convex decomposition is the largest among them. Indeed, our optimal convex decomposition consistently yields the highest BP CFL number for any triangular cell; see the comparison in \Cref{fig:1744}.

    \begin{table}[h]
    \centering
    \caption{Compare BP CFL numbers and total numbers of internal nodes corresponding to various feasible convex decompositions for $\mathbb{P}^1$ and $\mathbb{P}^2$ spaces on two example triangular cells.}\label{tab:2019}
    \renewcommand\arraystretch{1.7}
    \begin{tabular}{clrcrc} 
     \toprule[1.5pt]
     \multirow{3}{*}{example cell $K$}
     & \multirow{3}{*}{convex decomposition}
     & \multicolumn{2}{c}{$\mathbb{P}^1$ space}
     & \multicolumn{2}{c}{$\mathbb{P}^2$ space}
     \\
     \cmidrule(r){3-4} \cmidrule(l){5-6} 
     && BP CFL & internal
     & BP CFL & internal\\
     && number & nodes
     & number & nodes\\
     \midrule[1.5pt]
     \multirow{3}{*}{\includegraphics[trim = 20 0 20 0, clip, width = 0.18\textwidth]{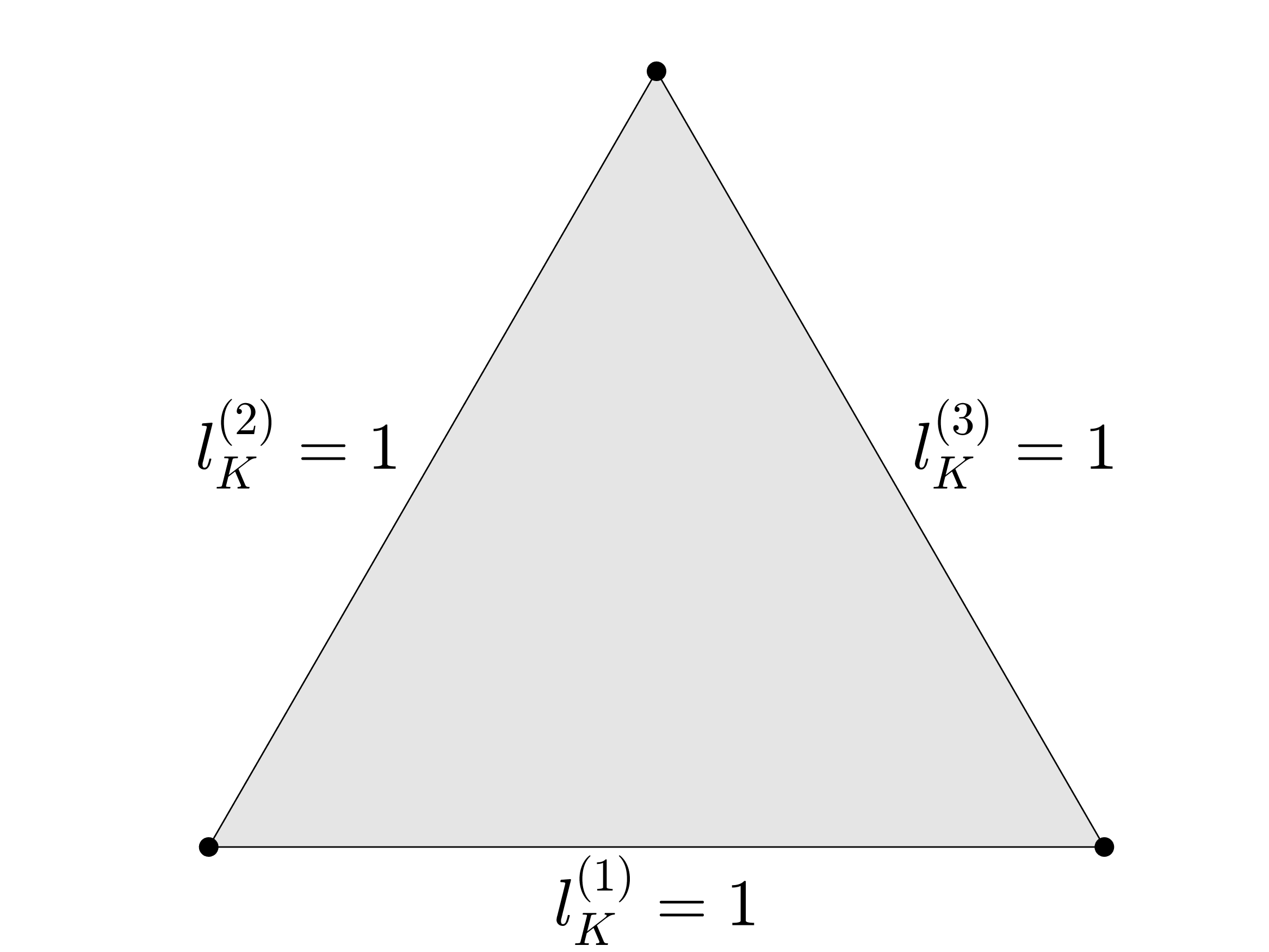}}
     & Optimal 
     & $\frac13 \approx 0.3333$ & 0 
     & $\frac16 \approx 0.1667$ & 1 \\ 
     & Zhang, Xia, Shu \cite{ZXSPP2012} & $\frac19 \approx 0.1111$ & 0 & $\frac{1}{27} \approx 0.0370$ & 9\\ 
     & Chen, Shu \cite{ChenShu2017} & $\frac16 \approx 0.1667$ & 6 & $\frac16 \approx 0.1667$ & 10 \\ 
    \midrule[1.5pt]
    \multirow{3}{*}{\includegraphics[trim = 10 0 20 0, clip, width = 0.18\textwidth]{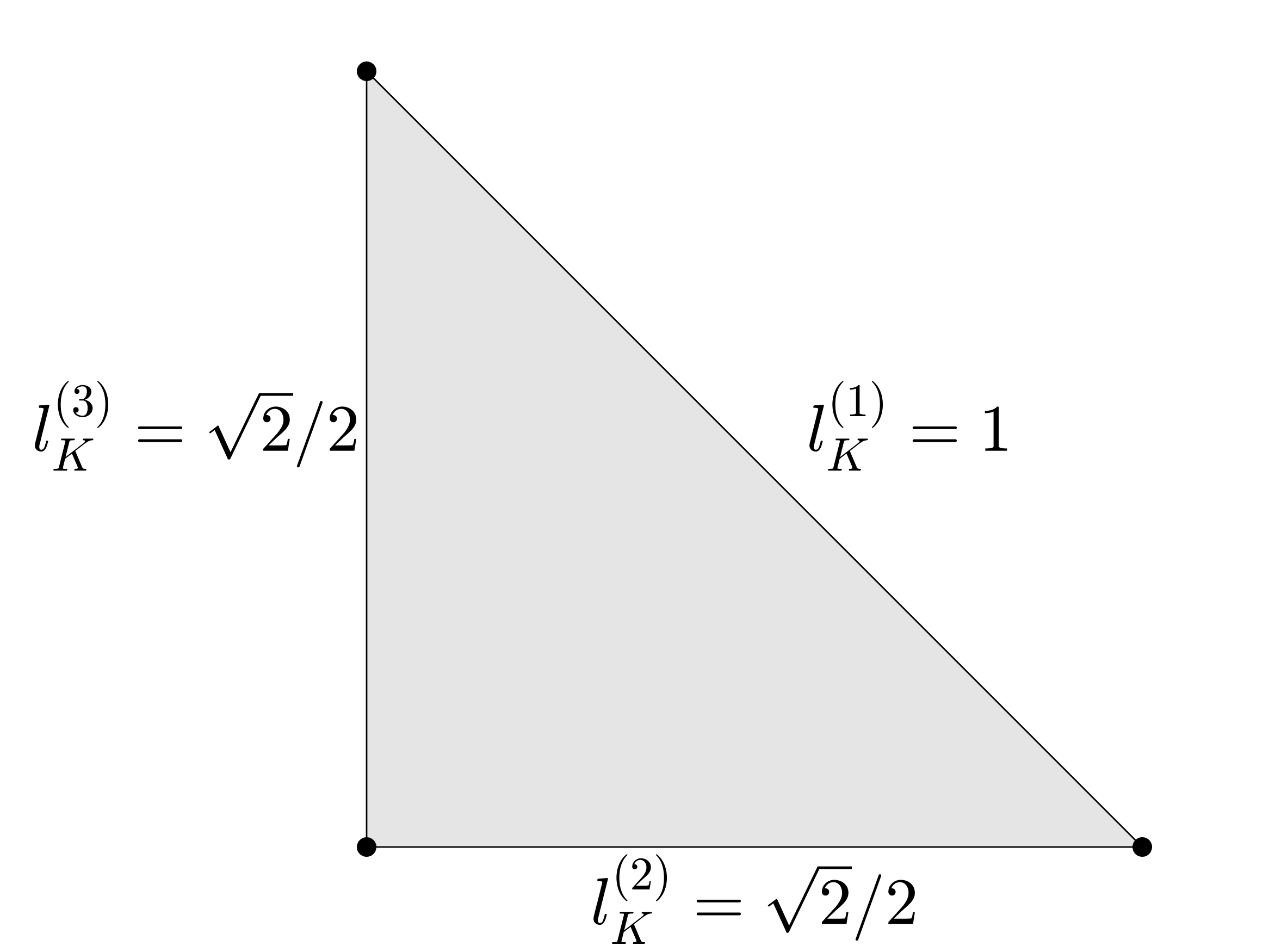}}
     & Optimal 
     & $\frac{2}{3(1+\sqrt{2}/2)} \approx 0.3905$ & 1 
     & $\frac{2}{3\sqrt{2}+\sqrt{15-6\sqrt{2}}+3} \approx 0.2042$ & 2 \\ 
     & Zhang, Xia, Shu \cite{ZXSPP2012} 
     & $\frac{1}{3(1+\sqrt{2})} \approx 0.1381$ & 0 
     & $\frac{1}{9(1+\sqrt{2})} \approx 0.0460$ & 9\\ 
     & Chen, Shu \cite{ChenShu2017} 
     & $\frac16\approx 0.1667$ & 6 
     & $\frac16\approx 0.1667$ & 10 \\ 
    \bottomrule[1.5pt]
    \end{tabular}
    \end{table}    
\end{remark}

\begin{figure}
	\centerline{
		\begin{subfigure}{\textwidth}
		\includegraphics[width=0.33\textwidth]{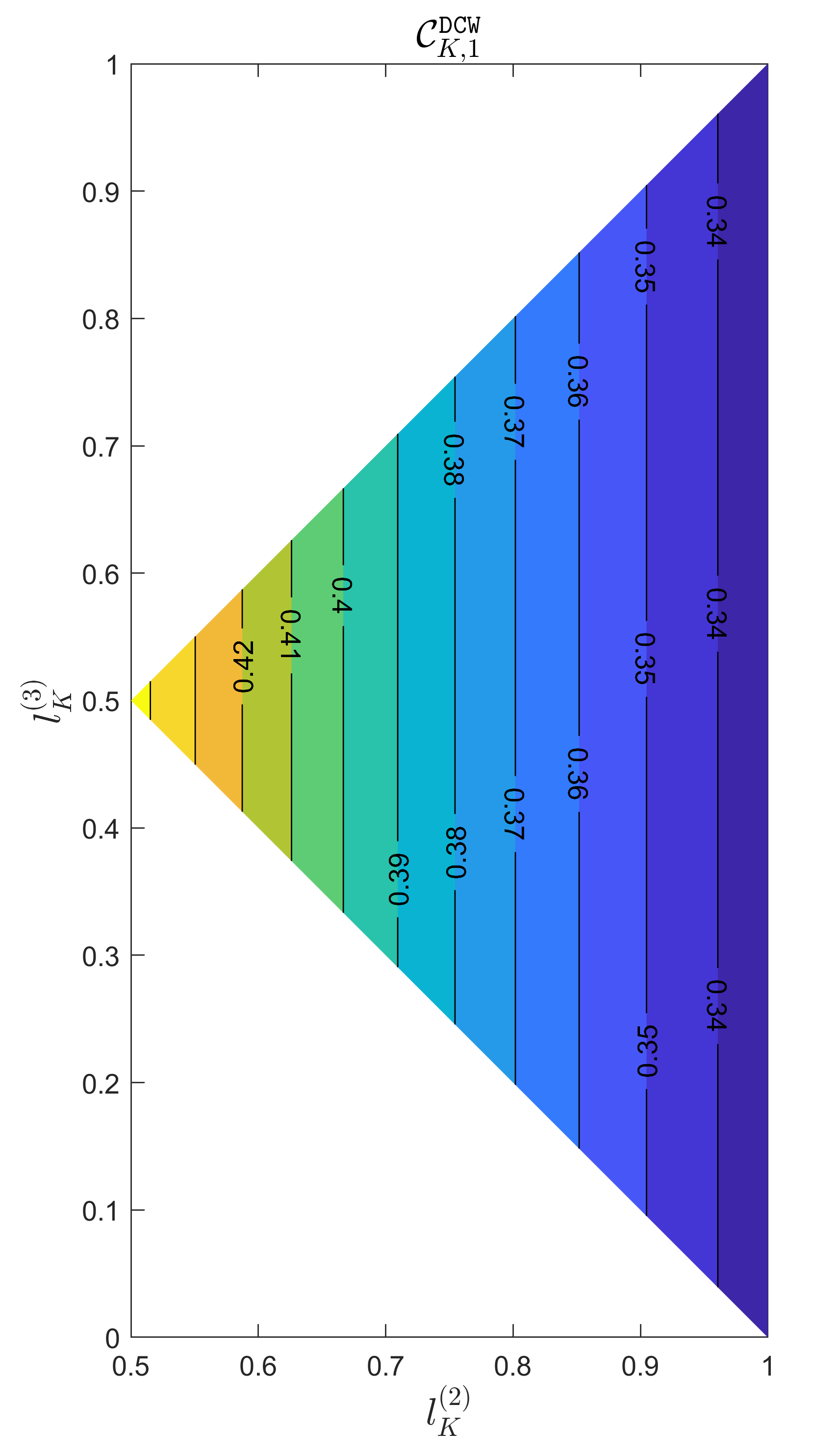}
		\includegraphics[width=0.33\textwidth]{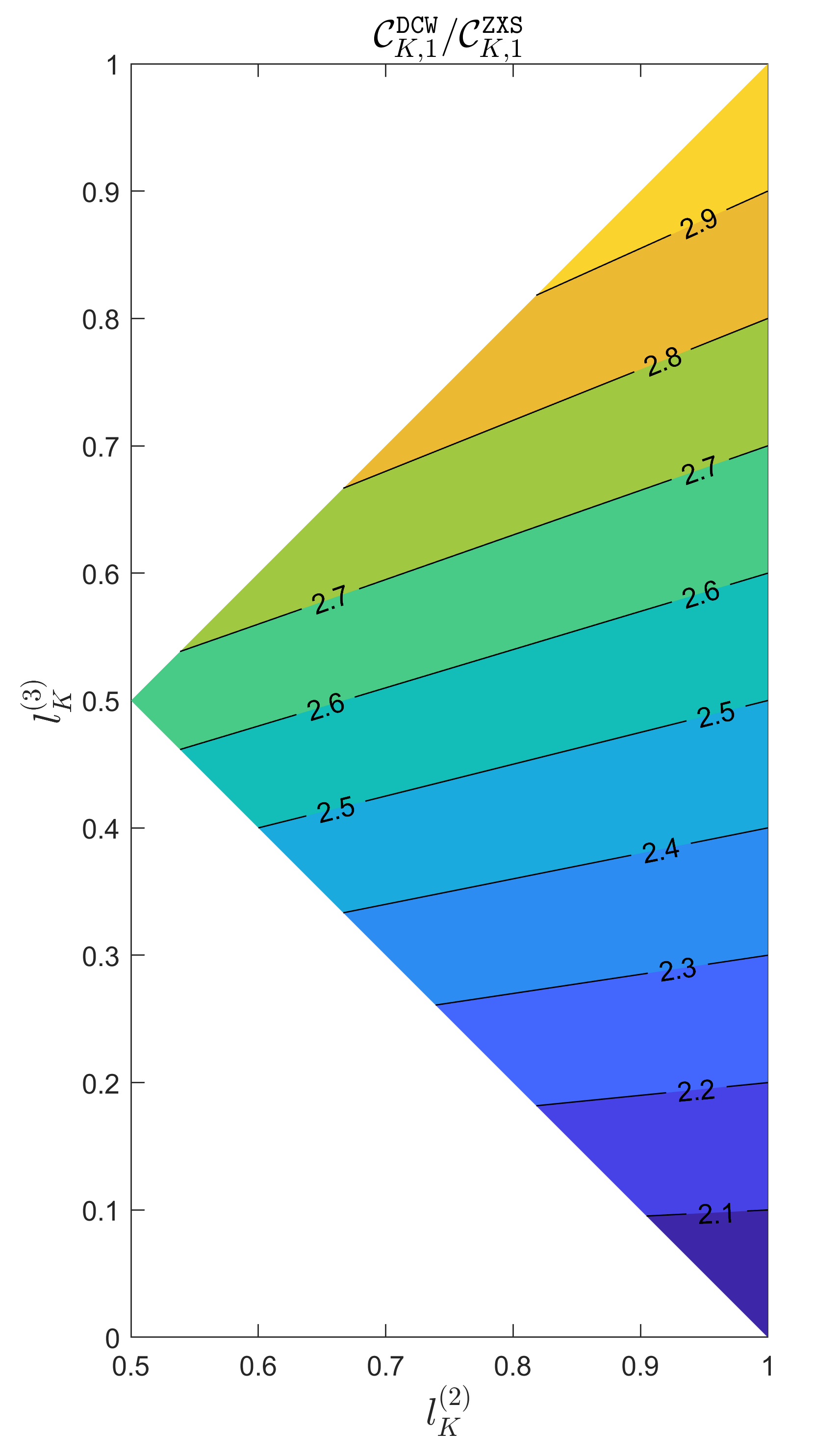}
            \includegraphics[width=0.33\textwidth]{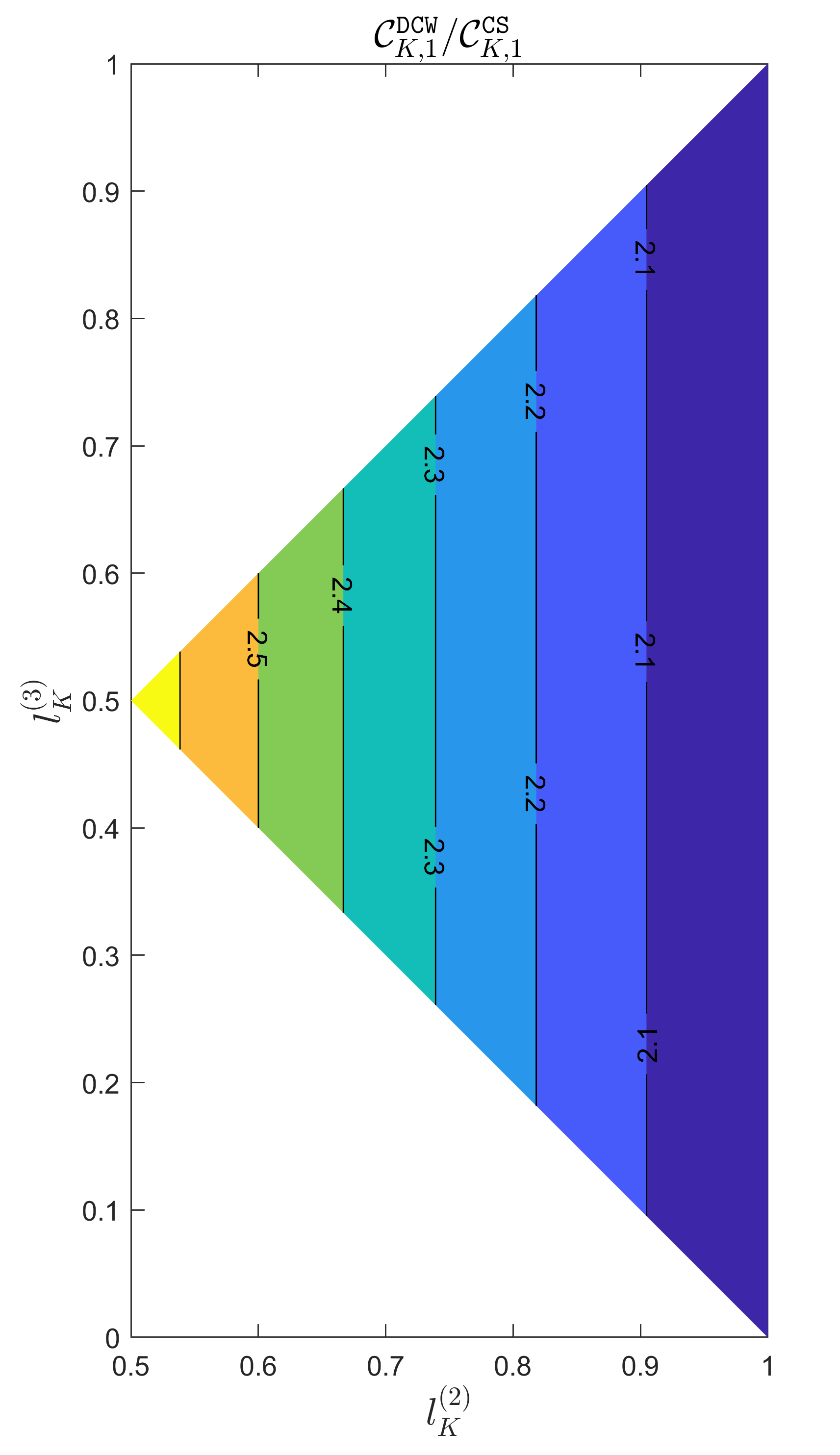}
		\caption{$\mathcal{C}^{\tt DCW}_{K,1}$, $\mathcal{C}^{\tt DCW}_{K,1}/\mathcal{C}^{\tt ZXS}_{K,1} \in [2,3]$, and $\mathcal{C}^{\tt DCW}_{K,1}/\mathcal{C}^{\tt CS}_{K,1} \in [2,2.6667]$.}
		\label{fig:1744a}
		\end{subfigure}
	}
	\centerline{
	\begin{subfigure}{\textwidth}
		\includegraphics[width=0.33\textwidth]{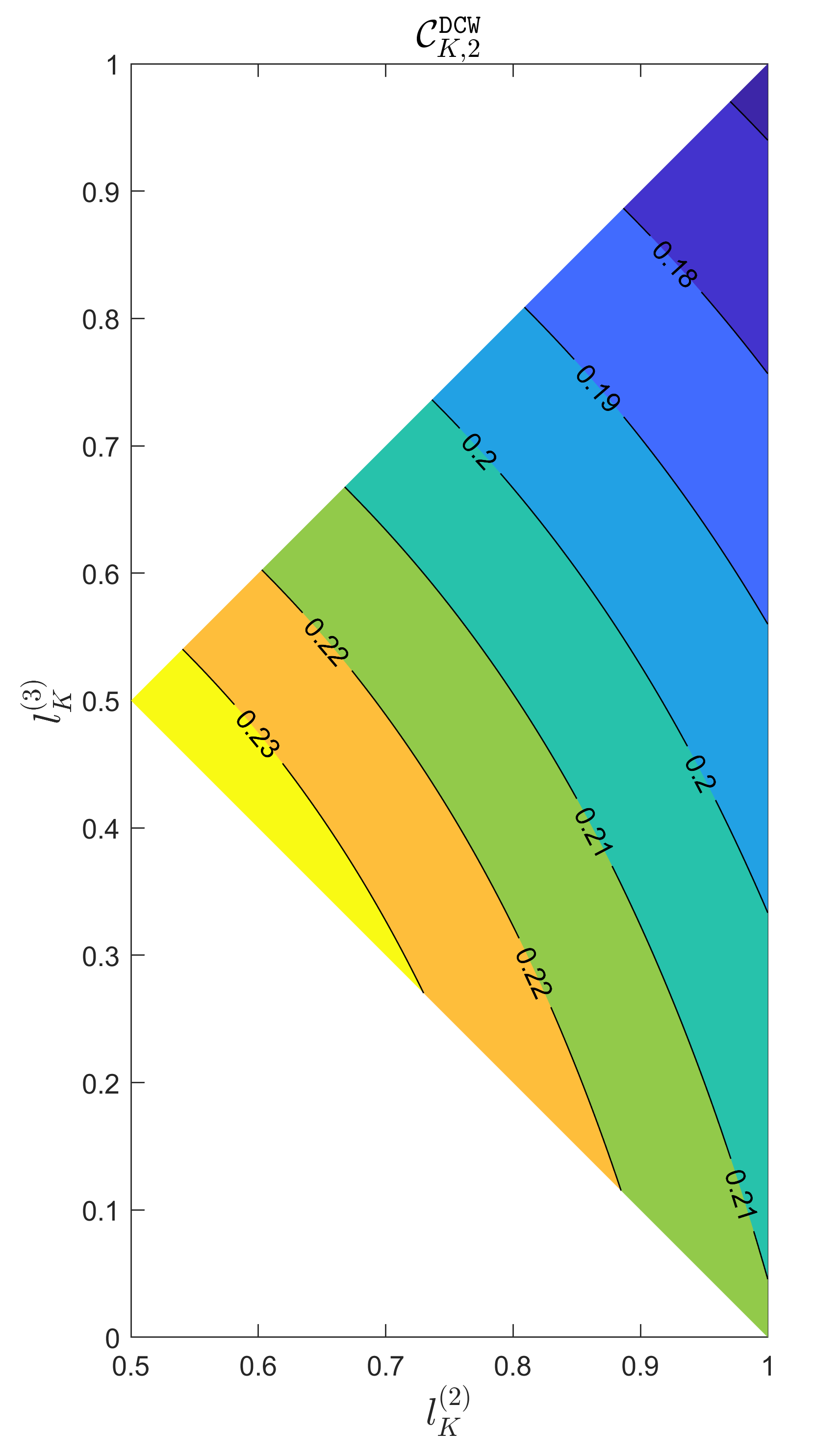}
		\includegraphics[width=0.33\textwidth]{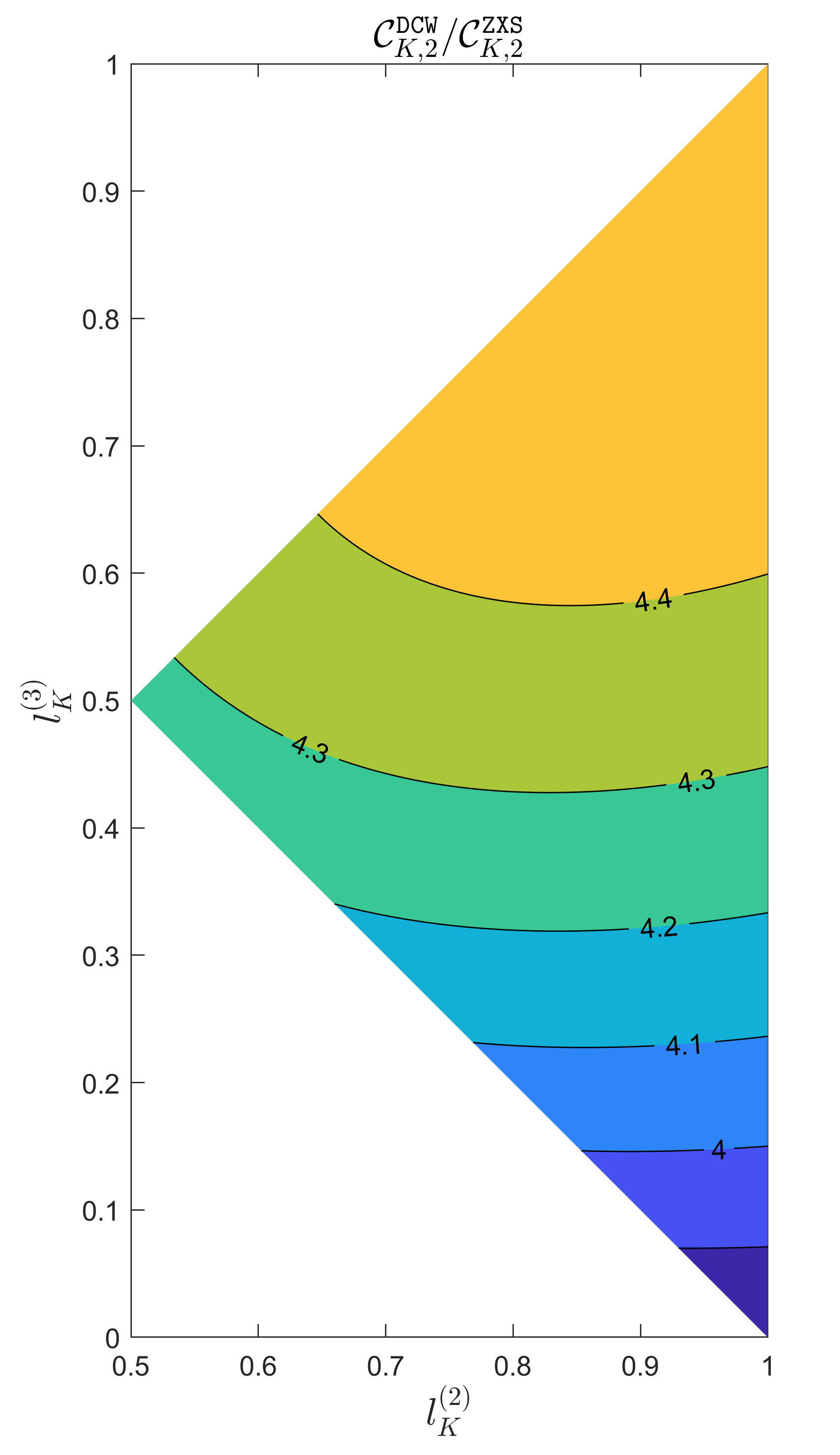}
            \includegraphics[width=0.33\textwidth]{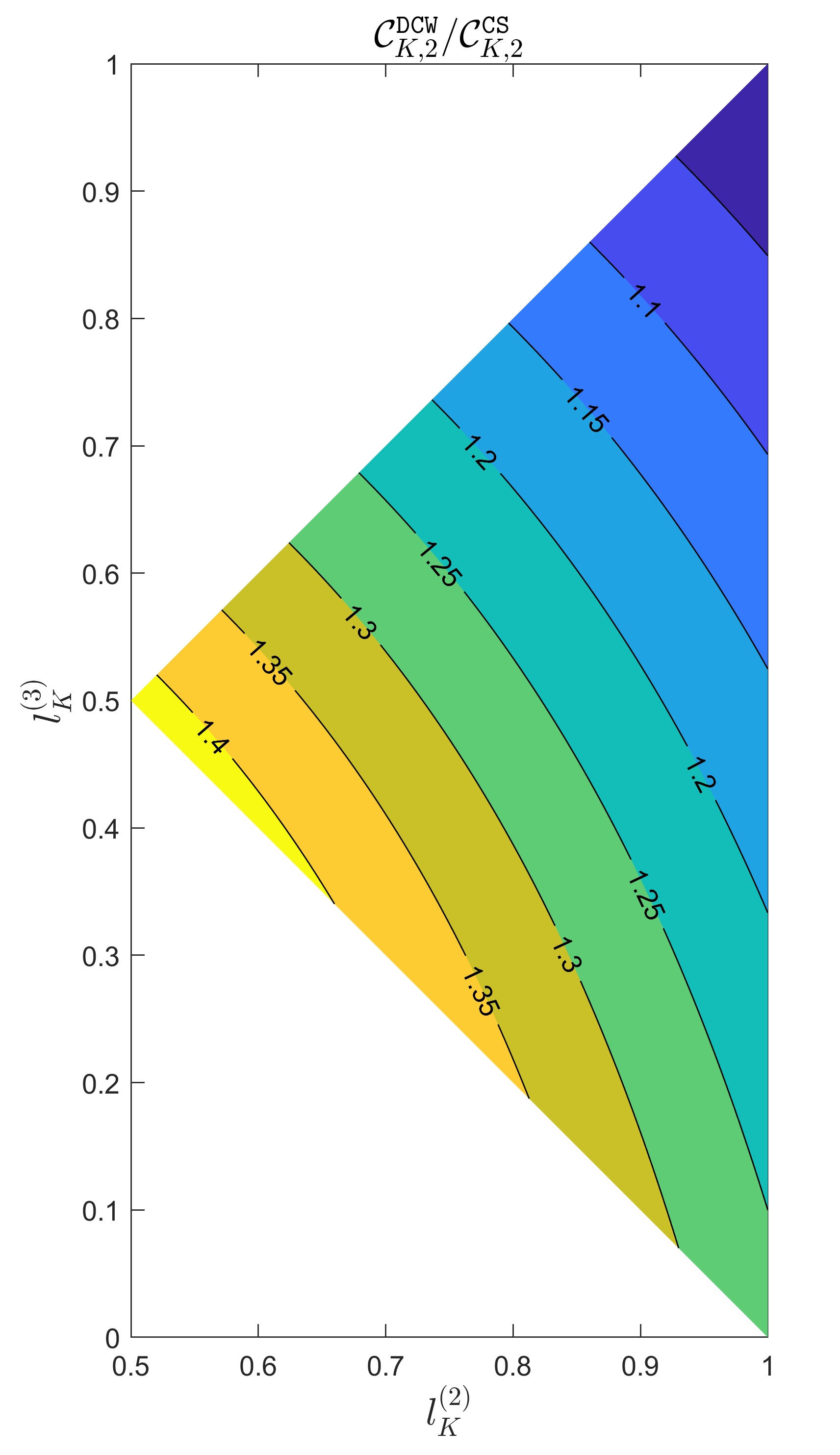}
		\caption{$\mathcal{C}^{\tt DCW}_{K,2}$, $\mathcal{C}^{\tt DCW}_{K,2}/\mathcal{C}^{\tt ZXS}_{K,2} \in [3.8038,4.5]$, $\mathcal{C}^{\tt DCW}_{K,2}/\mathcal{C}^{\tt CS}_{K,2} \in [1,1.4202]$.}
		\label{fig:1744b}
	\end{subfigure}
	}
	\caption{Comparison of BP CFL numbers $\mathcal{C}^{\tt DCW}_{K,k}$, $\mathcal{C}^{\tt ZXS}_{K,k}$, and $\mathcal{C}^{\tt CS}_{K,k}$ on general triangular cells $K$, $k=1,2$. Without loss of generality, assume $1 = l^{(1)}_K\ge l^{(2)}_K\ge l^{(3)}_K$ and $l^{(1)}_K\le l^{(2)}_K + l^{(3)}_K$. Please note that $\mathcal{C}^{\tt CS}_{K,1} = \mathcal{C}^{\tt CS}_{K,2} = 1/6$ in this case.}
	\label{fig:1744}
\end{figure}

\subsection{BP Limiter}\label{sec:BPlimiter}

To ensure the provable BP property via Theorem \ref{thm:1048} or \ref{thm:1008} and Corollary  \ref{cor:1837} or \ref{cor:1859}, it is crucial that the solution at each time level $t^n$ satisfies the conditions outlined in \eqref{eq:1275} or \eqref{eq:1526}. However, solutions produced by DG schemes do not typically satisfy these conditions inherently. To address this, a BP limiter can be employed to adjust any solution that violates these conditions, transforming it into one that satisfies them. By applying the BP limiter at the end of each RK stage, the conditions in \eqref{eq:1275} or \eqref{eq:1526} are ensured, thereby completing the design of a high-order BP scheme.

It is important to note that the implementation of the BP limiter for the optimal convex decomposition is straightforward and requires only minor modifications from the BP limiter described in \cite{zhang2011b} for the classical convex decomposition. To clearly illustrate both the similarities and differences between these limiters, we will use the 2D Euler system \eqref{eq:euler} as an example with ${\mathcal G}$ defined in \eqref{G:Euler},  
 presenting these limiters side-by-side in the following discussion.

On each triangular cell $K$, the simplified BP limiter modifies the polynomial solution $\bm u_K(\xbm) := \big(\rho_K(\xbm), ({m_1})_K(\xbm),$ $ ({m_2})_K(\xbm), E_K(\xbm)\big)^\top$ to a new polynomial $\hat{\bm u}_K(\xbm)$ in such a way that $\hat{\bm u}_K(\xbm)$ satisfies the desired BP conditions. This modification is performed in two steps:
\begin{itemize}
    \item[\textbf{Step 1}:] Enforce the positivity of the density by modifying the density $\rho_K(\xbm)$ as follows:
    \[
    \tilde{\rho}_K(\xbm) = \theta_1\Big(\rho_K(\xbm) - \bar{\rho}_K\Big) + \bar{\rho}_K, \qquad 
    \theta_1 = \min\left\{\frac{\bar{\rho}_K-\varepsilon_1}{\bar{\rho}_K-\rho_{\rm min}},1\right\},
    \]
    where $\varepsilon_1=\min\{\bar \rho_K,10^{-13}\}$. 
    If the classical convex decomposition \cite{ZXSPP2012} is employed, then 
    \[
        \rho_{\rm min} := 
        \begin{cases}
            \check\rho, & \text{ if } k=1,\\
            \min\left\{\check\rho, \rho_*\right\}, & \text{ if } k=2,\\
        \end{cases}
    \]
    where
    \[
        \check \rho
        =
        \underset{1\leq i\leq 3, 1\leq \nu \leq Q}{\min} \,
        \rho_K(x^{(i),\nu}_K, y^{(i),\nu}_K), 
    \qquad 
        \rho_*
        = 
        \frac
        {\bar{\rho}_K-\sum_{i=1}^{3}w_i \sum_{\nu=1}^{Q} \omega^{\tt G}_{\nu}\, \rho_K(x^{(i),\nu}_K, y^{(i),\nu}_K)}
        {1-\sum_{i=1}^{3} w_i}.
    \]
    If our optimal convex decomposition is employed, then 
    \begin{equation}\label{eq:1944}
        \rho_{\rm min} := 
        \begin{cases}
            \min\left\{\check\rho, \rho(x_1,y_1), \rho(x_2,y_2) \right\}, & \text{ if } k=1,\\
            \min\left\{\check\rho, \rho_*\right\}, & \text{ if } k=2,\\
        \end{cases}
    \end{equation}
    where ${\bm v}_1 = (x_1,y_1)$ and ${\bm v}_2 = (x_2,y_2)$ are the vertices of $K$ opposite the longest and the second longest edges, respectively.

    \item[\textbf{Step 2}:] Enforce the positivity of the specific internal energy $\mathcal{E}$. Let $\tilde{\bm u}_K(\xbm):=\big(\tilde{\rho}_K(\xbm), ({m_1})_K(\xbm),({m_2})_K(\xbm), E_K(\xbm)\big)^\top$ be the result from Step 1. Further modify $\tilde{\bm u}_K(\xbm)$ as follows:
    \[
    \hat{\bm u}_K(\xbm) = \theta_2\Big(\tilde{\bm u}_K(\xbm) - \bar{\bm u}_K\Big) + \bar{\bm u}_K, \qquad 
    \theta_2 = \min\left\{\frac{\mathcal{E}(\bar{\bm u}_K)-\varepsilon_2}{\mathcal{E}(\bar{\bm u}_K)-\mathcal{E}_{\rm min}},1\right\},
    \]
    where $\varepsilon_2=\min\{\mathcal{E}(\bar{\bm u}_K),10^{-13}\}$. 
    If the classical convex decomposition \cite{ZXSPP2012} is employed, then 
    \[
        \mathcal{E}_{\rm min} = 
        \begin{cases}
            \check{\mathcal{E}}, & \text{ if } k=1,\\
            \min\{\check{\mathcal{E}}, \mathcal{E}_*\}, & \text{ if } k=2,\\
        \end{cases}
    \]
    where
    \[
    \check{\mathcal{E}}
    = 
    \underset{1\leq i\leq 3, 1\leq \nu \leq Q}{\min}
    {\mathcal{E}}(\tilde{\bm u}_K(x^{(i),\nu}_K, y^{(i),\nu}_K)), 
    \qquad
    {\mathcal{E}}_*
    = 
    \frac{\mathcal{E}(\bar{\bm u}_K)-\sum_{i=1}^{3}w_i \sum_{\nu=1}^{Q}\omega^{\tt G}_{\nu} \, \mathcal{E}(\tilde{\bm u}_K(x^{(i),\nu}_K, y^{(i),\nu}_K))}{1-\sum_{i=1}^{3} w_i}.
    \]
    If our optimal convex decomposition is employed, then 
    \begin{equation}\label{eq:1981}
        \mathcal{E}_{\rm min} = 
        \begin{cases}
            \min\{\check{\mathcal{E}}, \mathcal{E}(\tilde{\bm u}_K(x_1,y_1)), \mathcal{E}(\tilde{\bm u}_K(x_2,y_2))\}, & \text{ if } k=1,\\
            \min\{\check{\mathcal{E}}, \mathcal{E}_*\}, & \text{ if } k=2,\\
        \end{cases}
    \end{equation}
    where ${\bm v}_1 = (x_1,y_1)$ and ${\bm v}_2 = (x_2,y_2)$ are defined as in Step 1.
\end{itemize}

\begin{remark}
    When employing the optimal convex decomposition for the $\mathbb{P}^1$ space, one might consider using $\rho_{\min} := \min\{\check\rho, \rho_*\}$ (resp. $\mathcal{E}_{\min} := \min\{\check{\mathcal{E}}, \mathcal{E}_*\}$), which aligns with the formula used for the $k=2$ case instead of the formulas in the first line of \eqref{eq:1944} (resp. \eqref{eq:1981}).
    However, this approach can be problematic because the expressions for $\rho_*$ (resp. $\mathcal{E}_*$) may become singular due to division by nearly zero, particularly when $k=1$ and $l^{(1)}_K = l^{(2)}_K = l^{(3)}_K$, which occurs when $K$ is an equilateral triangle.
    Such singularities can lead to large numerical errors and loss of the BP property when $K$ is close to being an equilateral triangle.
    To address this issue, it is noted that $\rho_*$ (resp. $\mathcal{E}_*$) can be equivalently expressed as $\rho_* = \rho_K(\xi_1,\eta_1)$ (resp. $\mathcal{E}_* = \mathcal{E}(\tilde{\bm u}_K(\xi_1,\eta_1))$), where $(\xi_1,\eta_1)$ is the internal node of the optimal convex decomposition \eqref{eq:1040}, which is always located on the edge $e_K^{(3)}$. Therefore, it holds that
    \[
        \rho_* = \rho_K(\xi_1,\eta_1) 
        \ge 
        \min \{\rho(x_1,y_1), \rho(x_2,y_2)\}
        \quad \text{and} \quad
        \mathcal{E}_* = \mathcal{E}(\tilde{\bm u}_K(\xi_1,\eta_1))
        \ge
        \min \{\mathcal{E}(\tilde{\bm u}_K(x_1,y_1)), \mathcal{E}(\tilde{\bm u}_K(x_2,y_2))\},
    \]
    leading to the novel formulas in the first line of \eqref{eq:1944} (resp. \eqref{eq:1981}), which are more robust for all possible shapes of the triangular cell $K$.
\end{remark}

\subsection{BP OEDG Schemes}

The BP techniques introduced above can be applied to the $\mathbb{P}^1$- and $\mathbb{P}^2$-based DG and finite volume methods, including the OEDG methods presented in \Cref{sec:scalar} and \Cref{sec:system}. For instance, using the third-order SSP RK method \eqref{RKTime}, the resulting fully discrete BP OEDG method for the hyperbolic system \eqref{eq:System} is given by
\begin{equation*}
	\begin{aligned}
		\bm{u}_h^{[1]} &= \bm{u}_{\sigma}^{n} + \Delta t \mathcal{L}(\bm{u}_{\sigma}^{n}), & 
		\hat{\bm{u}}_{\sigma}^{[1]} &= \mathcal{F}_{\tau}\bm{u}_h^{[1]}, &
        \bm{u}_{\sigma}^{[1]} &=
        \mathcal{B}_{\mathcal{G}} \, \hat{\bm{u}}_{\sigma}^{[1]},
        \\
		\bm{u}_h^{[2]} &= \frac{3}{4}\bm{u}_{\sigma}^{n} + \frac{1}{4} \left(\bm{u}_{\sigma}^{[1]} + \Delta t \mathcal{L}(\bm{u}_{\sigma}^{[1]}) \right), &
		\hat{\bm{u}}_{\sigma}^{[2]} &= \mathcal{F}_{\tau}\bm{u}_h^{[2]}, &
        \bm{u}_{\sigma}^{[2]} &=
        \mathcal{B}_{\mathcal{G}} \, \hat{\bm{u}}_{\sigma}^{[2]},\\
		\bm{u}_h^{n+1} &= \frac{1}{3}\bm{u}_{\sigma}^{n} + \frac{2}{3} \left(\bm{u}_{\sigma}^{[2]} + \Delta t \mathcal{L}(\bm{u}_{\sigma}^{[2]}) \right), & 
		\hat{\bm{u}}_{\sigma}^{n+1} &= \mathcal{F}_{\tau}\bm{u}_h^{n+1},&
        \bm{u}_{\sigma}^{n+1} &=
        \mathcal{B}_{\mathcal{G}} \, \hat{\bm{u}}_{\sigma}^{n+1},
	\end{aligned}
\end{equation*}
where $\mathcal{B}_{\mathcal{G}}$ denotes the simplified BP limiter introduced in \Cref{sec:BPlimiter}, and the timestep $\Delta t$ is determined by the conditions \eqref{eq:1692} for $\mathbb{P}^1$-based methods or \eqref{eq:1713} for $\mathbb{P}^2$-based methods with the optimal convex decomposition.

\section{Numerical Experiments}\label{sec:numexp}

In this section, we present a series of numerical experiments to validate the accuracy, robustness, and efficiency of the proposed OEDG methods on unstructured triangular meshes. We utilize the DG method with the proposed OE procedure. When the RIOE procedure $\mathcal{F}^{\rm RI}_{\tau}$ is applied to a rotationally invariant (RI) physical system, such as the Euler equations, we refer to the method as the \textbf{RI-OEDG} method. 
In all examples, we employ the LF flux \eqref{fluxLF} and the $n_{\rm s}$-stage $n_{\rm o}$th-order strong-stability-preserving Runge-Kutta (SSPRK) time discretization, denoted as {\tt SSP-RK}$(n_{\rm s},n_{\rm o})$ for convenience. 
Specifically, except for Examples \ref{Ex:LinSmooth} and \ref{Ex:BurgS} for accuracy tests, all other examples use the three-stage third-order SSPRK method, i.e., {\tt SSP-RK}$(3,3)$. 
The unstructured triangular meshes are generated using EasyMesh \cite{Niceno2002easymesh}, and we denote by $N$ the total number of cells and by $h$ the uniform edge length of the cells along the domain boundary.

In the following Examples \ref{Ex:LinSmooth}--\ref{Ex:EulerFPC}, the time step size $\Delta t$ is determined by
$$
    \Delta t
    =
    \frac{C_{\tt SSP}}{\alpha^{\rm LF}} 
    \underset{K \in \mathcal{T}_h}{\min} 
    \left\{
        \frac{|K|}{2k+1}
        \left(\sum_{i=1}^{3}l_K^{(i)}\right)^{-1}
    \right\},
$$
where $C_{\tt SSP}$ is the SSP coefficient of the employed {\tt SSP-RK} time discretization method. 
In Examples \ref{Ex:EulerFFS}--\ref{Ex:EulerSDiffWedge}, we focus on the 2D Euler system \eqref{eq:euler} with the solutions involving states close to vacuum, making the BP technique essential. For comparison, we use the $\mathbb{P}^k$-based RI-OEDG method ($k \in \{1, 2\}$) with either the classical convex decomposition on triangular cells from \cite{ZXSPP2012} or our optimal convex decomposition on triangular cells from \Cref{sec:OCD}. To ensure the BP property, the time step sizes for these two methods are determined by
$$
    \Delta t = \frac{C_{\tt SSP}}{\alpha^{\rm LF}} \underset{K \in \mathcal{T}_h}{\min} 
    \left\{\,
    \mathcal{C}^{\tt ZXS}_{K,k} \, |K|
    \,\right\},
$$
and
$$
    \Delta t = \frac{C_{\tt SSP}}{\alpha^{\rm LF}} \underset{K \in \mathcal{T}_h}{\min} 
    \left\{\,
    \mathcal{C}^{\tt DCW}_{K,k} \, |K|
    \,\right\},
$$
respectively. 
In these examples, we also compare the CPU times required by different numerical methods. The implementations are carried out in C++ using double precision and are executed on a Linux server equipped with an Intel(R) Xeon(R) Platinum 8458P CPU running at 3.3GHz, with 628GB of RAM.

\subsection{Linear Convection Equation}\label{Ex:Linear}
In Examples \ref{Ex:LinSmooth}--\ref{Ex:LinDis}, we consider the linear convection equation $u_t + u_x + u_y = 0$ over a square domain with periodic boundary conditions.

\begin{expl}[Smooth problem]\label{Ex:LinSmooth}
	This example considers a smooth initial condition $u(x,y,0) = \sin(2\pi(x+y))$ on the square domain $[0,1]^2$. The exact solution is given by $u(x,y,t) = \sin(2\pi(x+y-2t))$. The final time is set to $t = 0.1$. 
	We start by generating an unstructured mesh $\mathcal{T}_{h_0}$ with $N_0 = 44$ cells and a uniform edge length of $h_0 = 0.25$; see \Cref{fig:Ex_Linmesh_a}. We then refine this mesh $\mathcal{T}_{h_1}$ by halving the edge length and dividing each cell into four smaller cells, resulting in a finer mesh $\mathcal{T}_{h_1}$ with $N_1 = 4N_0 = 176$ cells; see \Cref{fig:Ex_Linmesh_b}, where the black lines are the cell edges in the mesh $\mathcal{T}_{h_0}$, and the  dashed blue lines are the cell edges newly added in the refined mesh $\mathcal{T}_{h_1}$. This refinement process is repeated to generate a sequence of meshes $\{\mathcal{T}_{h_i}\}_{i=1}^6$. 
	\Cref{tab:Ex_LinearS1} presents the $L^1$, $L^2$, and $L^\infty$ errors of the numerical solutions and the corresponding convergence rates obtained using the $\mathbb{P}^k$-based OEDG schemes for $k \in \{1, 2, 3, 4\}$. As shown, the expected $(k+1)$th-order accuracy is achieved.

	\begin{figure}[!htb]
		\centering
		\begin{subfigure}{0.32\textwidth}
			\includegraphics[width=\textwidth]{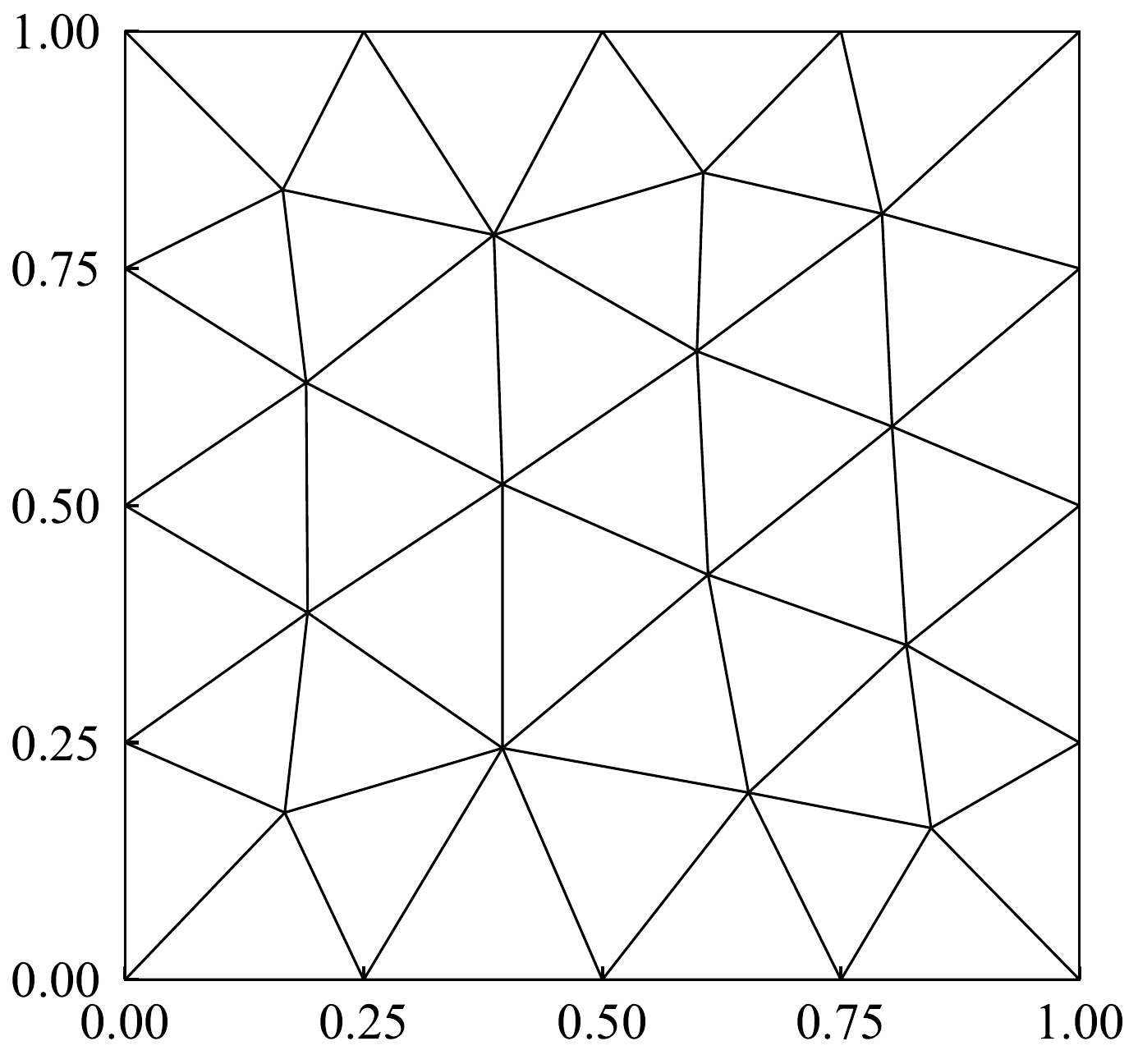}
			\caption{The mesh $\mathcal{T}_{h_0}$ with $N_0=44$ and $h_0=0.25$.}
			\label{fig:Ex_Linmesh_a}
		\end{subfigure}
		\qquad
		\begin{subfigure}{0.32\textwidth}
			\includegraphics[width=\textwidth]{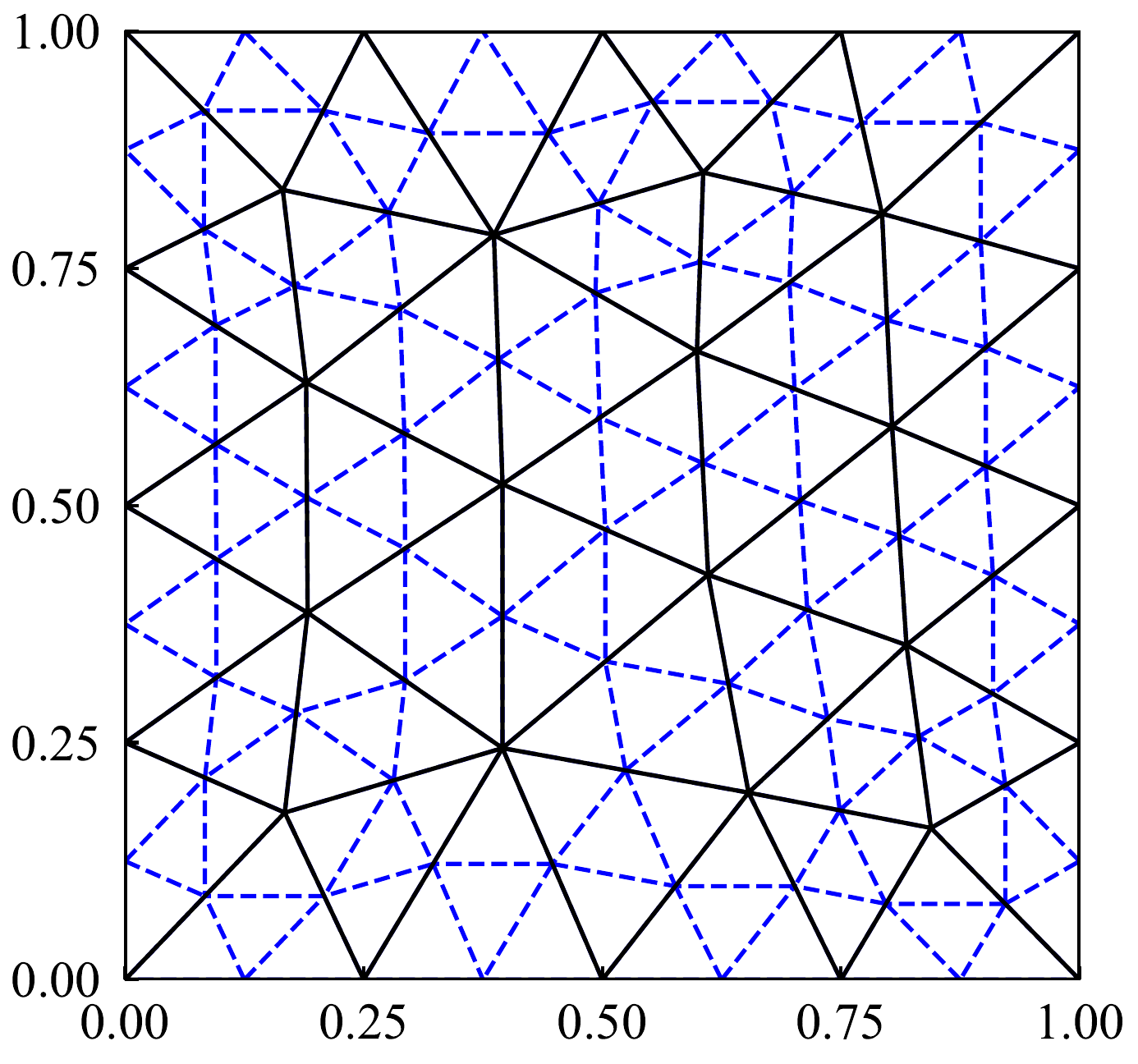}
			\caption{The mesh $\mathcal{T}_{h_1}$ with $N_1=176$ and $h_1=0.125$.}
			\label{fig:Ex_Linmesh_b}
		\end{subfigure}
		\caption{(\Cref{Ex:LinSmooth}) The meshes $\mathcal{T}_{h_0}$ and $\mathcal{T}_{h_1}$. 
		}
		\label{fig:Ex_Linmesh}
	\end{figure}
	
	\begin{table}[!htb] 
		\centering
		\caption{(\Cref{Ex:LinSmooth}) $L^1$, $L^2$, and $L^\infty$ errors and convergence orders, $\mathbb{P}^k$-based OEDG schemes, $k \in \{1,2,3,4\}$, $t=0.1$.
		}
		\label{tab:Ex_LinearS1}
		\setlength{\tabcolsep}{3mm}{
			\begin{tabular}{cccccccc}
				\toprule[1.5pt]
				& \multirow{2}{*}{$N$} & 
				\multicolumn{2}{c}{$L^1$ norm} & 
				\multicolumn{2}{c}{$L^2$ norm} & 
				\multicolumn{2}{c}{$L^\infty$ norm} \\
				\cmidrule(l){3-4} \cmidrule(l){5-6} \cmidrule(l){7-8} 
				& & error & order & error & order &  error & order\\
				
				\midrule[1.5pt]					
				\multirow{4}{*}{$\mathbb{P}^1$-based OEDG}
				&176   &8.18e{-}2&{-} &9.35e{-}2&{-} &2.53e{-}1&{-} \\%
				&704   &1.56e{-}2&2.39&1.91e{-}2&2.29&7.38e{-}2&1.78\\%
				\multirow{2}{*}{ {\tt SSP-RK}(2,2)}
				&2816  &2.79e{-}3&2.49&3.44e{-}3&2.47&1.68e{-}2&2.13\\%
				\multirow{2}{*}{ $C_{\tt SSP} = 1 $ } 
				&11264 &4.76e{-}4&2.55&6.01e{-}4&2.52&3.90e{-}3&2.11\\%
				&45056 &9.11e{-}5&2.39&1.21e{-}4&2.31&9.18e{-}4&2.09\\%
				&180224&2.01e{-}5&2.18&2.78e{-}5&2.12&1.96e{-}4&2.23\\%

				\midrule[1.0pt]	
				\multirow{4}{*}{$\mathbb{P}^2$-based OEDG}
				&176   &5.29e{-}2&{-} &6.25e{-}2&{-} &2.22e{-}1&{-} \\%
				&704   &2.90e{-}3&4.19&3.87e{-}3&4.01&1.89e{-}2&3.56\\%
				\multirow{2}{*}{ {\tt  SSP-RK}(3,3)} 
				&2816  &9.15e{-}5&4.99&1.22e{-}4&4.99&1.04e{-}3&4.18\\%
				\multirow{2}{*}{ $C_{\tt SSP} = 1 $ } 
				&11264 &5.70e{-}6&4.01&7.99e{-}6&3.93&1.06e{-}4&3.30\\%
				&45056 &4.85e{-}7&3.55&7.20e{-}7&3.47&8.26e{-}6&3.68\\%
				&180224&5.10e{-}8&3.25&7.79e{-}8&3.21&7.21e{-}7&3.52\\%

				\midrule[1.0pt]	
				\multirow{4}{*}{$\mathbb{P}^3$-based OEDG} 
				&176   &4.45e{-}2 &{-} &5.38e{-}2 &{-} &1.72e{-}1&{-} \\%
				&704   &3.20e{-}4 &7.12&4.96e{-}4 &6.76&3.85e{-}3&5.48\\%
				\multirow{2}{*}{ {\tt SSP-RK}(5,4)}
				&2816  &3.01e{-}6 &6.73&4.12e{-}6 &6.91&2.85e{-}5&7.08\\%
				\multirow{2}{*}{ $C_{\tt SSP} = 1.508$ } 
				&11264 &8.89e{-}8 &5.08&1.20e{-}7 &5.10&9.93e{-}7&4.84\\%
				&45056 &3.08e{-}9 &4.85&4.13e{-}9 &4.86&3.80e{-}8&4.71\\%
				&180224&1.23e{-}10&4.65&1.74e{-}10&4.57&1.69e{-}9&4.49\\%

				\midrule[1.0pt]	
				\multirow{3}{*}{$\mathbb{P}^4$-based OEDG} 
				&176   &6.34e{-}3 &{-}  &9.50e{-}3 &{-}  &6.24e{-}2 &{-}  \\%
				&704   &2.19e{-}6 &11.50&3.23e{-}6 &11.52&3.90e{-}5 &10.64\\%
				\multirow{1}{*}{ {\tt SSP-RK}(5,4)}
				&2816  &3.00e{-}8 &6.19 &4.15e{-}8 &6.28 &3.07e{-}7 &6.99 \\%
				\multirow{1}{*}{ $C_{\tt SSP} = 1.508$ } 
				&11264 &5.41e{-}10&5.80 &7.94e{-}10&5.71 &7.17e{-}9 &5.42 \\%
				\multirow{1}{*}{ $\dt= \frac{C_{\tt SSP}}{9}\Big(\underset{K}{\min}\frac{|K|}{3\bar{l}_K}\Big)^{\frac54}$ } 
				&45056 &9.93e{-}12&5.77 &1.55e{-}11&5.68 &1.73e{-}10&5.37 \\%
				&180224&2.94e{-}13&5.08 &3.93e{-}13&5.30 &4.11e{-}12&5.40 \\%

				\bottomrule[1.5pt]
			\end{tabular}
		}
	\end{table}
	
\end{expl}

\begin{expl}[Discontinuous problem]\label{Ex:LinDis}
	In this example, we consider a discontinuous initial condition given by
	\begin{equation*}
		u(x,y,0) =
		\begin{cases}
			1, & \sqrt{x^2+y^2} \leq \frac{1}{8}(3+3^{\sin{5\theta}}), \\
			0, & \text{otherwise},
		\end{cases}
		\quad \text{with} \quad
		\theta =
		\begin{cases}
			\arccos{\frac{x}{\sqrt{x^2+y^2}}}, & y \geq 0, \\
			2\pi - \arccos{\frac{x}{\sqrt{x^2+y^2}}}, & y < 0.
		\end{cases}
	\end{equation*}
	The computational domain $[-1,1]^2$ is discretized into $N = 92,474$ cells with $h = 0.01$. 
	\Cref{fig:Ex_Linear2D} shows the numerical solutions at $t = 1.8$ computed using the $\mathbb{P}^k$-based OEDG methods, for $k \in \{1, 2, 3, 4\}$. The results are comparable to those reported in \cite{lu2021oscillation}. 
    Additionally, \Cref{fig:Ex_LinearD2} shows the solutions along the line $y = -0.25$ obtained using the OEDG method and the DG method without the OE procedure. It can be observed that the discontinuities are well-resolved without spurious oscillations when the OE procedure is applied; however, nonphysical oscillations appear when the OE procedure is not used. Moreover, the resolution improves as the order of accuracy increases, as demonstrated by the close-up views near the shocks shown in \Cref{fig:Ex_LinearD3}.

	\begin{figure}[!htb]
		\centering		
		\begin{subfigure}{0.4\textwidth}
			\centering
			\includegraphics[width=\textwidth]{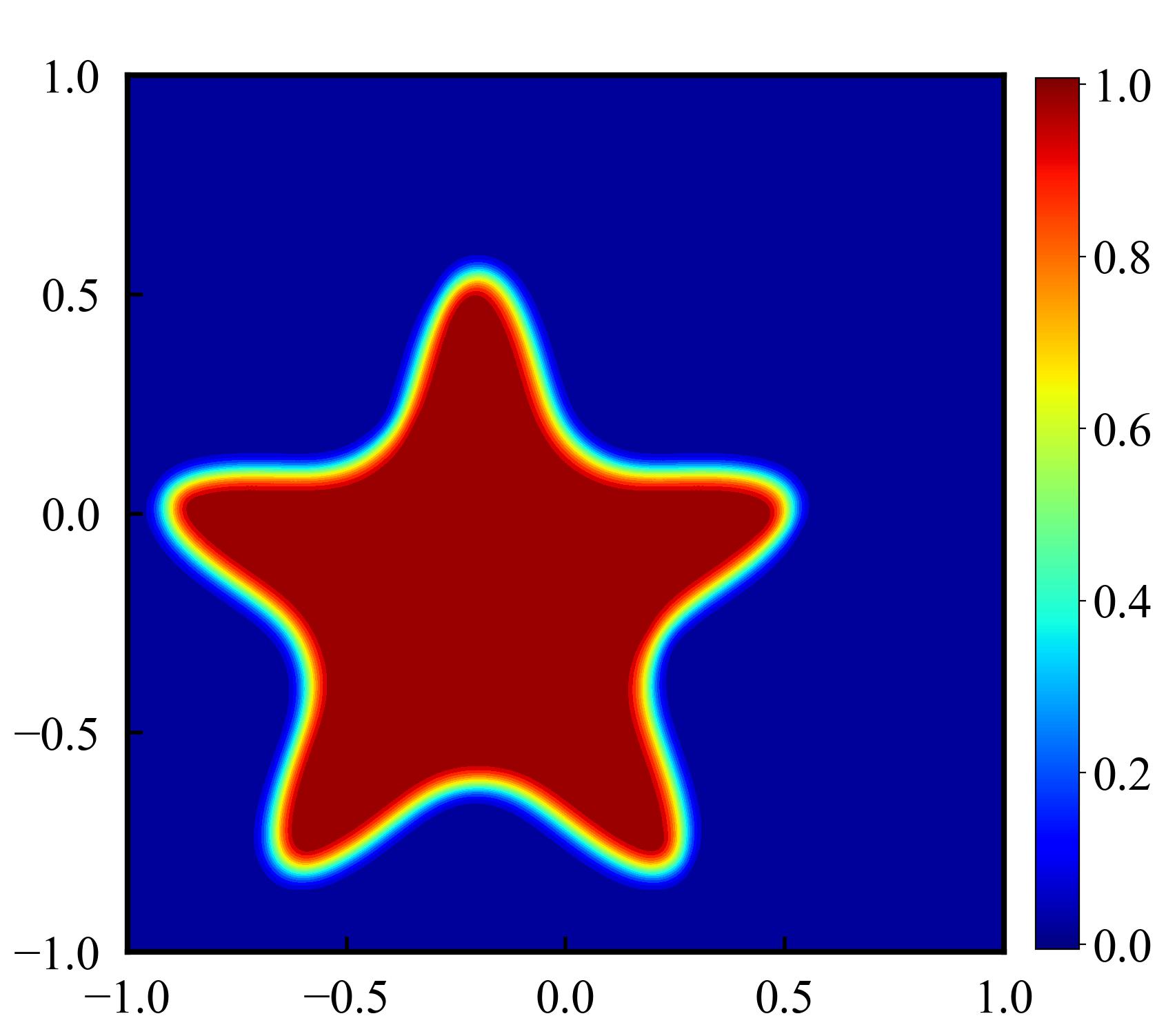}
			\caption{$\mathbb{P}^1$-based OEDG scheme.}
		\end{subfigure}
		\qquad
		\begin{subfigure}{0.4\textwidth}
			\centering
			\includegraphics[width=\textwidth]{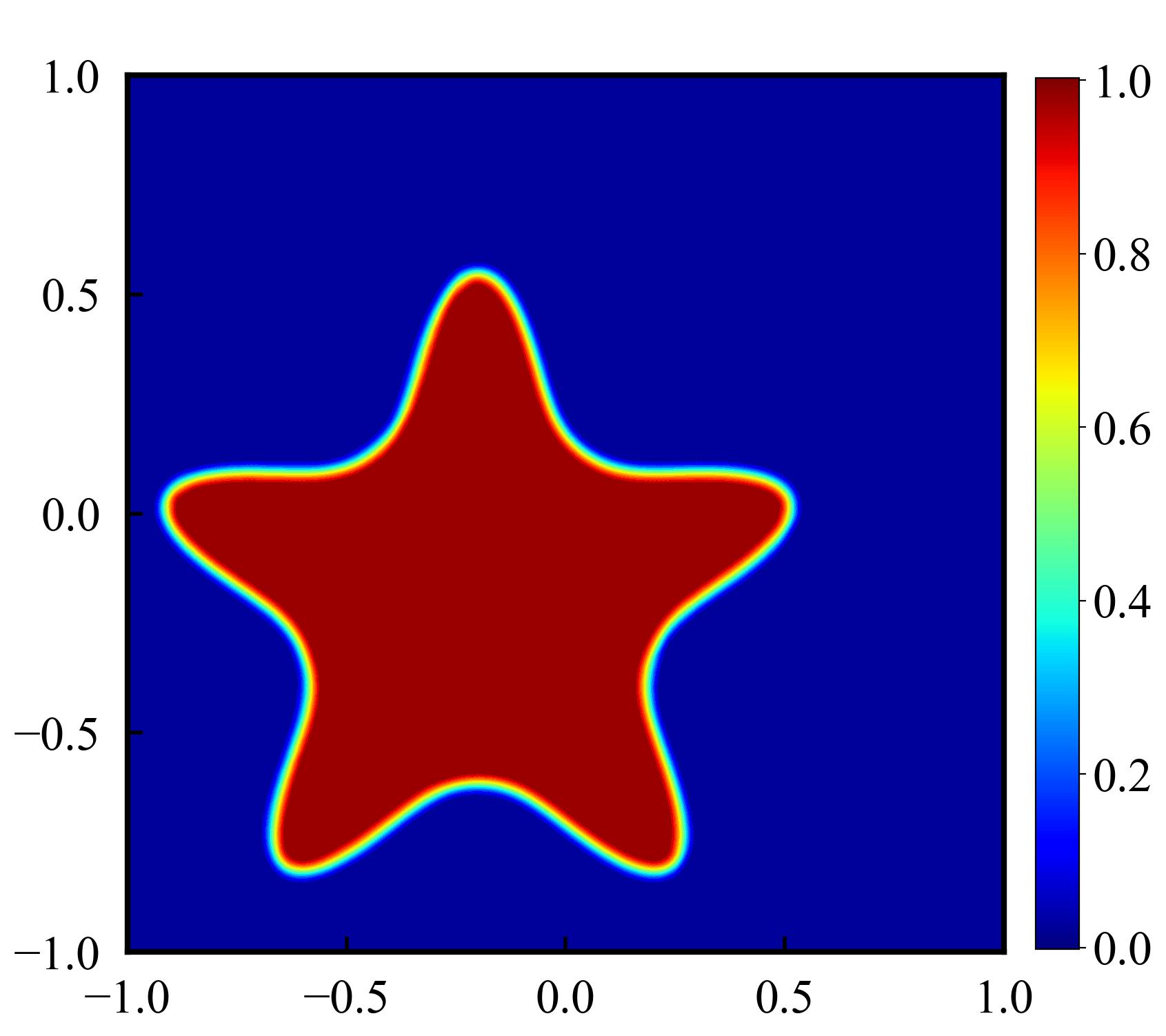}
			\caption{$\mathbb{P}^2$-based OEDG scheme.}
		\end{subfigure}
		
		\begin{subfigure}{0.4\textwidth}
			\centering
			\includegraphics[width=\textwidth]{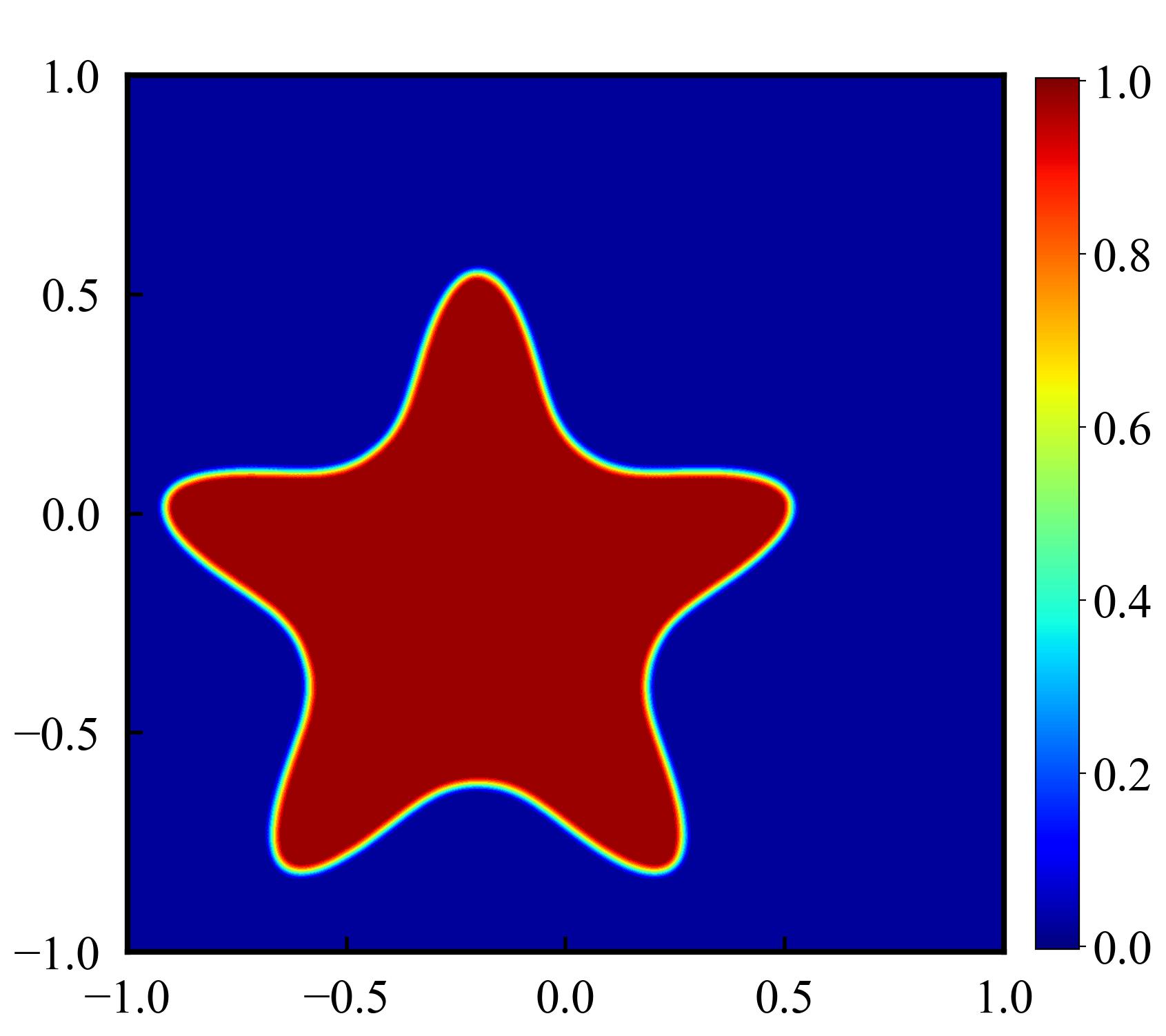}
			\caption{$\mathbb{P}^3$-based OEDG scheme.}
		\end{subfigure}
		\qquad
		\begin{subfigure}{0.4\textwidth}
			\centering
			\includegraphics[width=\textwidth]{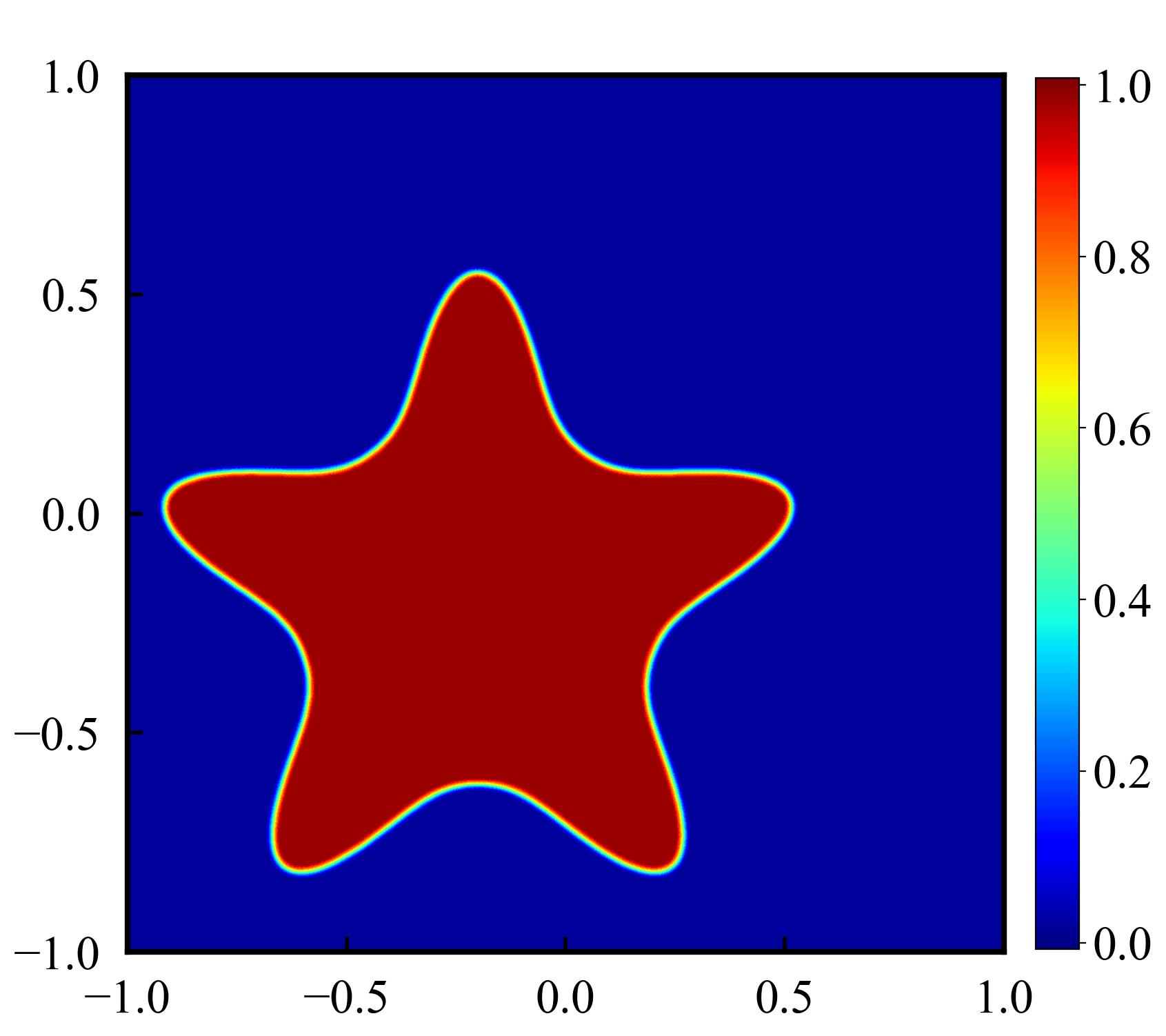}
			\caption{$\mathbb{P}^4$-based OEDG scheme.}
		\end{subfigure}
		
		\caption{(\Cref{Ex:LinDis}) Contours of the numerical solutions obtained by using the $\mathbb{P}^k$-based OEDG schemes, $k \in \{1,2,3,4\}$.
		}
		\label{fig:Ex_Linear2D}
	\end{figure} 
	
	\begin{figure}[!htb]
		\centering
		\centering
		\begin{subfigure}{0.45\textwidth}
			\centering
			\includegraphics[width=\textwidth]{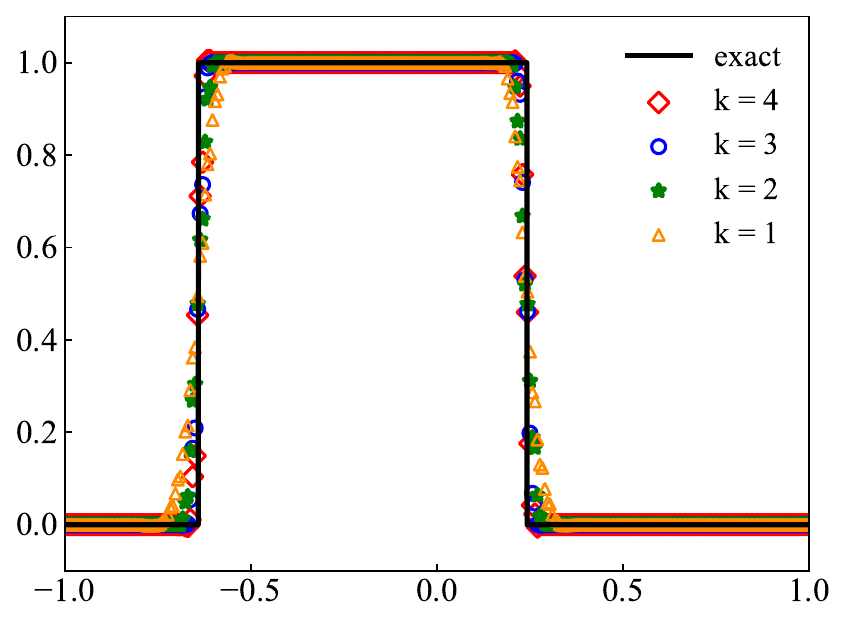}
		\end{subfigure}	
		\quad
		\begin{subfigure}{0.45\textwidth}
			\centering
			\includegraphics[width=\textwidth]{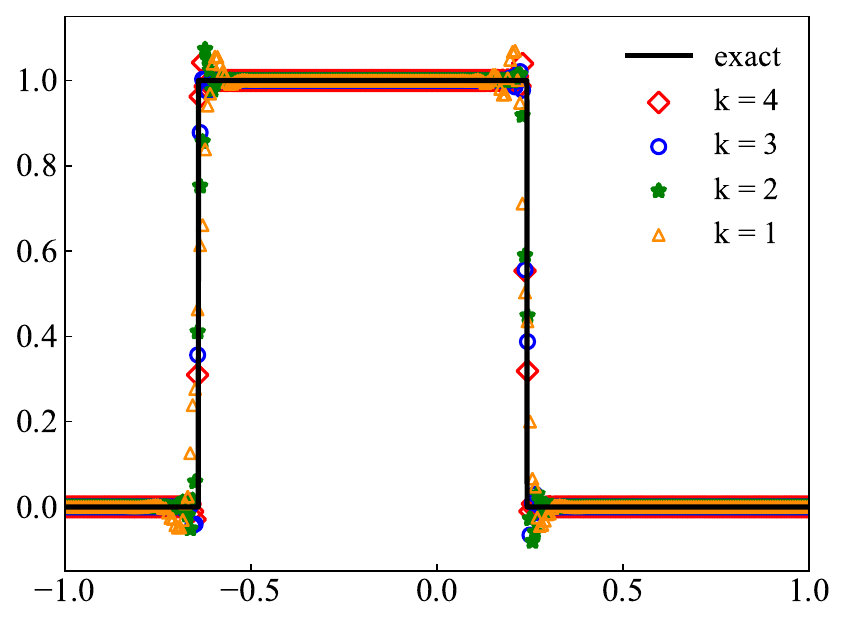}
		\end{subfigure}		
		\caption{(\Cref{Ex:LinDis}) Numerical solutions cut along $y=-0.25$. Left: the OEDG method; right: the DG method without the OE procedure.
		}
		\label{fig:Ex_LinearD2}
	\end{figure} 

        \begin{figure}[!htb]
		\centering
		\begin{subfigure}{0.45\textwidth}
			\centering
			\includegraphics[width=\textwidth]{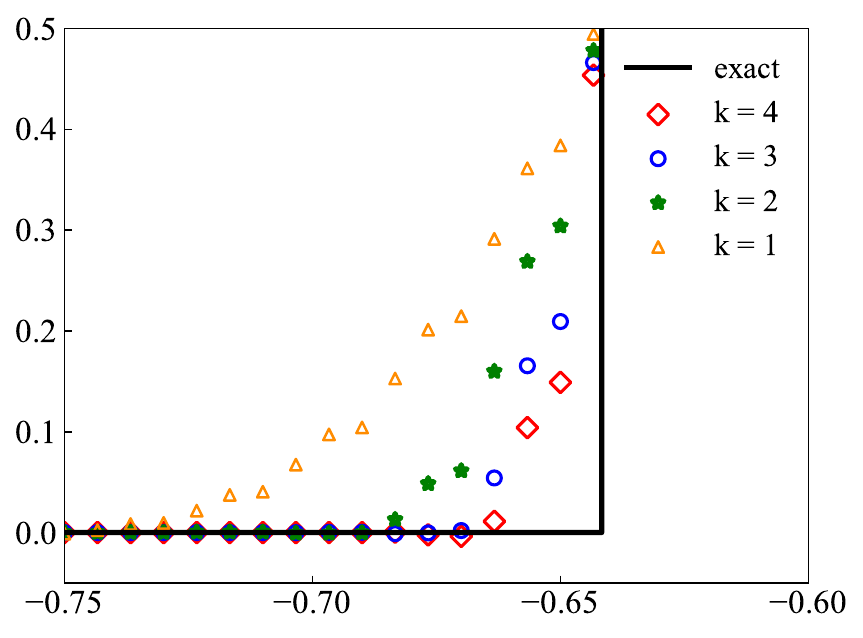}
		\end{subfigure}	
		\quad
		\begin{subfigure}{0.45\textwidth}
			\centering
			\includegraphics[width=\textwidth]{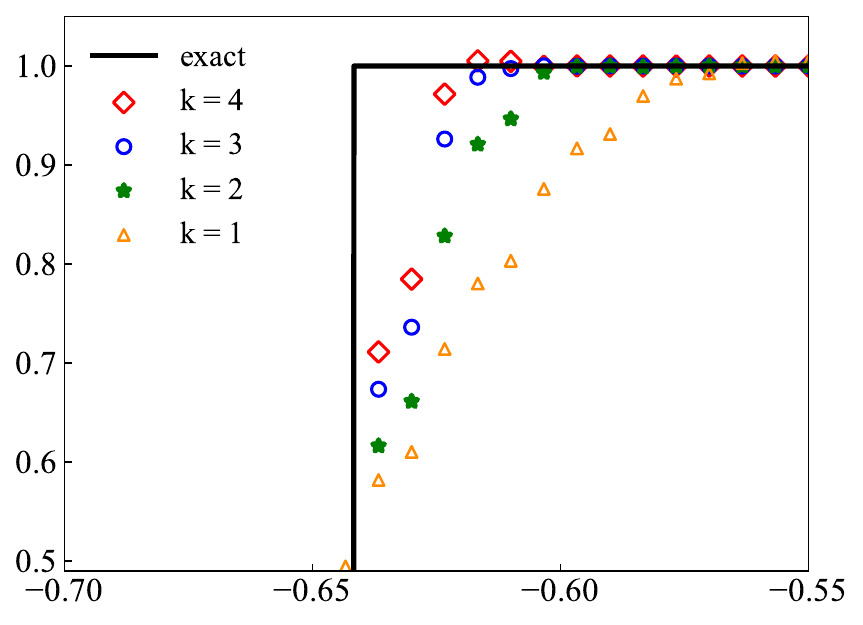}
		\end{subfigure}	
		
		\begin{subfigure}{0.45\textwidth}
			\centering
			\includegraphics[width=\textwidth]{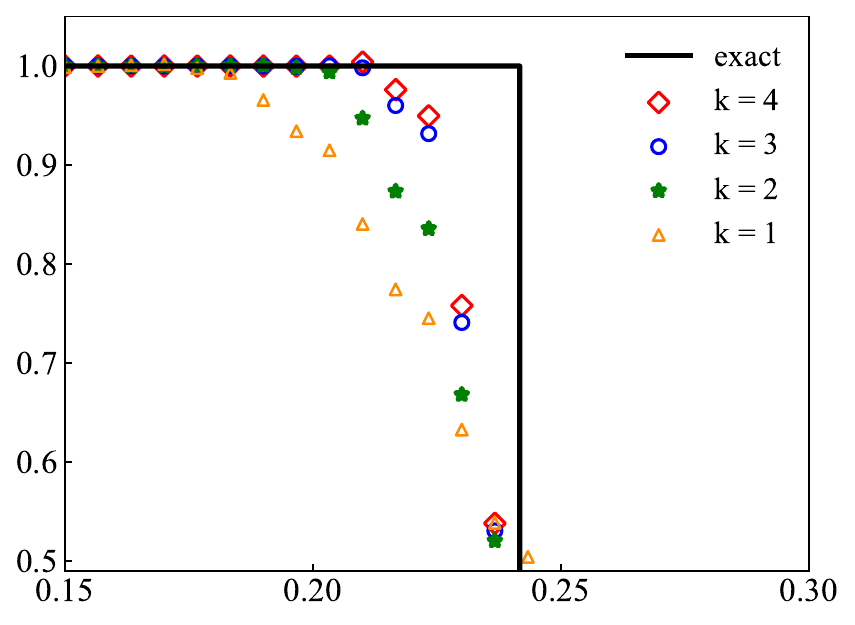}
		\end{subfigure}	
		\quad
		\begin{subfigure}{0.45\textwidth}
			\centering
			\includegraphics[width=\textwidth]{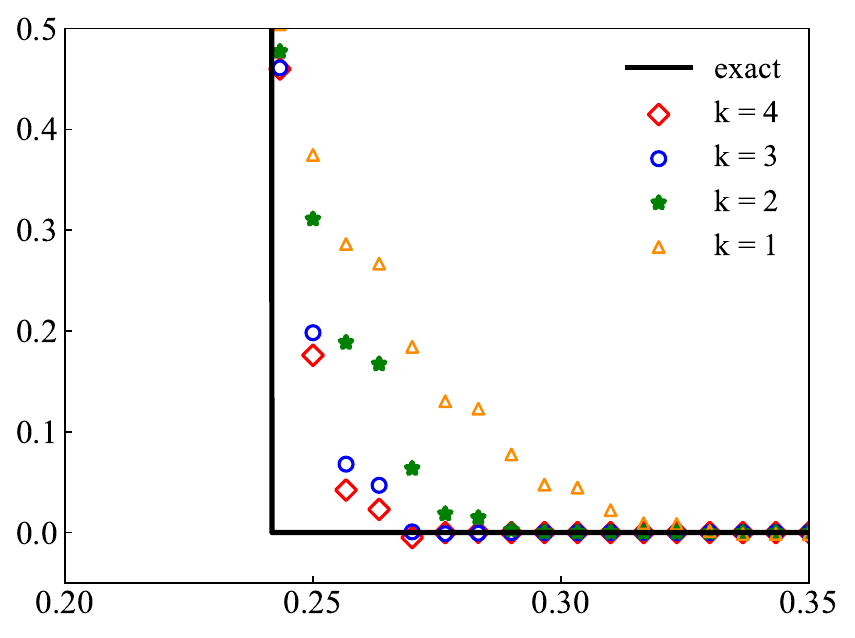}
		\end{subfigure}		
		\caption{(\Cref{Ex:LinDis}) Close-up view of the results in the left of \Cref{fig:Ex_LinearD2}.
		}
		\label{fig:Ex_LinearD3}
	\end{figure}
\end{expl}

\subsection{Burgers Equation}\label{Ex:Burg} 

In the following Examples \ref{Ex:BurgS}--\ref{Ex:BurgRM2}, we consider the Burgers equation:
\begin{equation}\label{eq:1955}
u_t + \left(\frac{u^2}{2}\right)_x + \left(\frac{u^2}{2}\right)_y = 0, \quad (x,y)\in[0,1]^2.
\end{equation}

\begin{expl}[Smooth initial condition]\label{Ex:BurgS}
	We first consider a smooth initial condition $u(x,y,0) = 0.5\sin(2\pi(x+y))$ with periodic boundary conditions. The exact solution remains smooth until a stationary shock wave forms at $t = \frac{1}{2\pi} \approx 0.159$.
	
	\Cref{tab:Ex_BurgS1} shows the $L^1$, $L^2$, and $L^\infty$ errors of the numerical solutions at $t = 0.05$ and the corresponding convergence orders obtained using the $\mathbb{P}^k$-based OEDG schemes for $k = 1, 2, 3, 4$. As seen, the expected $(k+1)$th-order convergence is achieved, confirming that the OE procedure maintains the accuracy of the original DG methods. 
	The meshes used in this example are the same as those used in \Cref{Ex:LinSmooth}. \Cref{fig:Ex_BurDisS} displays the numerical solutions at $t = 0.23$, obtained using our $\mathbb{P}^k$-based OEDG methods on the triangular mesh $\mathcal{T}_{h_4}$ with $N_4 = 11,264$ cells. It can be observed that the OEDG methods effectively capture the strong shock waves without oscillations, demonstrating the effectiveness of our OE procedure in suppressing nonphysical oscillations. 
	
	\begin{table}[!htb] 
		\centering
		\caption{(\Cref{Ex:BurgS}) $L^1$, $L^2$, and $L^\infty$ errors as well as the convergence orders at $t=0.05$ obtained by the $\mathbb{P}^k$-based OEDG schemes with $k \in \{1,2,3,4\}$. 
		}
		\label{tab:Ex_BurgS1}
		\setlength{\tabcolsep}{3mm}{
			\begin{tabular}{cccccccc}
				\toprule[1.5pt]
				& \multirow{2}{*}{$N$} & 
				\multicolumn{2}{c}{$L^1$ norm} & 
				\multicolumn{2}{c}{$L^2$ norm} & 
				\multicolumn{2}{c}{$L^\infty$ norm} \\
				\cmidrule(l){3-4} \cmidrule(l){5-6} \cmidrule(l){7-8} 
				& & error & order & error & order &  error & order\\
				
				\midrule[1.5pt]	
				\multirow{4}{*}{$\mathbb{P}^1$-based OEDG} 
				&176   &1.88e{-}2&{-} &2.48e{-}2&{-} &1.45e{-}1&{-} \\%
				&704   &3.77e{-}3&2.32&5.21e{-}3&2.25&3.85e{-}2&1.91\\%
				\multirow{2}{*}{ {\tt SSP-RK}(2,2)} 
				&2816  &7.57e{-}4&2.32&1.06e{-}3&2.30&6.16e{-}3&2.65\\%
				\multirow{2}{*}{ $C_{\tt SSP} = 1 $ } 
				&11264 &1.68e{-}4&2.17&2.48e{-}4&2.10&1.24e{-}3&2.32\\%
				&45056 &4.10e{-}5&2.04&6.21e{-}5&2.00&3.90e{-}4&1.66\\%
				&180224&1.02e{-}5&2.00&1.57e{-}5&1.98&1.06e{-}4&1.88\\%

				\midrule[1.0pt]	
				\multirow{4}{*}{$\mathbb{P}^2$-based OEDG} 	
				&176   &3.05e{-}3&{-} &5.13e{-}3&{-} &3.95e{-}2&{-} \\%
				&704   &3.22e{-}4&3.24&5.34e{-}4&3.26&4.94e{-}3&3.00\\%
				\multirow{2}{*}{ {\tt  SSP-RK}(3,3)} 
				&2816  &3.52e{-}5&3.19&5.97e{-}5&3.16&5.29e{-}4&3.22\\%
				\multirow{2}{*}{ $C_{\tt SSP} = 1 $ } 
				&11264 &3.95e{-}6&3.16&7.18e{-}6&3.06&6.13e{-}5&3.11\\%
				&45056 &4.82e{-}7&3.04&9.55e{-}7&2.91&1.01e{-}5&2.60\\%
				&180224&6.28e{-}8&2.94&1.36e{-}7&2.81&1.67e{-}6&2.60\\%

				\midrule[1.0pt]	
				\multirow{4}{*}{$\mathbb{P}^3$-based OEDG} 
				&176   &1.20e{-}3 &{-} &2.05e{-}3 &{-} &1.62e{-}2&{-} \\%
				&704   &3.39e{-}5 &5.15&6.67e{-}5 &4.94&7.73e{-}4&4.39\\%
				\multirow{2}{*}{ {\tt SSP-RK}(5,4)}
				&2816  &1.28e{-}6 &4.73&2.94e{-}6 &4.50&3.53e{-}5&4.45\\%
				\multirow{2}{*}{ $C_{\tt SSP} = 1.508$ } 
				&11264 &6.11e{-}8 &4.39&1.46e{-}7 &4.33&1.87e{-}6&4.24\\%
				&45056 &2.97e{-}9 &4.36&7.44e{-}9 &4.30&8.92e{-}8&4.39\\%
				&180224&1.58e{-}10&4.23&4.15e{-}10&4.16&6.81e{-}9&3.71\\%

				\midrule[1.0pt]	
				\multirow{3}{*}{$\mathbb{P}^4$-based OEDG} 	
				&176   &3.24e{-}4 &5.63&5.84e{-}4 &5.31&5.16e{-}3 &4.40\\%
				&704   &3.65e{-}6 &6.47&7.85e{-}6 &6.22&8.06e{-}5 &6.00\\%
				\multirow{1}{*}{ {\tt SSP-RK}(5,4)}
				&2816  &6.40e{-}8 &5.84&1.56e{-}7 &5.65&1.77e{-}6 &5.51\\%
				\multirow{1}{*}{ $C_{\tt SSP} = 1.508$ } 
				&11264 &1.42e{-}9 &5.50&3.68e{-}9 &5.40&4.60e{-}8 &5.27\\%
				\multirow{1}{*}{ {\small $\dt= \frac{C_{\tt SSP}}{9}\Big( \frac{\underset{K}{\min}\left\{|K|/(3\bar{l}_K)\right\}}{\alpha^{\rm LF}}\Big)^{\frac54}$ } }
				&45056 &3.74e{-}11&5.25&1.01e{-}10&5.20&1.30e{-}9 &5.15\\%
				&180224&1.18e{-}12&4.98&3.32e{-}12&4.92&4.84e{-}11&4.75\\%
				\bottomrule[1.5pt]
			\end{tabular}
		}
	\end{table}
	
	\begin{figure}[!htb]
		\centering		
		\begin{subfigure}{0.43\textwidth}
			\centering
			\includegraphics[width=\textwidth]{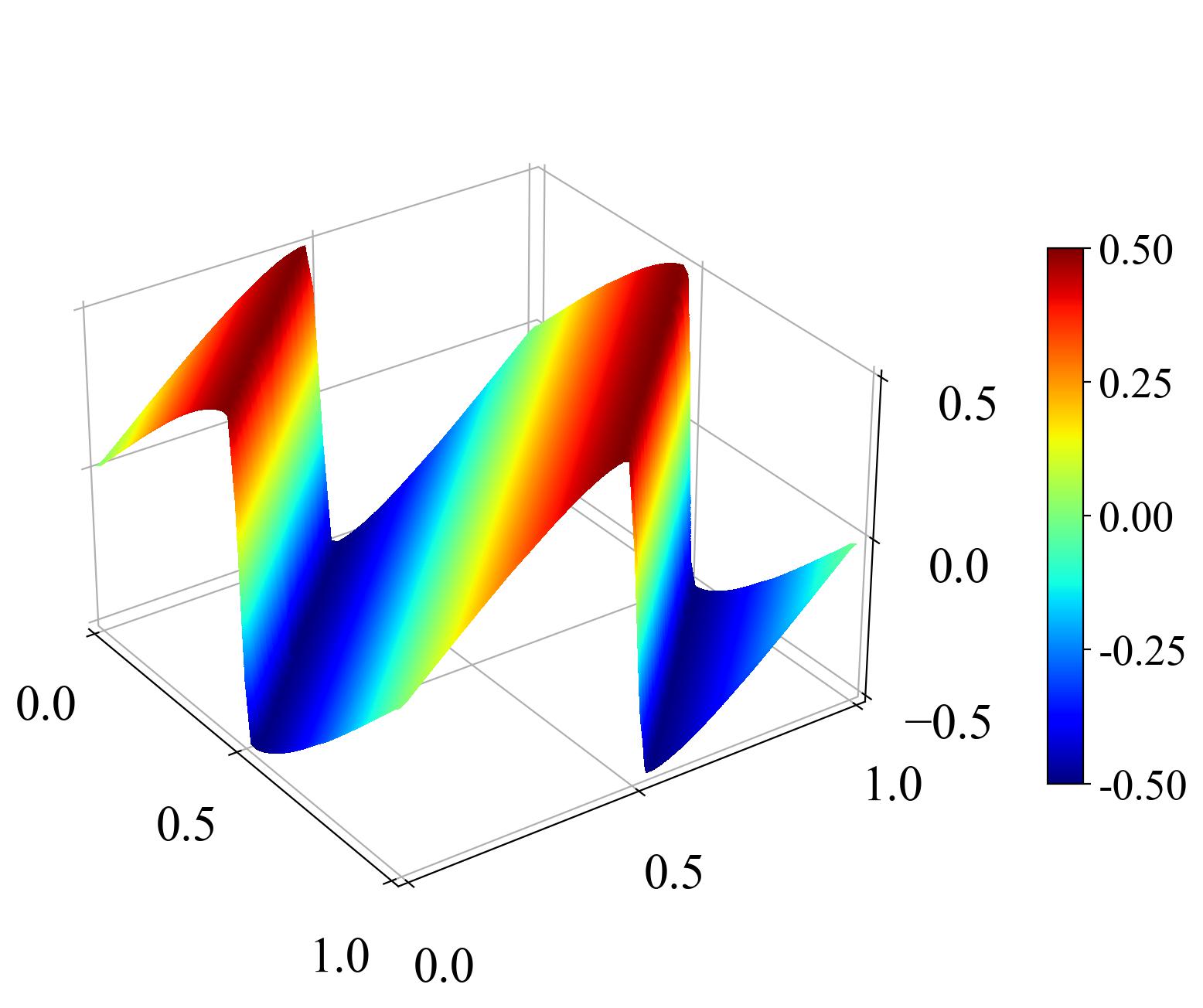}
            \caption{$\mathbb{P}^1$-based OEDG scheme.}
		\end{subfigure}
		\qquad
		\begin{subfigure}{0.43\textwidth}
			\centering
			\includegraphics[width=\textwidth]{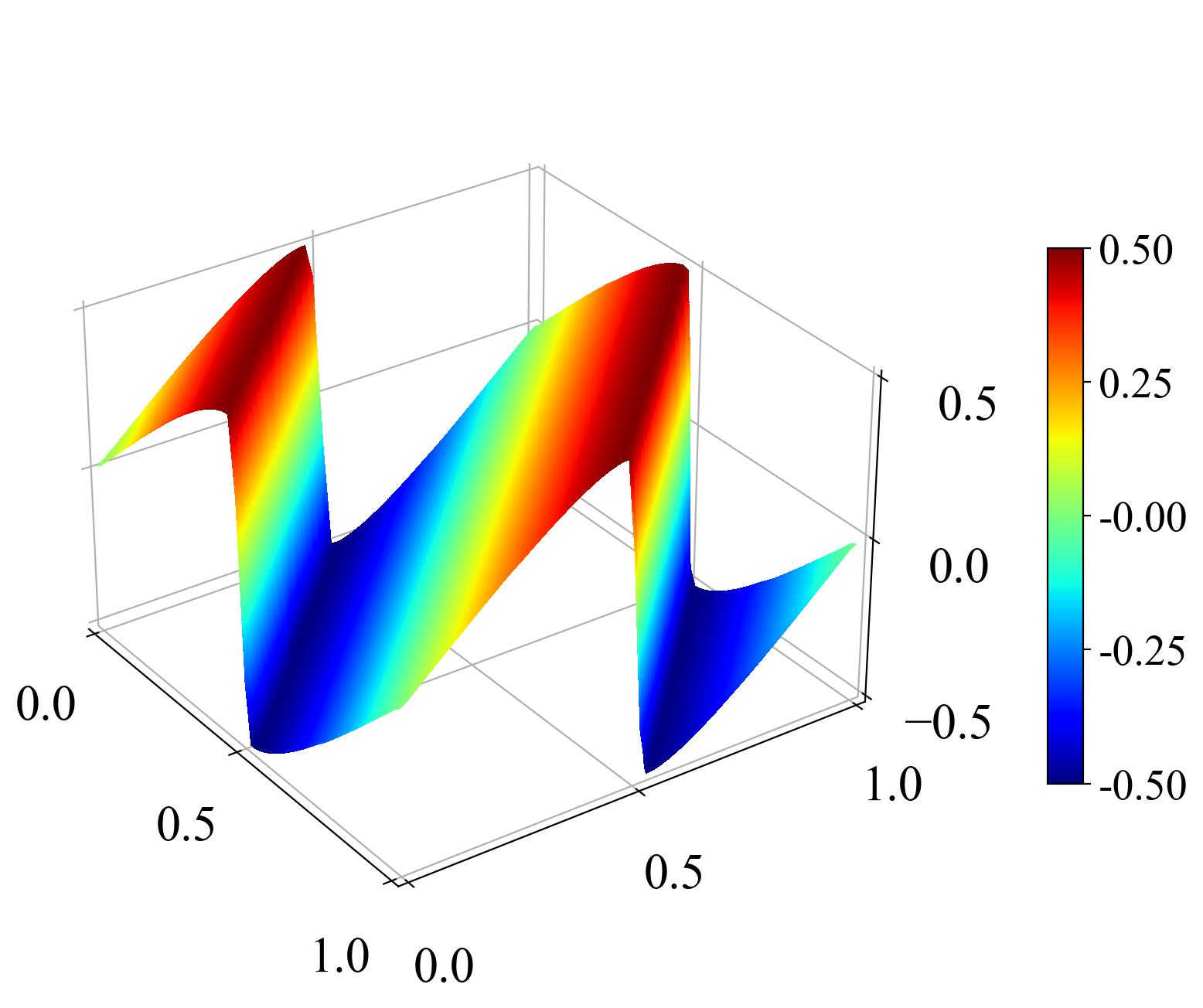}
            \caption{$\mathbb{P}^2$-based OEDG scheme.}
		\end{subfigure}
		
		\begin{subfigure}{0.43\textwidth}
			\centering
			\includegraphics[width=\textwidth]{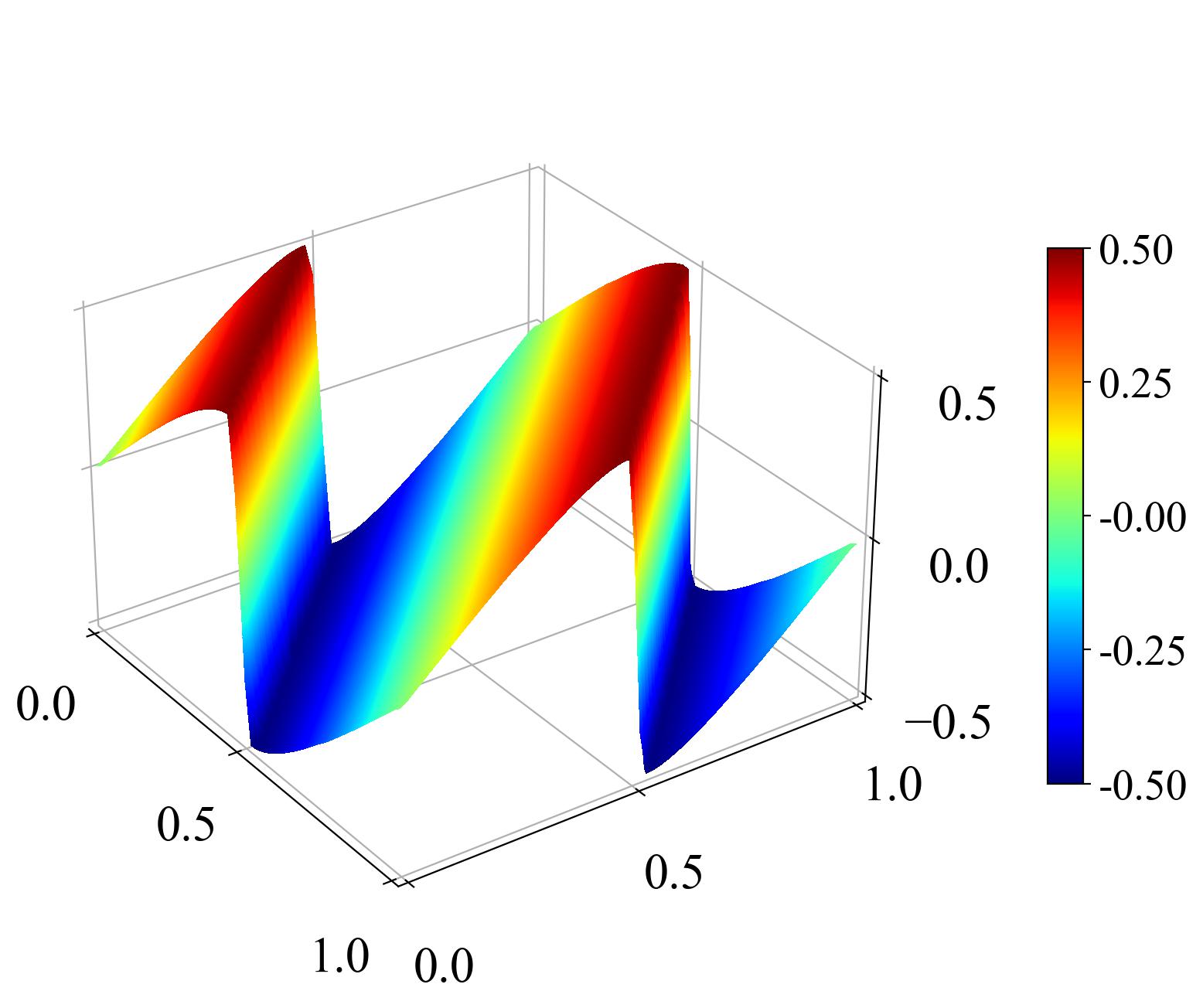}
            \caption{$\mathbb{P}^3$-based OEDG scheme.}
		\end{subfigure}
		\qquad
		\begin{subfigure}{0.43\textwidth}
			\centering
			\includegraphics[width=\textwidth]{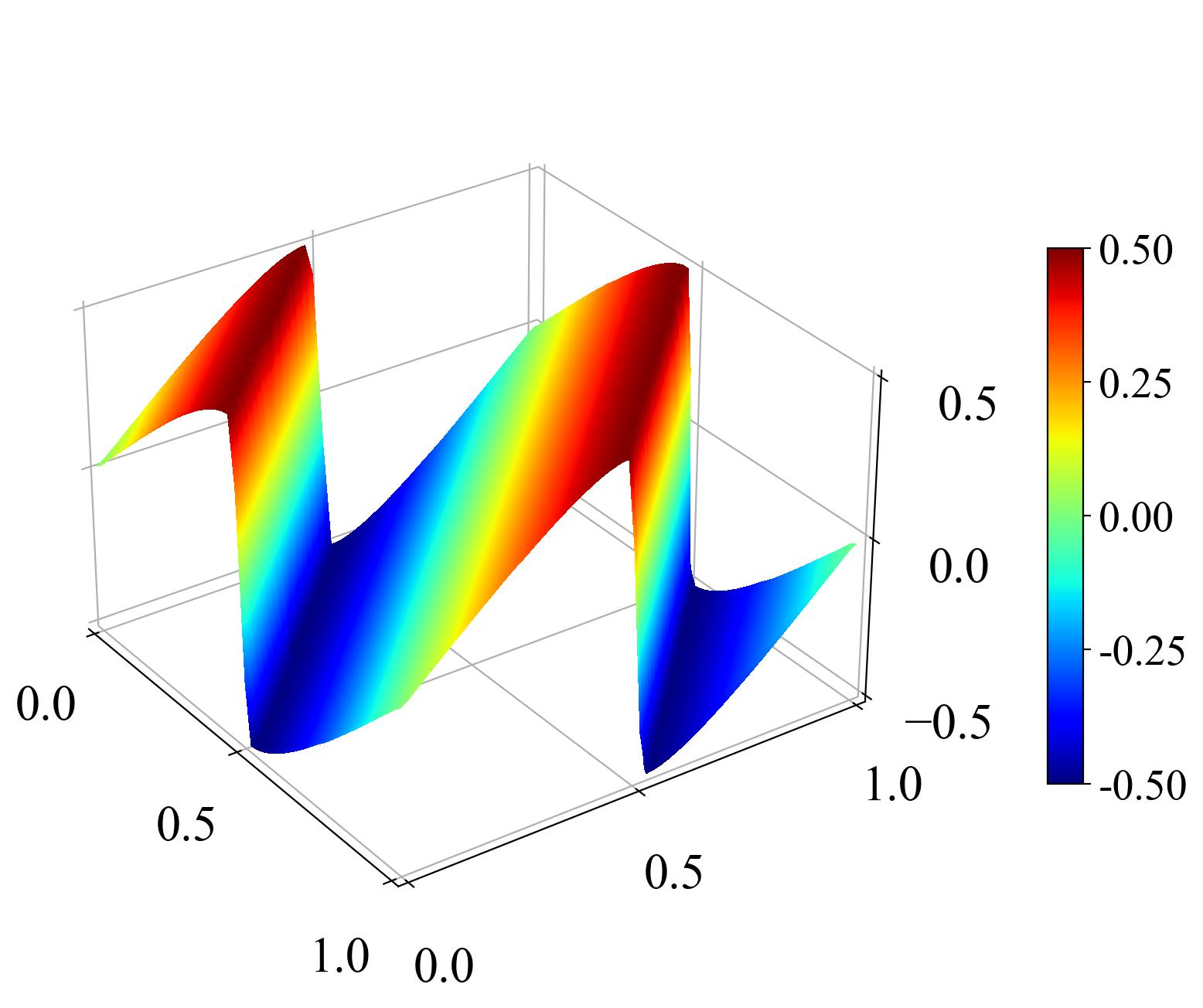}
			\caption{$\mathbb{P}^4$-based OEDG scheme.}
		\end{subfigure}
		
		\caption{(\Cref{Ex:BurgS}) Numerical solutions obtained by using the $\mathbb{P}^k$-based OEDG schemes with $k \in \{1,2,3,4\}$, at $t=0.23$.
		}
		\label{fig:Ex_BurDisS}
	\end{figure} 
	
\end{expl}

\begin{figure}[!htb]
	\centering
	\begin{subfigure}{0.32\textwidth}
		\centering
		\includegraphics[width=\textwidth]{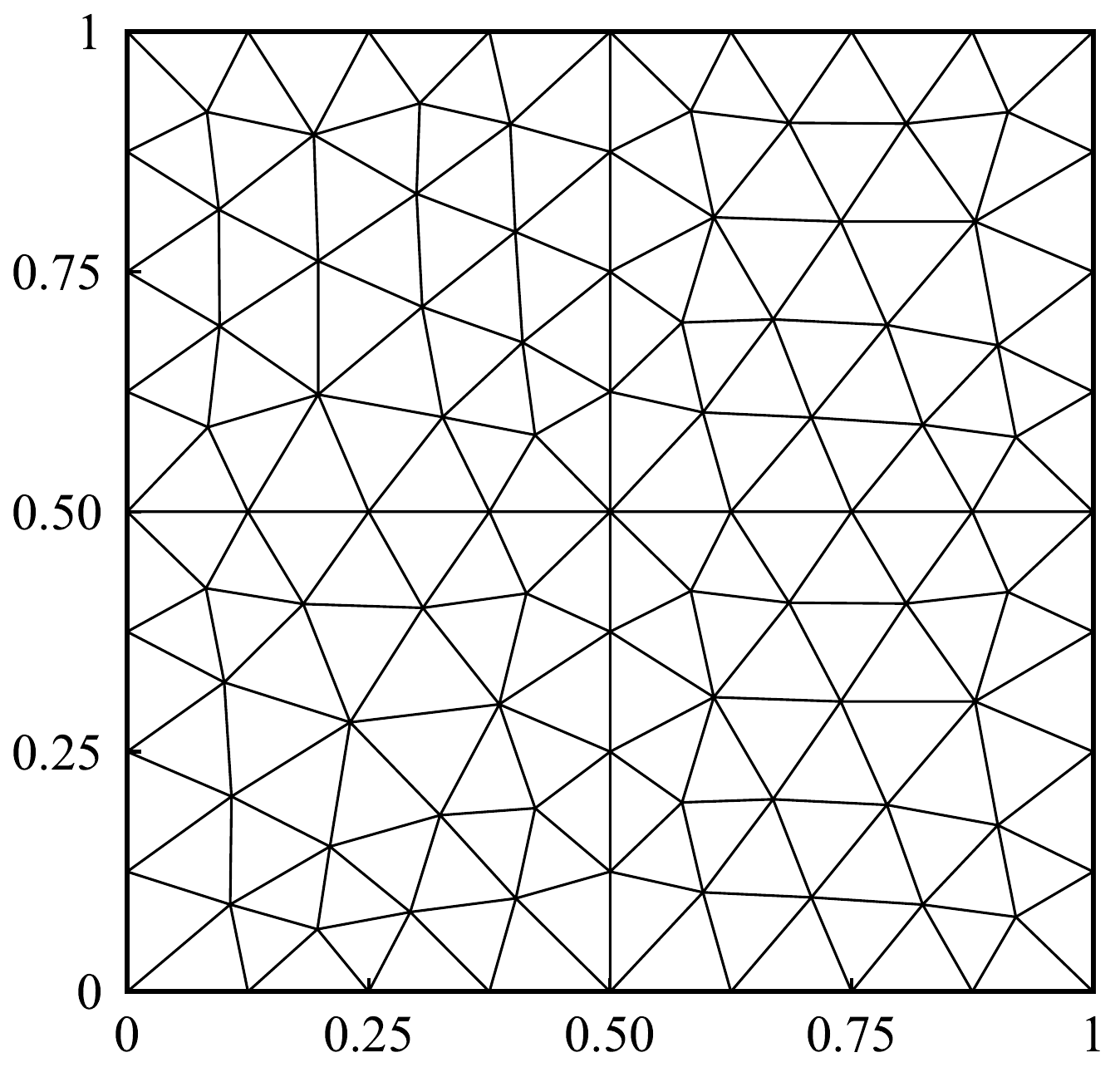}
		\caption{\Cref{Ex:BurgRM1}.} 
		\label{fig:Ex_BurgRM1mesh}
	\end{subfigure}
	\qquad\qquad
	\begin{subfigure}{0.32\textwidth}
		\centering
		\includegraphics[width=\textwidth]{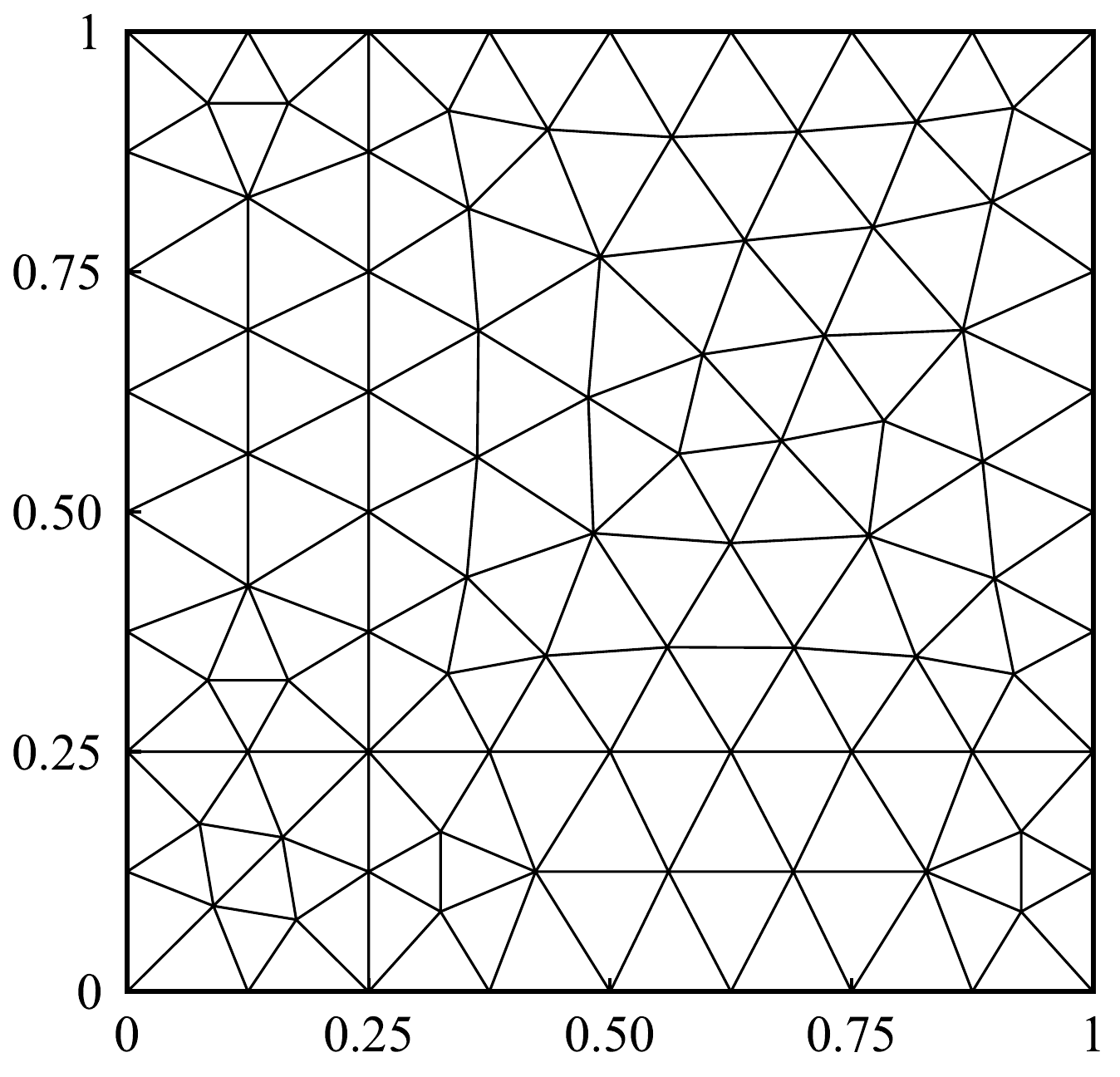}
		\caption{\Cref{Ex:BurgRM2}.} 
		\label{fig:Ex_BurgRM2mesh}
	\end{subfigure}
	\caption{The sample meshes with $h=0.125$.}
	\label{fig:Ex_BurgRM}
\end{figure}

\begin{expl}[Riemann problem \Rmnum{1}]\label{Ex:BurgRM1}
	We consider the ``oblique'' Riemann problem \cite{Christov2008JCP,CDWOCAD2023} for the Burgers equation \eqref{eq:1955}, subject to the following discontinuous initial condition:
	\begin{equation*}
		u(x,y,0) = 
		\begin{cases}
			-0.2, & x < 0.5, \, y \ge 0.5, \\
			-1,   & x \ge 0.5, \, y \ge 0.5, \\
			0.5,  & x < 0.5, \, y < 0.5, \\
			0.8,  & x \ge 0.5, \, y < 0.5.
		\end{cases}
	\end{equation*} 
	The computational domain $[0,1]^2$ is discretized using $N = 151,742$ triangular cells with $h = \frac{1}{256}$. A sample mesh with $h = 0.125$ and $N = 176$ cells is shown in \Cref{fig:Ex_BurgRM1mesh}. For any time $t \ge 0$, inflow conditions are imposed on all boundaries except for $\{x = 0, \, y \geq 0.5 + 0.15t\}$ and $\{x = 1, \, y \leq 0.5 - 0.1t\}$, where outflow conditions are applied. 
	This problem is simulated until $t = 0.5$ using the $\mathbb{P}^1$-, $\mathbb{P}^2$-, $\mathbb{P}^3$-, and $\mathbb{P}^4$-based OEDG schemes. \Cref{fig:Ex_BurgRM1} presents the numerical solutions, and \Cref{fig:Ex_BurgRM1Diag} shows the numerical solution along the line $y = 1 - x$. These results demonstrate the absence of nonphysical oscillations and are consistent with those reported in \cite{Christov2008JCP,CDWOCAD2023}.

	\begin{figure}[!htb]
		\centering
		\begin{subfigure}{0.43\textwidth}
			\centering
			\includegraphics[width=\textwidth]{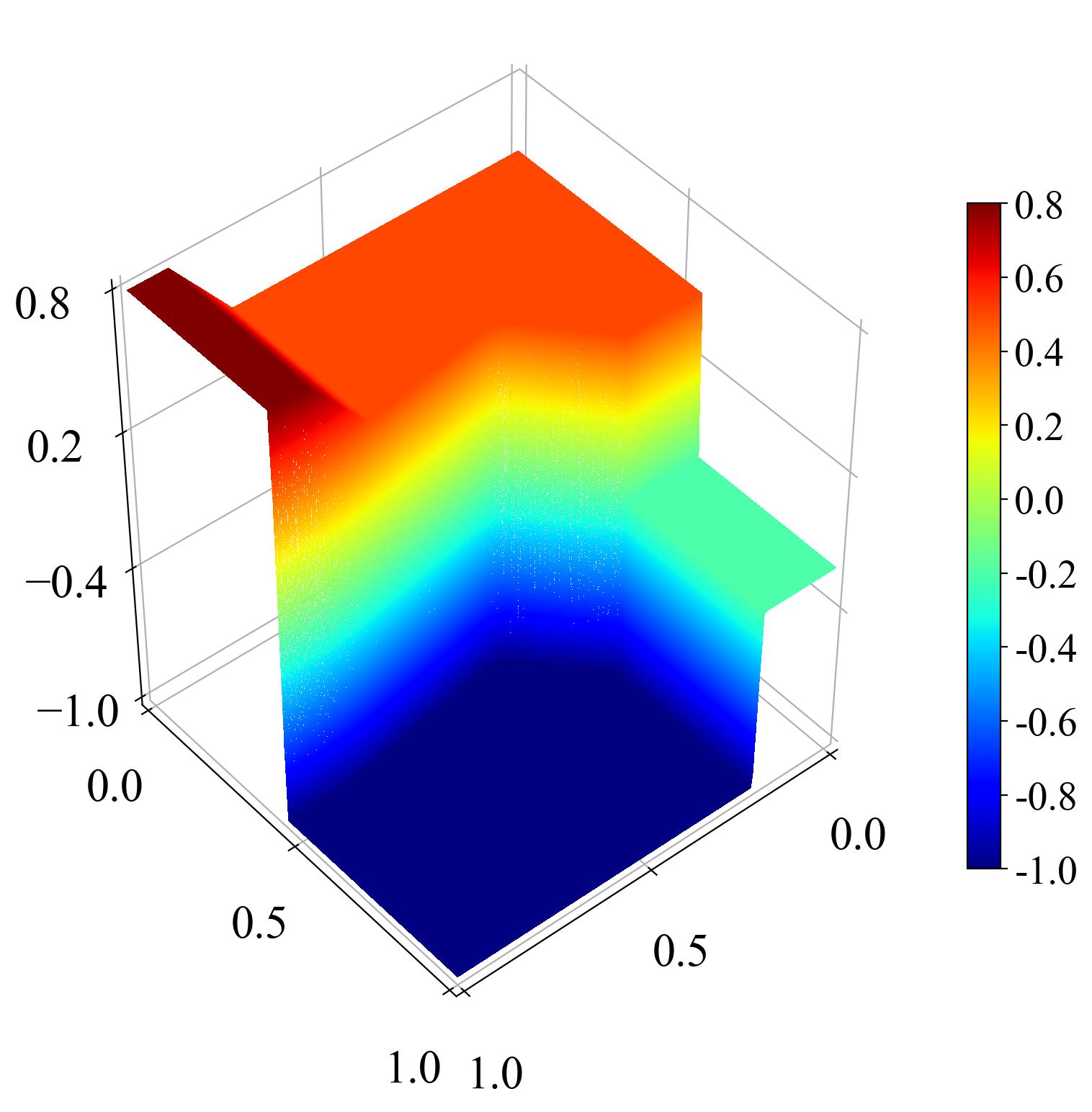}
			\caption{$\mathbb{P}^1$-based OEDG scheme.}
		\end{subfigure}
		\qquad
		\begin{subfigure}{0.43\textwidth}
			\centering
			\includegraphics[width=\textwidth]{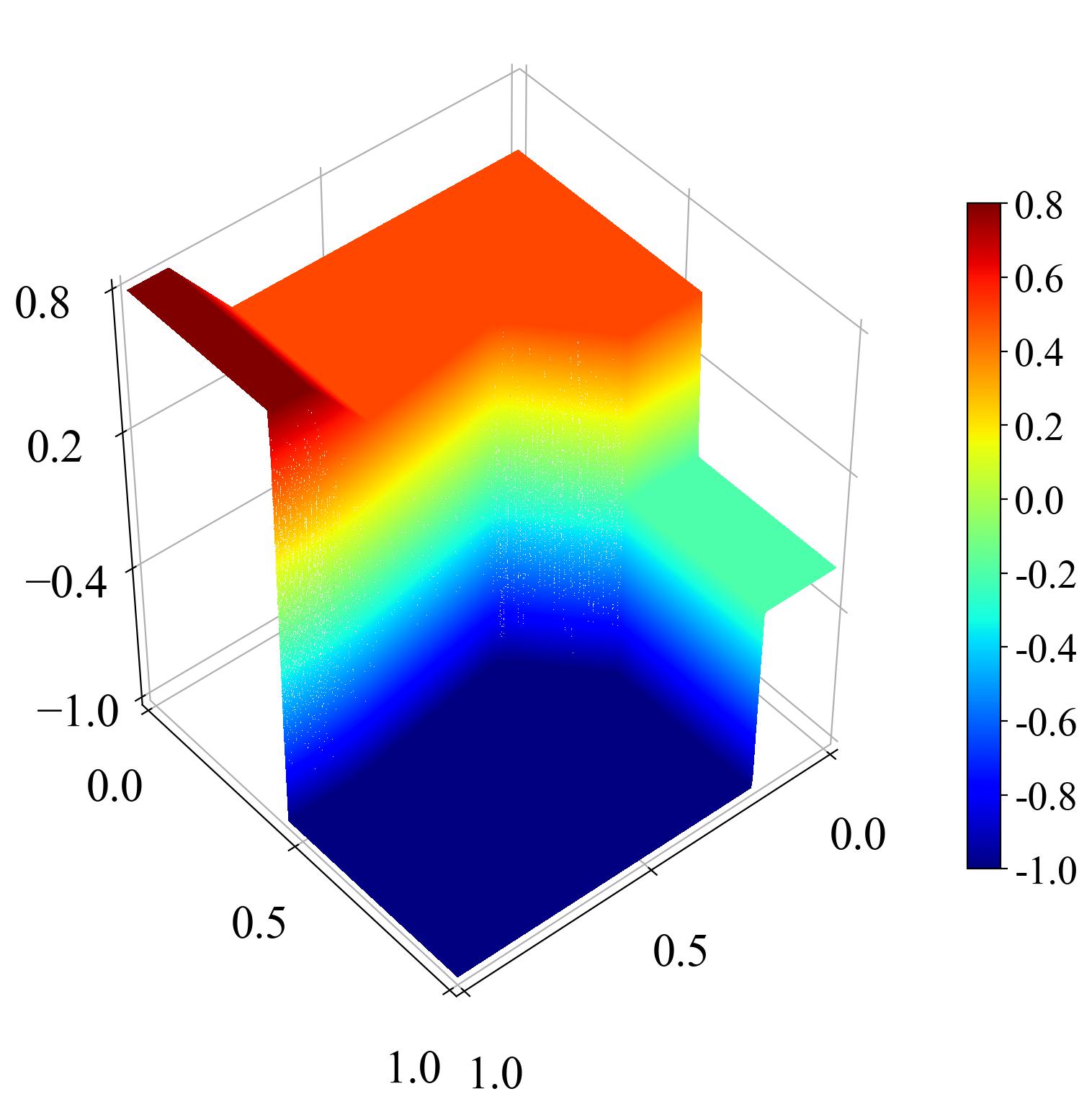}
			\caption{$\mathbb{P}^2$-based OEDG scheme.}
		\end{subfigure}
		
		\begin{subfigure}{0.43\textwidth}
			\centering
			\includegraphics[width=\textwidth]{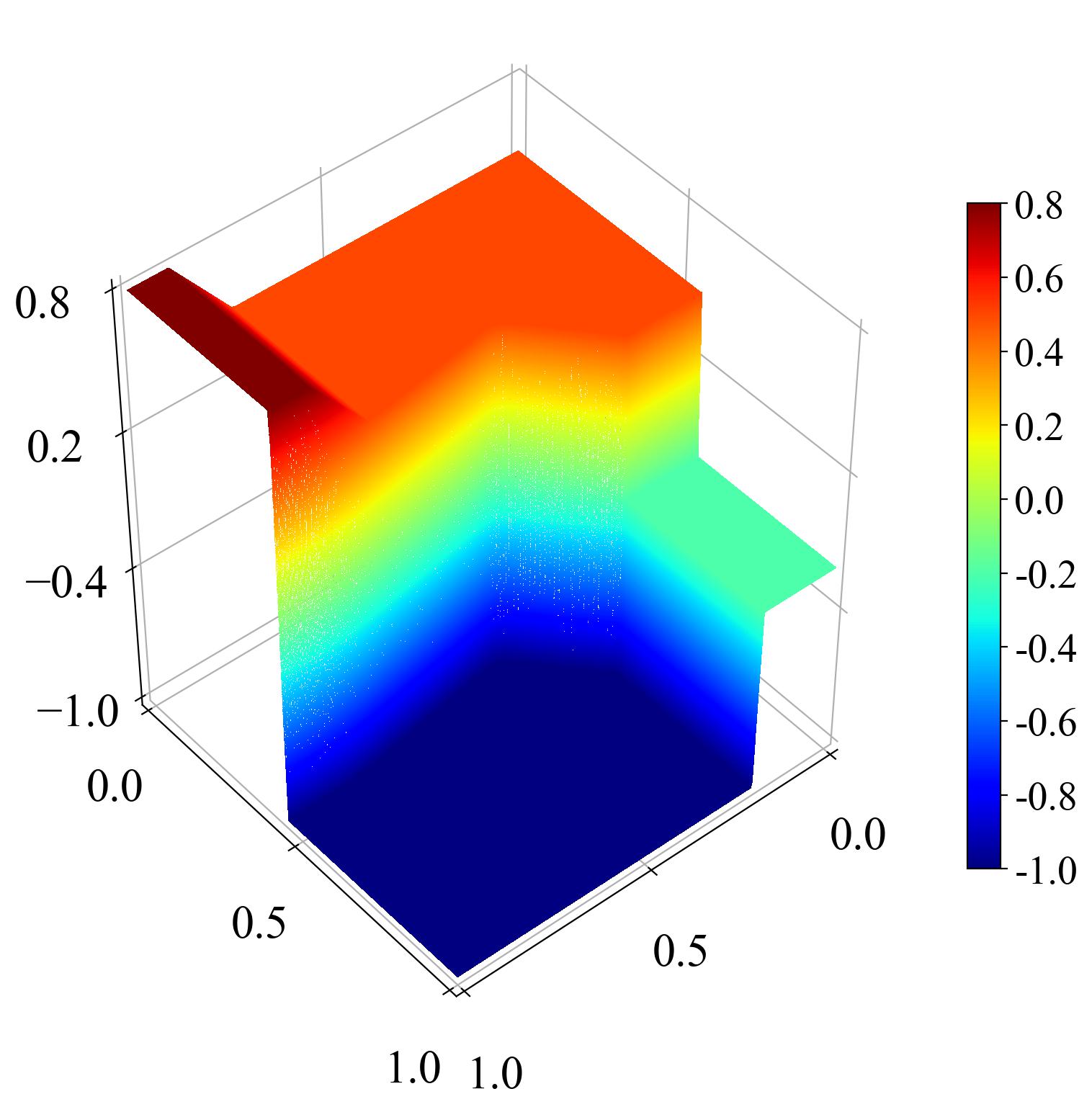}
			\caption{$\mathbb{P}^3$-based OEDG scheme.}
		\end{subfigure}
		\qquad
		\begin{subfigure}{0.43\textwidth}
			\centering
			\includegraphics[width=\textwidth]{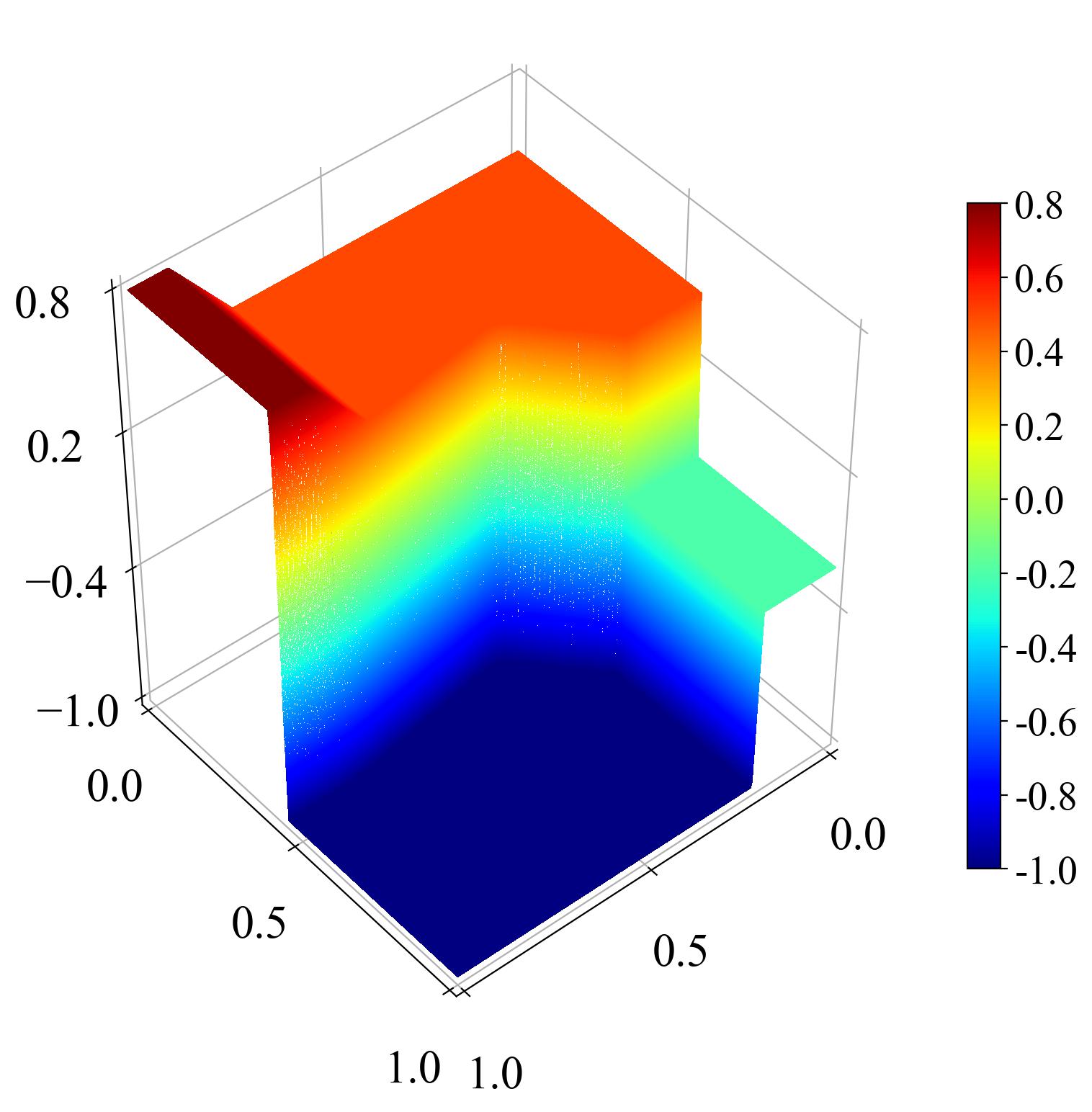}
			\caption{$\mathbb{P}^4$-based OEDG scheme.}
		\end{subfigure}
		
		\caption{(\Cref{Ex:BurgRM1}) Contours of numerical solutions at $t = 0.5$ obtained by using the $\mathbb{P}^k$-based OEDG method with $k \in \{1,2,3,4\}$.
		}
		\label{fig:Ex_BurgRM1}
	\end{figure}

	\begin{figure}[!htb]
		\centering
		\begin{subfigure}{0.45\textwidth}
			\includegraphics[width=\textwidth]{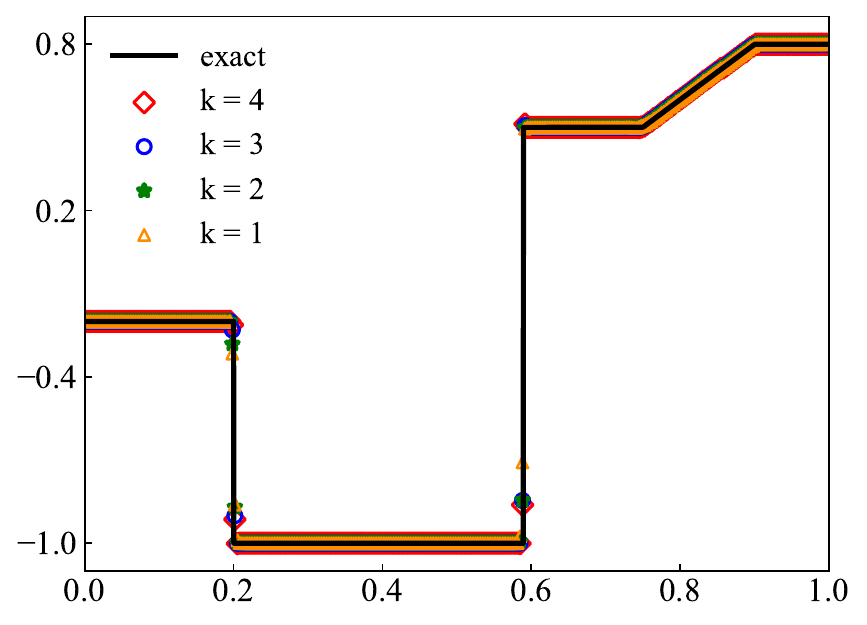}
		\end{subfigure}
		\caption{(\Cref{Ex:BurgRM1}) Numerical solutions cut along $y=1-x$.}
		\label{fig:Ex_BurgRM1Diag}
	\end{figure}
\end{expl}

\begin{expl}[Riemann problem \Rmnum{2}]\label{Ex:BurgRM2}
	We consider another Riemann problem for the Burgers equation \eqref{eq:1955}, as investigated in \cite{Christov2008JCP}, with the following discontinuous initial condition:
	\begin{equation*}
		u(x,y,0) = 
		\begin{cases}
			2, & x < 0.25, \, y < 0.25, \\
			3, & x \ge 0.25, \, y \ge 0.25, \\
			1, & \text{otherwise}.
		\end{cases}
	\end{equation*}
	The computational domain $[0,1]^2$ is discretized using $N = 151,756$ triangular cells; see the sample mesh in \Cref{fig:Ex_BurgRM2mesh}. Outflow conditions are applied at the right and upper boundaries, while inflow conditions are imposed at the left and bottom boundaries. 
	This problem describes the interaction of two shock waves and two rarefaction waves moving towards the center of the domain, eventually forming a cusp. \Cref{fig:Ex_BurgRM2} shows the numerical solutions at $t = \frac{1}{12}$ obtained using the $\mathbb{P}^1$-, $\mathbb{P}^2$-, $\mathbb{P}^3$-, and $\mathbb{P}^4$-based OEDG schemes. It can be observed that a cusp forms at the center, and the resolution of this cusp improves with increasing order of accuracy. The OEDG methods perform effectively, with no evidence of nonphysical oscillations in the computed results.

	\begin{figure}[!htb]
		\centering
		\begin{subfigure}{0.4\textwidth}
			\centering
			\includegraphics[width=\textwidth]{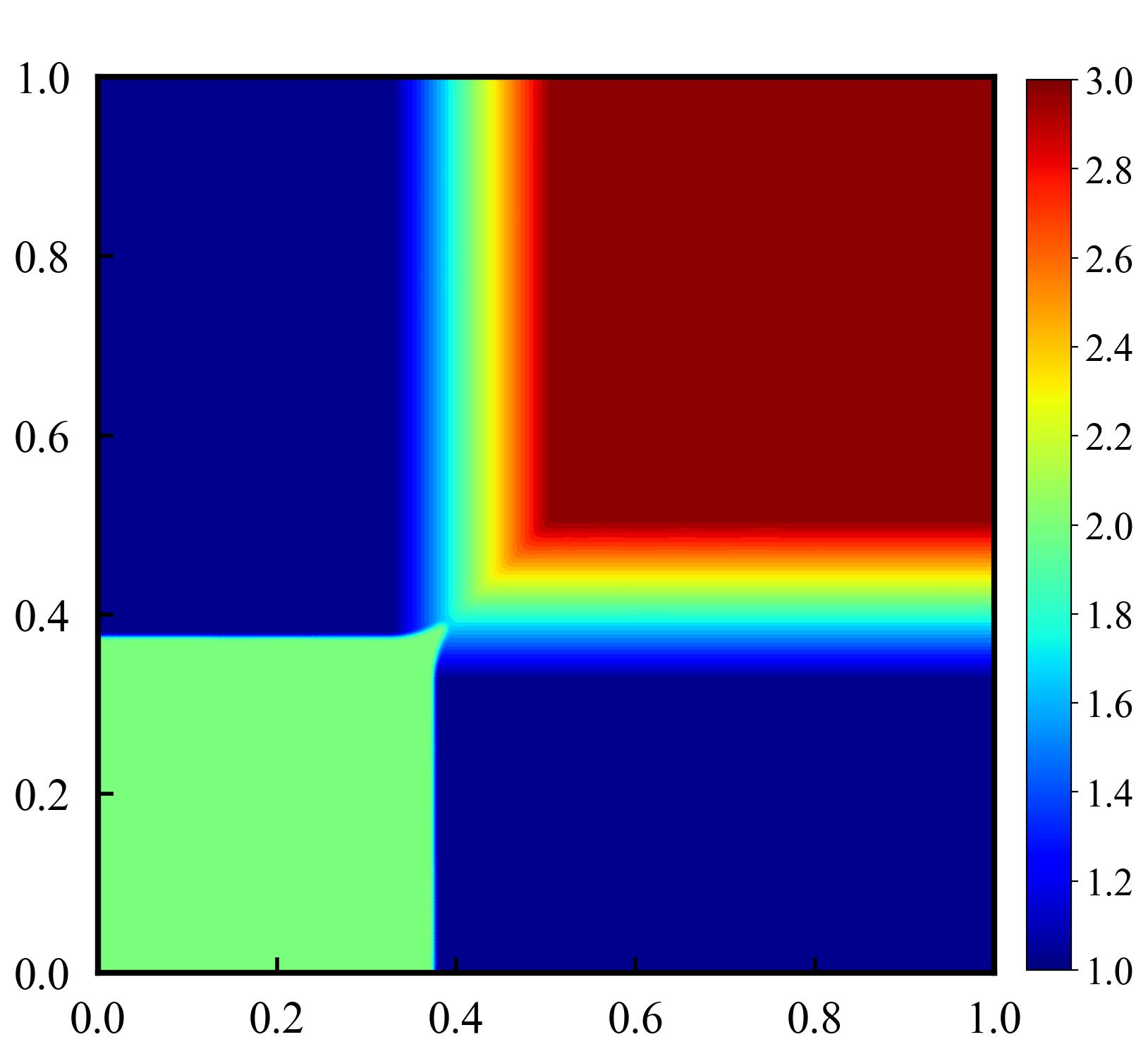}
            \caption{$\mathbb{P}^1$-based OEDG scheme.}
		\end{subfigure}
		\qquad
		\begin{subfigure}{0.4\textwidth}
			\centering
			\includegraphics[width=\textwidth]{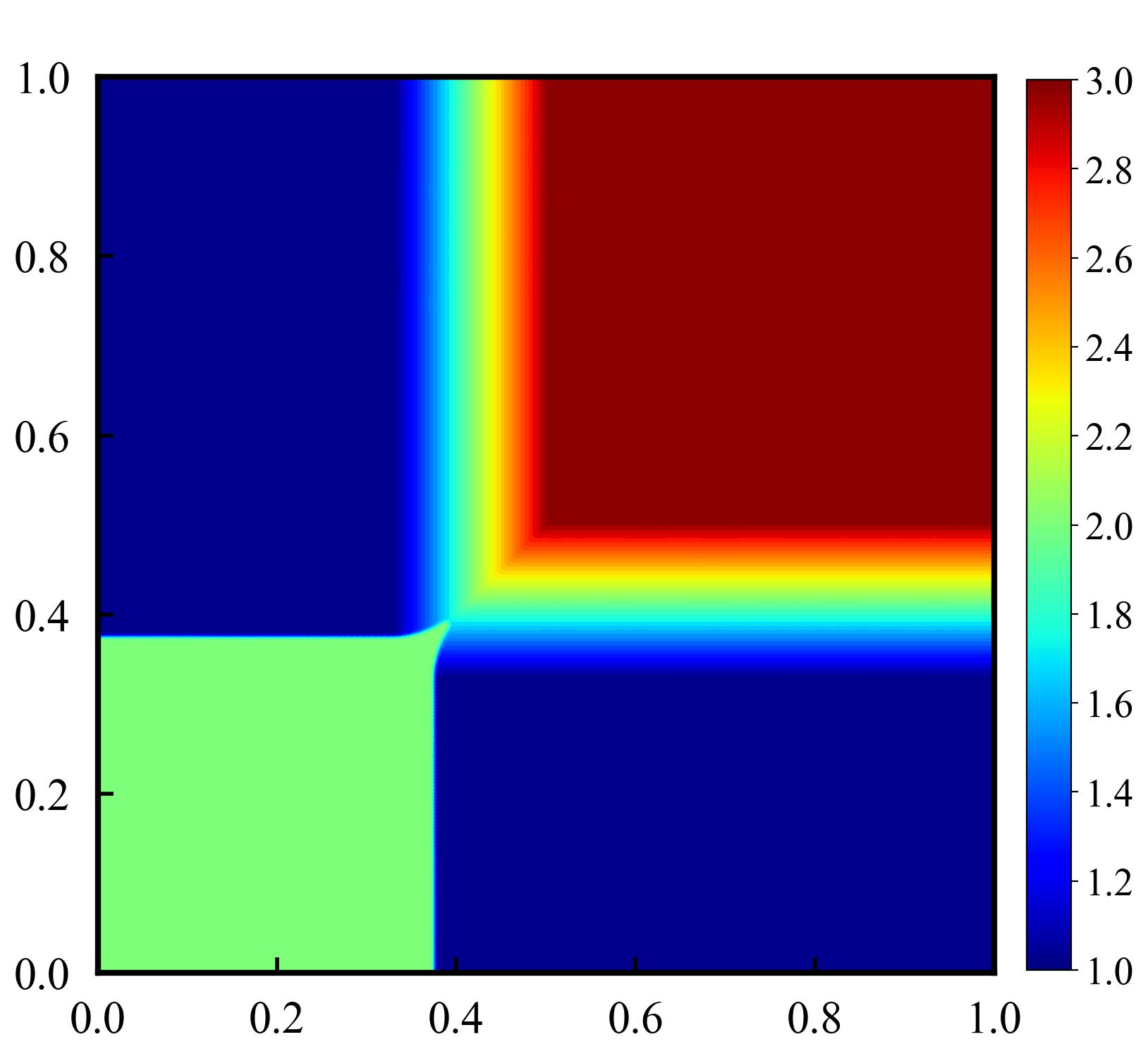}
            \caption{$\mathbb{P}^2$-based OEDG scheme.}
		\end{subfigure}
		
		\begin{subfigure}{0.4\textwidth}
			\centering
			\includegraphics[width=\textwidth]{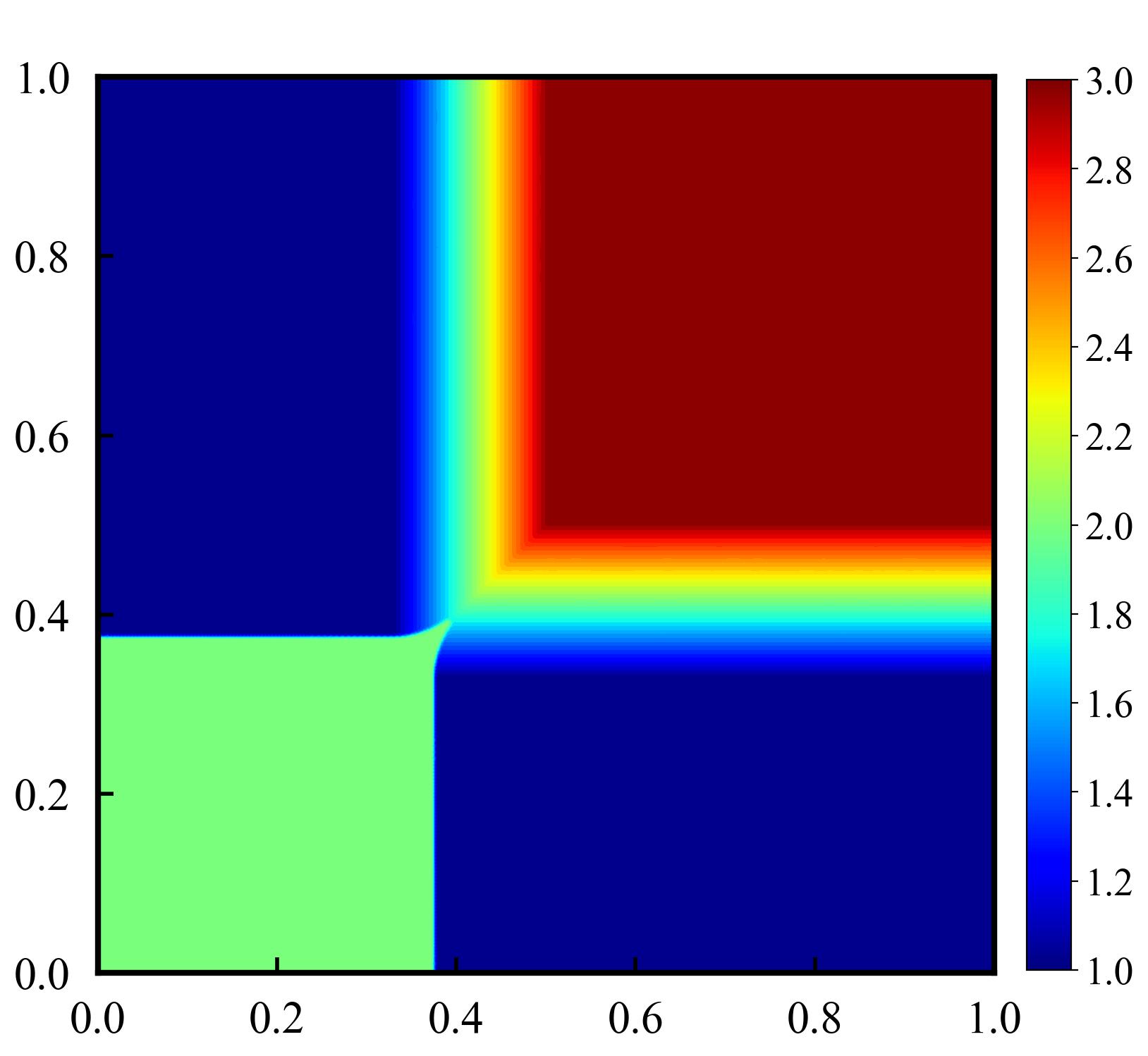}
            \caption{$\mathbb{P}^3$-based OEDG scheme.}
		\end{subfigure}
		\qquad
		\begin{subfigure}{0.4\textwidth}
			\centering
			\includegraphics[width=\textwidth]{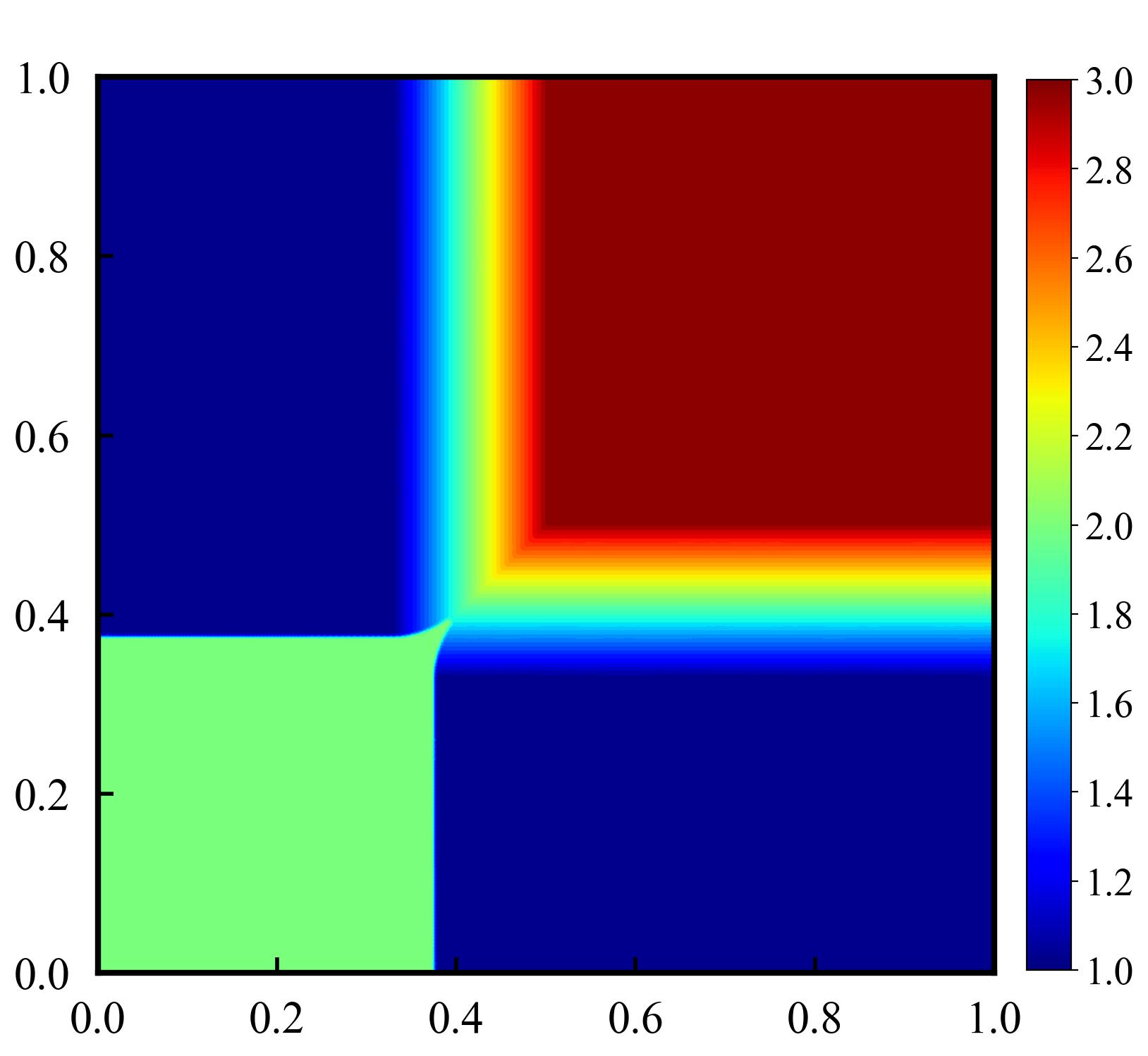}
            \caption{$\mathbb{P}^4$-based OEDG scheme.}
		\end{subfigure}
		
		\caption{(\Cref{Ex:BurgRM2}) Numerical solutions at $t = 1/12$ obtained by the $\mathbb{P}^k$-based OEDG method with $k \in \{1,2,3,4\}$.
		}
		\label{fig:Ex_BurgRM2}
	\end{figure}
\end{expl}

\subsection{Compressible Euler System}\label{Ex:Euler}

In the following Examples \ref{Ex:EulerImp}--\ref{Ex:EulerSDiffWedge}, we simulate several problems involving strong discontinuities by solving the compressible Euler system described in Example \ref{ex:euler}, with an adiabatic index $\gamma = 1.4$.

\begin{expl}[Implosion problem]\label{Ex:EulerImp}
	We consider the implosion problem investigated in \cite{Liska2003FD, Tchekhovskoy2007wham}. This problem involves the interaction between low-density, low-pressure gas inside a diamond-shaped region and high-density, high-pressure gas in the surrounding area. In the original settings \cite{Liska2003FD, Tchekhovskoy2007wham}, the computational domain is the square region $[-0.3,0.3]^2$, with the vertices of the diamond region located at $(\pm0.15,0)$ and $(0,\pm0.15)$. Due to geometric symmetry, computations can be restricted to the upper right quadrant of the domain. 
	Specifically, we define the computational domain as $\Omega = [0,0.3]^2$ and the interior region $D$ as the triangle with vertices at $(0,0)$, $(0.15,0)$, and $(0,0.15)$. A sample mesh of this domain is shown in green solid lines in \Cref{fig:Ex_EulImpMesh}, while a finer mesh (not shown) with $N = 23,290$ cells and $h = 0.003$ is used for computation. The initial condition is given by:
	\begin{equation*}
		(\rho,v_1,v_2,p)^\top ({\bm x},0) =
		\begin{cases}
			(0.125,0,0,0.14)^\top, & \xbm \in D = \{(x,y) \in \mathbb{R}^2 : x \ge 0, \, y \ge 0, \, x+y \leq 0.15\}, \\
			(1,0,0,1)^\top, & \xbm \in \Omega / D.
		\end{cases}
	\end{equation*}
	Reflective boundary conditions are applied to all four boundaries. 
	We use the $\mathbb{P}^k$-based RI-OEDG method with $k \in \{1,2,3,4\}$ to solve this problem up to $t = 2.5$. The pressure of the obtained numerical results is presented in the left column of \Cref{fig:Ex_EulImp}. These results match very well with those reported in \cite{Liska2003FD, Tchekhovskoy2007wham}, demonstrating the effectiveness of the RI-OEDG methods.

	Next, we denote the numerical solutions in the computational domain $\Omega$ as $\bm u_h = (\rho_h,v_{1,h},v_{2,h},p_h)^\top$. To test the rotational invariance, we rotate the domain clockwise by an angle of $\pi/4$. This rotation yields the following rotational matrix:
	\begin{equation*}
		M_{\bm \xi} = 
		\begin{pmatrix}
			\sqrt{2}/2 & \sqrt{2}/2 \\
			-\sqrt{2}/2 & \sqrt{2}/2
		\end{pmatrix}.
	\end{equation*}
	The new computational domain $\hat{\Omega}$, as shown in blue in \Cref{fig:Ex_EulImpMesh}, is defined as:
	$$\hat{\Omega} = \left\{(x,y) : 0 \leq x+y \leq 0.3\sqrt{2}, \, -0.3\sqrt{2} \leq y-x \leq 0\right\},$$ 
	with two vertices of the diamond relocated to $(0.15/\sqrt{2},\pm0.15/\sqrt{2})$. The interior region $D$ transforms into:
	$$\hat{D} = \left\{(x,y) : 0 \leq x \leq 0.15/\sqrt{2}, \, -x \leq y \leq x\right\}.$$
	The initial condition $\hat{\bm u}_0$ in $\hat{\Omega}$ is derived by rotating $\bm u_0$ by $\pi/4$. Notably, since the initial velocities in $\bm u_0$ are zero, $\hat{\bm u}_0$ remains similar to $\bm u_0$ except for the rotation of $D$ and $\Omega$ into $\hat{D}$ and $\hat{\Omega}$, respectively. The blue dashed lines in \Cref{fig:Ex_EulImpMesh} show the domain $\hat{\Omega}$ and the corresponding sample mesh $\hat{\mathcal{T}}_h$, which is derived by rotating the original sample mesh $\mathcal{T}_h$. 
	Using the RI-OEDG method with $\mathbb{P}^1$-, $\mathbb{P}^2$-, $\mathbb{P}^3$-, and $\mathbb{P}^4$-elements, we perform computations until $t = 2.5$. The results, shown in the right column of \Cref{fig:Ex_EulImp}, exhibit no spurious oscillations. The numerical solutions are denoted as $\hat{\bm u}_h = (\hat{\rho}_h,\hat{v}_{1,h},\hat{v}_{2,h},\hat{p}_h)^\top$. 
 
	To further demonstrate the rotational invariance of the RI-OEDG method, we quantitatively evaluate the numerical errors between $\hat{\bm u}_h$ and ${\bm u}_h$, termed RI errors, as follows:
	\begin{equation*}
		\bm \varepsilon := (\varepsilon_{\rho}, \varepsilon_{v_1}, \varepsilon_{v_2}, \varepsilon_p)^\top = 
		\Big(\norm{\bar{\rho}_h-\bar{\hat{\rho}}_h}_{L^\infty}, 
		\norm{\bar{v}_{1,h}-\bar{\hat{v}}_{1,h}^{T}}_{L^\infty}, 
		\norm{\bar{v}_{2,h}-\bar{\hat{v}}_{2,h}^{T}}_{L^\infty}, 
		\norm{\bar{p}_h-\bar{\hat{p}}_h}_{L^\infty}\Big)^\top,
	\end{equation*}
	where $(\bar{\hat{v}}_{1,h}^{T},\bar{\hat{v}}_{2,h}^{T})^\top = M^{-1}_{\bm \xi} (\bar{\hat{v}}_{1,h},\bar{\hat{v}}_{2,h})^\top$. From \Cref{fig:Ex_EulImpError}, we observe that the RI errors $\bm \varepsilon$ are maintained at about the level of round-off errors, demonstrating that the RI-OEDG method preserves the RI property.  
	For comparison, we also use the OEDG method with a component-wise OE procedure, referred to as the CW-OEDG method. The corresponding RI errors $\bm \varepsilon$ are significantly larger, as shown in \Cref{fig:Ex_EulImpError}, indicating that the CW-OEDG method is not RI.

	\begin{figure}[!htb]
		\centering
		\begin{subfigure}{0.35\textwidth}
			\centering
			\includegraphics[width=\textwidth]{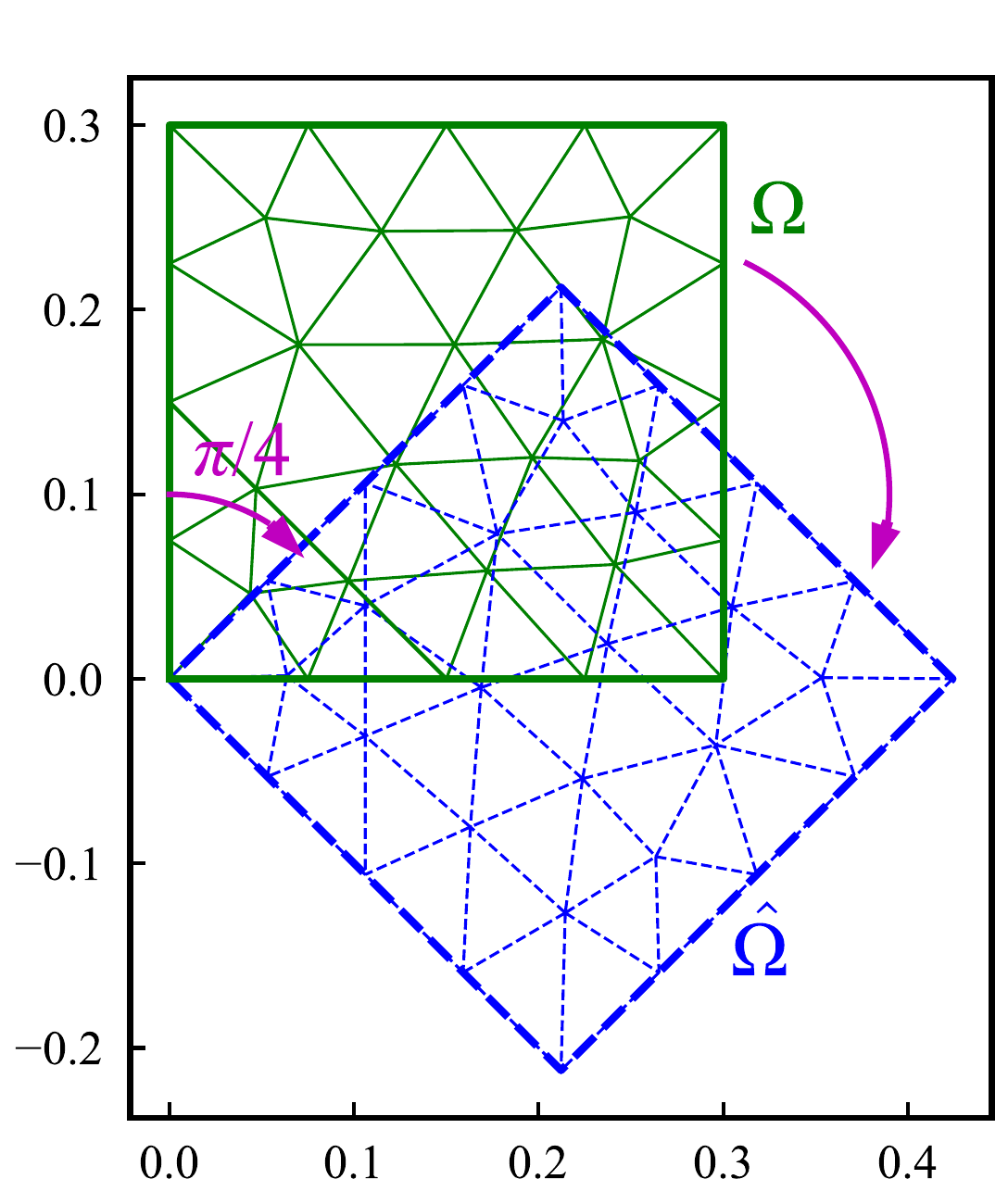}
		\end{subfigure}
		\caption{(\Cref{Ex:EulerImp}) The computational domain $\Omega$ with its corresponding sample mesh $\mathcal{T}_h$, and the rotated domain $\hat{\Omega}$ with its rotated sample mesh $\hat{\mathcal{T}}_h$, obtained after a rotation by $\pi/4$.
		}
		\label{fig:Ex_EulImpMesh}
	\end{figure}

	\begin{figure}[!htbp]
		\centering	
		\begin{subfigure}{0.36\textwidth}
			\centering
			\includegraphics[width=\textwidth]{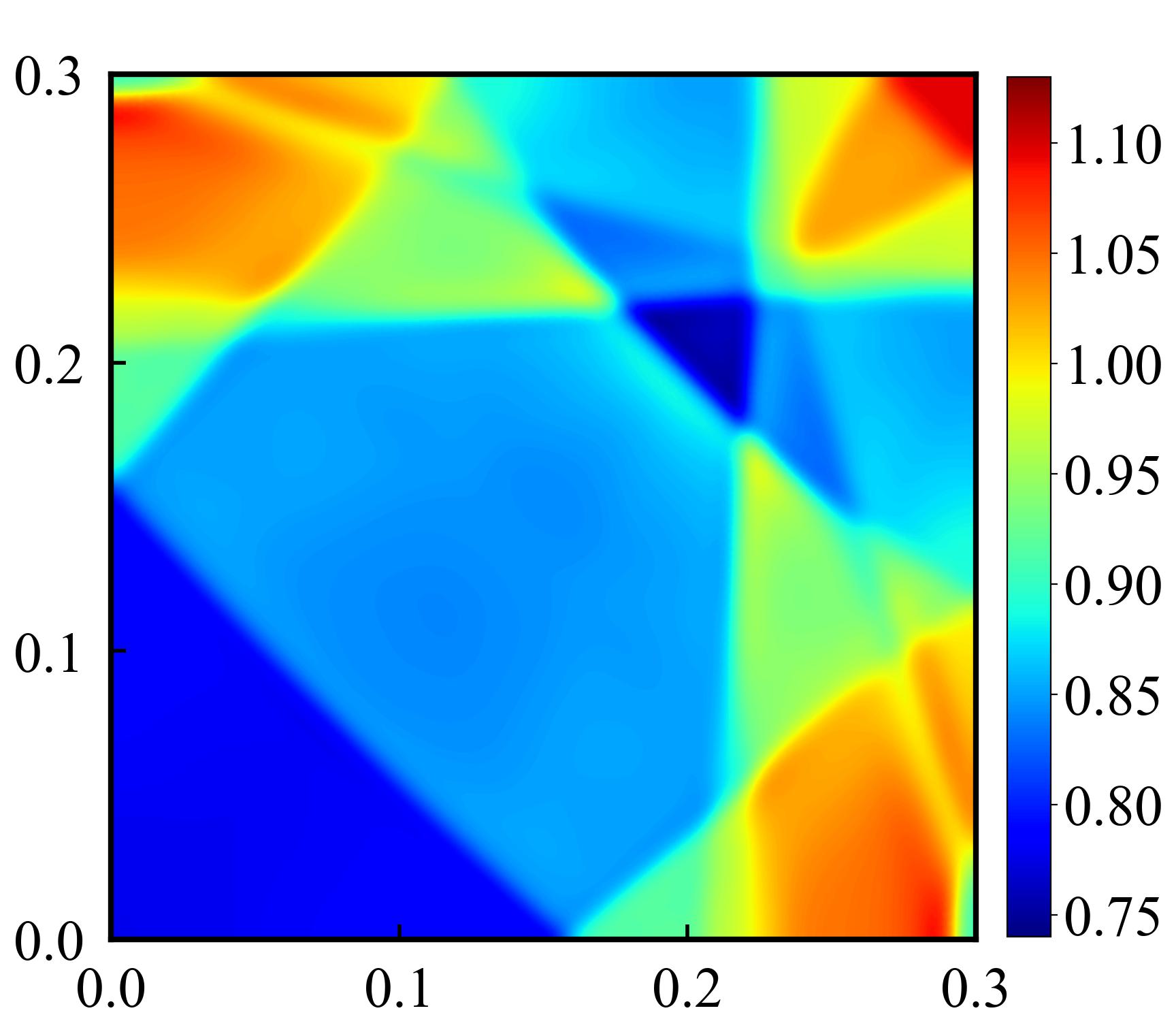}
		\end{subfigure}
		\qquad\qquad
		\begin{subfigure}{0.36\textwidth}
			\centering
			\includegraphics[width=\textwidth]{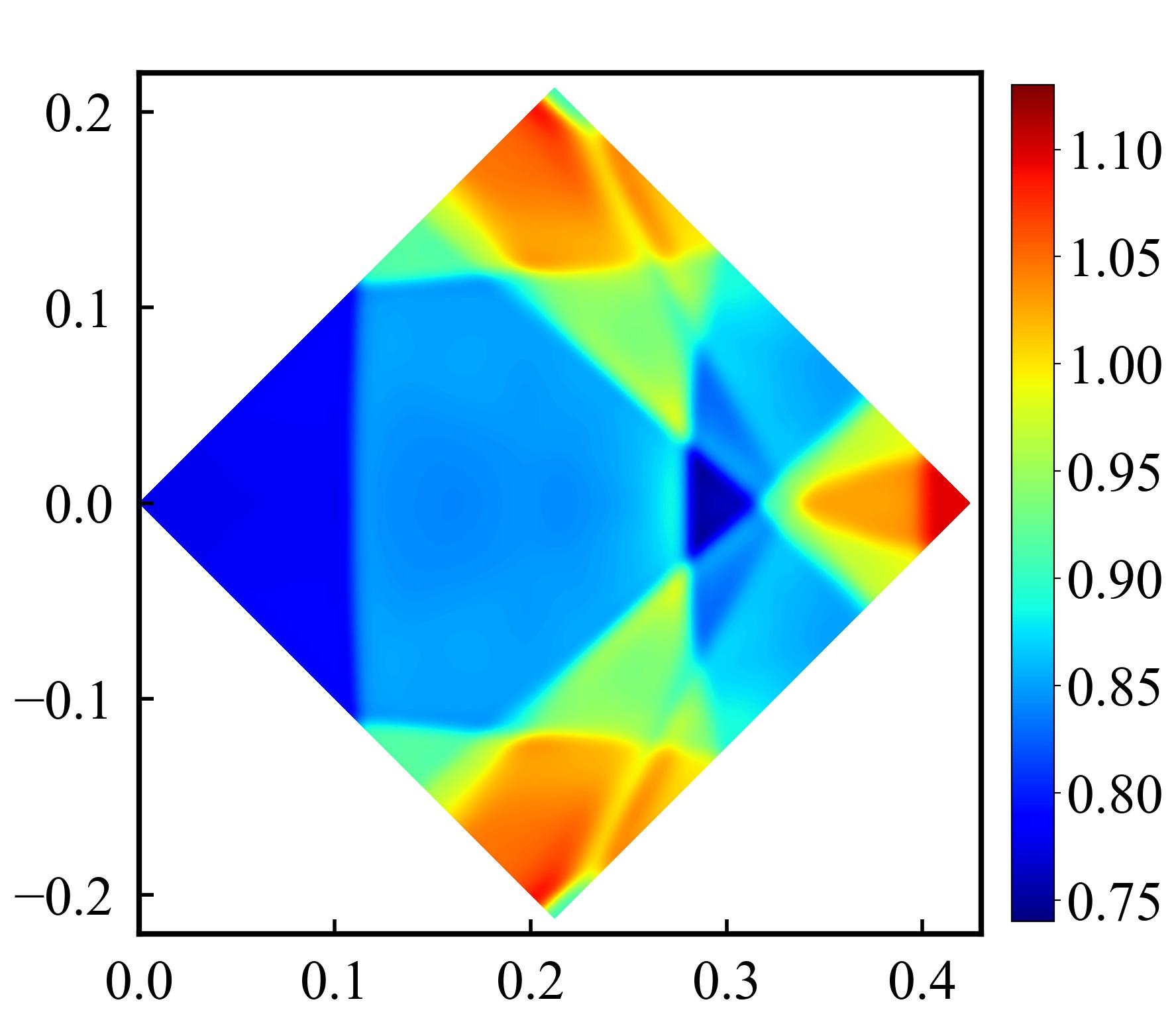}
		\end{subfigure}
		
		\begin{subfigure}{0.37\textwidth}
			\centering
			\includegraphics[width=\textwidth]{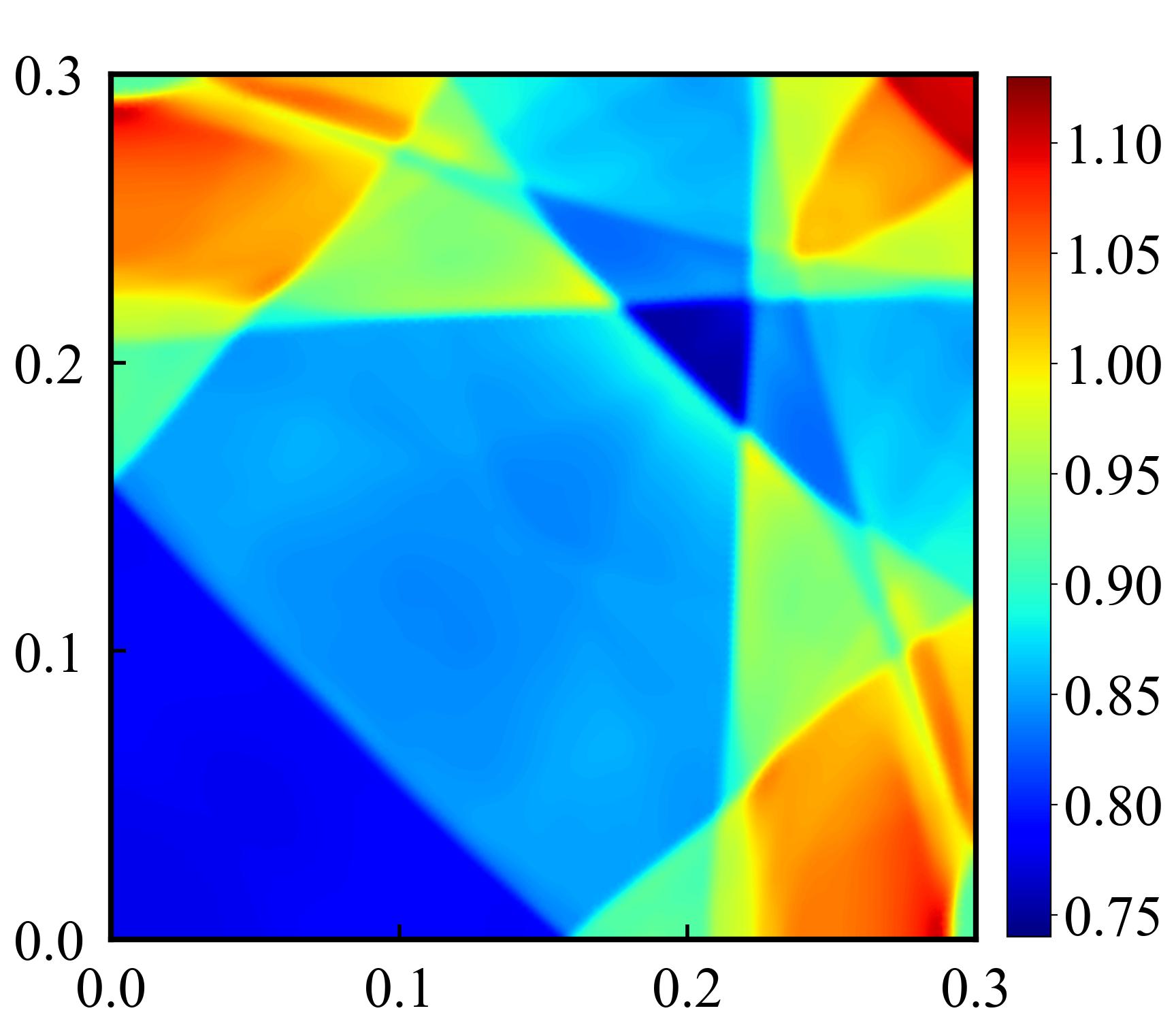}
		\end{subfigure}
		\qquad\qquad
		\begin{subfigure}{0.37\textwidth}
			\centering
			\includegraphics[width=\textwidth]{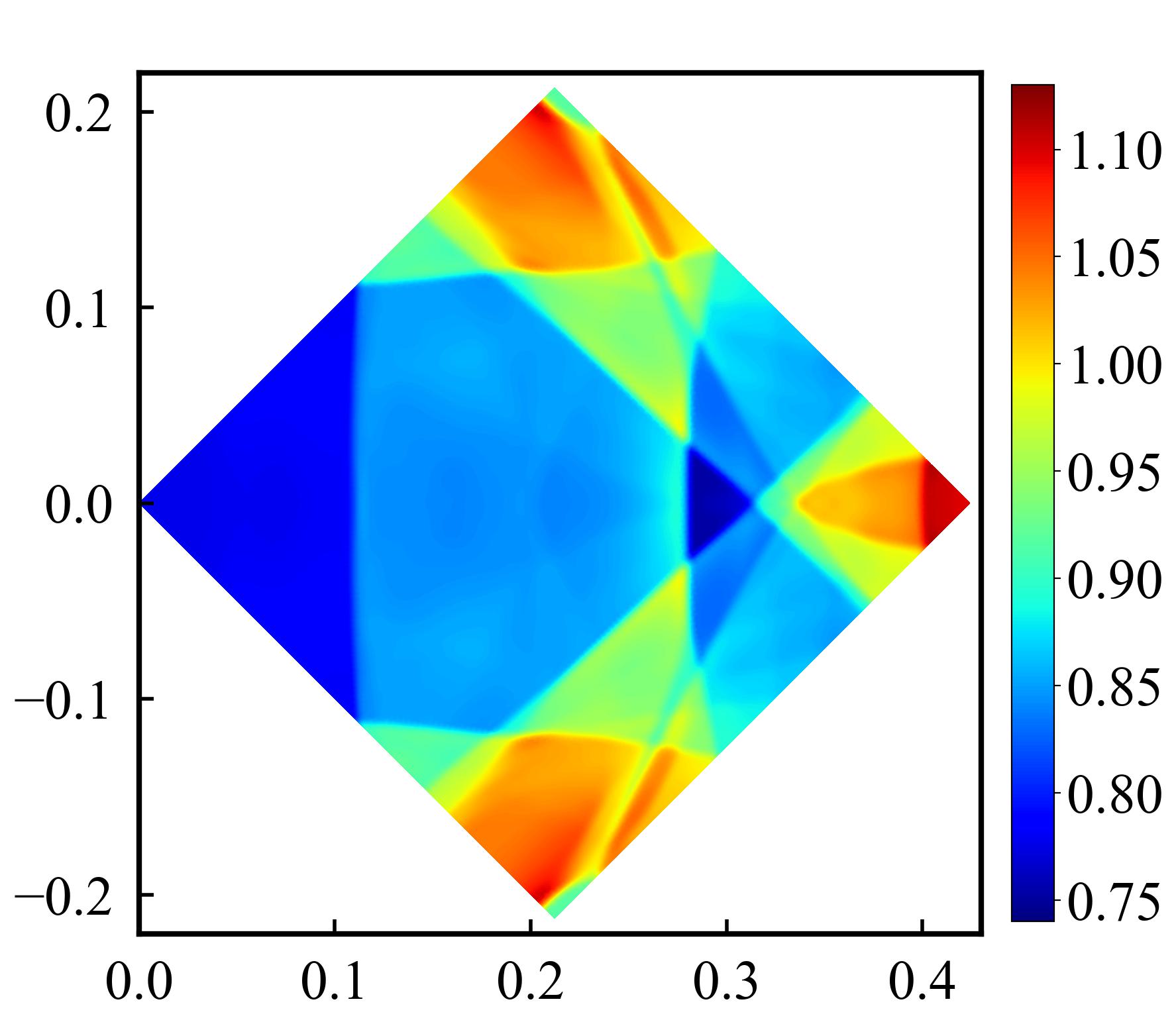}
		\end{subfigure}
		
		\begin{subfigure}{0.37\textwidth}
			\centering
			\includegraphics[width=\textwidth]{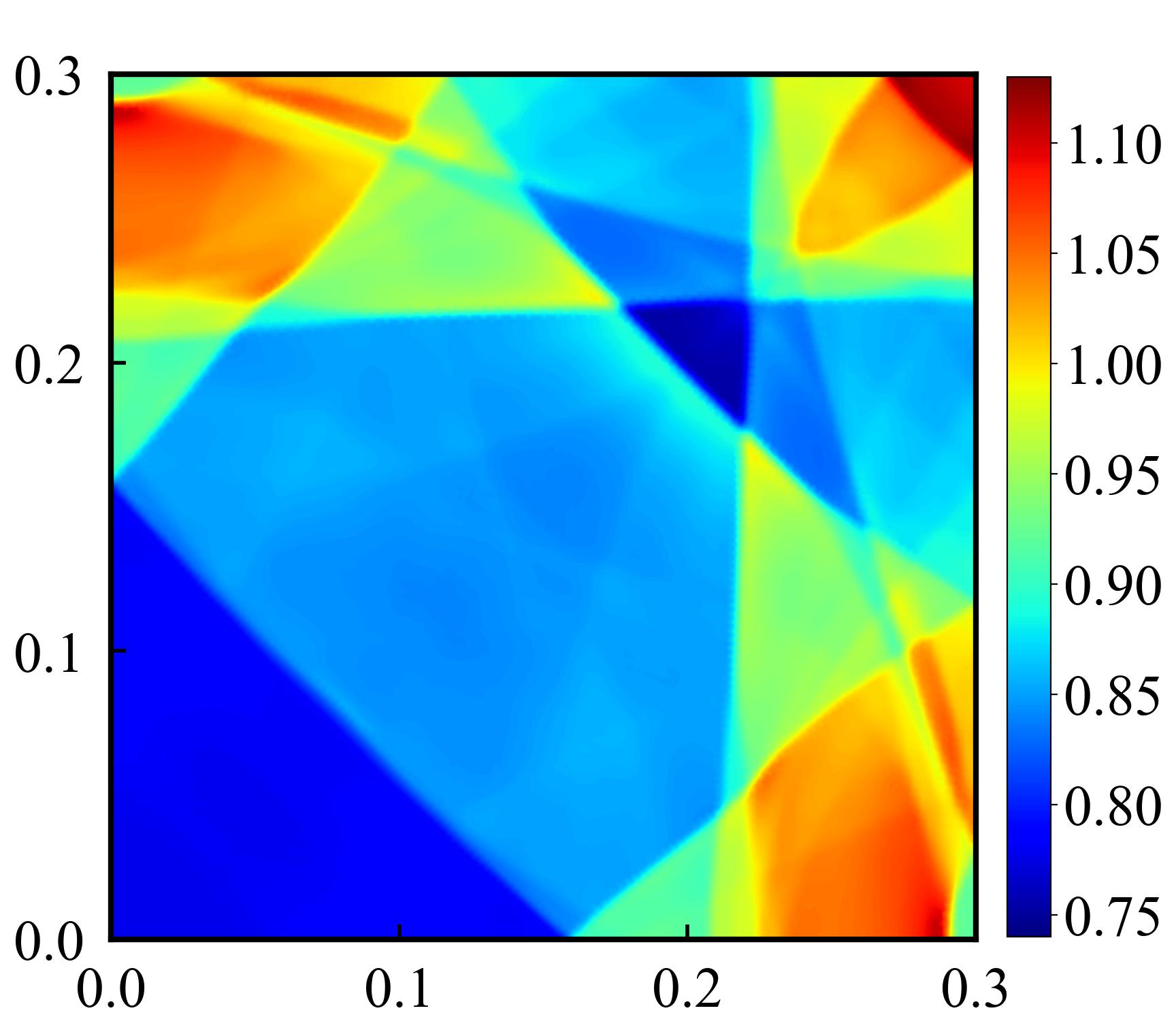}
		\end{subfigure}
		\qquad\qquad
		\begin{subfigure}{0.37\textwidth}
			\centering
			\includegraphics[width=\textwidth]{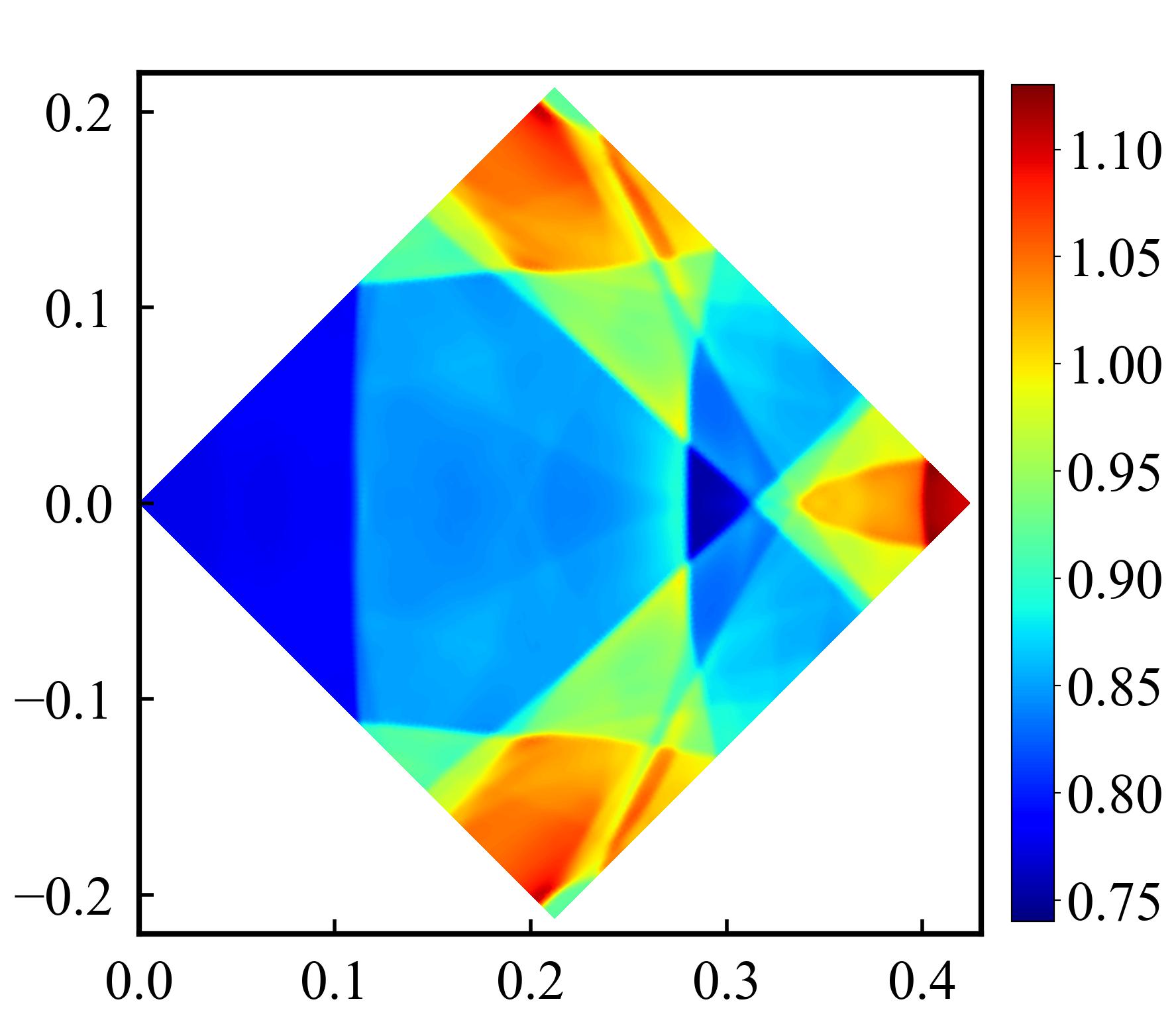}
		\end{subfigure}
	
		\begin{subfigure}{0.37\textwidth}
			\centering
			\includegraphics[width=\textwidth]{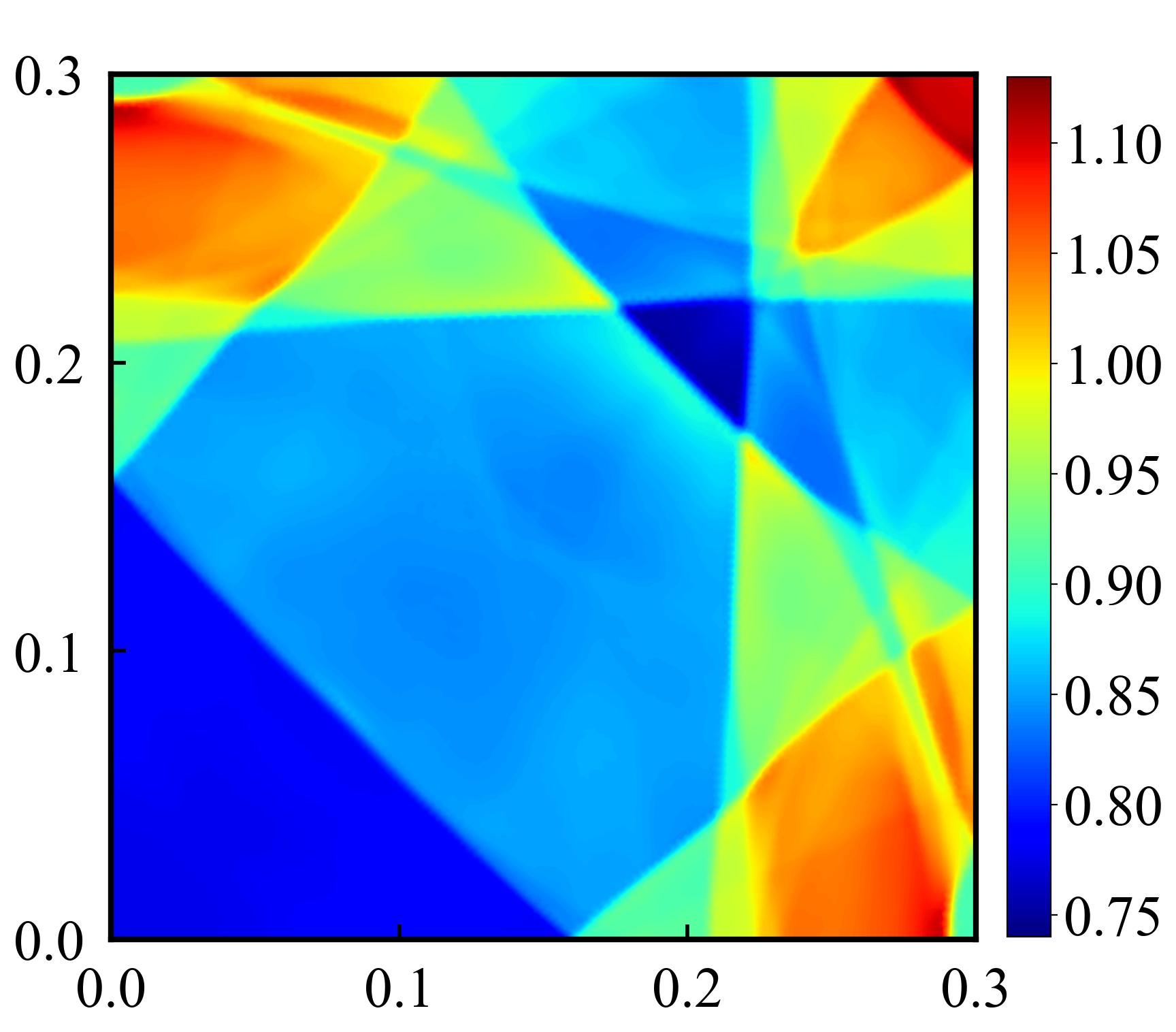}
		\end{subfigure}
		\qquad\qquad
		\begin{subfigure}{0.37\textwidth}
			\centering
			\includegraphics[width=\textwidth]{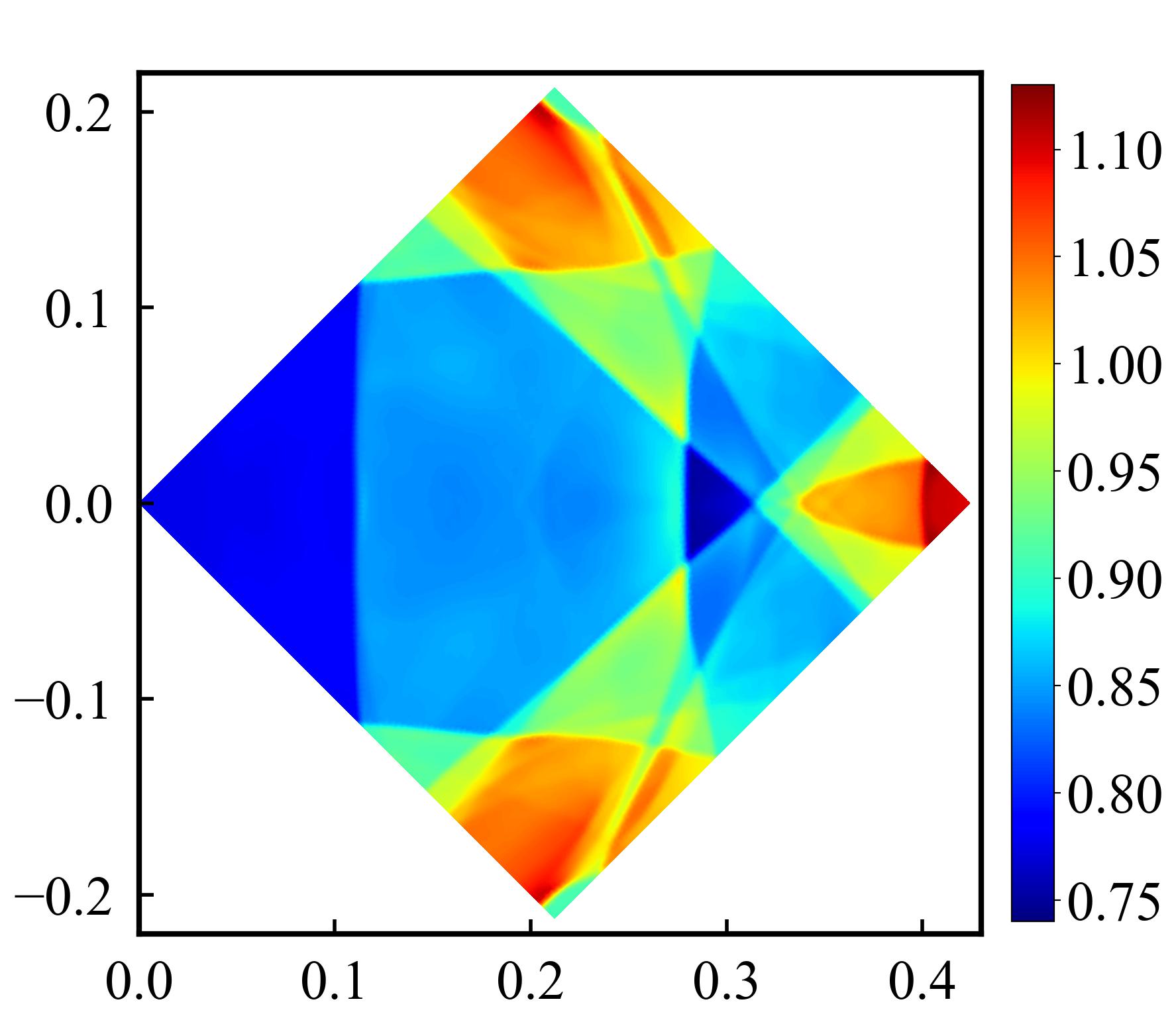}
		\end{subfigure}
		\caption{(\Cref{Ex:EulerImp}) Contours of pressure obtained by RI-OEDG methods. Left: $\Omega$, right: $\hat{\Omega}$. From top to bottom: $\mathbb{P}^1$, $\mathbb{P}^2$, $\mathbb{P}^3$, and $\mathbb{P}^4$. 
		}
		\label{fig:Ex_EulImp}
	\end{figure}

	\begin{figure}[!htb]
		\centering	
		\begin{subfigure}{0.4\textwidth}
			\centering
			\includegraphics[width=\textwidth]{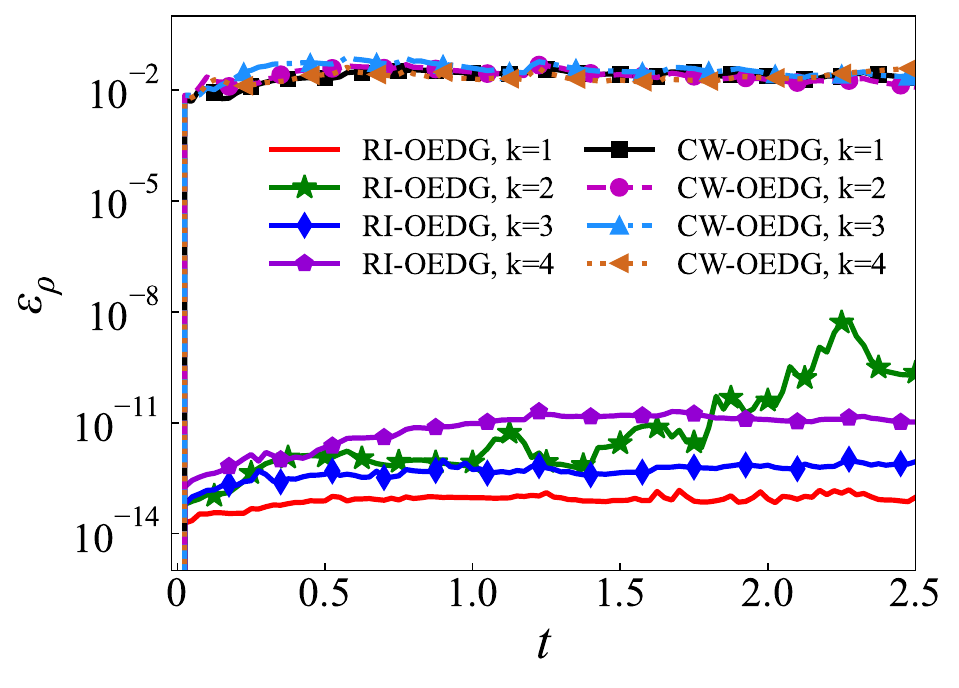}
		\end{subfigure}
		\qquad
		\begin{subfigure}{0.4\textwidth}
			\centering
			\includegraphics[width=\textwidth]{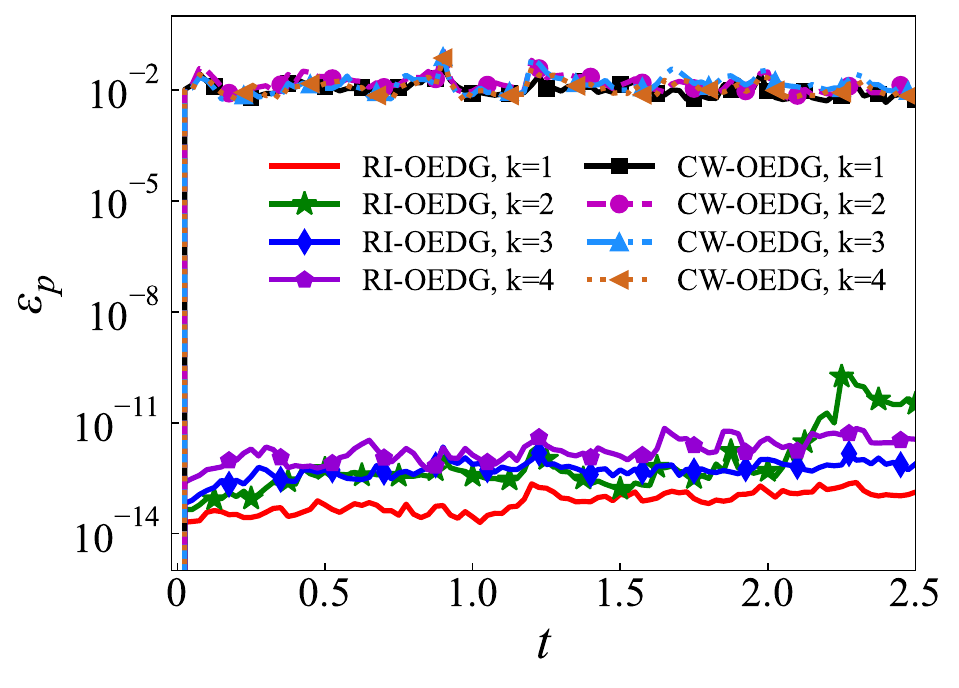}
		\end{subfigure}
		
		\begin{subfigure}{0.4\textwidth}
			\centering
			\includegraphics[width=\textwidth]{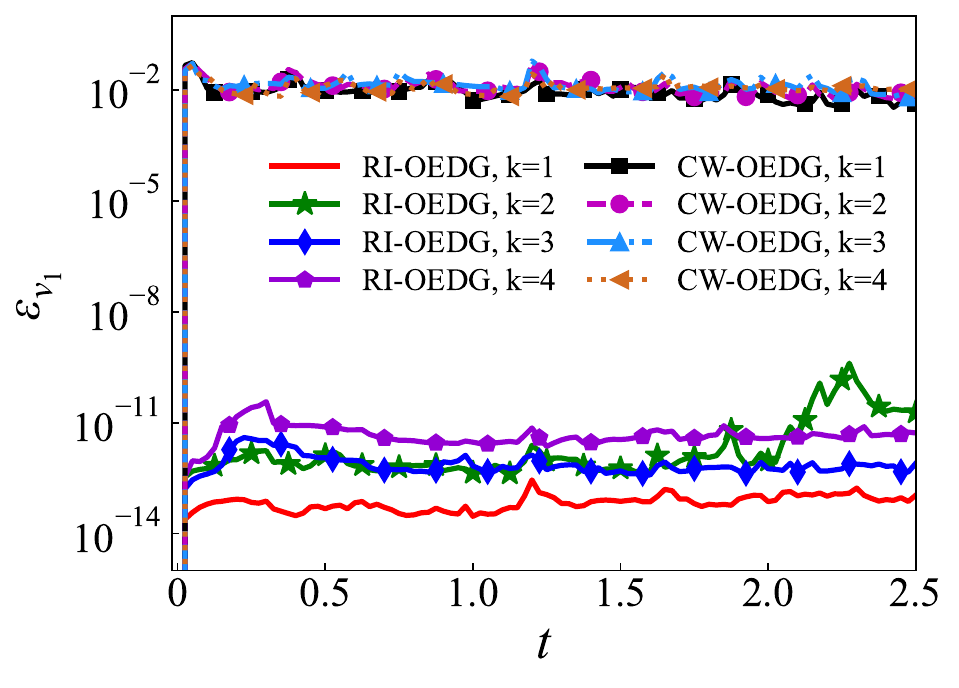}
		\end{subfigure}
		\qquad
		\begin{subfigure}{0.4\textwidth}
			\centering
			\includegraphics[width=\textwidth]{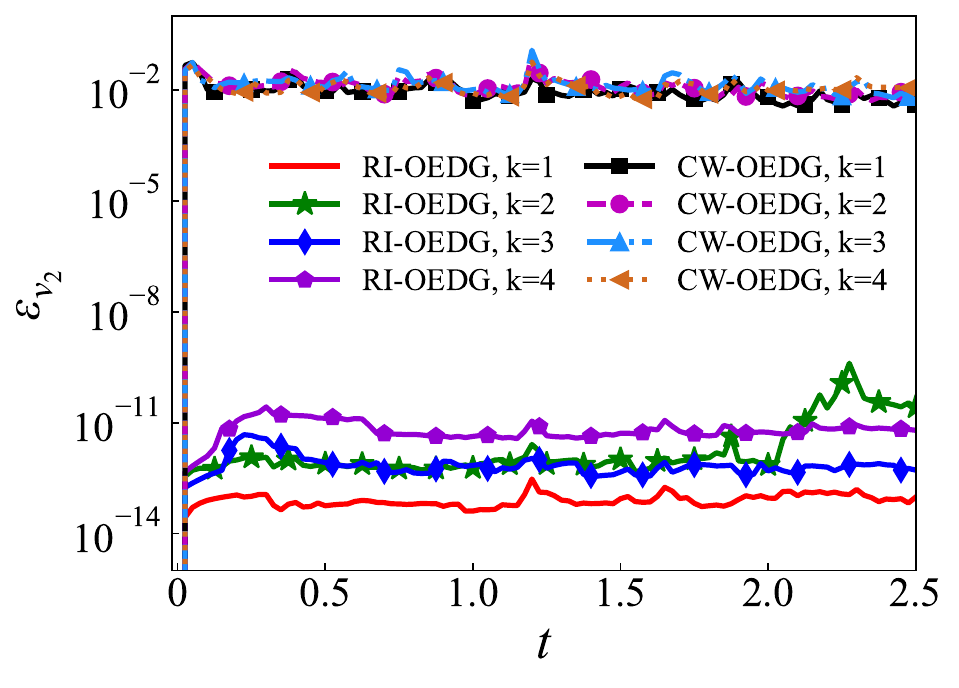}
		\end{subfigure}
		
		\caption{(\Cref{Ex:EulerImp}) Time evolution of the RI errors $\bm \varepsilon_{RI}$ computed every 0.025 time units. 
		}
		\label{fig:Ex_EulImpError}
	\end{figure}
	
\end{expl}

\begin{expl}[Flow past a circular cylinder]\label{Ex:EulerFPC}
	This benchmark problem, described in \cite{Ding2020ILW}, involves a rightward parallel flow with a Mach number of 3 passing a circular cylinder within the domain $[-3,0]\times[-6,6]$. The circular cylinder, with a unit radius, is centered at the origin on the $x$-$y$ plane. The initial conditions are $\rho = 1.4$, $v_1 = 3$, $v_2 = 0$, and $p = 1$. Due to symmetry, only the upper half of the irregular domain $\Omega$ is simulated, which is discretized into 15,973 triangular cells with $h = 0.05$. A sample mesh with $h = 0.2$ is shown in \Cref{fig:Ex_EulFPCMesh}. Inflow boundary condition is applied along the left boundary $\{x=-3, 0\leq y\leq 6\}$, reflective boundary conditions are imposed on the bottom boundary $\{-3 \leq x \leq -1, y=0\}$ and the curved surface of the cylinder $\{-1 \leq x \leq 0, \sqrt{x^2+y^2} = 1\}$, while outflow conditions are enforced on all other boundaries. 
	\Cref{fig:Ex_EulFPC_P1pre}-\Cref{fig:Ex_EulFPC_P2pre} show contour plots of the pressure at $t=40$, by which time the numerical solutions have reached a steady state, obtained using the $\mathbb{P}^1$- and $\mathbb{P}^2$-based RI-OEDG methods. The bow shocks are well-captured, and our results are comparable to those reported in \cite{Ding2020ILW}.

	\begin{figure}[!htb]
		\centering
		\begin{subfigure}{0.2\textwidth}
			\raggedright
			\includegraphics[width=\textwidth]{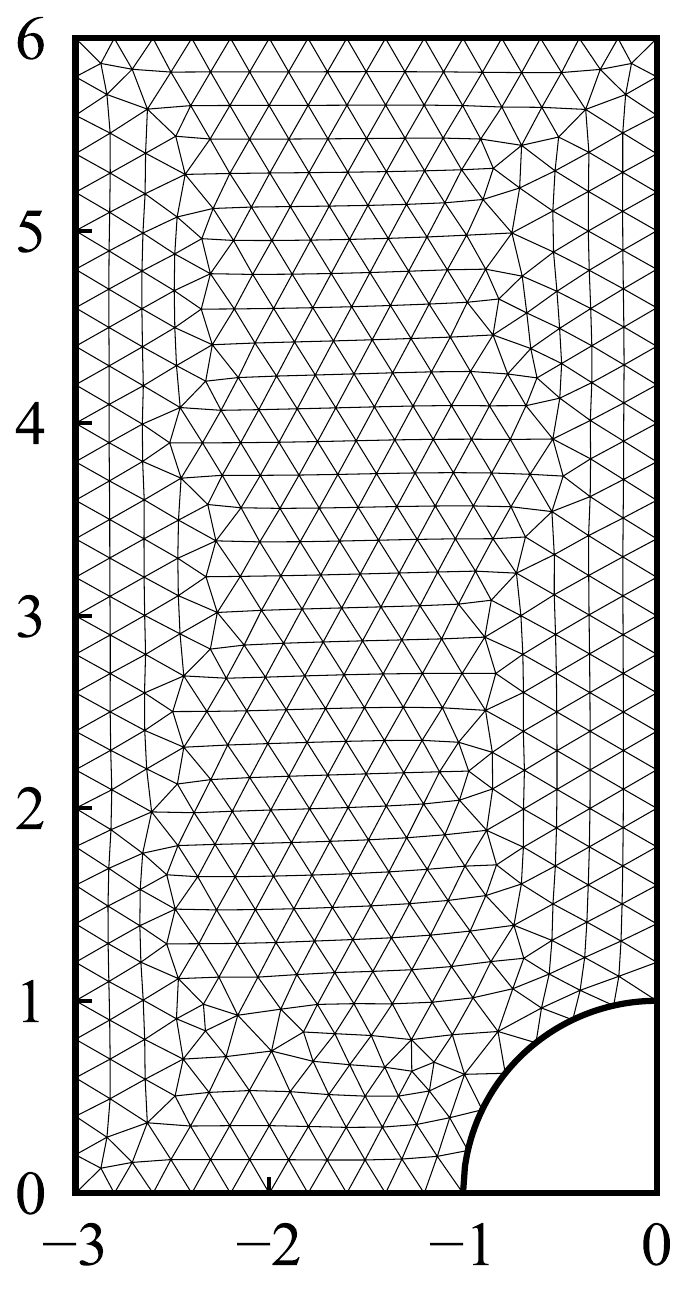}
			\caption{Sample mesh with $h=0.2$.} 
			\label{fig:Ex_EulFPCMesh}
		\end{subfigure}
		\qquad\qquad\qquad
		\begin{subfigure}{0.2\textwidth}
			\includegraphics[width=\textwidth]{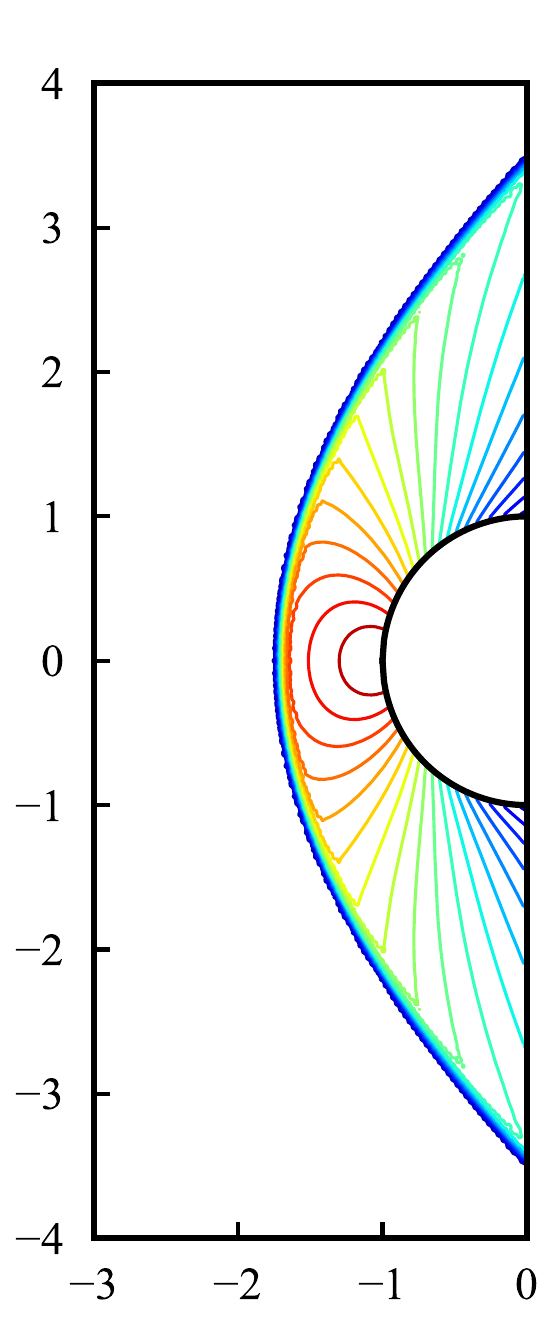}
			\caption{$\mathbb{P}^1$-based RI-OEDG method.}
			\label{fig:Ex_EulFPC_P1pre}
		\end{subfigure}
		\qquad
		\begin{subfigure}{0.2\textwidth}
			\centering
			\includegraphics[width=\textwidth]{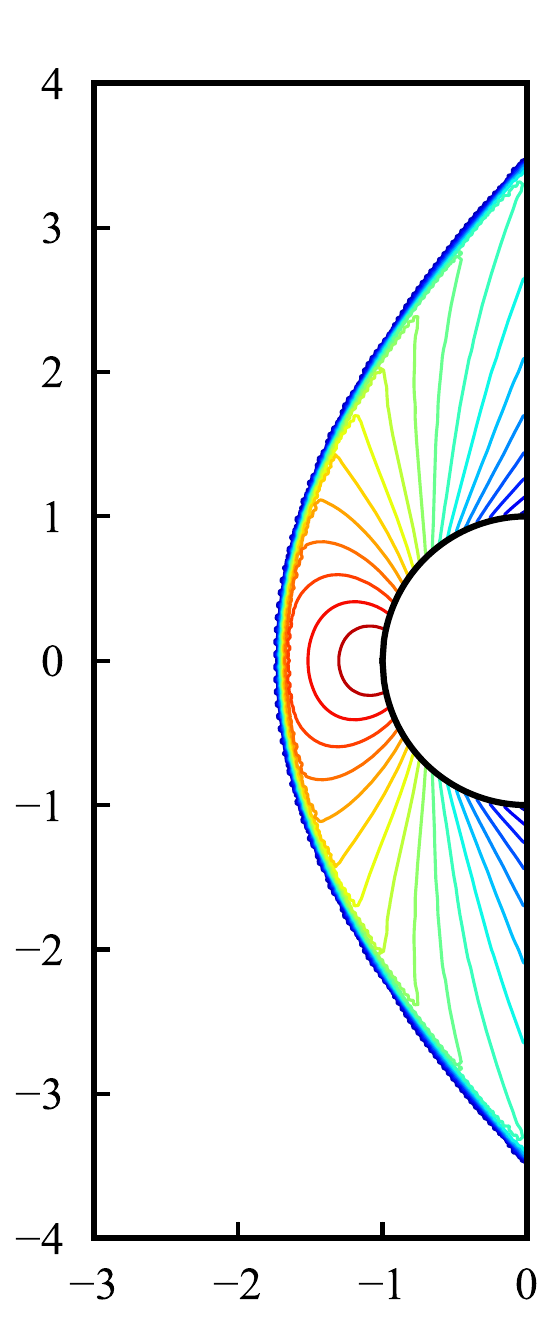}
			\caption{$\mathbb{P}^2$-based RI-OEDG method.}
			\label{fig:Ex_EulFPC_P2pre}
		\end{subfigure}
		
		\caption{(\Cref{Ex:EulerFPC}) Sample mesh and contour plots of pressure $p$ with twenty equally spaced contour lines from 0.9 to 12.1.
		}
		\label{fig:Ex_EulFPC}
	\end{figure}
\end{expl}

\begin{expl}[Flow past a forward facing step]\label{Ex:EulerFFS}
	This problem simulates a rightward uniform flow with a Mach number of 3 entering a wind tunnel, following the setup detailed in \cite{rkdg5}. The tunnel has a width of 1 and a length of 3, with a step of height 0.2 positioned at $x = 0.6$, as shown in \Cref{fig:Ex_EulFFSmesh}. For the simulations, we use a triangular mesh consisting of 73,862 cells. The mesh is designed with a size of $h=\frac{1}{80}$ away from the step corner and $h=\frac{1}{320}$ near the corner, specifically within the rectangle $[0.5, 0.7] \times [0.1, 0.3]$. \Cref{fig:Ex_EulFFSmesh} shows a sample mesh with a larger mesh size for illustration purposes. 
	The initial conditions are set as $\rho = 1.4$ for density, $(v_1,v_2)^\top=(3,0)^\top$ for velocity, and $p = 1$ for pressure. Inflow conditions are applied at the left boundary, and outflow conditions at the right boundary, with reflective conditions on all other boundaries of the tunnel. To ensure stability during simulations, we apply the BP limiter (see \cref{sec:BPlimiter}) to maintain the positivity of both density and pressure.  The RI-OEDG code would quickly break down due to nonphysical numerical solutions if we turn off the BP limiter.
 
	\Cref{fig:Ex_EulFFS} presents the density contours $\rho$ at $t=4$, computed using the $\mathbb{P}^1$- and $\mathbb{P}^2$-based BP RI-OEDG schemes with either the optimal or the classical convex decomposition. The results show good agreement with those reported in \cite{rkdg5}, demonstrating the  effectiveness of our BP schemes with optimal convex decomposition. Notably, the CPU times using the optimal convex decomposition are significantly lower than those with the classical convex decomposition, as detailed in \Cref{tab:Ex_EulFFS}. 
	To quantify the efficiency, let $\overline{\dt}$ denote the average time step size, computed as $\overline{\dt} = \frac{1}{N_t}\sum_{n=1}^{N_t} \dt^n$ over $N_t$ time steps. The ratios of average time step sizes $\overline{\dt}^{\tt CDW}/\overline{\dt}^{\tt ZXS}$ are 2.851852 for $k=1$ and 4.489796 for $k=2$, respectively. These ratios align with the BP CFL numbers shown in \Cref{fig:1744}, further demonstrating the higher efficiency of our BP DG method with the optimal convex decomposition compared to the classical convex decomposition.

	\begin{figure}[!htb]
		\centering
		\includegraphics[width=0.8\textwidth]{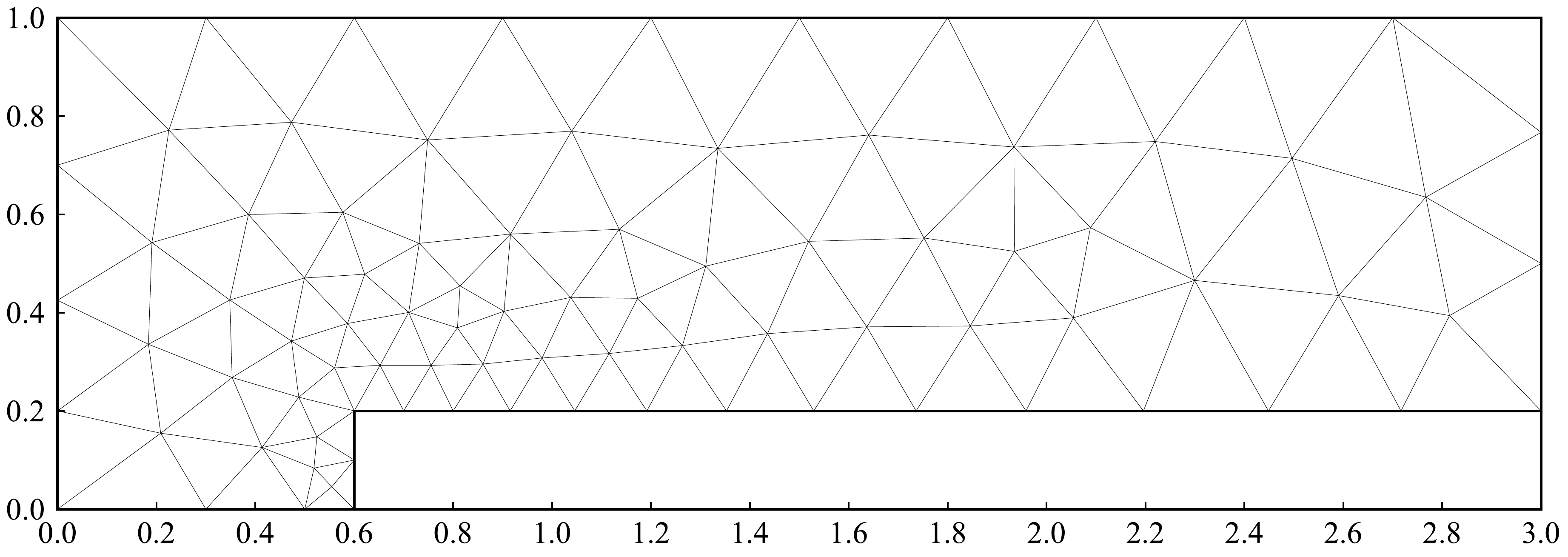}
		\caption{Sample mesh with $h=0.2$ away from the corner and $h=0.05$ near the corner.}
		\label{fig:Ex_EulFFSmesh}
	\end{figure}
	
	\begin{figure}[!htb]
		\centering
		\begin{subfigure}{0.48\textwidth}
			\centering
			\includegraphics[width=\textwidth]{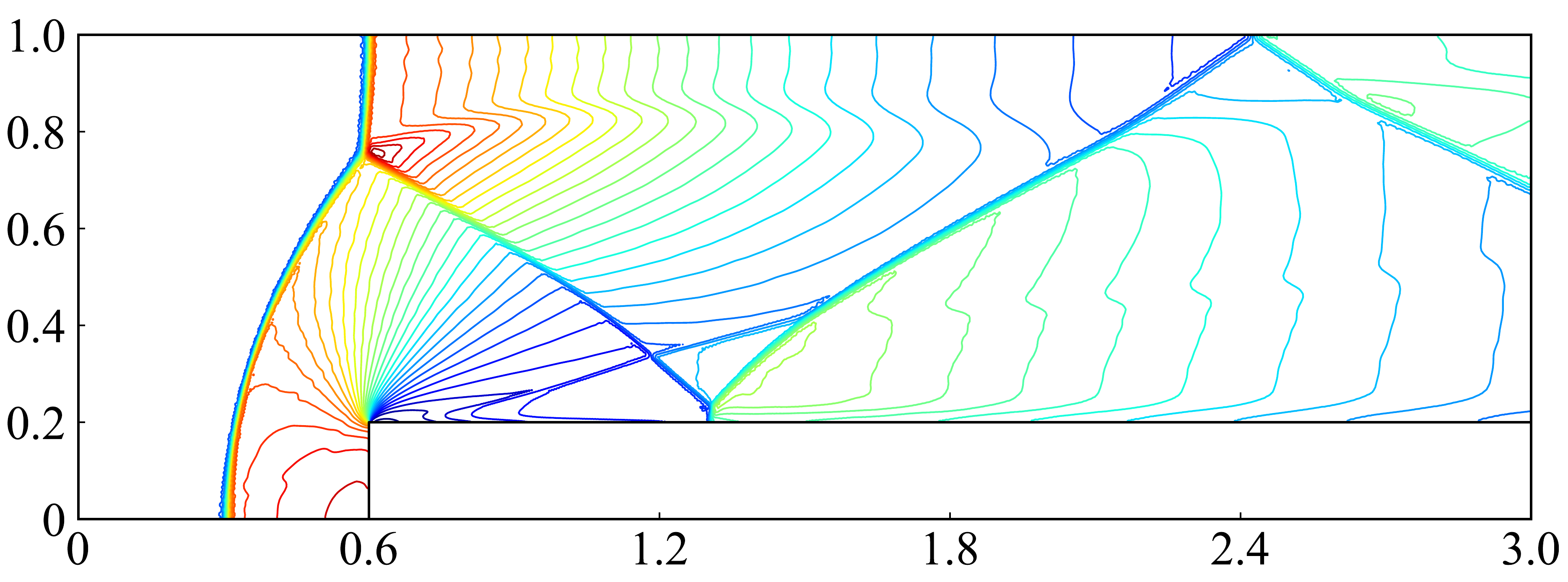}
		\end{subfigure}
		\hfill
		\begin{subfigure}{0.48\textwidth}
			\centering
			\includegraphics[width=\textwidth]{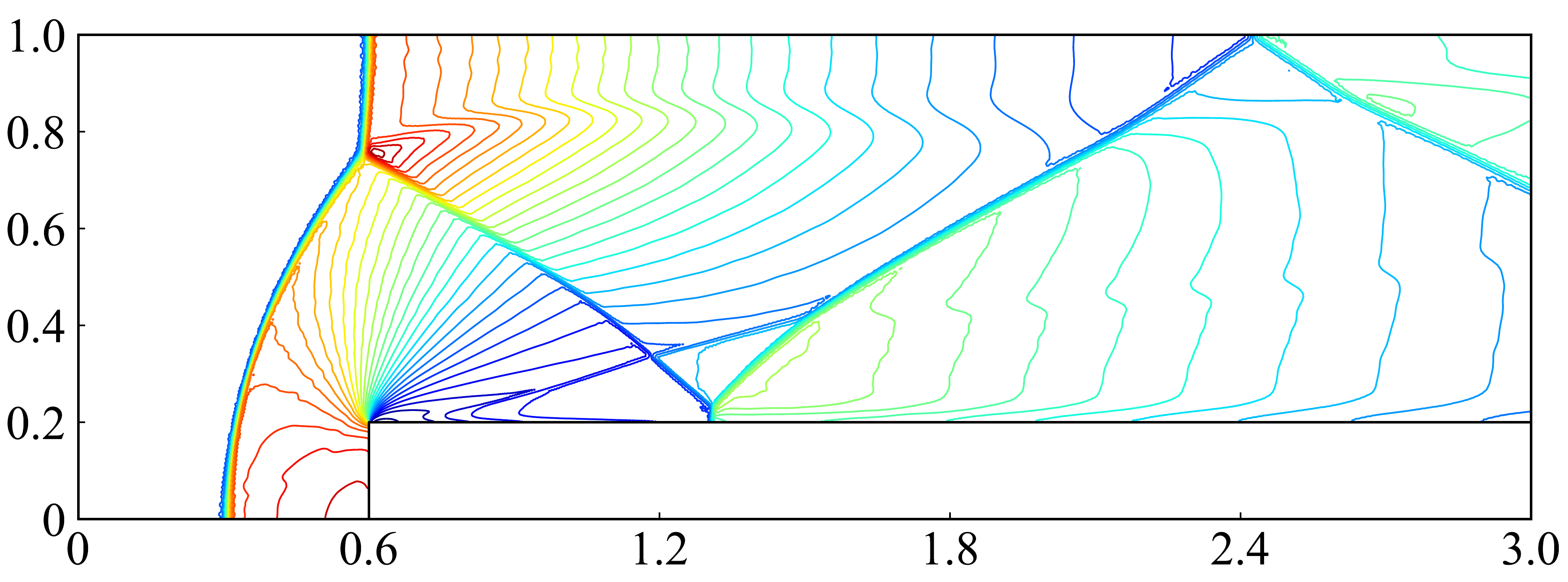}
		\end{subfigure}
		
		\begin{subfigure}{0.48\textwidth}
			\centering
			\includegraphics[width=\textwidth]{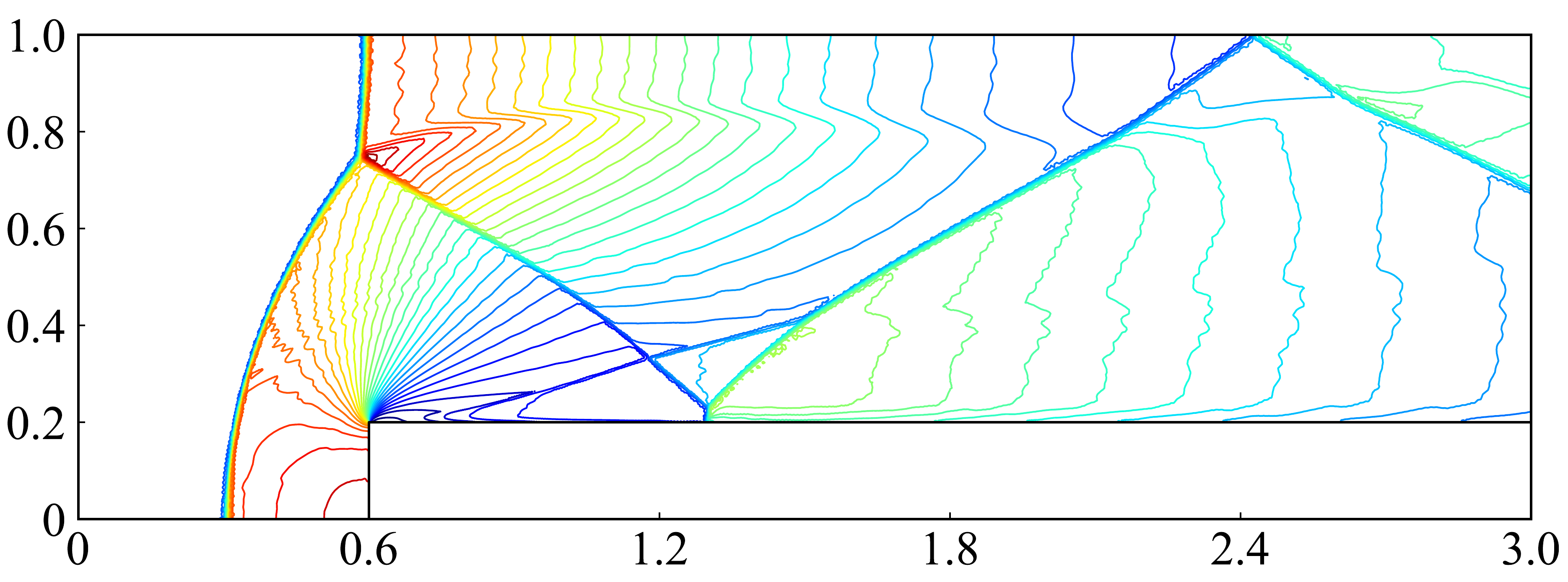}
		\end{subfigure}
		\hfill
		\begin{subfigure}{0.48\textwidth}
			\centering
			\includegraphics[width=\textwidth]{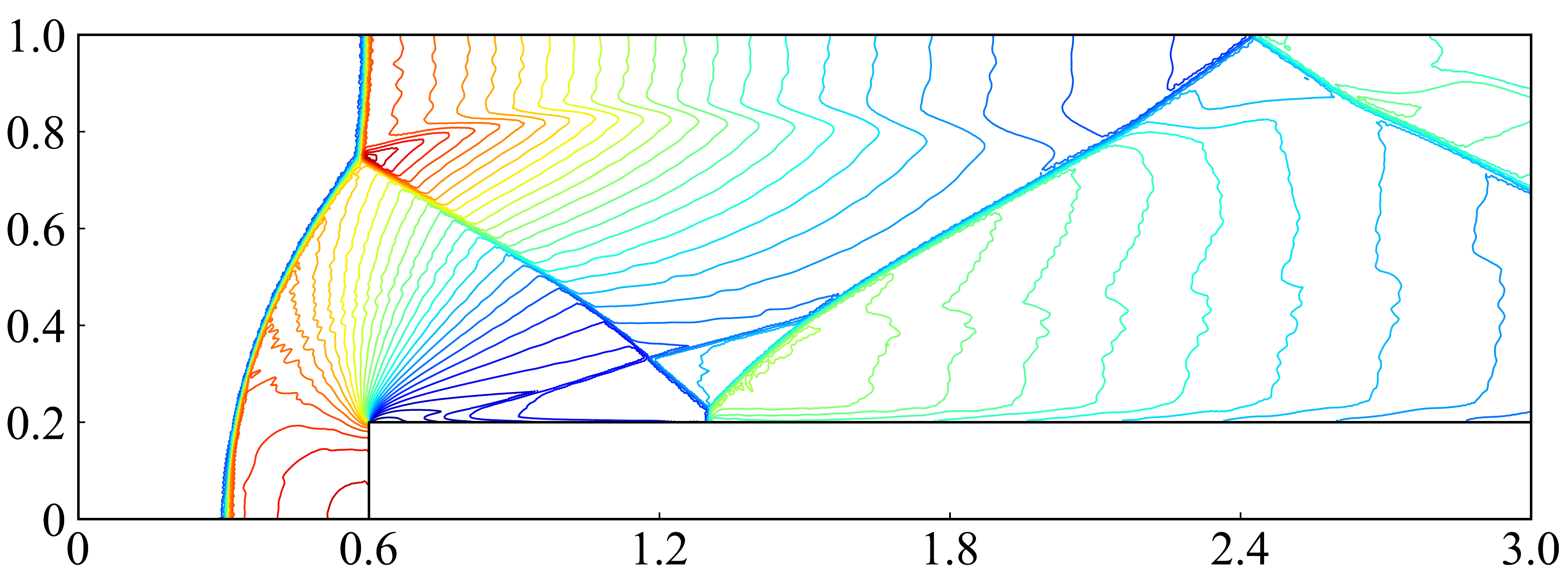}
		\end{subfigure}
		
		\caption{(\Cref{Ex:EulerFFS}) Contours plots of density obtained by $\mathbb{P}^1$-based (top) and $\mathbb{P}^2$-based (bottom) BP RI-OEDG methods with thirty equally spaced plots from 0.2 to 6.36.
		Left: optimal convex decomposition; right: classical convex decomposition.
		}
		\label{fig:Ex_EulFFS}
	\end{figure}

	\begin{table}[!htbp] 
		\centering
		\caption{(\Cref{Ex:EulerFFS}) CPU time (hours) computed by using the $\mathbb{P}^1$- and $\mathbb{P}^2$-based BP OEDG methods. 
		}
		\label{tab:Ex_EulFFS}
		\setlength{\tabcolsep}{4mm}{
			\begin{tabular}{ccc}
				\toprule[1.5pt]
				&
				\multicolumn{1}{c}{optimal convex decomposition} &
				\multicolumn{1}{c}{classical convex decomposition}  \\
				
				\midrule[1.5pt]		
				{$\mathbb{P}^1$} & 95.46  & 250.07  \\
				{$\mathbb{P}^2$} & 319.97 & 1274.51 \\
				
				\bottomrule[1.5pt]
			\end{tabular}
		}
	\end{table}
\end{expl}

\begin{figure}[!htb]
	\centering
	\begin{subfigure}{0.4\textwidth}
		\centering
		\includegraphics[width=\textwidth]{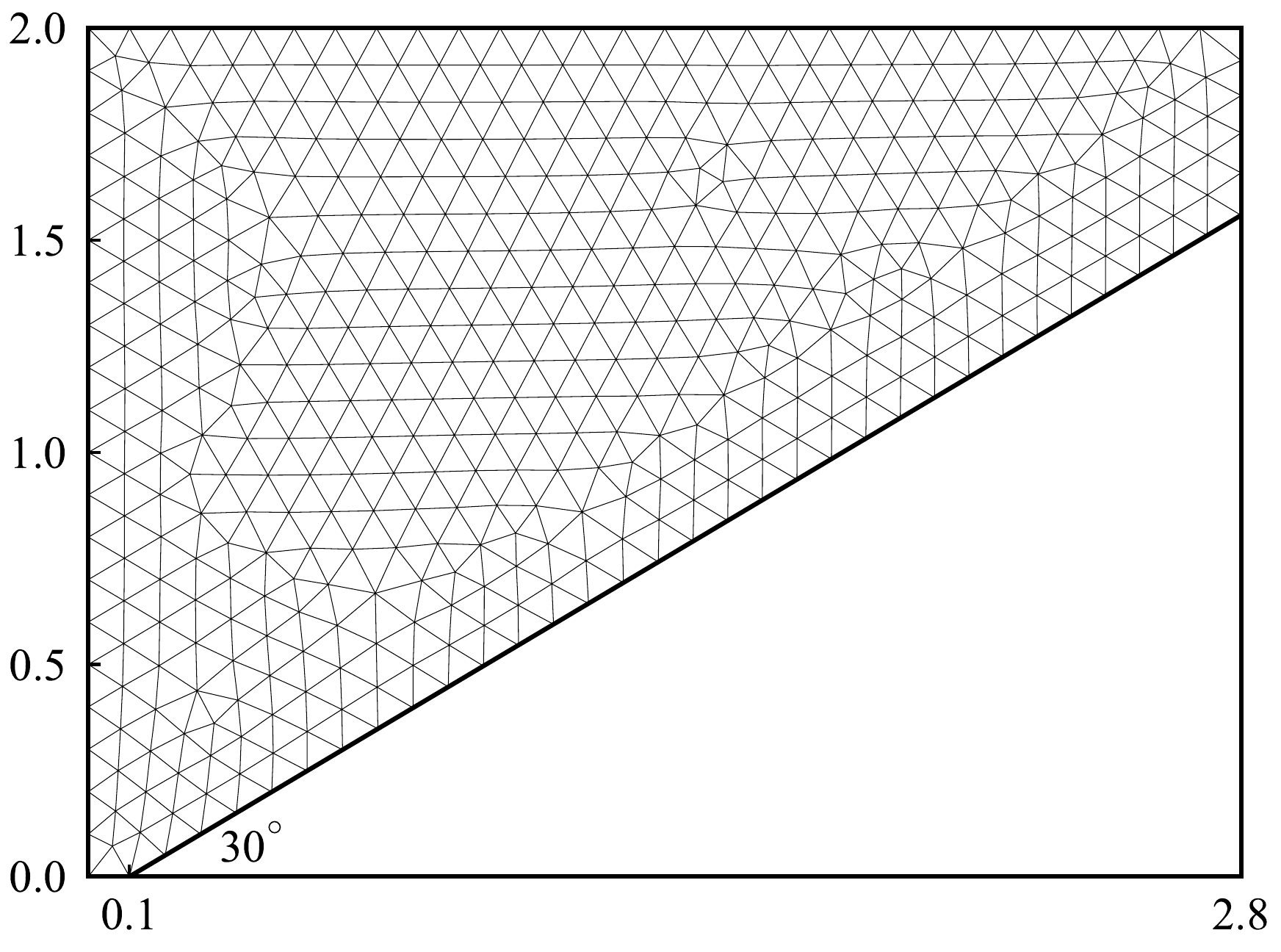}
		\caption{\Cref{Ex:EulerDMR}.}
		\label{fig:Ex_EulDMRmesh}
	\end{subfigure}
	\qquad\qquad
	\begin{subfigure}{0.34\textwidth}
		\centering
		\includegraphics[width=\textwidth]{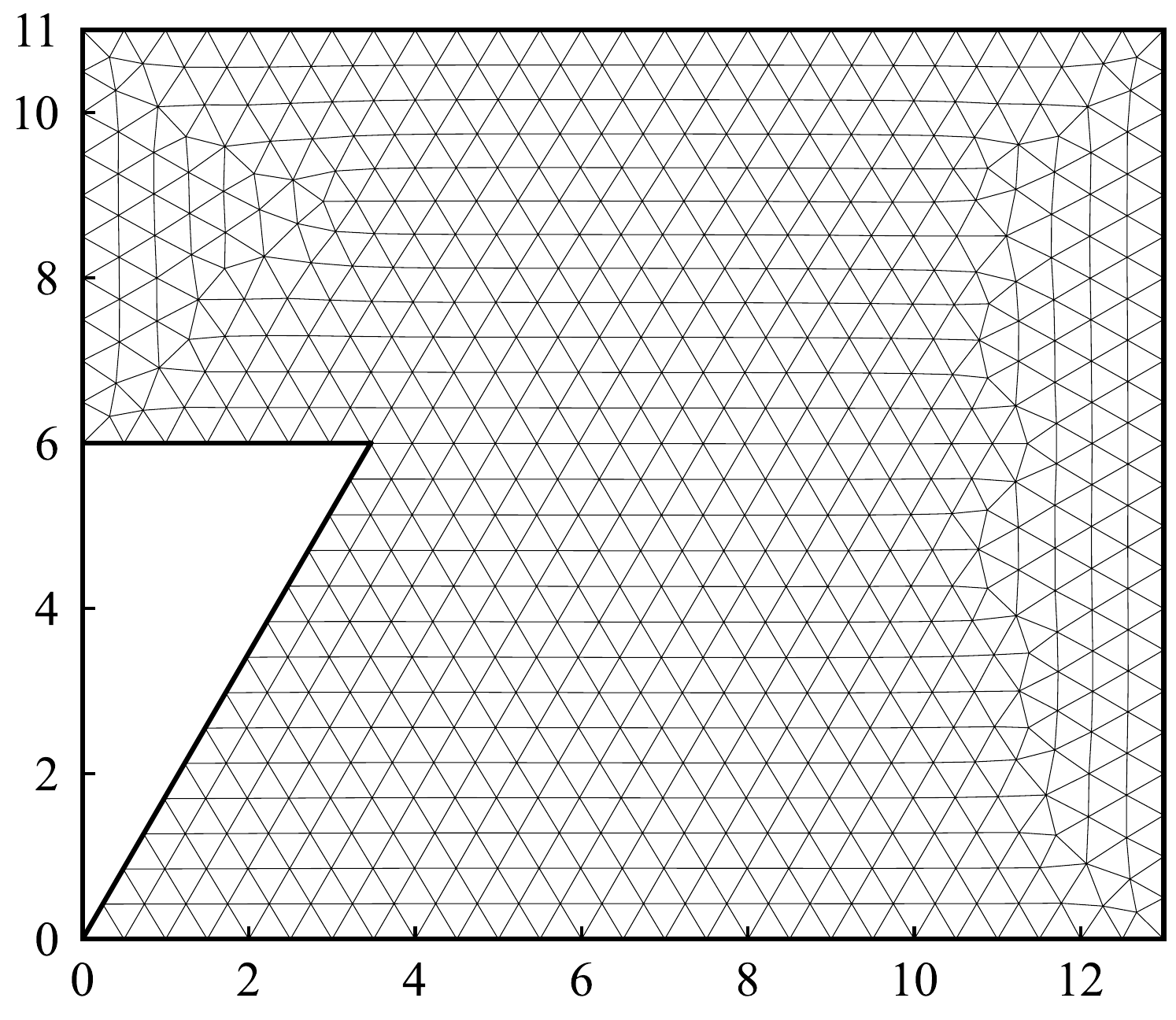}
		\caption{\Cref{Ex:EulerSDiffWedge}.}
		\label{fig:Ex_EulSDWmesh}
	\end{subfigure}
	\caption{Illustration of domain and sample mesh. 
	}
\end{figure}

\begin{expl}[Double Mach reflection]\label{Ex:EulerDMR}
	The double Mach reflection problem, originally introduced in \cite{Woodward1984}, is widely used to test the robustness of numerical schemes. In this study, we simulate the problem in a computational domain $\Omega$, modeled as a tube containing a $30^{\circ}$ wedge. The domain $\Omega$ is discretized using unstructured triangular cells with a mesh size of $h=\frac{1}{320}$, resulting in a total of 828,840 cells; a sample mesh with $h=0.1$ is shown in \Cref{fig:Ex_EulDMRmesh}. 
	Initially, a rightward shock with a Mach number of 10 is located at $x=0.1$. The initial conditions are specified as:
	\begin{equation*}
		(\rho, v_1, v_2, p) = 
		\begin{cases}
			(8, 8.25, 0, 116.5), & \text{if } x \leq 0.1, \\
			(1.4, 0, 0, 1), & \text{otherwise}.
		\end{cases}
	\end{equation*}
	Inflow conditions are applied on the left boundary ($x=0$), outflow conditions on the right boundary ($x=2.8$), and the exact solution is imposed on $\{0 \leq x \leq 0.1, y=0\}$ and the top boundary ($y=2$). Reflective conditions are imposed on the remaining boundaries. Due to the presence of low-density and low-pressure regions, the BP limiter described in \Cref{sec:BPlimiter} is necessary to ensure the positivity of density and pressure. The RI-OEDG code would fail at the second RK stage in the first time step if the BP limiter is turned off.

	\Cref{fig:Ex_EulDMR} displays the density contours at $t=0.2$, computed using the $\mathbb{P}^1$- and $\mathbb{P}^2$-based BP RI-OEDG methods with either the optimal or the classical convex decomposition. Additionally, \Cref{fig:Ex_EulDMRZoom} provides zoomed-in views around the double Mach stem, showing that the resolution improves as the polynomial degree $k$ increases for the same mesh, as expected. 
	Our results demonstrate that the RI-OEDG methods effectively capture complex flow structures without spurious oscillations near discontinuities, even at strong shock interactions. The CPU times for the methods based on the optimal convex decomposition are significantly lower than those based on the classical convex decomposition, as shown in \Cref{tab:Ex_EulDMR}. The computed ratios $\overline{\dt}^{\tt CDW}/\overline{\dt}^{\tt ZXS}$ are 2.869570 for $k=1$ and 4.486689 for $k=2$, indicating that using the optimal convex decomposition greatly enhances the computational efficiency of BP DG schemes.

	\begin{figure}[!htb]
		\centering
		\begin{subfigure}{0.43\textwidth}
			\centering
			\includegraphics[width=\textwidth]{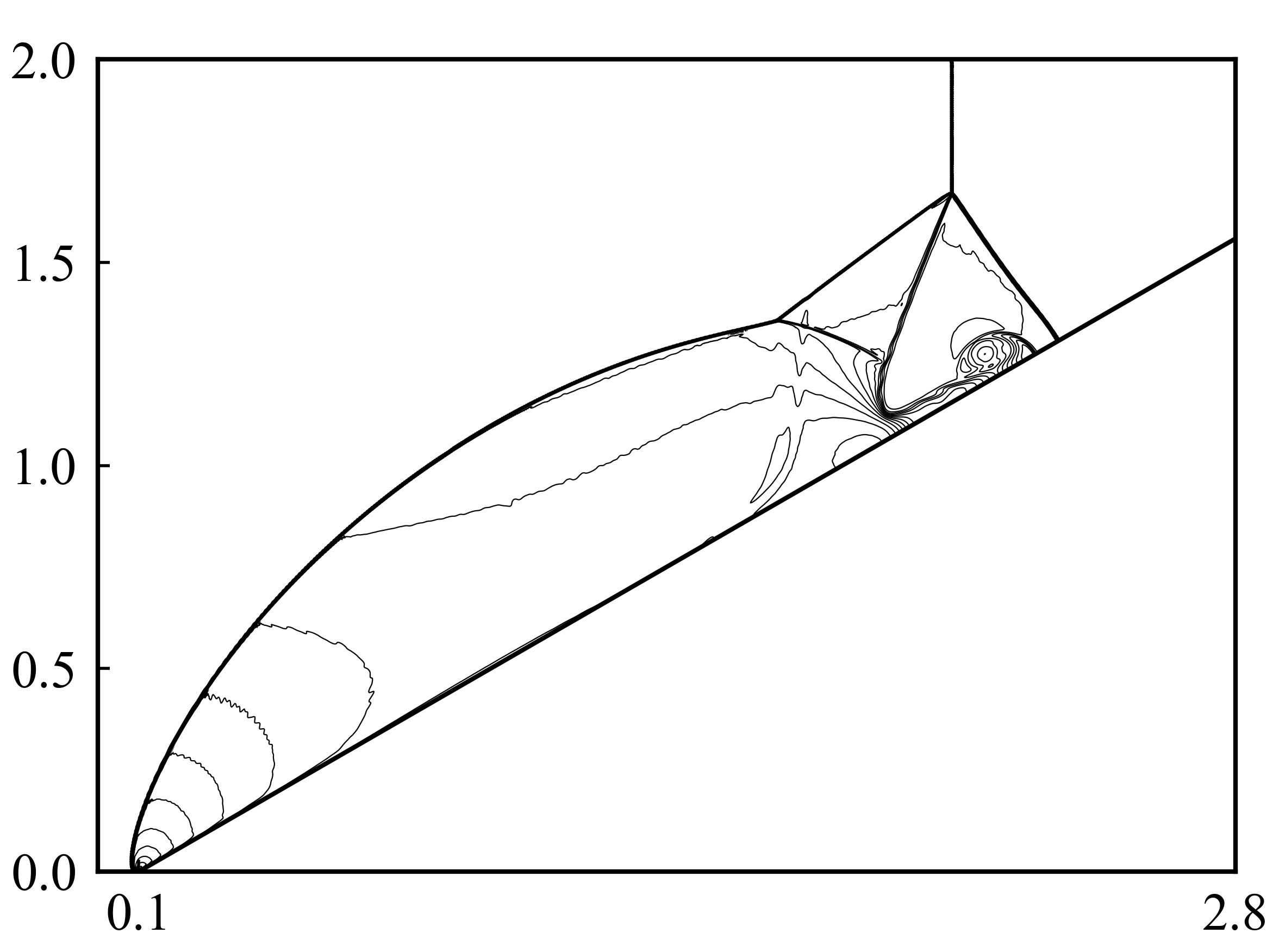}
		\end{subfigure}
		\qquad
		\begin{subfigure}{0.43\textwidth}
			\centering
			\includegraphics[width=\textwidth]{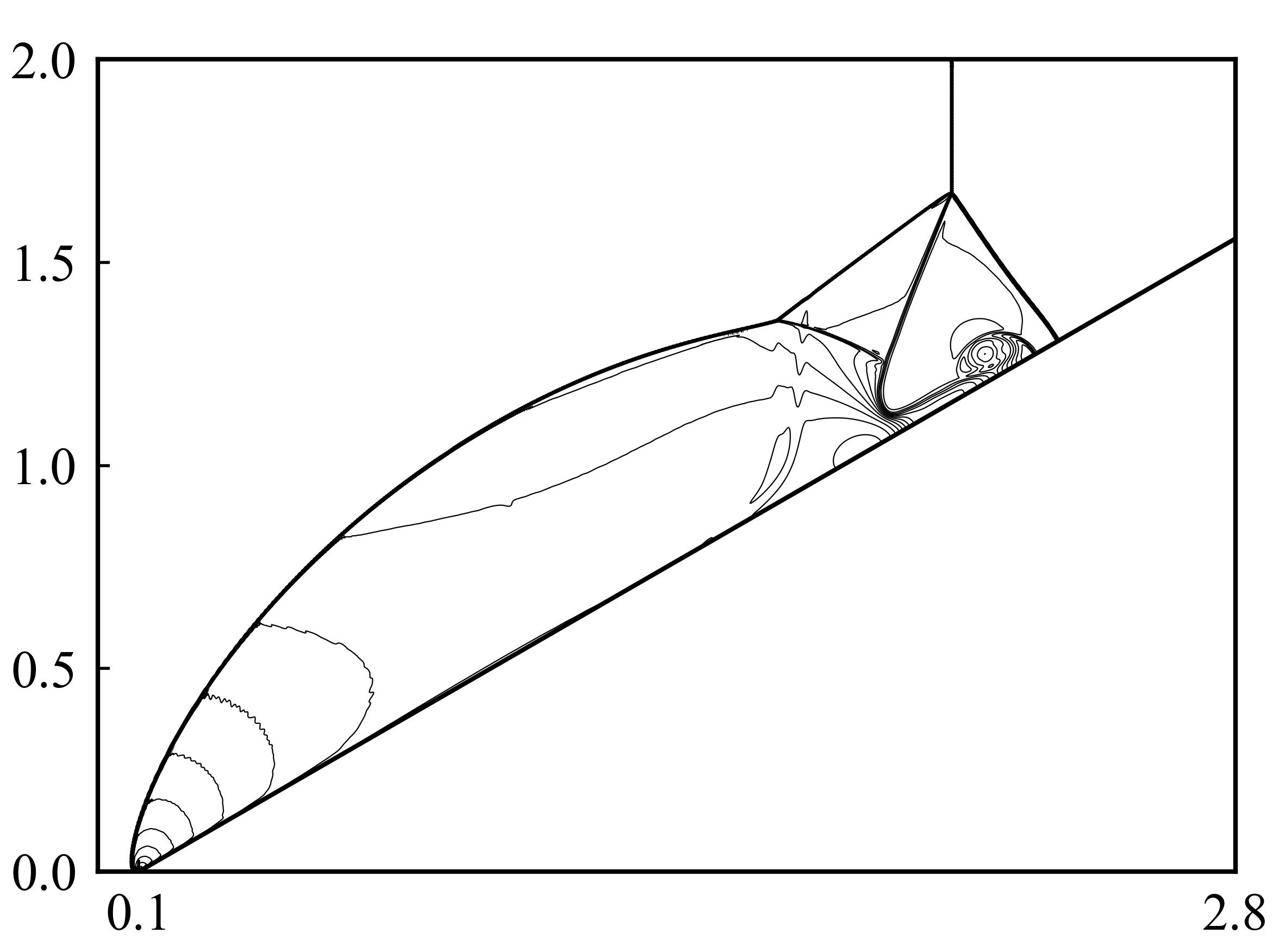}
		\end{subfigure}
	
		\begin{subfigure}{0.43\textwidth}
			\centering
			\includegraphics[width=\textwidth]{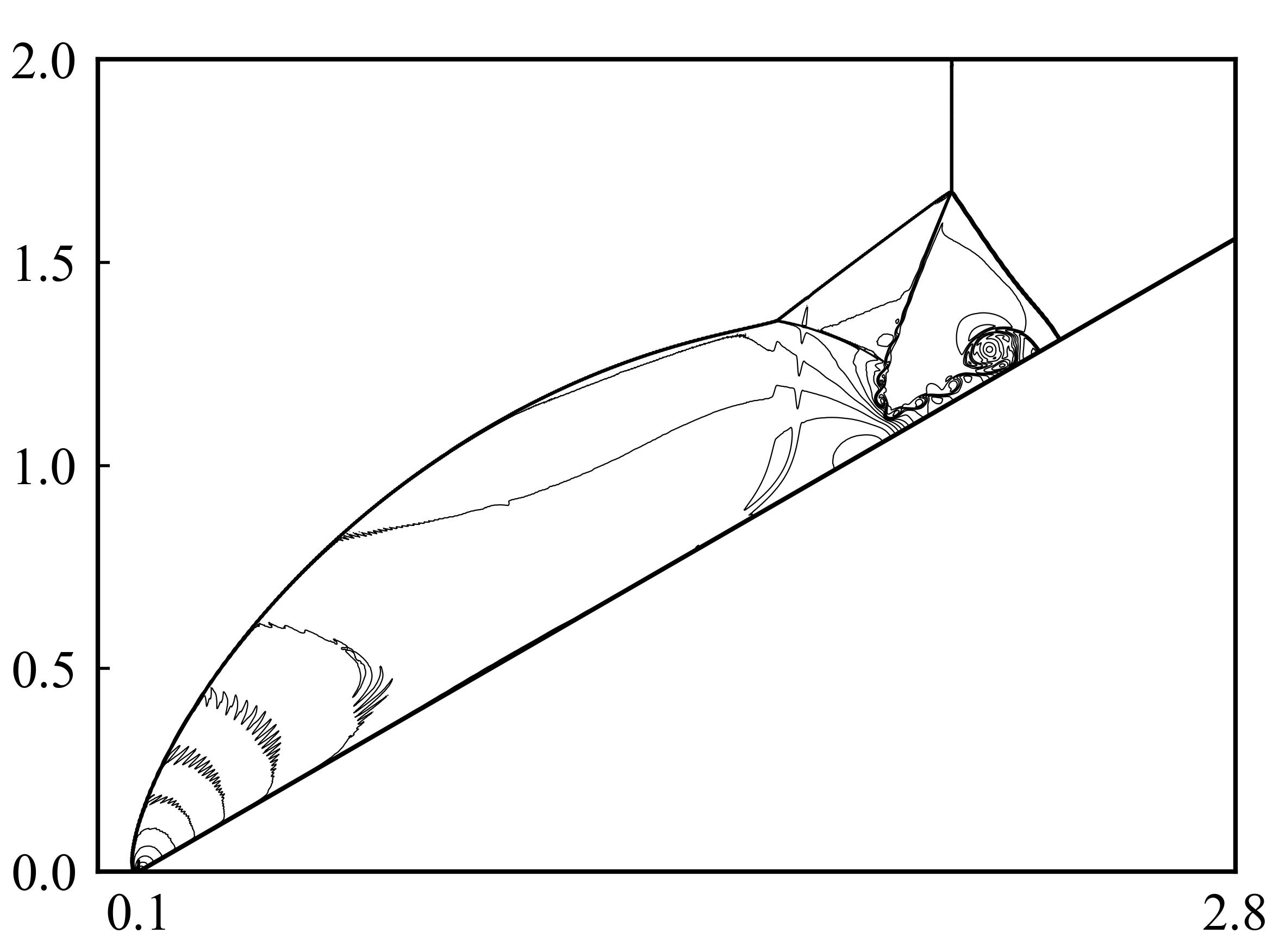}
		\end{subfigure}
		\qquad
		\begin{subfigure}{0.43\textwidth}
			\centering
			\includegraphics[width=\textwidth]{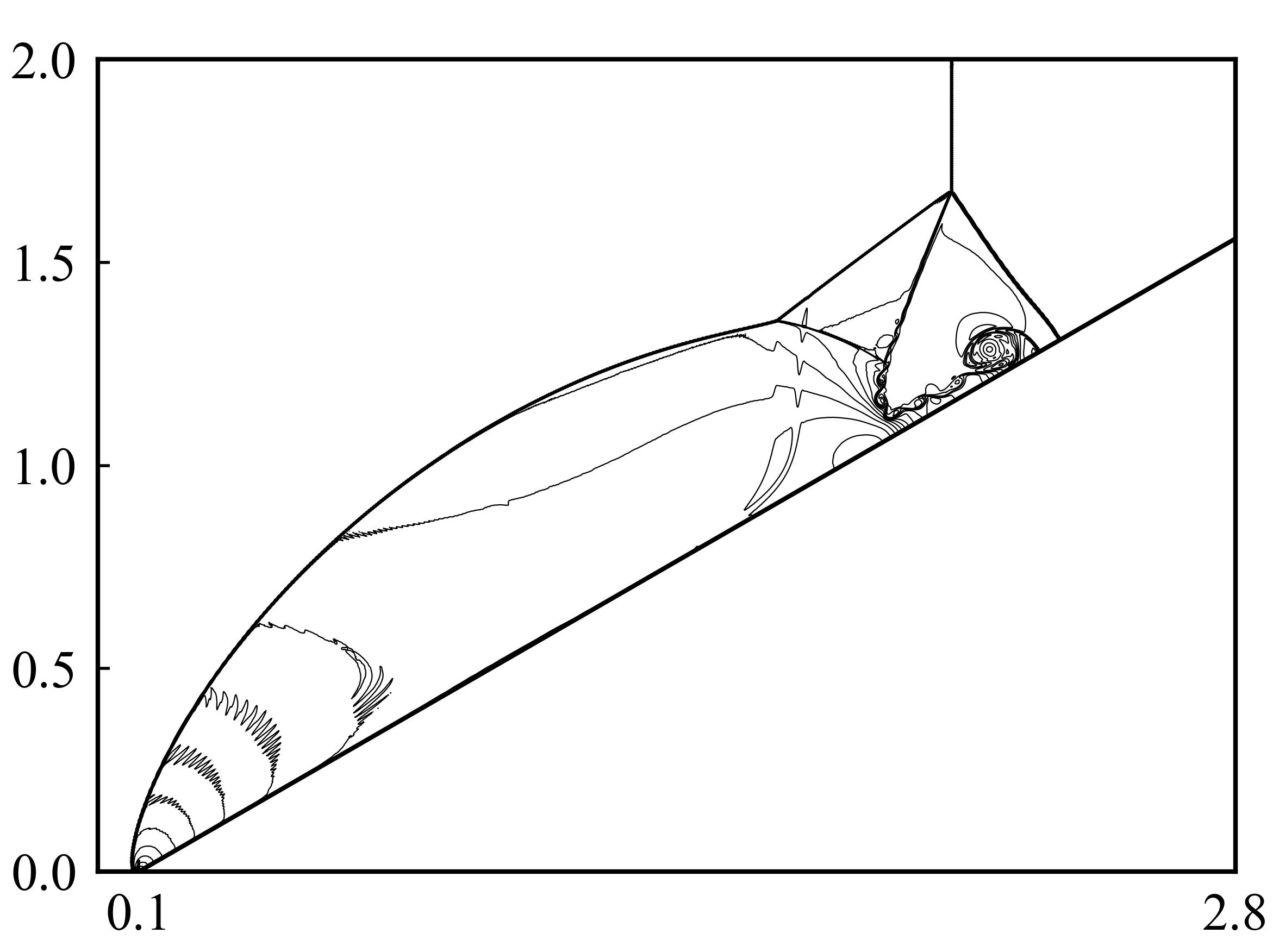}
		\end{subfigure}
		
		\caption{(\Cref{Ex:EulerDMR}) Contours of density obtained by the $\mathbb{P}^1$-based (top) and $\mathbb{P}^2$-based (bottom) BP RI-OEDG schemes. Thirty equally spaced contour lines between 1.5 and 22.7.
		Left: optimal convex decomposition; right: classical convex decomposition.
		}
		\label{fig:Ex_EulDMR}
	\end{figure}

	\begin{figure}[!htb]
		\centering
		\begin{subfigure}{0.43\textwidth}
			\centering
			\includegraphics[width=\textwidth]{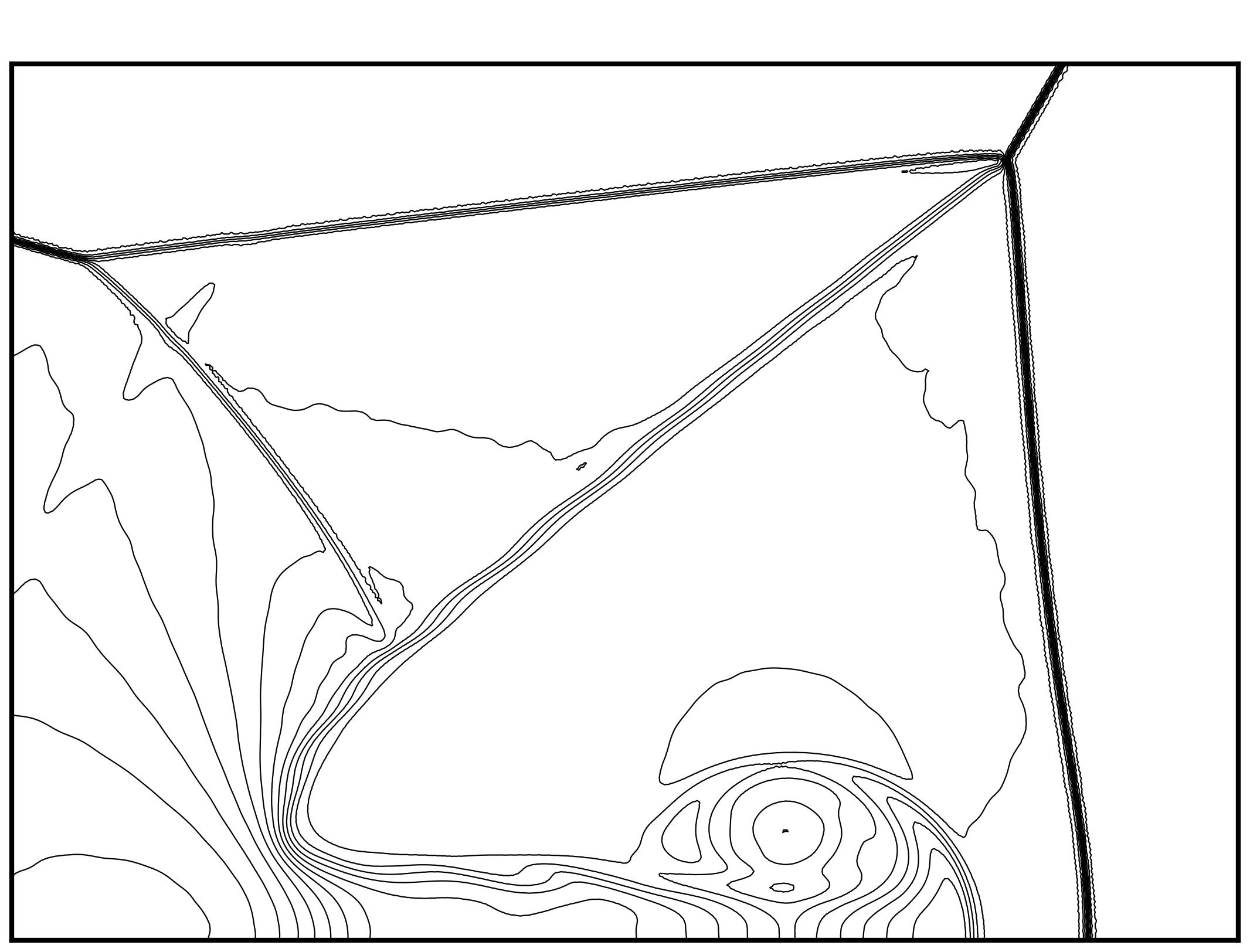}
		\end{subfigure}
		\qquad
		\begin{subfigure}{0.43\textwidth}
			\centering
			\includegraphics[width=\textwidth]{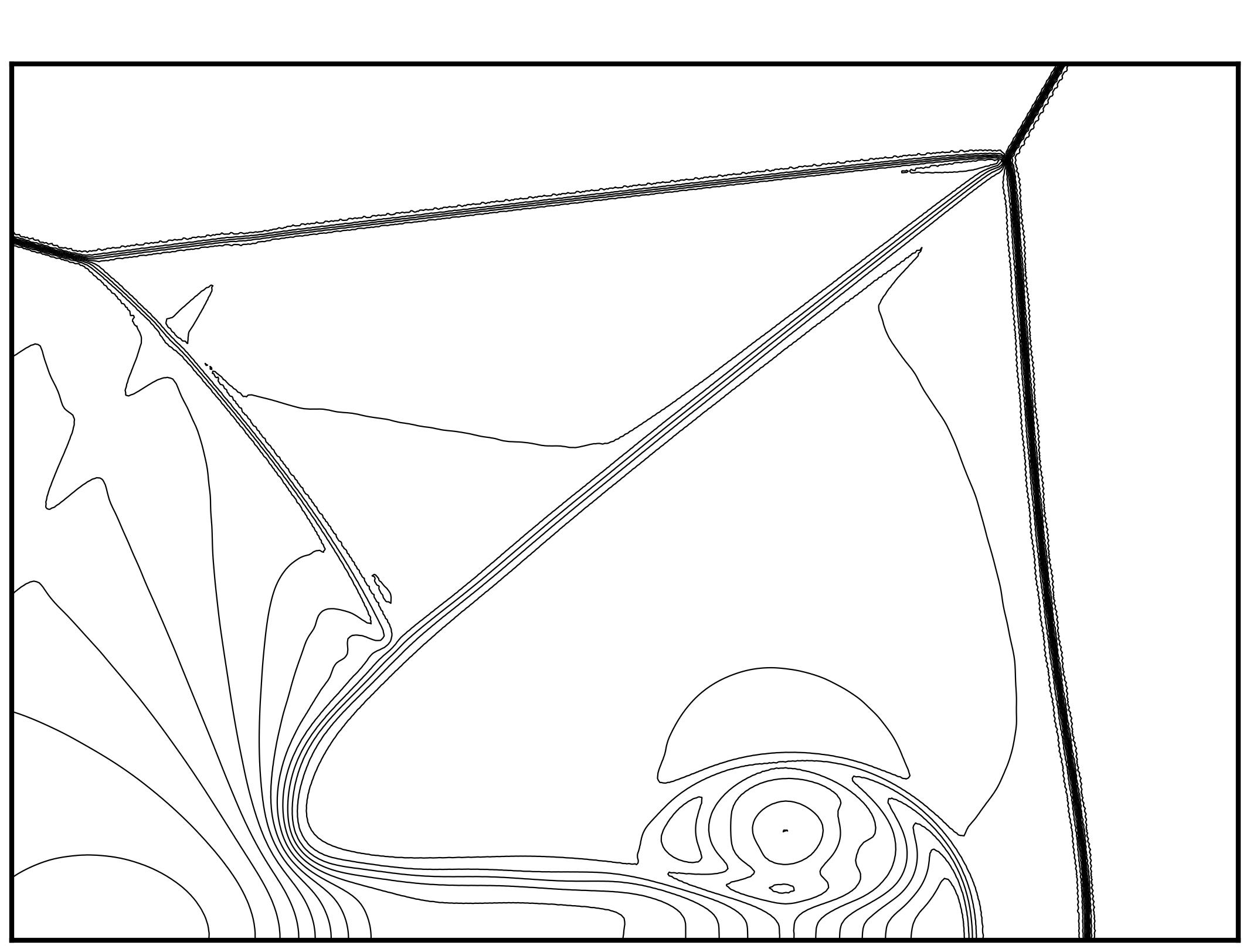}
		\end{subfigure}
		
		\begin{subfigure}{0.43\textwidth}
			\centering
			\includegraphics[width=\textwidth]{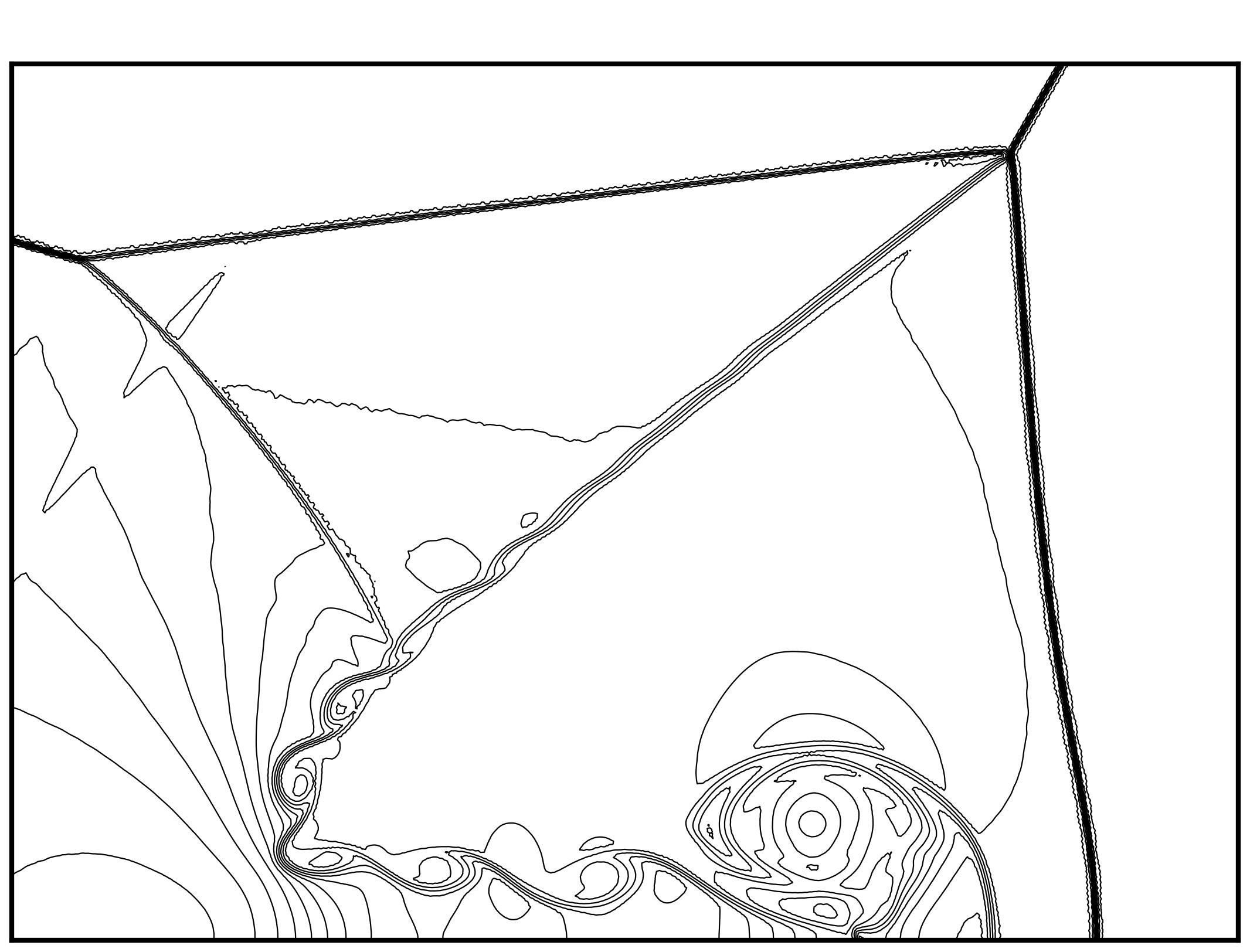}
		\end{subfigure}
		\qquad
		\begin{subfigure}{0.43\textwidth}
			\centering
			\includegraphics[width=\textwidth]{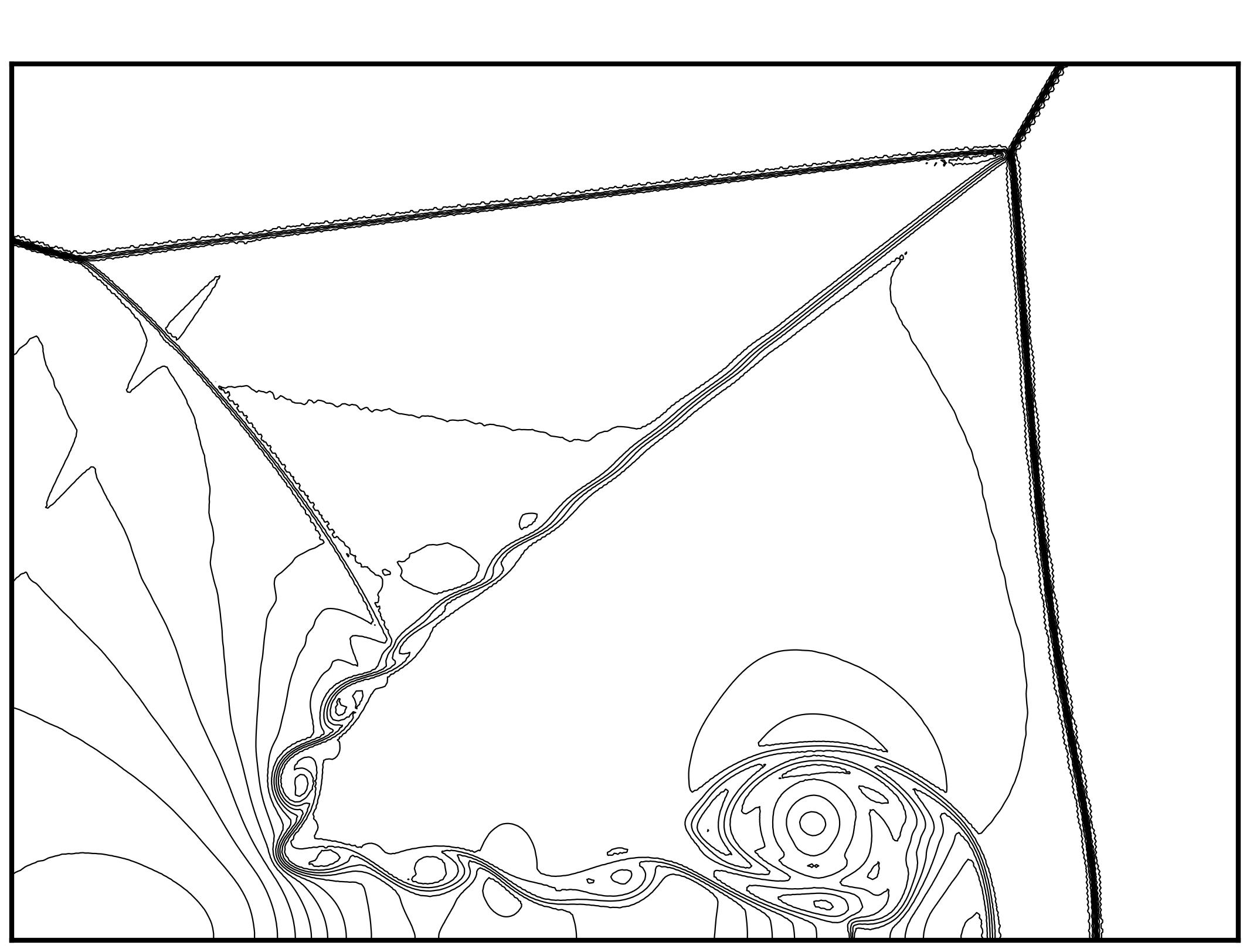}
		\end{subfigure}
		
		\caption{(\Cref{Ex:EulerDMR}) Zoom-in images around the double Mach stem of \Cref{fig:Ex_EulDMR}.
			Top: $\mathbb{P}^1$-element; bottom: $\mathbb{P}^2$-element. 
			Left: optimal convex decomposition; right: classical convex decomposition.
		}
		\label{fig:Ex_EulDMRZoom}
	\end{figure}

	\begin{table}[!htbp] 
		\centering
		\caption{(\Cref{Ex:EulerDMR}) CPU time (hours) computed by using the $\mathbb{P}^1$- and $\mathbb{P}^2$-based BP OEDG methods. 
		}
		\label{tab:Ex_EulDMR}
		\setlength{\tabcolsep}{4mm}{
			\begin{tabular}{ccc}
				\toprule[1.5pt]
				&
				\multicolumn{1}{c}{optimal convex decomposition} &
				\multicolumn{1}{c}{classical convex decomposition}  \\
				
				\midrule[1.5pt]		
				{$\mathbb{P}^1$} & 285.13 & 741.32  \\
				{$\mathbb{P}^2$} & 895.55 & 3195.07 \\
				
				\bottomrule[1.5pt]
			\end{tabular}
		}
	\end{table}
	
\end{expl}

\begin{expl}[Shock diffraction]\label{Ex:EulerSDiffWedge}
	This example simulates the diffraction of a shock wave at a convex corner with a $120^{\circ}$ angle, using the setup detailed in \cite{ZXSPP2012}. The computational domain $\Omega$, bounded by line segments connecting the points (0,0), (13,0), (13,11), (0,11), (0,6), and $(2\sqrt{3},6)$, is discretized into 123,253 triangular cells with a mesh size of $h=0.05$. \Cref{fig:Ex_EulSDWmesh} illustrates the domain and a sample mesh with $h=0.5$. 
	Initially, a right-going shock, parallel to the $x$-axis, is positioned at $x=3.4$ for $6 \leq y \leq 11$. The post-shock state is defined as:
	$$
	(\rho, v_1, v_2, p) = (8, 8.25, 0, 116.5),
	$$
	while the pre-shock state is:
	$$
	(\rho, v_1, v_2, p) = (1.4, 0, 0, 1).
	$$
	Inflow condition is applied on the left boundary $\{x=0, 6 \leq y \leq 11\}$, and outflow conditions are used on the right boundary $\{x=13, 0 \leq y \leq 11\}$ and the bottom boundary $\{0 \leq x \leq 13, y=0\}$. The exact solution is enforced on the top boundary $\{0 \leq x \leq 13, y=11\}$, while reflective conditions are applied to the remaining boundaries. 
	Due to the presence of strong shock waves, the BP limiter is essential to prevent nonphysical solutions and ensure numerical stability. Without the BP limiter, the OEDG codes would yield nonphysical solutions, leading to simulation failure. 
	\Cref{fig:Ex_EulSDW1} shows the pressure contours at $t=0.9$ obtained using the $\mathbb{P}^1$- and $\mathbb{P}^2$-based RI-OEDG methods with either the optimal or the classical convex decomposition. These results demonstrate that all BP OEDG schemes accurately capture the complex wave patterns, highlighting the effectiveness of the RIOE procedure in suppressing nonphysical oscillations near discontinuities. 
	Notably, the results of the BP OEDG method using the optimal convex decomposition are comparable to those using the classical convex decomposition. However, as shown in \Cref{tab:Ex_EulSDW}, the CPU times for the optimal convex decomposition approach are significantly lower. The computed ratios $\overline{\dt}^{\tt CDW}/\overline{\dt}^{\tt ZXS}$ are 2.868430 for $k=1$ and 4.488572 for $k=2$, indicating that the use of optimal convex decomposition greatly enhances the computational efficiency of BP schemes.

	\begin{figure}[!htb]
		\centering
		\begin{subfigure}{0.43\textwidth}
			\centering
			\includegraphics[width=\textwidth]{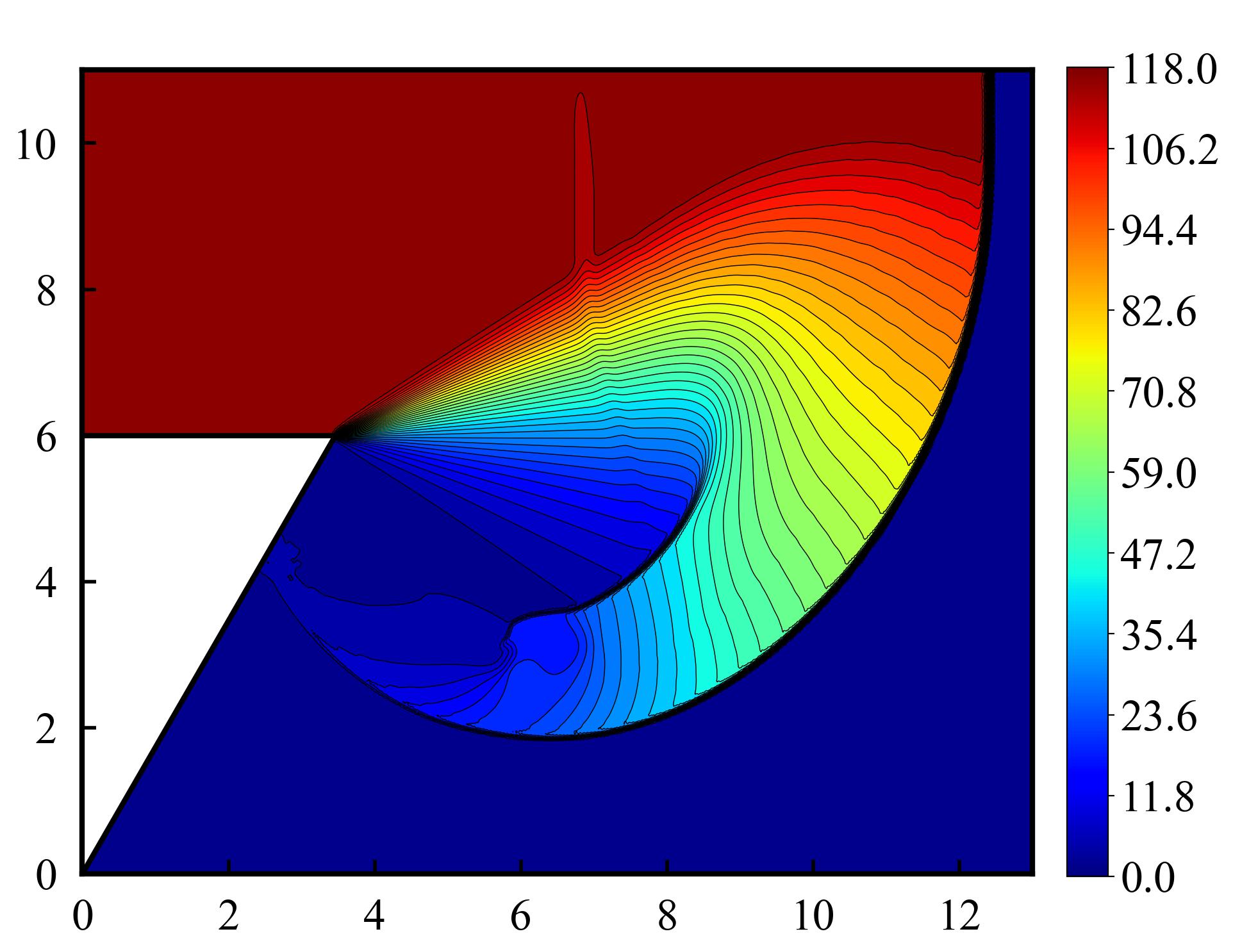}
		\end{subfigure}
		\qquad
		\begin{subfigure}{0.43\textwidth}
			\centering
			\includegraphics[width=\textwidth]{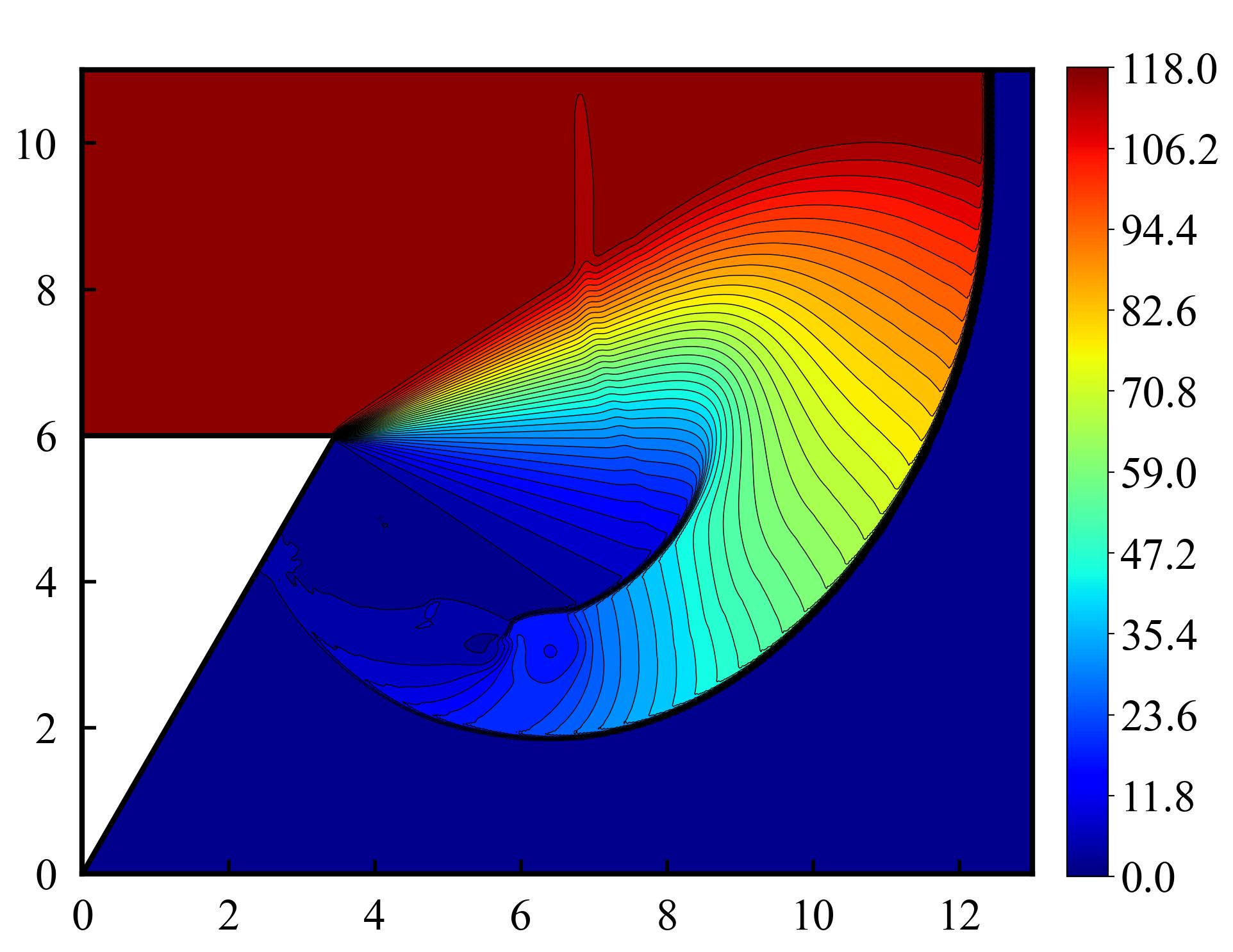}
		\end{subfigure}
		
		\begin{subfigure}{0.43\textwidth}
			\centering
			\includegraphics[width=\textwidth]{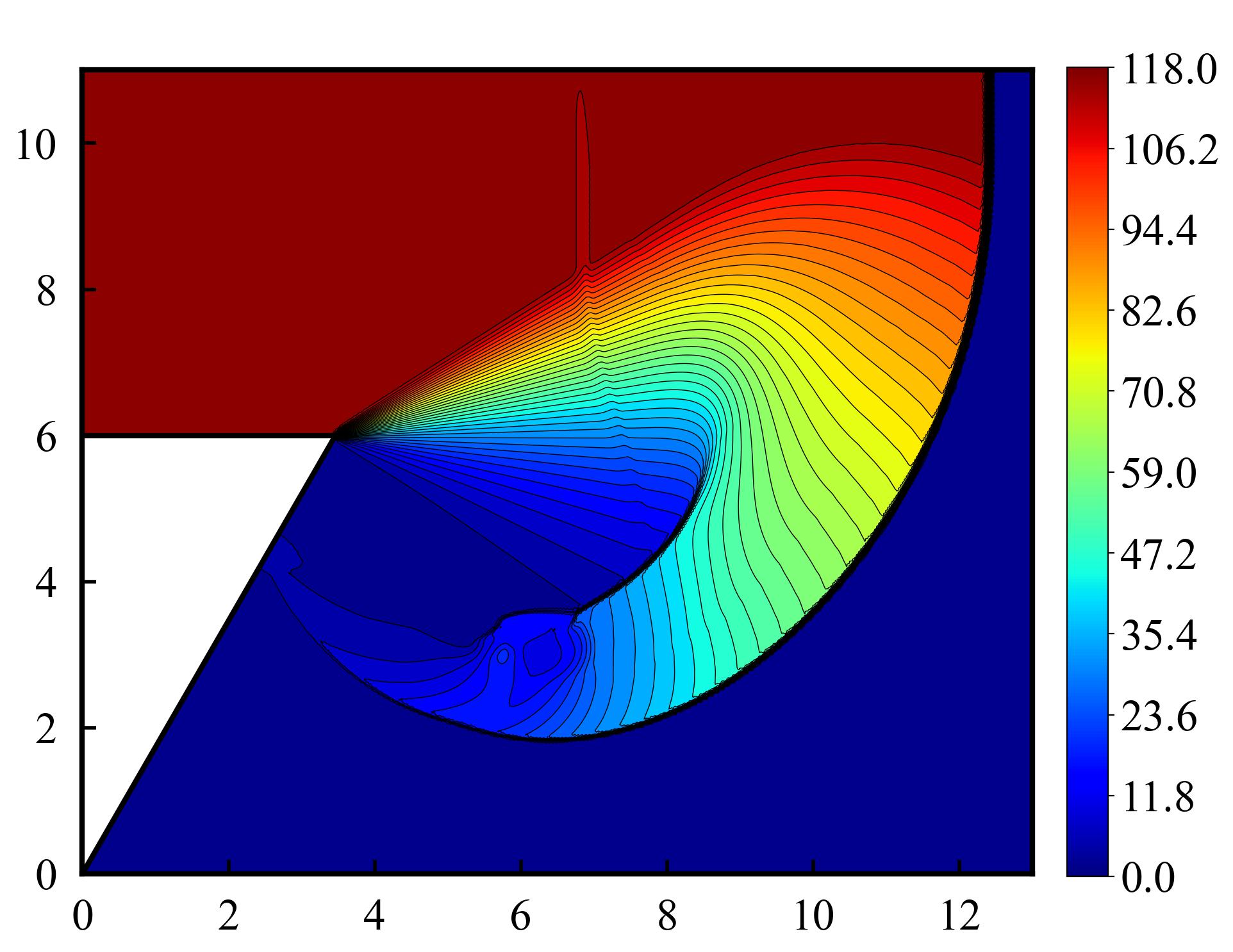}
		\end{subfigure}
		\qquad
		\begin{subfigure}{0.43\textwidth}
			\centering
			\includegraphics[width=\textwidth]{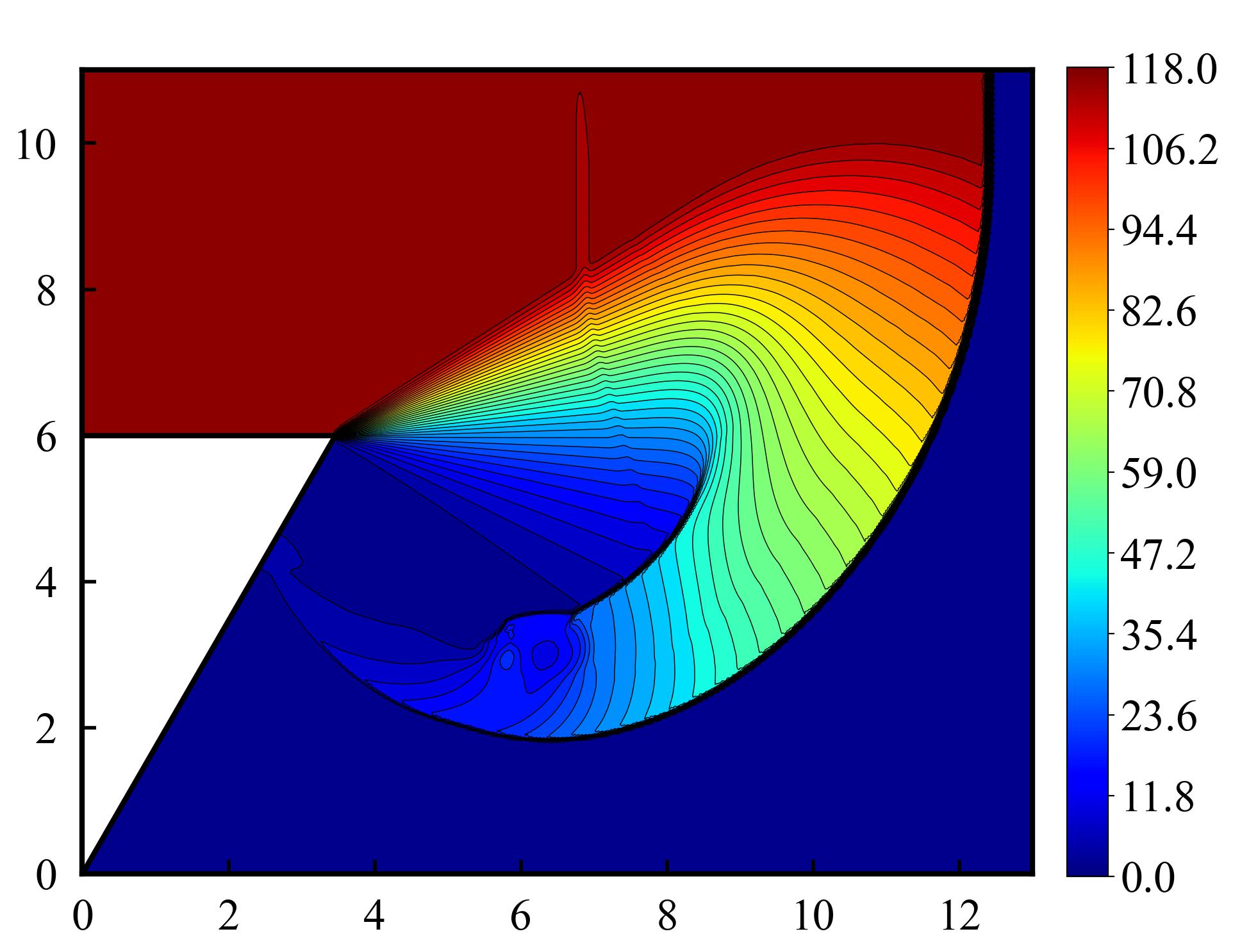}
		\end{subfigure}
		
		\caption{(\Cref{Ex:EulerSDiffWedge}) Contours of pressure obtained by the $\mathbb{P}^1$-based (top) and $\mathbb{P}^2$-based (bottom) BP RI-OEDG schemes. Forty levels from 0 to 118. Left: optimal convex decomposition; right: classical convex decomposition.
		}
		\label{fig:Ex_EulSDW1}
	\end{figure}

	\begin{table}[!htbp] 
		\centering
		\caption{(\Cref{Ex:EulerSDiffWedge}) CPU time (hours) obtained by using the $\mathbb{P}^1$- and $\mathbb{P}^2$-based BP OEDG methods. 
		}
		\label{tab:Ex_EulSDW}
		\setlength{\tabcolsep}{4mm}{
			\begin{tabular}{ccc}
				\toprule[1.5pt]
				&
				\multicolumn{1}{c}{optimal convex decomposition} &
				\multicolumn{1}{c}{classical convex decomposition}  \\
				
				\midrule[1.5pt]		
				{$\mathbb{P}^1$} & 7.28 & 21.70 \\
				{$\mathbb{P}^2$} & 26.75& 120.82 \\
				
				\bottomrule[1.5pt]
			\end{tabular}
		}
	\end{table}
\end{expl}

\section{Conclusions}\label{sec:conclu}

In this paper, we have presented two major advancements to enhance the robustness, reliability, and efficiency of discontinuous Galerkin (DG) methods for hyperbolic conservation laws on unstructured meshes while retaining the accuracy and compactness of DG methods. 
The first advancement involves the development of oscillation-eliminating (OE) techniques specifically tailored for DG schemes on unstructured meshes. Building on prior research, these methods have been shown to maintain essential properties such as conservation and scale/evolution invariance while introducing a novel mechanism to ensure rotational invariance. This new feature is crucial for maintaining the consistency of the numerical solutions of rotation-invariant (RI) physical equations across different coordinate systems. 

The second key contribution of this work is the design of an optimal convex decomposition for constructing high-order bound-preserving (BP) DG schemes on unstructured meshes. This novel decomposition leads to the mildest BP CFL condition, which is pivotal for the stability and efficiency of the schemes. Although the optimal decomposition problem has been thoroughly explored for rectangular meshes, its extension to triangular meshes was previously an open question. This research fills that gap by successfully constructing the optimal decomposition for $\mathbb{P}^1$ and $\mathbb{P}^2$ polynomial spaces on triangular elements, thereby significantly boosting the BP CFL numbers and, consequently, the computational efficiency. 
Furthermore, the proposed RIOE procedure and optimal decomposition approach are easy to integrate into existing DG codes. Their implementation requires only minor adjustments and relies on data from neighboring cells, ensuring that the inherent advantages of DG methods—such as compactness and efficiency in parallel computing—are preserved. 

Extensive numerical experiments have validated the proposed methodologies, demonstrating their effectiveness in maintaining accuracy, suppressing spurious oscillations, and preserving bounds across a variety of challenging test cases. These results underscore the effectiveness of our techniques in reliably handling complex geometries, thereby advancing the applicability of DG methods to a broader class of problems in computational fluid dynamics and beyond. 
This work may also open new avenues for further research and development in robust DG schemes for other related hyperbolic or convection-dominated equations. Future research could explore the extension and unified theory of optimal convex decomposition to even higher-order DG spaces on unstructured meshes.

\appendix
\section{$N$-point Quadrature Rule on the Triangular Element $K$}\label{Appx:Triangle}

For the $N$-point quadrature rule on a triangular element $K$, the quadrature nodes are given by
\begin{equation*}
	x_K^{\mu} = \sum_{i=1}^{3} b_i^{\mu} x_i, \qquad
	y_K^{\mu} = \sum_{i=1}^{3} b_i^{\mu} y_i, \qquad \text{for} \quad 1 \leq \mu \leq N.
\end{equation*}
Here, $(b_1^{\mu}, b_2^{\mu}, b_3^{\mu})$ represents the barycentric coordinates of the quadrature point $(x_K^{\mu}, y_K^{\mu})$ with respect to the triangle $K$, and these coordinates satisfy the following conditions:
$$
\sum_{i=1}^{3} b_i^{\mu} = 1, \qquad 0 < b_i^{\mu} < 1.
$$
The selection of $(b_1^{\mu}, b_2^{\mu}, b_3^{\mu})$ is critical and must be handled carefully to ensure the accuracy of the quadrature rule. In our simulations, we use a quadrature rule with $2k$th-order algebraic precision for $(k+1)$th-order OEDG methods. 
Tables \ref{tab:2th}, \ref{tab:4th}, and \ref{tab:6th} provide the barycentric coordinates $(b_1^{\mu}, b_2^{\mu}, b_3^{\mu})$ and the corresponding weights $\omega_{\mu}$ for quadrature rules with second, fourth, and sixth-order algebraic precision, respectively. For more details on triangle quadrature rules, refer to \cite{Zhang2009Quadrature}.

\begin{table}[!htb] 
	\centering
	\caption{Barycentric coordinates and weights for $3$-point quadrature rule with second-order algebraic precision.}
	\label{tab:2th}
	\renewcommand\arraystretch{1.2}
	\setlength{\tabcolsep}{6mm}{
		\begin{tabular}{ccc}
			\toprule[1.5pt]
			$\mu$ & $(b_1^{\mu},~ b_2^{\mu}, ~ b_3^{\mu})$ & $\omega_{\mu}$ \\
			
			\midrule[1.5pt]	
			1 & $(\frac{1}{6},\frac{1}{6},\frac{2}{3}) $ &  $\frac{1}{3}$  \\
			2 & $(\frac{1}{6},\frac{2}{3},\frac{1}{6})$  &  $\frac{1}{3}$  \\
			3 & $(\frac{2}{3},\frac{1}{6},\frac{1}{6})$  &  $\frac{1}{3}$  \\
			\bottomrule[1.5pt]
		\end{tabular}
	}
\end{table}

\begin{table}[!htb] 
	\centering
	\caption{Barycentric coordinates and weights for $6$-point quadrature rule with fourth-order algebraic precision.}
	\label{tab:4th}
	\setlength{\tabcolsep}{3mm}{
		\begin{tabular}{ccc}
			\toprule[1.5pt]
			$\mu$ & $(b_1^{\mu}, ~ b_2^{\mu}, ~ b_3^{\mu})$ & $\omega_{\mu}$ \\
			
			\midrule[1.5pt]	
			1 & (0.445948490915965, ~ 0.445948490915965, ~ 0.108103018168070) & 0.223381589678010\\
			2 & (0.445948490915965, ~ 0.108103018168070, ~ 0.445948490915965) & 0.223381589678010\\
			3 & (0.108103018168070, ~ 0.445948490915965, ~ 0.445948490915965) & 0.223381589678010\\
			4 & (0.091576213509771, ~ 0.091576213509771, ~ 0.816847572980458) & 0.109951743655322\\
			5 & (0.091576213509771, ~ 0.816847572980458, ~ 0.091576213509771) & 0.109951743655322\\
			6 & (0.816847572980458, ~ 0.091576213509771, ~ 0.091576213509771) & 0.109951743655322\\
			\bottomrule[1.5pt]
		\end{tabular}
	}
\end{table}

\begin{table}[!htb] 
	\centering
	\caption{Barycentric coordinates and weights for $12$-point quadrature rule with sixth-order algebraic precision.}
	\label{tab:6th}
	\setlength{\tabcolsep}{3mm}{
		\begin{tabular}{ccc}
			\toprule[1.5pt]
			$\mu$ & $(b_1^{\mu}, ~ b_2^{\mu}, ~ b_3^{\mu})$ & $\omega_{\mu}$ \\
			
			\midrule[1.5pt]	
			1 & (0.063089014491502, ~ 0.063089014491502, ~ 0.873821971016996) &0.050844906370207\\
			2 & (0.063089014491502, ~ 0.873821971016996, ~ 0.063089014491502) &0.050844906370207\\
			3 & (0.873821971016996, ~ 0.063089014491502, ~ 0.063089014491502) &0.050844906370207\\
			4 & (0.249286745170910, ~ 0.249286745170910, ~ 0.501426509658180) &0.116786275726379\\
			5 & (0.249286745170910, ~ 0.501426509658180, ~ 0.249286745170910) &0.116786275726379\\
			6 & (0.501426509658180, ~ 0.249286745170910, ~ 0.249286745170910) &0.116786275726379\\
			7 & (0.053145049844817, ~ 0.636502499121399, ~ 0.310352451033784) &0.082851075618374\\
			8 & (0.636502499121399, ~ 0.053145049844817, ~ 0.310352451033784) &0.082851075618374\\
			9 & (0.053145049844817, ~ 0.310352451033784, ~ 0.636502499121399) &0.082851075618374\\
			10& (0.636502499121399, ~ 0.310352451033784, ~ 0.053145049844817) &0.082851075618374\\
			11& (0.310352451033784, ~ 0.053145049844817, ~ 0.636502499121399) &0.082851075618374\\
			12& (0.310352451033784, ~ 0.636502499121399, ~ 0.053145049844817) &0.082851075618374\\
			\bottomrule[1.5pt]
		\end{tabular}
	}
\end{table}

\section{$Q$-Point Gauss Quadrature Rule on the Edge $e_K^{(i)}$}\label{Appx:Edge}
Let $\{x^{\nu}\}_{\nu=1}^{Q}$ represent the $Q$-point quadrature nodes in the interval $[-1,1]$. Assume that $\bm x_c := (x_c, y_c)^\top$ and $\bm x_d := (x_d, y_d)^\top$ are the coordinates of the  endpoints ${\bm v}_c$ and  ${\bm v}_d$ of the edge $e_K^{(i)}$, respectively. The coordinates of the quadrature points on the edge $e_K^{(i)}$ are then given by
\begin{equation}\label{eq:AppxEdge}
	x^{(i),\nu}_K = x_c + (x_d - x_c) x^{\nu}, \qquad
	y^{(i),\nu}_K = y_c + (y_d - y_c) x^{\nu}.
\end{equation}

\bibliographystyle{model1-num-names}%

\bibliography{refs}

\begin{thebibliography}{51}
\expandafter\ifx\csname natexlab\endcsname\relax\def\natexlab#1{#1}\fi
\providecommand{\url}[1]{\texttt{#1}}
\providecommand{\href}[2]{#2}
\providecommand{\path}[1]{#1}
\providecommand{\DOIprefix}{doi:}
\providecommand{\ArXivprefix}{arXiv:}
\providecommand{\URLprefix}{URL: }
\providecommand{\Pubmedprefix}{pmid:}
\providecommand{\doi}[1]{\href{http://dx.doi.org/#1}{\path{#1}}}
\providecommand{\Pubmed}[1]{\href{pmid:#1}{\path{#1}}}
\providecommand{\bibinfo}[2]{#2}
\ifx\xfnm\relax \def\xfnm[#1]{\unskip,\space#1}\fi
\bibitem[{Biswas et~al.(1994)Biswas, Devine, and Flaherty}]{Biswas1994}
\bibinfo{author}{R.~Biswas}, \bibinfo{author}{K.~D. Devine},
  \bibinfo{author}{J.~E. Flaherty},
\newblock \bibinfo{title}{Parallel, adaptive finite element methods for
  conservation laws},
\newblock \bibinfo{journal}{Applied Numerical Mathematics} \bibinfo{volume}{14}
  (\bibinfo{year}{1994}) \bibinfo{pages}{255--283}.
\bibitem[{Chen and Shu(2017)}]{ChenShu2017}
\bibinfo{author}{T.~Chen}, \bibinfo{author}{C.-W. Shu},
\newblock \bibinfo{title}{Entropy stable high order discontinuous {G}alerkin
  methods with suitable quadrature rules for hyperbolic conservation laws},
\newblock \bibinfo{journal}{Journal of Computational Physics}
  \bibinfo{volume}{345} (\bibinfo{year}{2017}) \bibinfo{pages}{427--461}.
\bibitem[{Christov and Popov(2008)}]{Christov2008JCP}
\bibinfo{author}{I.~Christov}, \bibinfo{author}{B.~Popov},
\newblock \bibinfo{title}{New non-oscillatory central schemes on unstructured
  triangulations for hyperbolic systems of conservation laws},
\newblock \bibinfo{journal}{Journal of Computational Physics}
  \bibinfo{volume}{227} (\bibinfo{year}{2008}) \bibinfo{pages}{5736--5757}.
\bibitem[{Cockburn and Shu(1989)}]{rkdg2}
\bibinfo{author}{B.~Cockburn}, \bibinfo{author}{C.-W. Shu},
\newblock \bibinfo{title}{{T}{V}{B} {R}unge--{K}utta local projection
  discontinuous {G}alerkin finite element method for conservation laws. {I}{I}.
  general framework},
\newblock \bibinfo{journal}{Mathematics of computation} \bibinfo{volume}{52}
  (\bibinfo{year}{1989}) \bibinfo{pages}{411--435}.
\bibitem[{Cockburn and Shu(1991)}]{rkdg1}
\bibinfo{author}{B.~Cockburn}, \bibinfo{author}{C.-W. Shu},
\newblock \bibinfo{title}{The {R}unge--{K}utta local projection $
  {P}^1$-discontinuous-{G}alerkin finite element method for scalar conservation
  laws},
\newblock \bibinfo{journal}{ESAIM: Mathematical Modelling and Numerical
  Analysis} \bibinfo{volume}{25} (\bibinfo{year}{1991})
  \bibinfo{pages}{337--361}.
\bibitem[{Cockburn and Shu(1998)}]{rkdg5}
\bibinfo{author}{B.~Cockburn}, \bibinfo{author}{C.-W. Shu},
\newblock \bibinfo{title}{The {R}unge--{K}utta discontinuous {G}alerkin method
  for conservation laws {V}: multidimensional systems},
\newblock \bibinfo{journal}{Journal of Computational Physics}
  \bibinfo{volume}{141} (\bibinfo{year}{1998}) \bibinfo{pages}{199--224}.
\bibitem[{Cockburn et~al.(1989)Cockburn, Lin, and Shu}]{rkdg3}
\bibinfo{author}{B.~Cockburn}, \bibinfo{author}{S.-Y. Lin},
  \bibinfo{author}{C.-W. Shu},
\newblock \bibinfo{title}{{T}{V}{B} {R}unge--{K}utta local projection
  discontinuous {G}alerkin finite element method for conservation laws
  {I}{I}{I}: one-dimensional systems},
\newblock \bibinfo{journal}{Journal of Computational Physics}
  \bibinfo{volume}{84} (\bibinfo{year}{1989}) \bibinfo{pages}{90--113}.
\bibitem[{Cockburn et~al.(1990)Cockburn, Hou, and Shu}]{rkdg4}
\bibinfo{author}{B.~Cockburn}, \bibinfo{author}{S.~Hou}, \bibinfo{author}{C.-W.
  Shu},
\newblock \bibinfo{title}{The {R}unge--{K}utta local projection discontinuous
  {G}alerkin finite element method for conservation laws. {I}{V}. the
  multidimensional case},
\newblock \bibinfo{journal}{Mathematics of Computation} \bibinfo{volume}{54}
  (\bibinfo{year}{1990}) \bibinfo{pages}{545--581}.
\bibitem[{Cui et~al.(2023)Cui, Ding, and Wu}]{CDWOCAD2023}
\bibinfo{author}{S.~Cui}, \bibinfo{author}{S.~Ding}, \bibinfo{author}{K.~Wu},
\newblock \bibinfo{title}{Is the classic convex decomposition optimal for
  bound-preserving schemes in multiple dimensions?},
\newblock \bibinfo{journal}{Journal of Computational Physics}
  \bibinfo{volume}{476} (\bibinfo{year}{2023}) \bibinfo{pages}{111882}.
\bibitem[{Cui et~al.(2024)Cui, Ding, and Wu}]{CuiDingWu2024}
\bibinfo{author}{S.~Cui}, \bibinfo{author}{S.~Ding}, \bibinfo{author}{K.~Wu},
\newblock \bibinfo{title}{On optimal cell average decomposition for high-order
  bound-preserving schemes of hyperbolic conservation laws},
\newblock \bibinfo{journal}{SIAM Journal on Numerical Analysis}
  \bibinfo{volume}{62} (\bibinfo{year}{2024}) \bibinfo{pages}{775--810}.
\bibitem[{Ding et~al.(2020)Ding, Shu, and Zhang}]{Ding2020ILW}
\bibinfo{author}{S.~Ding}, \bibinfo{author}{C.-W. Shu},
  \bibinfo{author}{M.~Zhang},
\newblock \bibinfo{title}{On the conservation of finite difference {WENO}
  schemes in non-rectangular domains using the inverse {L}ax-{W}endroff
  boundary treatments},
\newblock \bibinfo{journal}{Journal of Computational Physics}
  \bibinfo{volume}{415} (\bibinfo{year}{2020}) \bibinfo{pages}{109516}.
\bibitem[{Du et~al.(2023)Du, Liu, and Yang}]{du2023oscillation}
\bibinfo{author}{J.~Du}, \bibinfo{author}{Y.~Liu}, \bibinfo{author}{Y.~Yang},
\newblock \bibinfo{title}{An oscillation-free bound-preserving discontinuous
  {G}alerkin method for multi-component chemically reacting flows},
\newblock \bibinfo{journal}{Journal of Scientific Computing}
  \bibinfo{volume}{95} (\bibinfo{year}{2023}) \bibinfo{pages}{90}.
\bibitem[{Gottlieb et~al.(2001)Gottlieb, Shu, and Tadmor}]{gottlieb2001strong}
\bibinfo{author}{S.~Gottlieb}, \bibinfo{author}{C.-W. Shu},
  \bibinfo{author}{E.~Tadmor},
\newblock \bibinfo{title}{Strong stability-preserving high-order time
  discretization methods},
\newblock \bibinfo{journal}{SIAM Review} \bibinfo{volume}{43}
  (\bibinfo{year}{2001}) \bibinfo{pages}{89--112}.
\bibitem[{Hu and Shu(1999)}]{HuShu1999WENO}
\bibinfo{author}{C.~Hu}, \bibinfo{author}{C.-W. Shu},
\newblock \bibinfo{title}{Weighted essentially non-oscillatory schemes on
  triangular meshes},
\newblock \bibinfo{journal}{Journal of Computational Physics}
  \bibinfo{volume}{150} (\bibinfo{year}{1999}) \bibinfo{pages}{97--127}.
\bibitem[{Huang and Cheng(2020)}]{huang2020adaptive}
\bibinfo{author}{J.~Huang}, \bibinfo{author}{Y.~Cheng},
\newblock \bibinfo{title}{An adaptive multiresolution discontinuous {G}alerkin
  method with artificial viscosity for scalar hyperbolic conservation laws in
  multidimensions},
\newblock \bibinfo{journal}{SIAM Journal on Scientific Computing}
  \bibinfo{volume}{42} (\bibinfo{year}{2020}) \bibinfo{pages}{A2943--A2973}.
\bibitem[{Huang and Shu(2018)}]{huang2018bound}
\bibinfo{author}{J.~Huang}, \bibinfo{author}{C.-W. Shu},
\newblock \bibinfo{title}{Bound-preserving modified exponential
  {R}unge--{K}utta discontinuous {G}alerkin methods for scalar hyperbolic
  equations with stiff source terms},
\newblock \bibinfo{journal}{Journal of Computational Physics}
  \bibinfo{volume}{361} (\bibinfo{year}{2018}) \bibinfo{pages}{111--135}.
\bibitem[{LeVeque(2002)}]{Leveque2002}
\bibinfo{author}{R.~J. LeVeque}, \bibinfo{title}{Finite volume methods for
  hyperbolic problems}, volume~\bibinfo{volume}{31},
  \bibinfo{publisher}{Cambridge university press}, \bibinfo{year}{2002}.
\bibitem[{Liska and Wendroff(2003)}]{Liska2003FD}
\bibinfo{author}{R.~Liska}, \bibinfo{author}{B.~Wendroff},
\newblock \bibinfo{title}{Comparison of several difference schemes on {1D} and
  {2D} test problems for the {E}uler equations},
\newblock \bibinfo{journal}{SIAM Journal on Scientific Computing}
  \bibinfo{volume}{25} (\bibinfo{year}{2003}) \bibinfo{pages}{995--1017}.
\bibitem[{Liu and Wu(2024)}]{LiuWuOEMHD2024}
\bibinfo{author}{M.~Liu}, \bibinfo{author}{K.~Wu},
\newblock \bibinfo{title}{Structure-preserving oscillation-eliminating
  discontinuous {G}alerkin schemes for ideal {MHD} equations: Locally
  divergence-free and positivity-preserving},
\newblock \bibinfo{journal}{arXiv preprint arXiv:2404.16794}
  (\bibinfo{year}{2024}).
\bibitem[{Liu et~al.(2006)Liu, Vinokur, and Wang}]{Wang2006SDI}
\bibinfo{author}{Y.~Liu}, \bibinfo{author}{M.~Vinokur},
  \bibinfo{author}{Z.~Wang},
\newblock \bibinfo{title}{Spectral difference method for unstructured grids
  {I}: Basic formulation},
\newblock \bibinfo{journal}{Journal of Computational Physics}
  \bibinfo{volume}{216} (\bibinfo{year}{2006}) \bibinfo{pages}{780--801}.
\bibitem[{Liu et~al.(2022{\natexlab{a}})Liu, Lu, and
  Shu}]{LiuLuShu_OFDG_system}
\bibinfo{author}{Y.~Liu}, \bibinfo{author}{J.~Lu}, \bibinfo{author}{C.-W. Shu},
\newblock \bibinfo{title}{An essentially oscillation-free discontinuous
  {G}alerkin method for hyperbolic systems},
\newblock \bibinfo{journal}{SIAM Journal on Scientific Computing}
  \bibinfo{volume}{44} (\bibinfo{year}{2022}{\natexlab{a}})
  \bibinfo{pages}{A230--A259}.
\bibitem[{Liu et~al.(2022{\natexlab{b}})Liu, Lu, Tao, and
  Xia}]{liu2022oscillation}
\bibinfo{author}{Y.~Liu}, \bibinfo{author}{J.~Lu}, \bibinfo{author}{Q.~Tao},
  \bibinfo{author}{Y.~Xia},
\newblock \bibinfo{title}{An oscillation-free discontinuous {G}alerkin method
  for shallow water equations},
\newblock \bibinfo{journal}{Journal of Scientific Computing}
  \bibinfo{volume}{92} (\bibinfo{year}{2022}{\natexlab{b}})
  \bibinfo{pages}{109}.
\bibitem[{Liu et~al.(2024)Liu, Lu, and Shu}]{LiuLuShu2024}
\bibinfo{author}{Y.~Liu}, \bibinfo{author}{J.~Lu}, \bibinfo{author}{C.-W. Shu},
\newblock \bibinfo{title}{An entropy stable essentially oscillation-free
  discontinuous {G}alerkin method for hyperbolic conservation laws},
\newblock \bibinfo{journal}{SIAM Journal on Scientific Computing}
  \bibinfo{volume}{46} (\bibinfo{year}{2024}) \bibinfo{pages}{A1132--A1159}.
\bibitem[{Lu et~al.(2021)Lu, Liu, and Shu}]{lu2021oscillation}
\bibinfo{author}{J.~Lu}, \bibinfo{author}{Y.~Liu}, \bibinfo{author}{C.-W. Shu},
\newblock \bibinfo{title}{An oscillation-free discontinuous {G}alerkin method
  for scalar hyperbolic conservation laws},
\newblock \bibinfo{journal}{SIAM Journal on Numerical Analysis}
  \bibinfo{volume}{59} (\bibinfo{year}{2021}) \bibinfo{pages}{1299--1324}.
\bibitem[{Niceno(2002)}]{Niceno2002easymesh}
\bibinfo{author}{B.~Niceno}, \bibinfo{title}{{E}asy{M}esh: A two-dimensional
  quality mesh generator},
  \bibinfo{howpublished}{\url{https://web.mit.edu/easymesh_v1.4/www/easymesh.html}},
  \bibinfo{year}{2002}.
\bibitem[{Peng et~al.(2024)Peng, Sun, and Wu}]{PengSunWu2023OEDG}
\bibinfo{author}{M.~Peng}, \bibinfo{author}{Z.~Sun}, \bibinfo{author}{K.~Wu},
\newblock \bibinfo{title}{{OEDG}: Oscillation-eliminating discontinuous
  {G}alerkin method for hyperbolic conservation laws},
\newblock \bibinfo{journal}{Mathematics of Computation, in press}
  (\bibinfo{year}{2024}). \bibinfo{note}{DOI: {\tt
  https://doi.org/10.1090/mcom/3998}. Also available at arXiv preprint,
  arXiv:2310.04807, 2023}.
\bibitem[{Qiu and Shu(2005)}]{qiu2005runge}
\bibinfo{author}{J.~Qiu}, \bibinfo{author}{C.-W. Shu},
\newblock \bibinfo{title}{{Runge--Kutta} discontinuous {G}alerkin method using
  {W}{E}{N}{O} limiters},
\newblock \bibinfo{journal}{SIAM Journal on Scientific Computing}
  \bibinfo{volume}{26} (\bibinfo{year}{2005}) \bibinfo{pages}{907--929}.
\bibitem[{Reed and Hill(1973)}]{reed1973triangular}
\bibinfo{author}{W.~H. Reed}, \bibinfo{author}{T.~Hill},
  \bibinfo{title}{Triangular mesh methods for the neutron transport equation},
  \bibinfo{type}{Technical Report}, Los Alamos Scientific Lab., N. Mex.(USA),
  \bibinfo{year}{1973}.
\bibitem[{Suresh and Huynh(1997)}]{SureshMP1997}
\bibinfo{author}{A.~Suresh}, \bibinfo{author}{H.~T. Huynh},
\newblock \bibinfo{title}{Accurate monotonicity-preserving schemes with
  {R}unge--{K}utta time stepping},
\newblock \bibinfo{journal}{Journal of Computational Physics}
  \bibinfo{volume}{136} (\bibinfo{year}{1997}) \bibinfo{pages}{83--99}.
\bibitem[{Tao et~al.(2023)Tao, Liu, Jiang, and Lu}]{tao2023oscillation}
\bibinfo{author}{Q.~Tao}, \bibinfo{author}{Y.~Liu}, \bibinfo{author}{Y.~Jiang},
  \bibinfo{author}{J.~Lu},
\newblock \bibinfo{title}{An oscillation free local discontinuous {G}alerkin
  method for nonlinear degenerate parabolic equations},
\newblock \bibinfo{journal}{Numerical Methods for Partial Differential
  Equations} \bibinfo{volume}{39} (\bibinfo{year}{2023})
  \bibinfo{pages}{3145--3169}.
\bibitem[{Tchekhovskoy et~al.(2007)Tchekhovskoy, McKinney, and
  Narayan}]{Tchekhovskoy2007wham}
\bibinfo{author}{A.~Tchekhovskoy}, \bibinfo{author}{J.~C. McKinney},
  \bibinfo{author}{R.~Narayan},
\newblock \bibinfo{title}{Wham: a {WENO}-based general relativistic numerical
  scheme--{I}. hydrodynamics},
\newblock \bibinfo{journal}{Monthly Notices of the Royal Astronomical Society}
  \bibinfo{volume}{379} (\bibinfo{year}{2007}) \bibinfo{pages}{469--497}.
\bibitem[{Wang et~al.(2004)Wang, Zhang, and Liu}]{Wang2002SVIV}
\bibinfo{author}{Z.~Wang}, \bibinfo{author}{L.~Zhang},
  \bibinfo{author}{Y.~Liu},
\newblock \bibinfo{title}{Spectral (finite) volume method for conservation laws
  on unstructured grids {IV}: extension to two-dimensional systems},
\newblock \bibinfo{journal}{Journal of Computational Physics}
  \bibinfo{volume}{194} (\bibinfo{year}{2004}) \bibinfo{pages}{716--741}.
\bibitem[{Wang et~al.(2007)Wang, Liu, May, and Jameson}]{Wang2007SDII}
\bibinfo{author}{Z.~Wang}, \bibinfo{author}{Y.~Liu}, \bibinfo{author}{G.~May},
  \bibinfo{author}{A.~Jameson},
\newblock \bibinfo{title}{Spectral difference method for unstructured grids
  {II}: extension to the {E}uler equations},
\newblock \bibinfo{journal}{Journal of Computational Physics}
  \bibinfo{volume}{32} (\bibinfo{year}{2007}) \bibinfo{pages}{45--71}.
\bibitem[{Wang(2002)}]{Wang2002SVI}
\bibinfo{author}{Z.~Wang},
\newblock \bibinfo{title}{Spectral (finite) volume method for conservation laws
  on unstructured grids: Basic formulation},
\newblock \bibinfo{journal}{Journal of computational physics}
  \bibinfo{volume}{178} (\bibinfo{year}{2002}) \bibinfo{pages}{210--251}.
\bibitem[{Woodward and Colella(1984)}]{Woodward1984}
\bibinfo{author}{P.~Woodward}, \bibinfo{author}{P.~Colella},
\newblock \bibinfo{title}{The numerical simulation of two-dimensional fluid
  flow with strong shocks},
\newblock \bibinfo{journal}{Journal of computational physics}
  \bibinfo{volume}{54} (\bibinfo{year}{1984}) \bibinfo{pages}{115--173}.
\bibitem[{Wu and Shu(2019)}]{wu2019provably}
\bibinfo{author}{K.~Wu}, \bibinfo{author}{C.-W. Shu},
\newblock \bibinfo{title}{Provably positive high-order schemes for ideal
  magnetohydrodynamics: analysis on general meshes},
\newblock \bibinfo{journal}{Numerische Mathematik} \bibinfo{volume}{142}
  (\bibinfo{year}{2019}) \bibinfo{pages}{995--1047}.
\bibitem[{Wu and Shu(2023)}]{Wu2023Geometric}
\bibinfo{author}{K.~Wu}, \bibinfo{author}{C.-W. Shu},
\newblock \bibinfo{title}{Geometric quasilinearization framework for analysis
  and design of bound-preserving schemes},
\newblock \bibinfo{journal}{SIAM Review} \bibinfo{volume}{65}
  (\bibinfo{year}{2023}) \bibinfo{pages}{1031--1073}.
\bibitem[{Wu and Tang(2015)}]{wu2015high}
\bibinfo{author}{K.~Wu}, \bibinfo{author}{H.~Tang},
\newblock \bibinfo{title}{High-order accurate physical-constraints-preserving
  finite difference {W}{E}{N}{O} schemes for special relativistic
  hydrodynamics},
\newblock \bibinfo{journal}{Journal of Computational Physics}
  \bibinfo{volume}{298} (\bibinfo{year}{2015}) \bibinfo{pages}{539--564}.
\bibitem[{Wu et~al.(2023)Wu, Jiang, and Shu}]{wu2023provably}
\bibinfo{author}{K.~Wu}, \bibinfo{author}{H.~Jiang}, \bibinfo{author}{C.-W.
  Shu},
\newblock \bibinfo{title}{Provably positive central discontinuous {G}alerkin
  schemes via geometric quasilinearization for ideal {MHD} equations},
\newblock \bibinfo{journal}{SIAM Journal on Numerical Analysis}
  \bibinfo{volume}{61} (\bibinfo{year}{2023}) \bibinfo{pages}{250--285}.
\bibitem[{Wu(2018)}]{wu2018positivity}
\bibinfo{author}{K.~Wu},
\newblock \bibinfo{title}{Positivity-preserving analysis of numerical schemes
  for ideal magnetohydrodynamics},
\newblock \bibinfo{journal}{SIAM Journal on Numerical Analysis}
  \bibinfo{volume}{56} (\bibinfo{year}{2018}) \bibinfo{pages}{2124--2147}.
\bibitem[{Yan et~al.(2024)Yan, Abgrall, and Wu}]{yan2024uniformly}
\bibinfo{author}{R.~Yan}, \bibinfo{author}{R.~Abgrall},
  \bibinfo{author}{K.~Wu},
\newblock \bibinfo{title}{Uniformly high-order bound-preserving {OEDG} schemes
  for two-phase flows},
\newblock \bibinfo{journal}{Mathematical Models and Methods in Applied Sciences
  (M3AS)} \bibinfo{volume}{to appear} (\bibinfo{year}{2024}).
\bibitem[{Yu and Hesthaven(2020)}]{yu2020study}
\bibinfo{author}{J.~Yu}, \bibinfo{author}{J.~S. Hesthaven},
\newblock \bibinfo{title}{A study of several artificial viscosity models within
  the discontinuous {G}alerkin framework},
\newblock \bibinfo{journal}{Communications in Computational Physics}
  \bibinfo{volume}{27} (\bibinfo{year}{2020}) \bibinfo{pages}{1309--1343}.
\bibitem[{Zhang and Shu(2010{\natexlab{a}})}]{zhang2010maximum}
\bibinfo{author}{X.~Zhang}, \bibinfo{author}{C.-W. Shu},
\newblock \bibinfo{title}{On maximum-principle-satisfying high order schemes
  for scalar conservation laws},
\newblock \bibinfo{journal}{Journal of Computational Physics}
  \bibinfo{volume}{229} (\bibinfo{year}{2010}{\natexlab{a}})
  \bibinfo{pages}{3091--3120}.
\bibitem[{Zhang and Shu(2010{\natexlab{b}})}]{zhang2010positivity}
\bibinfo{author}{X.~Zhang}, \bibinfo{author}{C.-W. Shu},
\newblock \bibinfo{title}{On positivity-preserving high order discontinuous
  {G}alerkin schemes for compressible {E}uler equations on rectangular meshes},
\newblock \bibinfo{journal}{Journal of Computational Physics}
  \bibinfo{volume}{229} (\bibinfo{year}{2010}{\natexlab{b}})
  \bibinfo{pages}{8918--8934}.
\bibitem[{Zhang and Shu(2011)}]{zhang2011b}
\bibinfo{author}{X.~Zhang}, \bibinfo{author}{C.-W. Shu},
\newblock \bibinfo{title}{Maximum-principle-satisfying and
  positivity-preserving high-order schemes for conservation laws: survey and
  new developments},
\newblock \bibinfo{journal}{Proceedings of the Royal Society A: Mathematical,
  Physical and Engineering Sciences} \bibinfo{volume}{467}
  (\bibinfo{year}{2011}) \bibinfo{pages}{2752--2776}.
\bibitem[{Zhang et~al.(2009)Zhang, Cui, and Liu}]{Zhang2009Quadrature}
\bibinfo{author}{L.~Zhang}, \bibinfo{author}{T.~Cui}, \bibinfo{author}{H.~Liu},
\newblock \bibinfo{title}{A set of symmetric quadrature rules on triangles and
  tetrahedra},
\newblock \bibinfo{journal}{Journal of Computational Mathematics}
  \bibinfo{volume}{27} (\bibinfo{year}{2009}) \bibinfo{pages}{89--96}.
\bibitem[{Zhang et~al.(2012)Zhang, Xia, and Shu}]{ZXSPP2012}
\bibinfo{author}{X.~Zhang}, \bibinfo{author}{Y.~Xia}, \bibinfo{author}{C.-W.
  Shu},
\newblock \bibinfo{title}{Maximum-principle-satisfying and
  positivity-preserving high order discontinuous {G}alerkin schemes for
  conservation laws on triangular meshes},
\newblock \bibinfo{journal}{Journal of Scientific Computing}
  \bibinfo{volume}{50} (\bibinfo{year}{2012}) \bibinfo{pages}{29--62}.
\bibitem[{Zhong and Shu(2013)}]{zhong2013simple}
\bibinfo{author}{X.~Zhong}, \bibinfo{author}{C.-W. Shu},
\newblock \bibinfo{title}{A simple weighted essentially nonoscillatory limiter
  for {R}unge--{K}utta discontinuous {G}alerkin methods},
\newblock \bibinfo{journal}{Journal of Computational Physics}
  \bibinfo{volume}{232} (\bibinfo{year}{2013}) \bibinfo{pages}{397--415}.
\bibitem[{Zhu and Qiu(2018)}]{ZhuQiu2018SISC}
\bibinfo{author}{J.~Zhu}, \bibinfo{author}{J.~Qiu},
\newblock \bibinfo{title}{New finite volume weighted essentially nonoscillatory
  schemes on triangular meshes},
\newblock \bibinfo{journal}{SIAM Journal on Scientific Computing}
  \bibinfo{volume}{40} (\bibinfo{year}{2018}) \bibinfo{pages}{A903--A928}.
\bibitem[{Zhu et~al.(2008)Zhu, Qiu, Shu, and Dumbser}]{ZhuQSD2008JCP}
\bibinfo{author}{J.~Zhu}, \bibinfo{author}{J.~Qiu}, \bibinfo{author}{C.-W.
  Shu}, \bibinfo{author}{M.~Dumbser},
\newblock \bibinfo{title}{{R}unge--{K}utta discontinuous {G}alerkin method
  using {WENO} limiters {II}: unstructured meshes},
\newblock \bibinfo{journal}{Journal of Computational Physics}
  \bibinfo{volume}{227} (\bibinfo{year}{2008}) \bibinfo{pages}{4330--4353}.
\bibitem[{Zingan et~al.(2013)Zingan, Guermond, Morel, and
  Popov}]{zingan2013implementation}
\bibinfo{author}{V.~Zingan}, \bibinfo{author}{J.-L. Guermond},
  \bibinfo{author}{J.~Morel}, \bibinfo{author}{B.~Popov},
\newblock \bibinfo{title}{Implementation of the entropy viscosity method with
  the discontinuous {G}alerkin method},
\newblock \bibinfo{journal}{Computer Methods in Applied Mechanics and
  Engineering} \bibinfo{volume}{253} (\bibinfo{year}{2013})
  \bibinfo{pages}{479--490}.

\end{thebibliography}

\end{document}